\newcommand{\CPSTRING}{\textsc{\tiny{CP}}}
\newcommand{\CPS}{{\CPSTRING}}
\newcommand{\CPHS}{\ensuremath{{\CPSTRING,h}}}
\newcommand{\CPDS}{\ensuremath{{\CPSTRING,\cdot}}}
\newcommand{\CPZS}{\ensuremath{{\CPSTRING,0}}}
\newcommand{\CPTS}{\ensuremath{{\CPSTRING,t}}}
\newcommand{\MWSTRING}{\textsc{\tiny{MW}}}
\newcommand{\MWS}{\ensuremath{{\MWSTRING}}}
\newcommand{\FDMSCP}{FDMS\xspace}
\newcommand{\FDMSCPs}{FDMSs\xspace}
\newcommand{\FDMSCPExp}{\ensuremath{\text{FDMS}_{TQ}}\xspace}
\newcommand{\FDMSCPsExp}{\ensuremath{\text{FDMSs}_{TQ}}\xspace}
\newcommand{\FDMSMW}{\ensuremath{\text{FDMS}_{Q\times Q}}\xspace}
\newcommand{\FDMSMWs}{\ensuremath{\text{FDMSs}_{Q\times Q}}\xspace}
\newcommand{\discretizationCPExp}{\ensuremath{\text{discretization}_{TQ}}\xspace}
\newcommand{\discretizationsCPExp}{\ensuremath{\text{discretizations}_{TQ}}\xspace}
\newcommand{\discretizationMW}{\ensuremath{\text{discretization}_{Q\times Q}}\xspace}
\newcommand{\discretizationsMW}{\ensuremath{\text{discretizations}_{Q\times Q}}\xspace}
\newcommand{\FMS}{FMS\xspace}
\newcommand{\FMSs}{FMSs\xspace}
\newcommand{\FVF}[1]{\ensuremath{\check{#1}}}
\newcommand{\gF}{\ensuremath{\alpha}}
\newcommand{\gAF}{\ensuremath{\alpha}}
\newcommand{\dAFop}{\ensuremath{\mu}}
\newcommand{\dAFtp}{\ensuremath{\sigma}}
\newcommand{\dAFtpst}{\ensuremath{\sigma^{*}}}
\newcommand{\dAFnp}{\ensuremath{\sigma^{\mathcal{C}_N}}}
\newcommand{\dAFfhop}{\ensuremath{\mu^{\mathcal{V}_h}}}
\newcommand{\dAFfhtp}{\ensuremath{\sigma^{\mathcal{V}_h}}}
\newcommand{\mwAFop}{\ensuremath{\mu_d}}
\newcommand{\mwAFtp}{\ensuremath{\sigma_d}}
\newcommand{\mwAFnp}{\ensuremath{\sigma_d}}
\newcommand{\mwAFDop}{\ensuremath{\mu_{\MWS,\cdot}}}
\newcommand{\mwAFDtp}{\ensuremath{\sigma_{\MWS,\cdot}}}
\newcommand{\cAFop}{\ensuremath{\nu_{L,f}}}
\newcommand{\dBTp}{\ensuremath{\alpha^+}}
\newcommand{\dBTm}{\ensuremath{\alpha^-}}
\newcommand{\grade}{\ensuremath{\mathfrak{h}}}
\newcommand{\discMW}{\ensuremath{\Delta_{Q\times Q}}}
\newcommand{\discMWE}{\ensuremath{\Delta_{Q\times Q}^E}}
\newcommand{\discCP}{\ensuremath{\Delta}}
\newcommand{\discCPE}{\ensuremath{\discCP^E}}
\newcommand{\discCPexp}{\ensuremath{\Delta_{TQ}}}
\newcommand{\discCPEexp}{\ensuremath{\discCP_{TQ}^E}}
\newcommand{\aFDLS}{\ensuremath{\Sigma}}
\newcommand{\aFDLSh}{\ensuremath{\Sigma_h}}
\newcommand{\aFDLSE}{\ensuremath{\Sigma^E}}
\newcommand{\aFDMS}{\ensuremath{\Sigma_{Q\times Q}}}
\newcommand{\aFDMSh}{\ensuremath{\Sigma_{Q\times Q,h}}}
\newcommand{\aFDMSE}{\ensuremath{\Sigma_{Q\times Q}^E}}
\newcommand{\aFDMSEh}{\ensuremath{\Sigma_{Q\times Q,h}^E}}
\newcommand{\cupdot}{\mathbin{\mathaccent\cdot\cup}}
\newcommand{\jdef}[1]{\index{#1}\emph{#1}}
\newcommand{\ti}[1]{\widetilde{#1}}
\newcommand{\baseRing}[1]{\ensuremath{\mathbb{#1}}}
\newcommand{\N}{\baseRing{N}}
\newcommand{\Z}{\baseRing{Z}}
\newcommand{\R}{\baseRing{R}}
\newcommand{\F}{\baseRing{F}}
\newcommand{\DD}{\baseRing{D}}
\newcommand{\CP}{\baseRing{P}}
\newcommand{\NZ}{\ensuremath{\N\cup\{0\}}}
\newcommand{\del}{\ensuremath{\partial}}
\newcommand{\stext}[1]{\ensuremath{\quad\text{#1}\quad}}
\newcommand{\transpose}[1]{{#1}^t}
\providecommand{\abs}[1]{\ensuremath{\left\lvert{#1}\right\rvert}}
\providecommand{\norm}[1]{\ensuremath{\left\lVert{#1}\right\rVert}}
\newcommand{\pd}[2]{{\frac{\partial {#1}}{\partial {#2}}}}
\newcommand{\svec}[2]{\ensuremath{\bigl( \begin{smallmatrix} 
      {#1}\\{#2} \end{smallmatrix} \bigr)}}
\newcommand{\SM}{\ensuremath{\smallsetminus}}
\newcommand{\VF}{\mathfrak{X}}
\newcommand{\conj}{\overline}
\newcommand{\imp}{\ensuremath{\Rightarrow}\xspace}
\newcommand\widecheck[1]{%
\savestack{\tmpbox}{\stretchto{%
  \scaleto{%
    \scalerel*[\widthof{\ensuremath{#1}}]{\kern-.6pt\bigwedge\kern-.6pt}%
    {\rule[-\textheight/2]{1ex}{\textheight}}
  }{\textheight}%
}{0.5ex}}%
\stackon[1pt]{#1}{\scalebox{-1}{\tmpbox}}%
}
\DeclareMathOperator{\cpres}{res}
\DeclareMathOperator{\Gr}{Gr}
\DeclareMathOperator{\GrB}{\mathcal{G}}
\DeclareMathOperator{\im}{Im}
\DeclareMathOperator{\vspan}{Span}
\theoremstyle{plain}
\newtheorem{theorem}{Theorem}[section]
\newtheorem{corollary}[theorem]{Corollary}
\newtheorem{prop}[theorem]{Proposition}
\newtheorem{proposition}[theorem]{Proposition}
\newtheorem{lemma}[theorem]{Lemma}
\theoremstyle{definition}
\newtheorem{definition}[theorem]{Definition}
\newtheorem{remark}[theorem]{Remark}
\newtheorem{example}[theorem]{Example}
\numberwithin{equation}{section}
\begin{document}

\title[Forced discrete systems]{Error analysis of forced \\discrete
  mechanical systems}

\author{Javier Fern\'andez}

\address{Instituto Balseiro, Universidad Nacional de Cuyo --
  C.N.E.A.\\Av. Bustillo 9500, San Carlos de Bariloche, R8402AGP,
  Rep\'ublica Argentina}

\email{jfernand@ib.edu.ar}

\author{Sebasti\'an Graiff Zurita}

\address{Graduate School of Mathematics, Kyushu University\\
  744 Motooka, Nishi-ku, Fukuoka, 819- 0395, Japan}

\email{s-graiff@kyudai.jp}

\author{Sergio Grillo}

\address{Instituto Balseiro, Universidad Nacional de Cuyo --
  C.N.E.A. and CONICET\\ Av. Bustillo 9500, San Carlos de
    Bariloche, R8402AGP, Rep\'ublica Argentina}

\email{sergiog@cab.cnea.gov.ar}

\subjclass{Primary: 37J99, 37M15, 70G45; Secondary: 70G75, 70K35.}

\keywords{Geometric mechanics, discrete mechanical systems, error
  analysis, forced mechanical system.}

\thanks{This research was partially supported by grants from the
  Universidad Nacional de Cuyo (grants 06/C567 and 06/C574) and
  CONICET}

\begin{abstract}
  The purpose of this paper is to perform an error analysis of the
  variational integrators of mechanical systems subject to external
  forcing. Essentially, we prove that when a discretization of contact
  order $r$ of the Lagrangian and force are used, the integrator has
  the same contact order. Our analysis is performed first for discrete
  forced mechanical systems defined over $TQ$, where we study the
  existence of flows, the construction and properties of discrete
  exact systems and the contact order of the flows (variational
  integrators) in terms of the contact order of the original
  systems. Then we use those results to derive the corresponding
  analysis for the analogous forced systems defined over $Q\times Q$.
\end{abstract}


\bibliographystyle{amsalpha}

\maketitle



\section{Introduction}
\label{sec:introduction}

The study of mechanical systems subject to external forcing is
interesting, among other reasons, because this type of dynamical
system appears very naturally in many real world situations, where
control forces, dissipation or friction are usually present. Even
though the general theory of these systems has been studied for
centuries, a practical problem still remains: the concrete solution of
the corresponding equations of motion can be too involved or
practically impossible to find analytically. Hence, a valuable tool is
the numerical solution of those equations.

There are many approaches to discretizing ordinary differential
equations and, subsequently, finding appropriate numerical
solutions. In the special case of equations that arise as equations of
motion of mechanical systems, a successful technique of numerical
integration is that of variational integration, where rather than
discretizing those equations, the continuous variational principle
that leads to the equations of motion ---the Lagrange--D'Alembert
principle--- is discretized. The solutions of this discrete
variational principle are the numerical approximations of the
solutions of the equation of motion of the system. This mechanism has
been explained and analyzed, for example, by J. Marsden and M. West
in~\cite{ar:marsden_west-discrete_mechanics_and_variational_integrators}.

In the case of Lagrangian systems with external forces the basic idea
consists of associating to the given (continuous) Lagrangian and force
discrete analogues and a discrete-time dynamics based on a variational
principle that mimics the Lagrange--D'Alembert principle. Then, the
question is how well the discrete trajectories that solve the discrete
variational problem match the solutions of the original continuous
problems, in terms of how well the discrete Lagrangian and discrete
forces match their continuous counterparts. An answer to this question
is proposed
in~\cite{ar:marsden_west-discrete_mechanics_and_variational_integrators}
(Section 3.2.1) where most of the analysis is referred to the unforced
case. Unfortunately, it was shown by G. Patrick and C. Cuell
in~\cite{ar:patrick_cuell-error_analysis_of_variational_integrators_of_unconstrained_lagrangian_systems}
that this last analysis was
flawed. Still,~\cite{ar:patrick_cuell-error_analysis_of_variational_integrators_of_unconstrained_lagrangian_systems}
proves for (unforced) Lagrangian systems that the contact order
between the discrete and the continuous trajectories is, at least,
that of the discrete and continuous Lagrangians. The purpose of this
paper is to apply the approach
of~\cite{ar:patrick_cuell-error_analysis_of_variational_integrators_of_unconstrained_lagrangian_systems}
to the error analysis of the variational integrators of forced
mechanical systems. To do so, we also try to explain and clarify the
content of some results that appear
in~\cite{ar:patrick_cuell-error_analysis_of_variational_integrators_of_unconstrained_lagrangian_systems},~\cite{ar:cuell_patrick-skew_critical_problems}
and~\cite{ar:cuell_patrick-geometric_discrete_analogues_of_tangent_bundles_and_constrained_lagrangian_systems}.

Discrete mechanical systems, forced or not, have usually been
described as dynamical systems over a ``discrete phase space''
$Q\times Q$ and, also, over $T^*Q$ (using a ``discrete Legendre
transform''). An alternative point of view is developed
in~\cite{ar:cuell_patrick-geometric_discrete_analogues_of_tangent_bundles_and_constrained_lagrangian_systems}
and~\cite{ar:patrick_cuell-error_analysis_of_variational_integrators_of_unconstrained_lagrangian_systems}
where such systems are defined over $TQ$, calling them \jdef{discrete
  mechanical systems}. In all cases, the dynamics is defined in terms
of a variational principle for the space of (discrete) paths. In
general, a process of \jdef{discretization} of a (continuous) system
assigns a family of discrete systems depending on a ``deformation
parameter'' $h$ to a given continuous one, for instance, by choosing
an $h$-dependent family of local diffeomorphism between $TQ$ (the
``velocity phase space'' of the continuous system) and $Q\times Q$.
Cuell \& Patrick introduce a refined notion of discretization
in~\cite[Definitions 3.1 and
3.3]{ar:patrick_cuell-error_analysis_of_variational_integrators_of_unconstrained_lagrangian_systems}
with the characteristic that, for each fixed $h$, the discretization
produces a discrete mechanical system, whose properties can be derived
from those of the original (continuous) system. For example, they are
able to show that all discretizations have local flows and,
consequently, that the discrete mechanical systems obtained from them
have trajectories. In addition, they prove that for regular
(continuous) Lagrangian systems, there are discretizations, called
\jdef{exact}, so that the trajectories of the discrete mechanical
systems obtained from them are the same as the continuous trajectories
of the (continuous) systems, evaluated at discrete time
($t=0,h,2h,\ldots$); unfortunately, such exact systems are not
practically constructible except for a few trivial cases, otherwise
the trajectories of the continuous system would be easily
available. Also, after introducing a notion of \jdef{contact order}
between maps, they are able to prove that if a family of discrete
mechanical systems has contact order $r$ with the exact system, then
the flow of the discrete system has contact order $r$ with the
original continuous system.

As we said above, the goal of our paper is to apply the approach of
Cuell \& Patrick to \emph{forced} mechanical systems, that we review
in Section~\ref{sec:forced_mechanical_systems}. In order to do so, we
introduce in Section~\ref{sec:discretizations_a_la_cuell_and_patrick}
the \jdef{forced discrete mechanical systems} and a notion of
discretization of (continuous) forced mechanical systems that,
for fixed $h$, produce forced discrete mechanical
systems. Using~\cite[Proposition
1]{ar:cuell_patrick-geometric_discrete_analogues_of_tangent_bundles_and_constrained_lagrangian_systems}
and the methods of~\cite[Theorem
3.7]{ar:patrick_cuell-error_analysis_of_variational_integrators_of_unconstrained_lagrangian_systems}
we prove in Theorem~\ref{th:flow_of_discretizations_of_FMS} that
discretizations of regular forced mechanical systems always have
smooth flows, even when $h=0$. Along the way we discuss a point
in~\cite[Proposition
1]{ar:cuell_patrick-geometric_discrete_analogues_of_tangent_bundles_and_constrained_lagrangian_systems}
that we find obscure and provide an alternative proof for part of that
result. Then, given a regular forced mechanical system we introduce
and analyze the properties of some discrete forced mechanical systems
that we call \jdef{exact}; as before, the name comes from the fact,
proved in Theorem~\ref{thm:flow_of_exact_discrete_systems}, that their
(discrete) trajectories coincide with the trajectories of the
continuous system from whom they are constructed, at discrete times.
The next important result that we discuss in
Section~\ref{sec:error_analysis} is
Theorem~\ref{th:contact_order_discrete_FMS}, which proves that if a
family of forced discrete mechanical systems that is obtained from a
discretization $\discCP$ of a regular forced mechanical system has
contact order $r$ with the exact system associated to the same
continuous system, then the flows of the discretization $\discCP$ and
the exact discretization also have contact order $r$; hence the
variational integrator derived from the discretization has contact
order $r$ with the exact solution of the equation of motion the
original continuous system.

There is a more well known type of discrete system used to model
forced mechanical systems as discrete dynamical systems over
$Q\times Q$; see for instance, the presentation in Part III
of~\cite{ar:marsden_west-discrete_mechanics_and_variational_integrators}. Still,
the analytic study of these systems given in that reference is rather
sketchy. In particular, the construction of exact discrete systems is
not completely justified and the error analysis is suggested as
similar to the one performed in the unforced case (which was found to
be flawed). In
Section~\ref{sec:forced_discrete_mechanical_systems_on_QxQ} we use our
results on forced discrete mechanical systems to derive the existence
of forced exact systems on $Q\times Q$ for a given regular forced
mechanical system (see
Example~\ref{ex:exact_discretization_MW-simple_bd} and
Theorem~\ref{th:flow_of_exact_discrete_systems-FDMS}). We are also
able to transfer the error analysis developed for systems defined on
$TQ$ to the systems on $Q\times Q$
(Theorem~\ref{th:order_of_flow_MW_vs_order_discMW}). It is worth
mentioning that another approach to this problem has been considered
by D. Mart\'in de Diego and R. Sato Mart\'in de Almagro
in~\cite{ar:deDiego_deAlmagro-variational_order_for_forced_lagrangian_systems},
where they associate a non-forced mechanical system to the forced one
and, then, apply the results of Cuell \& Patrick to the former.

A key tool used by Cuell \& Patrick and us to obtain results for
systems over $TQ$ is the study of the asymptotic behavior of the
system when $h\rightarrow 0$ which, after a kind of blow-up, is very
good (yielding, for instance, smooth flows even at $h=0$). The same
asymptotic techniques don't seem to apply in the case of systems over
$Q\times Q$, which do not seem to extend to $h=0$. This difference is
what justifies the ``extra work'' of having to introduce dynamical
systems over $TQ$ in order to study systems over $Q\times Q$. It seems
to us that the discrete systems over $TQ$ allow for easier study of
matters related to the evolution of the systems, separating them from
the ``inversion problem'' of assigning a velocity to a pair of points,
as is evident in the treatment of the discrete exact systems.

The paper includes a few Appendices that review some basic aspects of
Grassmannian manifolds and Grassmann bundles as well as some (lengthy)
local computations used in the proof of
Theorem~\ref{th:contact_order_discrete_FMS}.

\emph{Notation:} Throughout the paper many spaces are cartesian
products. In general we denote the projections by
$p_k:\prod_{j=1}^N X_j\rightarrow X_k$ with the obvious
adaptations. For any space $X$, its \jdef{diagonal} is
$\Delta_X:=\{(x,x')\in X\times X:x=x'\} \subset X\times X$.


\section{Preliminaries}
\label{sec:preliminaries}

In this section we review three topics: first, in
Section~\ref{sec:forced_mechanical_systems}, the notion of forced
mechanical system using the variational formalism is reviewed. Then,
in Section~\ref{sec:contact_order}, we touch on the notion of contact
order between maps, that will be very important for the rest of the
paper; here we follow Section 3
of~\cite{ar:cuell_patrick-skew_critical_problems}, albeit with a
slight difference, noted in
Remark~\ref{rem:no_same_contact_order_defs}. Last, in
Section~\ref{sec:critical_problems}, we survey the notion of critical
problem that will allow us to characterize, later, trajectories of
some dynamical systems as critical points; here we mostly follow
Sections 1 and 2 of~\cite{ar:cuell_patrick-skew_critical_problems}.


\subsection{Forced mechanical systems}
\label{sec:forced_mechanical_systems}

Let $\tau_Q:TQ\rightarrow Q$ be the tangent bundle of the (finite
dimensional) differentiable manifold $Q$; the vectors in
$\ker(T \tau_Q)\subset TTQ$ are said to be \jdef{vertical}. A $1$-form
on $TQ$ is said to be \jdef{horizontal} if it vanishes on vertical
vectors.

In what follows we consider \jdef{paths} in a manifold $Q$, that is,
smooth maps $q:[a,b]\rightarrow Q$, and also \jdef{infinitesimal
  variations} over $q$: smooth maps $\delta q:[a,b]\rightarrow TQ$
such that $\delta q(t)\in T_{q(t)}Q$. If $q$ is a path and $\delta q$
is an infinitesimal variation over $q$, we consider their velocities
$q':[a,b]\rightarrow TQ$ and $(\delta q)':[a,b]\rightarrow TTQ$, that
are infinitesimal variations over $q$ and $\delta q$,
respectively. Last, we recall the \jdef{canonical involution}
$\kappa:TTQ\rightarrow TTQ$ defined in local coordinates as follows:
let $(U,\phi)$ be a local coordinate chart of $Q$, then $(TTU,TT\phi)$
is a local coordinate chart of $TTQ$ and, if
$\check{\kappa}:=TT\phi\circ \kappa\circ (TT\phi)^{-1}$ is the local
expression of $\kappa$ with respect to that chart, then
$\check{\kappa}(q,\dot{q},\delta q, \delta \dot{q}) = (q,\delta
q,\dot{q},\delta \dot{q})$. It is well known that $\kappa$ is a well
defined smooth involution and that
$T(\tau_Q) \circ \kappa = \tau_{TQ}$ (see
\cite{ar:cendra_ferraro_grillo-lagrangian_reduction_of_generalized_NHS}
or~\cite{ar:tulczyjew_urbanski-a_slow_and_careful_legendre_transformation_for_singular_lagrangians}).

\begin{definition}
  A \jdef{forced mechanical system} (\FMS) is a triple $(Q,L,f)$ where
  $Q$ is a finite dimensional differentiable manifold, the
  \jdef{configuration space}, $L:TQ\rightarrow \R$ is a smooth
  function, the \jdef{Lagrangian}, and $f$ is a horizontal $1$-form on
  $TQ$, the \jdef{force}. A forced mechanical system determines a
  dynamical system whose trajectories are the curves
  $q:[a,b]\rightarrow Q$ such that for all fixed endpoint variations
  $\delta q(t)$ over $q(t)$,
  \begin{equation}\label{eq:FMS-variational_pple}
    \int_a^b (dL+f)(q'(t))(\kappa((\delta q)'(t))) dt =0.
  \end{equation}
\end{definition}

\begin{remark}\label{rem:force_field}
  It is customary to give the force of a mechanical system as a fiber
  preserving smooth map $\FVF{f}:TQ\rightarrow T^*Q$ that we call
  the \jdef{force field}. In this case, we define the $1$-form
  $f\in \mathcal{A}^1(TQ)$ by
  \begin{equation*}
    f(v)(\delta v) := \FVF{f}(v)(T_{v}\tau_Q(\delta v)) \stext{ for }
    \delta v \in T_vTQ.
  \end{equation*}
  By construction, $f$ is a horizontal $1$-form and, in fact, there is
  a correspondence between horizontal $1$-forms on $TQ$ and
  fiber-preserving smooth maps from $TQ\rightarrow T^*Q$ (Prop.
  7.8.2
  in~\cite{bo:marsden_ratiu-introduction_to_mechanics_and_symmetry_2e})
  so, in what follows, we will describe the force alternatively in
  either way. In terms of the force field $\FVF{f}$, the trajectories
  of the system are the curves $q:[a,b]\rightarrow Q$ such that for
  all fixed endpoint variations $\delta q(t)$ over $q(t)$,
  \begin{equation}\label{eq:FMS-variational_pple-force_as_maps}
    \int_a^b dL(q'(t))(\kappa((\delta q)'(t))) dt + 
    \int_a^b \FVF{f}(q'(t))(\delta q(t)) dt =0
  \end{equation}
\end{remark}

\begin{remark}\label{rem:def_of_fiber_derivative}
  We recall the notion of \jdef{fiber derivative}. Let
  $\phi_E:E\rightarrow M$ and $\phi_F:F\rightarrow M$ be vector
  bundles and $f:E\rightarrow F$ be a fiber-preserving smooth map over
  $id_M$. As for each $m\in M$ the fiber $E_m$ is a vector space, for
  each $v\in E_m$ the mapping
  $w\mapsto w^v:=\frac{d}{dt}\big|_{t=0}(v+t w)$ is an isomorphism
  from $E_m$ onto $T_vE_m$, with a parallel construction for $F$.  For
  each $m\in M$, let $f_m:=f|_{E_m}:E_m\rightarrow F_m$. The
  \jdef{fiber derivative} of $f$ is defined as
  $\F f:E\rightarrow \hom(E,F)$ by
  \begin{equation*}
    \F f(e)(e'):=T_e(f|_{\phi_E(e)})((e')^e) = \frac{d}{dt}\bigg|_{t=0} f(e+t e')
  \end{equation*}
  (where we identify $T_{f(e)}F_{\phi_E(e)} \simeq F_{\phi_E(e)}$). It
  can be seen that $\F f$ is, indeed, a smooth morphism of vector
  bundles over $M$. For example, when $L:TQ\rightarrow\R$, we have
  $\F L:TQ\rightarrow \hom(TQ,\R) \simeq T^*Q$ that is computed by
  \begin{equation*}
    \F L(v_q)(w_q) := \frac{d}{dt}\bigg|_{t=0}L(v_q+t w_q)
  \end{equation*}
  and is known as the \jdef{Legendre transform} of $L$.  We can also
  consider
  $\F^2 L:=\F(\F L):TQ\rightarrow \hom(TQ,T^*Q) \simeq T^*Q\otimes
  T^*Q \simeq Bil(TQ,\R)$ (where $Bil(TQ,\R)$ are the $\R$-valued
  bilinear forms on $TQ$).
\end{remark}

\begin{lemma}\label{le:variation_of_lagrangian}
  Let $L:TQ\rightarrow\R$ be a smooth function. Then, for any curve
  $q:[a,b]\rightarrow Q$ and smooth variation
  $\delta q:[a,b]\rightarrow TQ$ over $q$ we have that
  \begin{equation}\label{eq:variation_of_lagrangian}
    \int_a^bdL(q'(t))(\kappa((\delta q)'(t))) dt =
    \int_a^b D_{EL}L(q''(t))(\delta q(t)) dt +
    \F L(q'(t))(\delta q(t)) \big|_a^b,
  \end{equation}
  where $D_{EL}:\ddot{Q}\rightarrow T^*Q$ is the \jdef{Euler--Lagrange
    operator} (see p. 26
  of~\cite{bo:cendra_marsden_ratiu-lagrangian_reduction_by_stages}).
\end{lemma}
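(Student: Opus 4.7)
The plan is to reduce the identity to the familiar integration-by-parts computation that underlies the Euler--Lagrange equations. I would work in a local coordinate chart $(U,\phi)$ on $Q$, writing the induced coordinates on $TQ$ as $(q^i,\dot q^i)$ and those on $TTQ$ as $(q^i,\dot q^i,\delta q^i,\delta \dot q^i)$. For a path $q(t)$ with variation $\delta q(t)$, the lifts to $TQ$ and $TTQ$ are given in coordinates by $q'(t)=(q^i(t),\dot q^i(t))$, where $\dot q^i(t)=\frac{d q^i}{dt}(t)$, and $(\delta q)'(t)=(q^i(t),\delta q^i(t),\dot q^i(t),\delta \dot q^i(t))$, where $\delta \dot q^i(t)=\frac{d\,\delta q^i}{dt}(t)$. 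Applying the local formula for the canonical involution recalled before the definition of \FMS{}, one obtains
\begin{equation*}
  \kappa((\delta q)'(t)) = (q^i(t),\dot q^i(t),\delta q^i(t),\delta \dot q^i(t)).
\end{equation*}

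Next I would expand the integrand: since $dL = \frac{\partial L}{\partial q^i}\,dq^i + \frac{\partial L}{\partial \dot q^i}\,d\dot q^i$, evaluating at $q'(t)$ and pairing with $\kappa((\delta q)'(t))$ gives
\begin{equation*}
  dL(q'(t))(\kappa((\delta q)'(t))) = \frac{\partial L}{\partial q^i}(q'(t))\,\delta q^i(t) + \frac{\partial L}{\partial \dot q^i}(q'(t))\,\delta \dot q^i(t).
\end{equation*}
Standard integration by parts on the second summand yields
\begin{equation*}
  \int_a^b \frac{\partial L}{\partial \dot q^i}(q'(t))\,\delta \dot q^i(t)\,dt = \frac{\partial L}{\partial \dot q^i}(q'(t))\,\delta q^i(t)\Big|_a^b - \int_a^b \frac{d}{dt}\!\left[\frac{\partial L}{\partial \dot q^i}(q'(t))\right]\delta q^i(t)\,dt,
\end{equation*}
so the full integral becomes $\int_a^b\bigl[\tfrac{\partial L}{\partial q^i}-\tfrac{d}{dt}\tfrac{\partial L}{\partial \dot q^i}\bigr]\delta q^i\,dt$ plus a boundary term.

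It then remains to identify the two intrinsic objects on the right-hand side of~\eqref{eq:variation_of_lagrangian} with their coordinate expressions. For the boundary term, the definition of $\F L$ recalled in Remark~\ref{rem:def_of_fiber_derivative} reads $\F L(v_q)(w_q)=\frac{\partial L}{\partial \dot q^i}(v_q)w^i$ in coordinates, which matches exactly. For the bulk term, the Euler--Lagrange operator $D_{EL}L\colon\ddot Q\to T^*Q$, applied to $q''(t)$ (an element of the second-order tangent bundle), has local expression $\bigl(\tfrac{\partial L}{\partial q^i}(q'(t))-\tfrac{d}{dt}\tfrac{\partial L}{\partial \dot q^i}(q'(t))\bigr)dq^i$, again matching. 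These two identifications, together with the fact that both sides of~\eqref{eq:variation_of_lagrangian} are intrinsically defined (so that the local computation automatically glues over any atlas of $Q$ because both sides agree chart-by-chart on integrands whose values are real numbers), finish the proof.

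The computation itself is entirely routine; the only mildly delicate point is verifying that the local action of $\kappa$ is indeed the one written above, which is guaranteed by the formula recalled just before the definition of \FMS{}, and then matching the resulting coordinate expressions with the intrinsic operators $\F L$ and $D_{EL}L$. I do not anticipate any real obstacle beyond bookkeeping.
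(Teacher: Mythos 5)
Your proof is correct: the paper states this lemma without proof (treating it as the standard variational computation and citing Cendra--Marsden--Ratiu only for the definition of $D_{EL}$), and your local-coordinate argument --- the coordinate formula for $\kappa$, the expansion of $dL$, integration by parts, and the identification of the boundary and bulk terms with $\F L$ and $D_{EL}L$ --- is exactly the intended standard verification. No gaps.
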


If $(Q,L,f)$ is a \FMS, from~\eqref{eq:variation_of_lagrangian} we see
that, with the same notation used in
Lemma~\ref{le:variation_of_lagrangian},
\begin{equation*}
  \begin{split}
    \int_a^bdL(q'(t))&(\kappa((\delta q)'(t)))dt + \int_a^b
    \FVF{f}(q'(t))(\delta q(t)) dt\\=&\int_a^b D_{EL}L(q''(t))(\delta
    q(t)) dt + \int_a^b \FVF{f}(q'(t))(\delta q(t)) dt+ \F
    L(q'(t))(\delta q(t)) \big|_a^b \\=& \int_a^b
    \big(D_{EL}L(q''(t))+\FVF{f}\big)(q'(t))(\delta q(t)) dt + \F
    L(q'(t))(\delta q(t)) \big|_a^b.
  \end{split}
\end{equation*}
From this last expression we deduce the following result.

\begin{prop}\label{prop:equation_of_motion-FMS}
  Let $(Q,L,f)$ be a \FMS. Then, the path $q:[a,b]\rightarrow Q$ is a
  trajectory of the system if and only if
  \begin{equation}
    \label{eq:equation_of_motion-FMS}
    D_{EL}L(q''(t))+\FVF{f}(q'(t)) =0 \stext{ for all } t\in [a,b].
  \end{equation}
  Also, when $q:[a,b]\rightarrow Q$ is a trajectory of the system and
  $\delta q:[a,b]\rightarrow TQ$ is a smooth variation over $q$,
  \begin{equation}
    \label{eq:variation_of_lagrangian-FMS}
    \int_a^bdL(q'(t))(\kappa((\delta q)'(t))) dt + \int_a^b
    \FVF{f}(q'(t))(\delta q(t)) dt = \F
    L(q'(t))(\delta q(t)) \big|_a^b.
  \end{equation}
\end{prop}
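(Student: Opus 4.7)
The plan is to combine Lemma~\ref{le:variation_of_lagrangian} with the variational principle~\eqref{eq:FMS-variational_pple-force_as_maps} and apply the fundamental lemma of the calculus of variations. The key observation, already displayed in the unnumbered computation immediately preceding the proposition, is that for any path $q:[a,b]\rightarrow Q$ and any smooth variation $\delta q$ over $q$,
\begin{equation*}
  \int_a^b dL(q'(t))(\kappa((\delta q)'(t)))\,dt + \int_a^b \FVF{f}(q'(t))(\delta q(t))\,dt = \int_a^b \bigl(D_{EL}L(q''(t))+\FVF{f}(q'(t))\bigr)(\delta q(t))\,dt + \F L(q'(t))(\delta q(t))\big|_a^b.
\end{equation*}
So all the content of the proposition is extracted from this single identity.

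First I would prove the equivalence in~\eqref{eq:equation_of_motion-FMS}. For the ``if'' direction, assume~\eqref{eq:equation_of_motion-FMS} holds. Then the first integral on the right-hand side of the displayed identity vanishes, and for any fixed endpoint variation $\delta q$ (i.e.\ $\delta q(a)=0$ and $\delta q(b)=0$) the boundary term $\F L(q'(t))(\delta q(t))|_a^b$ also vanishes. Hence the left-hand side is zero for every such $\delta q$, which is precisely~\eqref{eq:FMS-variational_pple-force_as_maps}, so $q$ is a trajectory. For the ``only if'' direction, assume $q$ is a trajectory. Then for every fixed endpoint variation $\delta q$, the boundary term vanishes and so the identity forces
\begin{equation*}
  \int_a^b \bigl(D_{EL}L(q''(t))+\FVF{f}(q'(t))\bigr)(\delta q(t))\,dt = 0.
\end{equation*}
Applying the fundamental lemma of the calculus of variations locally in a coordinate chart (use bump functions supported in a small interval and multiplying each coordinate basis vector along $q$) one concludes that the covector $D_{EL}L(q''(t))+\FVF{f}(q'(t)) \in T_{q(t)}^*Q$ must be zero for every $t\in[a,b]$, which is~\eqref{eq:equation_of_motion-FMS}.

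For the second assertion~\eqref{eq:variation_of_lagrangian-FMS}, we simply plug~\eqref{eq:equation_of_motion-FMS} into the displayed identity: the first integral on the right-hand side vanishes identically (this time for \emph{every} smooth variation, not only fixed endpoint ones), leaving exactly the boundary term $\F L(q'(t))(\delta q(t))|_a^b$, as claimed.

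The only subtle point is the invocation of the fundamental lemma for a $T^*Q$-valued integrand along a curve; this is standard and reduces to the scalar case componentwise in local coordinates, so I do not anticipate any real obstacle. The rest is just bookkeeping with Lemma~\ref{le:variation_of_lagrangian}.
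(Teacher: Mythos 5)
Your proposal is correct and follows exactly the route the paper intends: the paper displays the same identity immediately before the proposition and simply states that the result is deduced from it, leaving the standard fundamental-lemma argument and the fixed-endpoint bookkeeping implicit. Your write-up merely fills in those routine details.
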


\begin{definition}\label{def:regularity-GFMS}
  A \FMS $(Q,L,f)$ is said to be \jdef{regular} if the Legendre
  transform $\F L:TQ\rightarrow T^*Q$ is a local diffeomorphism or,
  equivalently, if $\F^2 L(v)$ is a nondegenerate bilinear form at
  each $v\in TQ$ (see Remark~\ref{rem:def_of_fiber_derivative} and
  Proposition 3.5.10 in~\cite {bo:AM-mechanics}).
\end{definition}

\begin{remark}\label{rem:legendre_transform_from_1-form}
  In general, given $\gF\in \mathcal{A}^1(TQ)$, we can define
  $\ti{\gF}:TQ\rightarrow T^*Q$ by
  $\ti{\gF}(v_q)(w_q) := \gF(v_q)(w_q^{v_q})$ for all $w_q\in T_qQ$,
  where
  $w_q^{v_q} := \frac{d}{dt}\big|_{t=0} (v_q+t w_q)\in
  T_{v_q}T_qQ\subset T_{v_q}TQ$ is a vertical vector.  From the
  definition $\ti{\gF}$ is smooth and preserves the fibers under the
  canonical projections on $Q$. In particular, for a \FMS $(Q,L,f)$,
  the form $\cAFop:=dL+f \in \mathcal{A}^1(TQ)$ produces the map
  $\ti{\cAFop}$ such that
  $\ti{\cAFop}(v_q)(w_q) = \cAFop(v_q)(w_q^{v_q}) =
  (dL+f)(v_q)(w_q^{v_q}) = dL(v_q)(w_q^{v_q}) = \F L(v_q)(w_q)$, the
  Legendre transform of $L$. Thus, $(Q,L,f)$ is regular if and only if
  $\ti{\cAFop}$ is a local diffeomorphism.
\end{remark}

\begin{example}\label{ex:particle_with_friction-def}
  A simple example of a \FMS is that of a (unit mass) particle, whose
  movement is subject to a friction force, proportional to its
  velocity. In this case, $Q:=\R$, $L(q,v):=\frac{1}{2} v^2$ and
  $f(q,v):=-\alpha v\, dq$, for a constant $\alpha>0$. The Legendre
  transform is given by $\F L(q,v) = v\, dq$, that is a diffeomorphism
  between $TQ$ and $T^*Q$, hence the system is regular.
\end{example}

Given a \FMS $(Q,L,f)$, we can define its \jdef{energy function}
$E_{L}:TQ\rightarrow\R$ as
\begin{equation*}
  E_{L}(v):=\F L(v)(v)-L(v)
\end{equation*}
and the closed $2$-form
$\omega_{L}:= \F L^*\omega\in\mathcal{A}^{2} (TQ)$, where $\omega$ is
the canonical symplectic form of the cotangent bundle $T^*Q$. For
regular systems, following similar steps as for the unforced case (see
\cite{bo:AM-mechanics}, page 215), the next result easily follows.

\begin{theorem}
  If $(Q,L,f)$ is a regular \FMS, then $\omega_{L}$ is a symplectic
  form on $TQ$ and the integral curves of the vector field
  $X_{L,f}\in\VF(TQ)$, defined by the equation
  \begin{equation*}
    i_{X_{L,f}}\omega_{L}=dE_{L}+f,
  \end{equation*}
  are exactly the trajectories of the system.
\end{theorem}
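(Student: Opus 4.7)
The plan is to establish the two claims in sequence, following the strategy used for the unforced case in \cite[p.~215]{bo:AM-mechanics}, with the new input being only the horizontal force $f$.

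First I would verify that $\omega_L$ is symplectic. Closedness is automatic: $d\omega_L = d(\F L^*\omega) = \F L^*(d\omega) = 0$ because $\omega$ is closed on $T^*Q$. For nondegeneracy, regularity (Definition~\ref{def:regularity-GFMS}) says that $\F L$ is a local diffeomorphism, so at every $v\in TQ$ the differential $T_v\F L$ is a linear isomorphism; since nondegeneracy of a $2$-form is a pointwise property preserved under pullback by isomorphisms, $\omega_L$ inherits nondegeneracy from $\omega$. In particular the map $X\mapsto i_X\omega_L$ is a bundle isomorphism between $TTQ$ and $T^*TQ$, which both uniquely determines $X_{L,f}$ from $i_{X_{L,f}}\omega_L = dE_L+f$ and makes it smooth.

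Next I would identify integral curves with trajectories by a local computation in coordinates $(q^i,v^i)$ on $TQ$. In such coordinates
\begin{equation*}
\omega_L = \frac{\partial^2 L}{\partial q^j \partial v^i}\, dq^i\wedge dq^j + \frac{\partial^2 L}{\partial v^j \partial v^i}\, dq^i\wedge dv^j, \qquad f = f_i\, dq^i,
\end{equation*}
and $dE_L$ may be expanded using Remark~\ref{rem:def_of_fiber_derivative}. Writing $X_{L,f}=A^k\partial_{q^k}+B^k\partial_{v^k}$ and equating the $dv^j$-components of $i_{X_{L,f}}\omega_L$ and $dE_L+f$, nondegeneracy of the Hessian $\partial^2 L/\partial v^i\partial v^j$ (regularity) forces $A^i=v^i$; that is, $X_{L,f}$ is a second-order equation and its integral curves have the form $t\mapsto(q(t),\dot q(t))$. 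Equating the $dq^j$-components then reduces, after substituting $A^i=v^i$, to
\begin{equation*}
\frac{d}{dt}\Bigl(\frac{\partial L}{\partial v^j}\Bigr) - \frac{\partial L}{\partial q^j} + f_j = 0,
\end{equation*}
which is the local form of $D_{EL}L(q''(t))+\FVF{f}(q'(t))=0$. By Proposition~\ref{prop:equation_of_motion-FMS} these are exactly the trajectories of $(Q,L,f)$, giving the desired identification.

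The main obstacle is essentially bookkeeping, and the horizontal hypothesis on $f$ is what lets the unforced argument carry over almost verbatim: since $f$ has no $dv^i$ component locally, it contributes nothing to the $dv^j$-equation and so does not disturb the second-order condition $A^i=v^i$; it only shifts the $dq^j$-equation by $+f_j$, converting Euler--Lagrange into forced Euler--Lagrange. Intrinsically this is the content of Remark~\ref{rem:force_field}: horizontal $1$-forms correspond to fiber-preserving maps $\FVF{f}:TQ\to T^*Q$, and it is precisely this fiber-preserving character that preserves the second-order equation property.
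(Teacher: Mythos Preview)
Your proposal is correct and follows precisely the approach the paper indicates: the paper does not spell out a proof but simply refers the reader to the unforced argument in \cite[p.~215]{bo:AM-mechanics}, and your sketch carries out exactly that adaptation, with the horizontality of $f$ used to preserve the second-order condition and to shift the Euler--Lagrange equations by the force term as in Proposition~\ref{prop:equation_of_motion-FMS}.
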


Then, according to the general theory of ordinary differential
equations (see for instance~\cite{bo:Boothby-differential_geometry},
page 127), the trajectories of $(Q,L,f)$ are given by a function
$F^{X_{L,f}}:U^{X_{L,f}}\rightarrow TQ$, the \jdef{flow} of the vector
field $X_{L,f}$, with $U^{X_{L,f}}$ an open subset of $\R\times TQ$
containing $\{0\}\times TQ$. More precisely, for every $v\in TQ$ there
exists only one trajectory $\gamma:(-t,t)\rightarrow TQ$ of $(Q,L,f)$
such that $\gamma(0)=v$, and it is given by the formula
$\gamma(t) = F^{X_{L,f}}(t,v)$.

\begin{example}\label{ex:particle_with_friction-flow}
  It is easy to check that the trajectories of the system introduced
  in Example~\ref{ex:particle_with_friction-def} are of the form
  \begin{equation*}
    q(t) = q_0 + \frac{1}{\alpha} v_0 (1-e^{-\alpha t}),
  \end{equation*}
  where $q_0=q(0)$ and $v_0 = q'(0)$. Thus, the corresponding
  (globally defined) flow is
  \begin{equation*}
    F^{X_{L,f}}(t,q,v) = \big(q + \frac{1}{\alpha} v (1-e^{-\alpha t}\big),
    v e^{-\alpha t}).
  \end{equation*}
\end{example}


\subsection{Contact order}
\label{sec:contact_order}

In this section we mostly review the notion of contact order for maps
between manifolds, as introduced by Cuell \& Patrick in Section 2
of~\cite{ar:cuell_patrick-skew_critical_problems}. There is, though, a
small difference between our definitions and theirs, that we explain
in Remark~\ref{rem:no_same_contact_order_defs}.

\begin{definition}
  Let $M$ be a differentiable manifold and $\grade_M:M\rightarrow \R$
  be a smooth function for which $0\in\R$ is a regular value. Then,
  the pair $(M,\grade_M)$ is called a \jdef{manifold with a
    grade}\footnote{In fact, in the original definition (Definition
    4.1 in~\cite{ar:cuell_patrick-skew_critical_problems}) such a pair
    $(M,\grade_M)$ is called just a \jdef{manifold}, but we don't find this
    name too appropriate.}. A \jdef{smooth map between manifolds with
    a grade} $f:(M,\grade_M)\rightarrow (N,\grade_N)$ is a smooth map
  $f:M\rightarrow N$ such that $\grade_N\circ f = \grade_M$.
\end{definition}

\begin{definition}\label{def:contact_order}
  Let $(M,\grade_M)$ be a manifold with a grade, $N$ be a manifold and
  $f_1,f_2:M\rightarrow N$ be smooth functions such that
  $f_1(m)=f_2(m)$ for all $m\in \grade_M^{-1}(0)$. We say that
  $f_2 = f_1 +\mathcal{O}(\grade_M^r)$ for $r\in\N$ if, for all
  $m_0\in \grade_M^{-1}(0)$, there is an open subset $U\subset M$
  containing $m_0$, a chart $(W,\psi)$ of $N$ such that
  $f_j(U)\subset W$ for $j=1,2$ and a continuous function
  $(\delta f)_\psi:U\rightarrow \R^{\dim(N)}$, such that
  \begin{equation}\label{eq:contact_order_condition-def}
    \psi(f_2(m))-\psi(f_1(m)) = \grade_M(m)^{r} (\delta f)_\psi(m)
  \end{equation}
  for all $m\in U$. If $f_2 = f_1 +\mathcal{O}(\grade_M^r)$, it is also
  said that $f_1$ and $f_2$ have \jdef{order $\grade_M^{r-1}$ contact}. In
  what follows and in order to simplify the notation we may write
  $f_2 = f_1 +\mathcal{O}(r)$ instead of
  $f_2 = f_1 +\mathcal{O}(\grade_M^r)$, when the function $\grade_M$ can be
  deduced from the context.
\end{definition}

\begin{remark}\label{rem:no_same_contact_order_defs}
  Definition 5 in~\cite{ar:cuell_patrick-skew_critical_problems} only
  requires that $(\delta f)_\psi$ be continuous at $m_0$ instead of in
  the open neighborhood $U$, but we prefer the stronger condition.
\end{remark}

\begin{lemma}\label{le:order_of_maps_in_terms_of_local_expression}
  Let $(M,\grade_M)$ be a manifold with a grade and
  $f_1,f_2:M\rightarrow N$ be smooth maps. Then,
  $f_2=f_1+\mathcal{O}(\grade_M^r)$ if and only if for each
  $m_0\in \grade_M^{-1}(0)$ there are coordinate charts $(V,\phi)$ and
  $(W,\psi)$ of $M$ and $N$ such that
  \begin{enumerate}
  \item \label{it:order_of_maps_in_terms_of_local_expression-contained}
    $m_0\in V$ and $f_a(V)\subset W$ for $a=1,2$,
  \item \label{it:order_of_maps_in_terms_of_local_expression-difference}
    and there is an open subset $U'\subset \phi(V)$ containing
    $\phi(m_0)$ and a continuous function
    $\delta\check{f}:U'\rightarrow \R^{\dim(N)}$ such that
    $\check{f}_2(u')-\check{f}_1(u') = \check{\grade}(u')^r
    \delta\check{f}(u')$ for all $u'\in U'$, where
    $\check{\grade} := \grade_M \circ \phi^{-1}$ and
    $\check{f}_a:=\psi\circ f_a \circ \phi^{-1}$ ($a=1,2$).
  \end{enumerate}
\end{lemma}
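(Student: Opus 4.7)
The plan is to prove the equivalence by two straightforward passages between the chart-free formulation of Definition~\ref{def:contact_order} and the fully local one stated in the lemma. The only content is bookkeeping of neighborhoods and continuous compositions; once the relationship $\check{f}_a = \psi\circ f_a\circ \phi^{-1}$ and $\check{\grade} = \grade_M\circ \phi^{-1}$ is written out, both directions reduce to substituting $u' = \phi(m)$ (or $m=\phi^{-1}(u')$) in the displayed equality.

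For the forward direction ($\Rightarrow$), I would start from Definition~\ref{def:contact_order} applied at $m_0\in \grade_M^{-1}(0)$: this produces an open $U\subset M$ containing $m_0$, a chart $(W,\psi)$ of $N$ with $f_j(U)\subset W$ for $j=1,2$, and a continuous $(\delta f)_\psi:U\rightarrow \R^{\dim N}$ satisfying~\eqref{eq:contact_order_condition-def}. Pick any chart $(V_0,\phi)$ of $M$ around $m_0$ and shrink to $V := V_0\cap U\cap f_1^{-1}(W)\cap f_2^{-1}(W)$, which is still an open neighborhood of $m_0$ by continuity of the $f_j$ (and the inclusions $f_j(U)\subset W$). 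Then $(V,\phi)$ and $(W,\psi)$ satisfy condition~\eqref{it:order_of_maps_in_terms_of_local_expression-contained}, and with $U':=\phi(V)$ and $\delta\check{f} := (\delta f)_\psi\circ \phi^{-1}:U'\rightarrow \R^{\dim N}$ (continuous as a composition of continuous functions) the identity in condition~\eqref{it:order_of_maps_in_terms_of_local_expression-difference} is just~\eqref{eq:contact_order_condition-def} evaluated at $m=\phi^{-1}(u')$.

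For the reverse direction ($\Leftarrow$), fix $m_0\in \grade_M^{-1}(0)$ and take the charts $(V,\phi)$ and $(W,\psi)$ and the set $U'\subset \phi(V)$ with the continuous function $\delta\check{f}:U'\rightarrow \R^{\dim N}$ provided by the hypothesis. Set $U := \phi^{-1}(U')$; this is an open subset of $M$ containing $m_0$, and since $U\subset V$ we have $f_j(U)\subset f_j(V)\subset W$ for $j=1,2$. Define $(\delta f)_\psi:U\rightarrow \R^{\dim N}$ by $(\delta f)_\psi(m):=\delta\check{f}(\phi(m))$, which is continuous. For every $m\in U$, writing $u' = \phi(m)$ and using that $\psi(f_a(m)) = \check{f}_a(\phi(m))$ and $\grade_M(m) = \check{\grade}(\phi(m))$, the hypothesis yields exactly~\eqref{eq:contact_order_condition-def}, so $f_2 = f_1 +\mathcal{O}(\grade_M^r)$.

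There is no substantive obstacle here; the only care needed is to shrink the chart domain on $M$ so that both $f_1$ and $f_2$ land inside the single chart $(W,\psi)$ of $N$ (rather than in possibly different charts), which is the reason Definition~\ref{def:contact_order} forces the single-chart condition $f_j(U)\subset W$ in the first place. Strictly speaking, the continuity of $\delta\check{f}$ in the lemma is stated on an open $U'\subset \phi(V)$ containing $\phi(m_0)$; the forward direction yields $U'=\phi(V)$, and the reverse direction only uses the given $U'$, so the stronger continuity requirement of Remark~\ref{rem:no_same_contact_order_defs} is preserved in both passages.
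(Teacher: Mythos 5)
Your proof is correct and is exactly the unraveling of Definition~\ref{def:contact_order} that the paper itself invokes (its proof is the one-line remark that the lemma ``follows readily by unraveling the definitions''). The chart-shrinking in the forward direction and the pullback via $\phi$ in the reverse direction are the right and only steps needed.
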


\begin{proof}
  It follows readily by unraveling the definitions.
\end{proof}

\begin{remark}\label{rem:local_description_of_delta_f}
  Notice that the ``local description'' of the contact order condition
  provided by
  Lemma~\ref{le:order_of_maps_in_terms_of_local_expression} is valid
  not just at any $m_0\in \grade_M^{-1}(0)$, but also for all
  $m\in \grade_M^{-1}(0)$ in an open neighborhood of $m_0$.
\end{remark}

\begin{lemma}\label{le:residual_well_defined}
  Let $(M,\grade_M)$ be a manifold with a grade, $N$ be a manifold and
  $f_1,f_2:M\rightarrow N$ be smooth maps such that
  $f_2 = f_1 +\mathcal{O}(\grade_M^r)$ for some $r\in\N$. Then,
  \begin{enumerate}
  \item the property that $f_2 = f_1 +\mathcal{O}(\grade_M^r)$ for some
    $r\in\N$ as in Definition~\ref{def:contact_order}, is independent
    of the coordinate chart and open subsets used.
  \item Furthermore, for $m_0\in \grade_M^{-1}(0)$,
    $(\delta f)_\psi(m_0)$ transforms as (the coordinates of) a
    tangent vector of $N$ under coordinate change.
  \end{enumerate}
\end{lemma}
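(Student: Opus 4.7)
The plan is to reduce both claims to a single calculation: change of chart via Hadamard's lemma (the integrated mean value theorem) applied to the transition map between two charts of $N$. For part~(1) this rewrites the residual in any new chart as a continuous (matrix-valued) factor times the original residual, so continuity of the residual survives changes of chart. For part~(2), when the formula is evaluated at a point of $\grade_M^{-1}(0)$, where by hypothesis $f_1$ and $f_2$ coincide, the matrix factor collapses to the Jacobian of the transition map, which is precisely the tangent-vector transformation rule.

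To execute this I would fix $m_0 \in \grade_M^{-1}(0)$ and assume Definition~\ref{def:contact_order} is witnessed near $m_0$ by a chart $(W,\psi)$ of $N$, an open $U \ni m_0$, and a continuous $(\delta f)_\psi: U \to \R^{\dim(N)}$. Given any other chart $(W',\psi')$ with $n_0 := f_1(m_0) = f_2(m_0) \in W'$, continuity of $f_1, f_2$ allows one to shrink $U$ to an open $U' \ni m_0$ on which $f_j(U') \subset W \cap W'$ for $j=1,2$ and on which, for every $m \in U'$, the segment from $\psi(f_1(m))$ to $\psi(f_2(m))$ lies in a convex neighborhood of $\psi(n_0)$ contained in $\psi(W \cap W')$. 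Writing $\eta := \psi' \circ \psi^{-1}$, Hadamard's lemma gives
\[
\eta(y) - \eta(x) = A(x,y)(y - x), \qquad A(x,y) := \int_0^1 D\eta\bigl(x + t(y - x)\bigr)\, dt,
\]
with $A$ smooth on the chosen convex set. Substituting $x = \psi(f_1(m))$ and $y = \psi(f_2(m))$ and using the hypothesis yields
\[
\psi'(f_2(m)) - \psi'(f_1(m)) = \grade_M(m)^r\, A\bigl(\psi(f_1(m)),\psi(f_2(m))\bigr)\,(\delta f)_\psi(m),
\]
so the assignment $(\delta f)_{\psi'}(m) := A\bigl(\psi(f_1(m)),\psi(f_2(m))\bigr)\,(\delta f)_\psi(m)$ is a continuous residual witnessing the contact order condition in the chart $(W',\psi')$. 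This proves part~(1); independence from the choice of open subset is immediate, since any valid $U$ can be replaced by any open sub-neighborhood of $m_0$.

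Part~(2) then follows by evaluating the last identity at $m_0$. Because $\grade_M(m_0) = 0$ forces $f_1(m_0) = f_2(m_0) = n_0$, we have $\psi(f_1(m_0)) = \psi(f_2(m_0)) = \psi(n_0)$, so $A(\psi(n_0),\psi(n_0)) = D\eta(\psi(n_0))$ and
\[
(\delta f)_{\psi'}(m_0) = D(\psi' \circ \psi^{-1})(\psi(n_0))\,(\delta f)_\psi(m_0),
\]
which is exactly the transformation rule for the coordinates of a vector in $T_{n_0} N$ under the change of charts $\psi \mapsto \psi'$. The only delicate step I anticipate is the shrinking of $U$ that keeps all the Hadamard segments inside a single convex coordinate domain so that the integral defining $A$ is meaningful on all of $U'$; this is a routine continuity argument exploiting the coincidence of the endpoints at $m_0$, but it must be carried out explicitly to make the rest of the proof legitimate.
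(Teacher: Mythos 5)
Your argument is correct and is essentially the same Hadamard/integral-remainder computation for the transition map $\psi'\circ\psi^{-1}$ that the paper delegates to the passage between Definitions 5 and 6 of Cuell--Patrick, so there is nothing substantively different here. The only point you leave implicit is that $(\delta f)_\psi(m_0)$ is itself uniquely determined (so that part (2) is about a well-defined object): since $0$ is a regular value of $\grade_M$, the set $\{\grade_M\neq 0\}$ is dense near $m_0$, the defining identity pins down $(\delta f)_\psi$ there, and continuity forces its value at $m_0$.
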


\begin{proof}
  See the arguments between Definitions 5 and 6
  in~\cite{ar:cuell_patrick-skew_critical_problems}.
\end{proof}

\begin{definition}
  In the context of Lemma~\ref{le:residual_well_defined}, the tangent
  vector in $T_{f(m_0)}N$ whose coordinates with respect to the
  coordinate chart $(W,\psi)$ are $(\delta f)_\psi(m_0)$ will be
  denoted by $\cpres^r(f_2,f_1)(m_0)\in T_{f(m_0)}N$ and called the
  \jdef{$r$-residual} of $f_2$ with respect to $f_1$. In other words,
  \begin{equation}\label{eq:residual_vs_delta_f}
    \cpres^r(f_2,f_1)(m_0) = T_{\psi(f_2(m_0))}\psi^{-1}((\delta f)_\psi(m_0)).
  \end{equation}
\end{definition}

\begin{remark}\label{rem:residuals_from_local_description}
  If $f_2 = f_1 +\mathcal{O}(\grade_M^r)$ for some $r\in\N$, using the
  local description by $\check{f}_1$ and $\check{f}_2$ satisfying
  $\check{f}_2(u') - \check{f}_1(u') = \check{\grade}(u')^r \delta
  \check{f}(u')$ as in
  Lemma~\ref{le:order_of_maps_in_terms_of_local_expression}, and taking
  into account Remark~\ref{rem:local_description_of_delta_f}, we see
  that for $m\in \grade_M^{-1}(0)$:
  \begin{equation*}
    \cpres^r(f_2,f_1)(m) = T_{\psi(f_2(m))}\psi^{-1}((\delta f)_\psi(m))
    = T_{\psi(f_2(m))}\psi^{-1}(\delta \check{f}(\phi(m))).
  \end{equation*}
\end{remark}

\begin{lemma}\label{le:order_of_sections_of_VB}
  Let $(M,\grade_M)$ be a manifold with a grade,
  $\phi:\mathcal{V}\rightarrow M$ be a vector bundle and
  $f_1,f_2\in
  \Gamma(M,\mathcal{V})$\footnote{$\Gamma(M,{\mathcal V})$ is the
    $\R$-vector space of $C^\infty$ global sections of
    ${\mathcal V}$.}. Then, the following assertions are equivalent.
  \begin{enumerate}
  \item \label{it:order_of_sections_of_VB-def}
    $f_2 = f_1 + \mathcal{O}(\grade_M^r)$.
  \item \label{it:order_of_sections_of_VB-local} For each
    $m_0\in \grade_M^{-1}(0)$ there is an open neighborhood $U$ of
    $m_0$ in $M$ and a continuous section
    $\delta f_U:U\rightarrow \mathcal{V}|_U$ such that
    \begin{equation}\label{eq:order_of_sections_of_VB-local}
      f_2(m)-f_1(m) = \grade_M(m)^r \delta f_U(m) \stext{ for all } m\in U.
    \end{equation}
  \item \label{it:order_of_sections_of_VB-global} There is a
    continuous section $\delta f:M\rightarrow \mathcal{V}$ such that
    \begin{equation}\label{eq:order_of_sections_of_VB-global}
      f_2(m)-f_1(m) = \grade_M(m)^r \delta f(m) \stext{ for all } m\in M.
    \end{equation}
  \end{enumerate}
  If condition~\eqref{eq:order_of_sections_of_VB-local} is satisfied,
  we have
  \begin{equation*}
    \cpres^r(f_2,f_1)(m) =  \frac{d}{dt}\big|_{t=0}(f_2(m)+t \delta f_U(m))
    \stext{ for all } m\in \grade_M^{-1}(0)\cap U,
  \end{equation*}
  while, if condition~\eqref{eq:order_of_sections_of_VB-global} is
  satisfied, we have
  \begin{equation*}
    \cpres^r(f_2,f_1)(m) = \frac{d}{dt}\big|_{t=0}(f_2(m)+t \delta f(m))
    \stext{ for all } m\in \grade_M^{-1}(0).
  \end{equation*}
\end{lemma}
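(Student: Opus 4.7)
The plan is to use vector bundle trivializations as coordinate charts so that the general contact-order condition from Definition \ref{def:contact_order} reduces to a statement about fiberwise differences. By Lemma \ref{le:residual_well_defined} the condition $f_2 = f_1 + \mathcal{O}(\grade_M^r)$ is independent of the chart chosen, so for each $m_0 \in \grade_M^{-1}(0)$ I would pick a coordinate chart $(V,\chi)$ of $M$ containing $m_0$ together with a local trivialization $\Psi\colon \mathcal{V}|_V \to V \times \R^k$ of the rank-$k$ vector bundle, and use $\psi := (\chi \times \mathrm{id}_{\R^k}) \circ \Psi$ as a chart on $\mathcal{V}|_V$. Because $f_1$ and $f_2$ are sections, the first $\dim M$ coordinates of $\psi(f_a(m))$ are simply $\chi(m)$ and therefore cancel in the difference $\psi(f_2(m)) - \psi(f_1(m))$, leaving only the fiber coordinates. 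Consequently the coordinate condition in Definition \ref{def:contact_order} holds with a continuous $(\delta f)_\psi$ whose first $\dim M$ components must vanish (they vanish on $U \setminus \grade_M^{-1}(0)$ by division, and extend to zero everywhere on $U$ by continuity, using that the zero locus has empty interior), and the remaining fiber components correspond via $\Psi^{-1}$ to a continuous section $\delta f_U$ of $\mathcal{V}|_V$ satisfying $f_2(m) - f_1(m) = \grade_M(m)^r \delta f_U(m)$. Reversing this manipulation gives $(1) \Leftrightarrow (2)$.

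\textbf{From local to global.} The implication $(3) \Rightarrow (2)$ is immediate by restriction, so the main obstacle is $(2) \Rightarrow (3)$. Because $0 \in \R$ is a regular value of $\grade_M$, the zero locus $Z := \grade_M^{-1}(0)$ is an embedded hypersurface of $M$, and $M \setminus Z$ is open and dense. On $M \setminus Z$ the required equation forces
\begin{equation*}
\delta f(m) = \frac{f_2(m) - f_1(m)}{\grade_M(m)^r},
\end{equation*}
which is smooth and uniquely determined. To extend continuously across $Z$, for each $m_0 \in Z$ I would take a local section $\delta f_U$ provided by (2); on $U \setminus Z$ it must agree with the formula above, so continuity of $\delta f_U$ identifies its value at $m_0$ with the unique limit of this formula as $m \to m_0$ within $M \setminus Z$. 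Setting $\delta f(m_0)$ equal to that common value yields a globally defined section, smooth off $Z$ and continuous on $Z$; density of $M \setminus Z$ together with the uniqueness of the limit simultaneously ensures well-definedness (independence from the choice of $U$) and continuity of $\delta f$ on all of $M$.

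\textbf{Residual formula.} For the final identities, I would apply Remark \ref{rem:residuals_from_local_description} in the vector bundle chart above. The residual at $m \in Z$ equals $T_{\psi(f_2(m))}\psi^{-1}$ applied to the coordinate vector whose base part vanishes and whose fiber part corresponds to $\delta f_U(m)$ under $\Psi$. Being purely vertical, this is a tangent vector at $f_2(m) \in \mathcal{V}_m$ lying in $T_{f_2(m)}\mathcal{V}_m$, which under the canonical identification with $\mathcal{V}_m$ recalled in Remark \ref{rem:def_of_fiber_derivative} corresponds to $\delta f_U(m)$ itself. In intrinsic form this is precisely $\frac{d}{dt}\bigl|_{t=0}(f_2(m) + t\,\delta f_U(m))$, establishing the first formula; the second is then the special case $U = M$ using the global $\delta f$ constructed above.
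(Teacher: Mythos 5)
Your proposal is correct, and for the equivalence of \eqref{it:order_of_sections_of_VB-def} and \eqref{it:order_of_sections_of_VB-local} as well as for the residual formulas it follows the same route as the paper: use a chart on $\mathcal{V}$ built from a base chart times a local trivialization, observe that the base coordinates of $\psi(f_2(m))-\psi(f_1(m))$ cancel because $f_1,f_2$ are sections, and read off the residual as a vertical vector via \eqref{eq:residual_vs_delta_f} and the canonical identification $T_{f_2(m)}\mathcal{V}_m\simeq\mathcal{V}_m$. Where you genuinely diverge is in \eqref{it:order_of_sections_of_VB-local} $\Rightarrow$ \eqref{it:order_of_sections_of_VB-global}: the paper obtains the global section by a partition of unity subordinate to a cover by the local neighborhoods $U$ together with $M\SM\grade_M^{-1}(0)$, whereas you exploit that $\delta f$ is \emph{forced} to equal $(f_2-f_1)/\grade_M^r$ on the dense open set $M\SM\grade_M^{-1}(0)$ (dense because $0$ is a regular value, so $\grade_M^{-1}(0)$ is a hypersurface), so the local sections from \eqref{it:order_of_sections_of_VB-local} automatically agree on overlaps and glue directly. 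Your version avoids the partition-of-unity machinery and yields as a by-product the uniqueness of $\delta f$ (hence of the residual representative); the paper's version is the standard one-line gluing argument that does not require identifying the quotient explicitly. Both are valid, and your continuity check at points of $\grade_M^{-1}(0)$ (agreement of the glued $\delta f$ with a fixed $\delta f_U$ on all of $U$, not just at $m_0$) is handled correctly.
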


\begin{proof}
  The equivalence of points~\ref{it:order_of_sections_of_VB-def}
  and~\ref{it:order_of_sections_of_VB-local} is checked using
  coordinates for $\mathcal{V}$ derived from the local trivialization
  $\Phi_U$ of $\mathcal{V}$ and the fact that the order is independent
  of the coordinate charts
  used. That~\ref{it:order_of_sections_of_VB-global} \imp
  \ref{it:order_of_sections_of_VB-local} is trivial,
  while~\ref{it:order_of_sections_of_VB-local} \imp
  \ref{it:order_of_sections_of_VB-global} can be obtained using
  partitions of unity. Both formulas for the residual are obtained
  from~\eqref{eq:residual_vs_delta_f} using coordinate charts derived
  from $\Phi_U$.
\end{proof}

Before stating our next result, we recall a parameterized version of
Taylor's Theorem.

\begin{theorem}\label{th:taylor-r_minus_1}
  Let $U\subset\R^n$ be an open set and $a>0$; define
  $\ti{U}:=U\times (-a,a)$. If $f\in C^k(\ti{U},\R^m)$ with $k\geq 1$,
  for each $x_0\in U$ and $r\in\NZ$ such that $r\leq k$ there are an
  open subset $U_{x_0}\subset U$ containing $x_0$ and a real number
  $a_{x_0} \in (0,a)$, such that
  \begin{equation}\label{eq:taylor-r_minus_1}
    f(x,h) = \sum_{j=0}^{r-1} \frac{h^j}{j!} (D_2^j f)(x,0) + 
    \frac{h^r}{r!} R_r(x,h) 
    \stext{ for all } (x,h)\in U_{x_0}\times (-a_{x_0}, a_{x_0}),
  \end{equation}
  where 
  \begin{equation}\label{eq:taylor-r_minus_1-error_term}
    R_r(x,h) =
    \begin{cases}
      \frac{r}{h^r} \int_0^h (h-t)^{r-1} D_2^r f(x,t) dt \stext{ if } h\neq 0,\\
      D_2^r f(x,0) \stext{ otherwise.}
    \end{cases}
  \end{equation}
  Furthermore, $R_r\in C^{k-r}(U_{x_0}\times (-a_{x_0}, a_{x_0}),\R^m)$.
\end{theorem}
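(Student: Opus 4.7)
The plan is to reduce the statement to the classical single-variable Taylor theorem with integral remainder, applied to the smooth function $h\mapsto f(x,h)$ for each fixed $x$. First I would choose $U_{x_0}\subset U$ to be an open ball about $x_0$ and $a_{x_0}\in(0,a)$ small enough that the closure $\overline{U_{x_0}}\times[-a_{x_0},a_{x_0}]$ is a compact subset of $\ti{U}$; such a choice exists because $\ti{U}$ is open and contains $(x_0,0)$. For fixed $x\in U_{x_0}$, the map $h\mapsto f(x,h)$ is $C^k$ on a neighbourhood of $[-a_{x_0},a_{x_0}]$, so the standard one-variable Taylor theorem with integral remainder yields exactly the identity~\eqref{eq:taylor-r_minus_1} for every $h\in(-a_{x_0},a_{x_0})\setminus\{0\}$, with remainder $R_r(x,h)$ as in the first branch of~\eqref{eq:taylor-r_minus_1-error_term}. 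For $h=0$ the identity is trivially satisfied no matter how $R_r(x,0)$ is defined, so the expansion formula holds on the whole rectangle.

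The substantive content is showing that $R_r$ extends continuously to $h=0$ and is in fact of class $C^{k-r}$. The key step is the change of variable $t=sh$ in the defining integral, which converts $R_r$ into
\begin{equation*}
R_r(x,h) \;=\; r\int_0^1 (1-s)^{r-1}\, (D_2^r f)(x,sh)\, ds,
\end{equation*}
an expression that is manifestly well defined for every $(x,h)\in U_{x_0}\times(-a_{x_0},a_{x_0})$, including $h=0$. Evaluating at $h=0$ and using $r\int_0^1(1-s)^{r-1}\,ds=1$ recovers $R_r(x,0)=(D_2^r f)(x,0)$, matching the second branch of~\eqref{eq:taylor-r_minus_1-error_term}.

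Regularity then follows from the standard theorem on differentiation under the integral sign. Since $D_2^r f\in C^{k-r}(\ti{U},\R^m)$ and the integrand depends on the parameters $(x,h)$ in a $C^{k-r}$ manner while $s$ ranges over the compact interval $[0,1]$ with $(x,sh)$ confined to the relatively compact set $\overline{U_{x_0}}\times[-a_{x_0},a_{x_0}]\subset \ti{U}$, partial derivatives in $(x,h)$ up to order $k-r$ may be brought inside the integral and evaluated pointwise in $s$. This gives continuity of those derivatives in $(x,h)$ and hence $R_r\in C^{k-r}(U_{x_0}\times(-a_{x_0},a_{x_0}),\R^m)$.

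The only conceptual subtlety is the case $r=0$, where the integral formula is degenerate; there, however, the statement reduces to the tautology $f(x,h)=R_0(x,h)$ with $R_0=f$, so there is nothing to prove beyond the hypothesis $f\in C^k$. I do not anticipate any real obstacle: the one step demanding any care is the interchange of differentiation and integration, which the compactness of $\overline{U_{x_0}}\times[-a_{x_0},a_{x_0}]$ handles in a routine manner. The slickness of the change-of-variable form of $R_r$ is what makes the joint smoothness in $(x,h)$ immediate, and is the main observation driving the proof.
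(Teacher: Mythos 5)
Your proof is correct. The paper itself does not argue the theorem: it reduces to the scalar case and then cites Whitney's 1943 paper on the differentiability of the Taylor remainder, whereas you supply a complete, self-contained argument. Your route --- one-variable Taylor with integral remainder for each fixed $x$, followed by the substitution $t=sh$ to rewrite the remainder as $R_r(x,h)=r\int_0^1(1-s)^{r-1}(D_2^rf)(x,sh)\,ds$, and then differentiation under the integral sign over the compact parameter interval $[0,1]$ --- is exactly the classical mechanism (and is essentially what underlies Whitney's result), so nothing is lost relative to the reference; what is gained is that the $C^{k-r}$ regularity of $R_r$ in $(x,h)$ jointly, including at $h=0$, becomes transparent rather than being outsourced. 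Two small points are handled correctly and are worth keeping: the product structure of $\ti{U}=U\times(-a,a)$ guarantees $(x,sh)\in\ti{U}$ for $s\in[0,1]$, which is what makes the reparametrized integral well defined on the whole rectangle, and the $r=0$ case, where the displayed integral formula for $R_r$ is degenerate, is correctly read as the tautology $R_0=f$. I see no gap.
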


\begin{proof}
  It suffices to consider the case $m=1$ since the case of vector
  valued functions can be treated in each component independently. The
  proof in this case is given
  in~\cite{ar:whitney-differentiability_of_the_remainder_term_in_taylors_formula}
  (in the case $n=1$ and can be extended to the general case as
  mentioned there).
\end{proof}

\begin{lemma}\label{le:0_residual_implies_order+1}
  Let $f_1,f_2\in C^k(M,N)$ with $k\geq 1$ be such that
  $f_2 = f_1 +\mathcal{O}(\grade_M^r)$ for some $r\in\NZ$ with
  $r\leq k$. Then,
  \begin{enumerate}
  \item \label{it:0_residual_implies_order+1-smooth} the continuous
    function $(\delta f)_\psi$ that appears
    in~\eqref{eq:contact_order_condition-def} is in
    $C^{k-r}(U,\R^{\dim(N)})$, and
  \item \label{it:0_residual_implies_order+1-residual} if $r< k$ and
    $\cpres^r(f_1,f_2)(m)=0$ for all $m\in \grade_M^{-1}(0)$, then,
    $f_2 = f_1 +\mathcal{O}(\grade_M^{r+1})$.
  \end{enumerate}
\end{lemma}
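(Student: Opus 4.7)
\emph{Proof plan for Lemma~\ref{le:0_residual_implies_order+1}.} The plan is to reduce everything to a local computation near a point $m_0\in\grade_M^{-1}(0)$, where we can arrange that $\grade_M$ becomes a coordinate function, and then apply the parameterized Taylor Theorem~\ref{th:taylor-r_minus_1} together with the uniqueness of Taylor coefficients. Since $0$ is a regular value of $\grade_M$, the submersion theorem lets us choose a chart $(V,\phi)$ of $M$ around $m_0$ in which $\check{\grade}(x,h)=h$, where $x\in\R^{\dim M-1}$ and $h\in\R$. Picking a chart $(W,\psi)$ of $N$ containing $f_1(m_0)=f_2(m_0)$, set $g:=\check{f}_2-\check{f}_1\in C^k$. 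By Lemma~\ref{le:order_of_maps_in_terms_of_local_expression} the hypothesis $f_2=f_1+\mathcal{O}(\grade_M^r)$ means $g(x,h)=h^r\,\delta\check{f}(x,h)$ for some continuous $\delta\check{f}$.

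For \ref{it:0_residual_implies_order+1-smooth}, apply Theorem~\ref{th:taylor-r_minus_1} in the variable $h$ (which is legal since $g\in C^k$ and $r\leq k$) to get
\begin{equation*}
g(x,h)=\sum_{j=0}^{r-1}\frac{h^j}{j!}D_2^j g(x,0)+\frac{h^r}{r!}R_r(x,h),\qquad R_r\in C^{k-r}.
\end{equation*}
Subtracting the two expressions for $g$, the polynomial $P_x(h):=\sum_{j=0}^{r-1}\tfrac{h^j}{j!}D_2^j g(x,0)$ of degree $\leq r-1$ equals $h^r[\delta\check{f}(x,h)-R_r(x,h)/r!]$, which has a zero of multiplicity $\geq r$ at $h=0$. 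A polynomial of degree $\leq r-1$ vanishing to order $\geq r$ is identically zero, so all coefficients vanish: $D_2^j g(x,0)=0$ for $j=0,\dots,r-1$. Therefore $\delta\check{f}(x,h)=R_r(x,h)/r!$ (for $h\neq 0$, and then everywhere by continuity), which shows $\delta\check{f}\in C^{k-r}$ and hence, by Lemma~\ref{le:order_of_maps_in_terms_of_local_expression} and Remark~\ref{rem:local_description_of_delta_f}, the continuous function $(\delta f)_\psi$ of Definition~\ref{def:contact_order} is in $C^{k-r}(U,\R^{\dim N})$.

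For \ref{it:0_residual_implies_order+1-residual}, observe that Remark~\ref{rem:residuals_from_local_description} says $\cpres^r(f_2,f_1)(m)$ corresponds in coordinates to $\delta\check{f}(\phi(m))$. The hypothesis $\cpres^r(f_2,f_1)(m)=0$ on $\grade_M^{-1}(0)$ thus translates to $\delta\check{f}(x,0)=0$, i.e.\ $R_r(x,0)=0$. Now $R_r\in C^{k-r}$ with $k-r\geq 1$, so applying Theorem~\ref{th:taylor-r_minus_1} once more to $R_r$ (this time with ``$r=1$'') yields $R_r(x,h)=h\,\tilde R(x,h)$ with $\tilde R$ continuous. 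Hence
\begin{equation*}
g(x,h)=h^r\,\delta\check{f}(x,h)=\tfrac{h^{r+1}}{r!}\,\tilde R(x,h),
\end{equation*}
which is the local characterization of $f_2=f_1+\mathcal{O}(\grade_M^{r+1})$ near $m_0$. Because this holds near every $m_0\in\grade_M^{-1}(0)$, Lemma~\ref{le:order_of_maps_in_terms_of_local_expression} promotes it to the required global statement.

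The only delicate step is the first one, where the Taylor polynomial must be forced to be identically zero; the polynomial-of-low-degree-vanishing-to-high-order argument avoids differentiating $\delta\check{f}$, which a priori is only continuous. Everything else is a straightforward application of Taylor's theorem combined with the submersion normal form for $\grade_M$.
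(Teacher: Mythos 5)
Your proof is correct and follows essentially the same route as the paper's: localize via the submersion normal form so that $\grade_M$ becomes the last coordinate, apply the parameterized Taylor Theorem~\ref{th:taylor-r_minus_1}, force the degree-$\leq r-1$ Taylor polynomial of $\check{f}_2-\check{f}_1$ to vanish, identify $\delta\check{f}$ with $R_r/r!$, and for part~\ref{it:0_residual_implies_order+1-residual} use the vanishing of the residual to gain one more power of $h$ (you expand the remainder $R_r$ once more, the paper re-expands $\check{f}_a$ to order $r+1$ --- an immaterial difference). The only point left implicit is that $(\delta f)_\psi$ is automatically $C^k$ near points of $U$ where $\grade_M\neq 0$ (being $(\psi\circ f_2-\psi\circ f_1)/\grade_M^r$ there), which the paper spells out but which is indeed trivial.
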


\begin{proof}
  Fix $m_0\in \grade_M^{-1}(0)$ and let $U\subset M$ and
  $(\delta f)_\psi$ be as in Definition~\ref{def:contact_order}. If
  $m_1\in U$ with $\grade_M(m_1)\neq 0$, as $f_1$ and $f_2$ are in
  $C^k(M)$ from~\eqref{eq:contact_order_condition-def} we see that
  $(\delta f)_\psi$ is in $C^k$ in some open neighborhood of $m_1$. If
  $\grade_M(m_1) = 0$, by
  Lemma~\ref{le:order_of_maps_in_terms_of_local_expression}, there are
  coordinate charts $(V,\phi)$ and $(W,\psi)$ of $M$ and $N$
  satisfying all conditions stated in that result and $m_1\in
  V$. Furthermore, as $0$ is a regular value of $\grade_M$, we see
  that there is an open neighborhood of $m_1$ where $\grade_M$ is a
  submersion. Hence, it is easy to see that, by an additional change
  of coordinates in $M$ one can assume that $\check{\grade}(x,h) = h$
  for all $(x,h)\in \phi(V)\subset \R^{\dim(M)-1}\times\R$. The local
  description of $f_2 = f_1 +\mathcal{O}(\grade_M^r)$ given by
  Lemma~\ref{le:order_of_maps_in_terms_of_local_expression} says that
  $\check{f}_2(x,h)-\check{f}_1(x,h) = h^r \delta\check{f}(x,h)$ for
  all $(x,h)\in U'$ where
  $\delta\check{f} = (\delta f)_\psi\circ \phi^{-1}$. Applying
  Theorem~\ref{th:taylor-r_minus_1} to each $\check{f}_a$ we have
  \begin{equation*}
    \check{f}_a(x,h) =
    \sum_{j=0}^{r-1} \frac{h^j}{j!} (D_2^j \check{f}_a)(x,0) + 
    \frac{h^r}{r!} R_{r,a}(x,h) 
  \end{equation*}
  for all $(x,h)$ in an open subset $V'\subset \phi(V)$ containing
  $\phi(m_1)$, and where $R_{r,a}\in C^{k-r}(V',\R^{\dim(N)})$. Hence
  \begin{equation*}
    h^r \delta\check{f}(x,h) =
    \sum_{j=0}^{r-1} \frac{h^j}{j!} (D_2^j \check{f}_2(x,0) -
    D_2^j \check{f}_1(x,0)) + 
    \frac{h^r}{r!} (R_{r,2}(x,h)-R_{r,1}(x,h))
  \end{equation*}
  for all $(x,h)\in U'\cap V'$. Then, as both $\delta\check{f}$ and
  $R_{r,2}-R_{r,1}$ are continuous functions over $U'\cap V'$ it is
  easy to conclude that, for all $(x,0)\in U'\cap V'$,
  \begin{equation}\label{eq:local_delta_check_in_terms_of_remainder_terms-der}
    D_2^j \check{f}_2(x,0) = D_2^j \check{f}_1(x,0)
    \stext{ for } 0\leq j\leq r-1
  \end{equation}
  and, consequently,
  \begin{equation}\label{eq:local_delta_check_in_terms_of_remainder_terms-rem}
    \delta \check{f}(x,h) = \frac{1}{r!}
    (R_{r,2}(x,h)-R_{r,1}(x,h)).
  \end{equation}
  As $R_{r,a}\in C^{k-r}(V',\R^{\dim(N)})$, it follows
  from~\eqref{eq:local_delta_check_in_terms_of_remainder_terms-rem}
  that $\delta \check{f} \in C^{k-r}(U'\cap V',\R^{\dim(N)})$. Also,
  as $\delta\check{f}\circ \phi = (\delta f)_\psi$, we conclude that
  $(\delta f)_\psi$ is $C^{k-r}$ in an open set
  $\ti{U}:=\phi^{-1}(V'\cap U')$ containing $m_1$. As $m_1\in U$ is
  arbitrary, we conclude that
  $(\delta f)_\psi\in C^{k-r}(U,\R^{\dim(N)})$, thus proving
  part~\ref{it:0_residual_implies_order+1-smooth} of the Lemma.

  Assume now that $r<k$ and that $\cpres^r(f_1,f_2)(m)=0$ for all
  $m\in \grade_M^{-1}(0)$. We specialize the previous local
  construction to $m_1=m_0$. Taking
  Remark~\ref{rem:residuals_from_local_description} into account, we
  have that
  \begin{equation*}
    0 = \cpres^r(f_2,f_1)(\phi^{-1}(x,0)) =
    T_{\psi(f_2(\phi^{-1}(x,0)))}\psi^{-1}(\delta \check{f}(x,0))
  \end{equation*}
  for all $(x,0)\in U'$, so that $\delta \check{f}(x,0)=0$ in that
  region. We conclude
  from~\eqref{eq:local_delta_check_in_terms_of_remainder_terms-rem}
  that $R_{r,2}(x,0)-R_{r,1}(x,0)=0$ for all $(x,0)\in U'\cap V'$;
  equivalently, by~\eqref{eq:taylor-r_minus_1-error_term}, we have
  that $D_2^r\check{f}_2(x,0) = D_2^r\check{f}_1(x,0)$, for all
  $(x,0)\in U'\cap V'$. Then, a new application of
  Theorem~\ref{th:taylor-r_minus_1}
  and~\eqref{eq:local_delta_check_in_terms_of_remainder_terms-der}
  lead to
  \begin{equation*}
    \begin{split}
      \check{f}_2(x,h) - \check{f}_1(x,h) =& \sum_{j=0}^{r}
      \frac{h^j}{j!} (\underbrace{D_2^j \check{f}_2(x,0)-D_2^j
        \check{f}_2(x,0)}_{=0}) \\&+ \frac{h^{r+1}}{(r+1)!}
      (R_{(r+1),2}(x,h)-R_{(r+1),1}(x,h))
    \end{split}
  \end{equation*}
  for all $(x,h)\in U'\cap V'$. As
  $R_{(r+1),2}(x,h)-R_{(r+1),1}(x,h) \in C^{k-(r+1)}(U'\cap
  V',\R^{\dim(N)})$ with $k-(r+1)\geq 0$, the last expression together
  with Lemma~\ref{le:order_of_maps_in_terms_of_local_expression}
  proves that $f_2 = f_1 +\mathcal{O}(\grade_M^{r+1})$, proving
  part~\ref{it:0_residual_implies_order+1-residual} of the Lemma.
\end{proof}

\begin{lemma}\label{le:residuals_of_maps_of_manifolds_with_a_grade}
  For $j=1,2$, let $f_j:(M,\grade_M)\rightarrow (N,\grade_N)$ be smooth maps
  between manifolds with a grade such that
  $f_2 = f_1 + \mathcal{O}(\grade_M^r)$. Then
  $\cpres^r(f_2,f_1)(m)\in T_{f_2(m)} \grade_N^{-1}(0)$ for all
  $m\in \grade_M^{-1}(0)$.
\end{lemma}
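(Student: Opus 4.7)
The plan is to reduce to a local computation in coordinate charts adapted to the grades on $M$ and $N$, and then read off the statement from the defining relation $\grade_N\circ f_j=\grade_M$.

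Fix $m\in \grade_M^{-1}(0)$. Since $\grade_N\circ f_2 = \grade_M$, we have $f_2(m)\in \grade_N^{-1}(0)$, so the statement makes sense. Because $0$ is a regular value of both $\grade_M$ and $\grade_N$, I can choose coordinate charts $(V,\phi)$ around $m$ and $(W,\psi)$ around $f_2(m)$ (with $f_1(V),f_2(V)\subset W$, using that $f_1(m)=f_2(m)$ and continuity) in which $\grade_M$ and $\grade_N$ become the last coordinate functions. That is, writing $\phi(\tilde m)=(x,h)$ and $\psi(n)=(y,k)$, I can arrange $\check{\grade}_M(x,h)=h$ and $\check{\grade}_N(y,k)=k$. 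This is the standard straightening of submersions.

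In these coordinates the condition that each $f_j$ is a map of manifolds with a grade, $\grade_N\circ f_j=\grade_M$, translates into: the last component of $\check f_j(x,h)$ equals $h$, for both $j=1,2$. Therefore the last component of $\check f_2(x,h)-\check f_1(x,h)$ vanishes identically on the overlap of the charts. By Lemma~\ref{le:order_of_maps_in_terms_of_local_expression} applied to the chosen charts, there is a continuous $\delta\check f$ on an open subset with
\begin{equation*}
  \check f_2(x,h)-\check f_1(x,h)=h^{r}\,\delta\check f(x,h),
\end{equation*}
and since the left-hand side has vanishing last component, so does $\delta\check f$ wherever $h\neq 0$; by continuity, also at $h=0$. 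Thus the last component of $\delta\check f(\phi(m))$ is zero.

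Finally, by Remark~\ref{rem:residuals_from_local_description},
\begin{equation*}
  \cpres^r(f_2,f_1)(m)=T_{\psi(f_2(m))}\psi^{-1}\bigl(\delta\check f(\phi(m))\bigr),
\end{equation*}
which is the image under $T\psi^{-1}$ of a vector whose last coordinate vanishes. Since in these coordinates $T_{f_2(m)}\grade_N^{-1}(0)=\ker\, d\grade_N|_{f_2(m)}$ is precisely the subspace spanned by the first $\dim(N)-1$ coordinate vectors, this proves that $\cpres^r(f_2,f_1)(m)\in T_{f_2(m)}\grade_N^{-1}(0)$. The only subtlety, which is not really an obstacle, is to ensure that the adapted coordinates on $N$ can be built around $f_2(m)$ and cover both $f_1(m)=f_2(m)$ at the point $m\in\grade_M^{-1}(0)$; this is handled as in the proof of Lemma~\ref{le:0_residual_implies_order+1}.
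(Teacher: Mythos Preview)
Your proof is correct and takes a genuinely different route from the paper's. The paper invokes Proposition~3 of~\cite{ar:cuell_patrick-skew_critical_problems} (a chain-rule formula for residuals of compositions) applied with $g_1=g_2=\grade_N$: since $\grade_N\circ f_j=\grade_M$, the left side $\cpres^r(\grade_M,\grade_M)(m)$ vanishes, and since $\cpres^r(\grade_N,\grade_N)=0$ as well, the formula collapses to $0=T_n\grade_N(\cpres^r(f_2,f_1)(m))$, which is exactly the kernel condition. Your approach instead straightens both grades in local coordinates and reads off directly that the last component of $\check f_2-\check f_1$ vanishes identically, forcing the last component of $\delta\check f$ to vanish by continuity. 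Your argument is more elementary and self-contained (it does not rely on the external chain-rule result), while the paper's argument is shorter once that proposition is taken for granted and avoids having to set up adapted charts on both sides simultaneously.
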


\begin{proof}
  By hypothesis, $\grade_N\circ f_j=\grade_M$, so that
  $f_j(\grade_M^{-1}(0))\subset \grade_N^{-1}(0)$. For any
  $m\in \grade_M^{-1}(0)$ and $n:=f_1(m)$ we apply Proposition 3
  in~\cite{ar:cuell_patrick-skew_critical_problems} with $g_j:=\grade_N$
  (for $j=1,2$) and obtain
  \begin{equation*}
    \cpres^r(\underbrace{\grade_N\circ f_2}_{=\grade_M},
    \underbrace{\grade_N\circ f_1}_{=\grade_M})(m) =
    \dot{f}_2(m)^r \cpres^r(\grade_N,\grade_N)(n)
    + T_n\grade_N(\cpres^r(f_2,f_1)(m)).
  \end{equation*}
  Observe that, in general, if $g_1=g_2:M\rightarrow N$, then
  $g_2=g_1+\mathcal{O}(\grade_M^s)$ for all $s\in\NZ$ and, so,
  $\cpres^s(g_2,g_1)(m)=0$ for all $m\in \grade_M^{-1}(0)$ and
  $s\in\NZ$. Then, the previous expression reduces to
  \begin{equation*}
    0 = T_n\grade_N(\cpres^r(f_2,f_1)(m)),
  \end{equation*}
  so that
  $\cpres^r(f_2,f_1)(m) \in \ker(T_n\grade_N) = T_n\grade_N^{-1}(0)$.
\end{proof}

Next, we quote a useful result
from~\cite{ar:cuell_patrick-skew_critical_problems}.

\begin{proposition}\label{prop:prop_1_in_CP07}
  Let $(M,\grade_M)$ be a manifold with a grade, $N$ be a manifold,
  and $\pi:E\rightarrow N$ a vector bundle. Suppose that
  $f:M\rightarrow E$ is $C^k$ with $k\geq 1$ and that $f(m)\in 0(E)$
  whenever $\grade_M(m)=0$. Then, for all $m\in \grade_M^{-1}(0)$
  there is a unique $e(m)\in E_{\pi(f(m))}$ such that
  \begin{equation*}
    Vert\  df(m)(v_m) = (d\grade_M(m)(v_m)) e(m) \stext{ for all } v_m\in T_mM.
  \end{equation*}
  Moreover, the function $\hat{f}:M\rightarrow E$ defined by
  \begin{equation*}
    \hat{f}(m) :=
    \begin{cases}
      \frac{f(m)}{\grade_M(m)}, \text{ if } \grade_M(m)\neq 0,\\
      e(m), \text{ if } \grade_M(m)=0
    \end{cases}
  \end{equation*}
  is $C^{k-1}$.
\end{proposition}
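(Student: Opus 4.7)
The plan is to argue locally around any $m_0\in\grade_M^{-1}(0)$ and then patch. Because $0$ is a regular value of $\grade_M$, the submersion theorem produces coordinates $(x^1,\ldots,x^{n-1},h)$ on an open neighborhood $V$ of $m_0$ for which $\grade_M(x,h)=h$; so $\grade_M^{-1}(0)\cap V=\{h=0\}$. Since $f(m_0)\in 0(E)$ and $f$ is continuous, shrinking $V$ if necessary we can also fix a local trivialization $E|_W\cong W\times\R^r$ on an open $W\subset N$ containing $\pi(f(V))$, and write $f(x,h)=(f_1(x,h),f_2(x,h))$ with $f_1:V\to W$ giving the base component in coordinates on $N$ and $f_2:V\to\R^r$ the fiber component. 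The hypothesis $f(\grade_M^{-1}(0))\subset 0(E)$ becomes $f_2(x,0)=0$ for all $x$.

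Apply Theorem~\ref{th:taylor-r_minus_1} with $r=1$ (permissible since $k\geq 1$) to $f_2$ in the variable $h$: on a smaller neighborhood we get $f_2(x,h)=hR_1(x,h)$ with $R_1\in C^{k-1}$ and $R_1(x,0)=D_2 f_2(x,0)$. This suggests defining, locally,
\begin{equation*}
\hat{f}(x,h):=(f_1(x,h),R_1(x,h)),
\end{equation*}
which is $C^{k-1}$ throughout $V$ and equals $f(x,h)/h$ whenever $h\neq 0$. Away from $\grade_M^{-1}(0)$, $\hat{f}=f/\grade_M$ is even $C^k$, by the quotient being taken fiberwise in the vector bundle.

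It remains to identify $R_1(x,0)$ with the $e(m)$ described in the statement, and to check uniqueness. For $v_m\in T_mM$ with $m=(x,0)$, write $v_m=\sum a_i\partial_{x^i}+b\,\partial_h$, so $d\grade_M(m)(v_m)=b$. Since $f_2(x,0)=0$ identically in $x$, the partials $D_{x^i}f_2(x,0)=0$, hence the vertical component of $df(m)(v_m)$ (well-defined because $f(m)\in 0(E)$, so $V_{f(m)}E\simeq E_{\pi(f(m))}$) reduces to $b\cdot D_2 f_2(x,0)=b\cdot R_1(x,0)$. Thus $e(m):=R_1(x,0)$ satisfies the required identity. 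Uniqueness is immediate: since $d\grade_M(m)\neq 0$, choosing $v_m$ with $d\grade_M(m)(v_m)=1$ forces $e(m)=Vert\,df(m)(v_m)$. Consequently the pointwise prescription of $e(m)$ is intrinsic, and our local $\hat{f}$ coincides with the globally defined $\hat{f}$ of the statement on $V$; hence $\hat{f}$ is $C^{k-1}$ at every $m_0\in\grade_M^{-1}(0)$, and $C^k$ elsewhere, so $C^{k-1}$ globally.

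The only real subtlety is justifying that the vertical part of $df(m)(v_m)$ depends only on $d\grade_M(m)(v_m)$ — but this is exactly the vanishing $D_{x^i}f_2(x,0)=0$ produced by differentiating the identity $f_2(\cdot,0)\equiv 0$, i.e.\ the fact that $f|_{\grade_M^{-1}(0)}$ lands in the zero section. All other steps are bookkeeping around the parameterized Taylor remainder of Theorem~\ref{th:taylor-r_minus_1}.
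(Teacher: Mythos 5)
Your proof is correct. Note that the paper itself does not prove this proposition: it is quoted from Cuell and Patrick and the ``proof'' given is just the citation, so your argument supplies a proof where the paper defers to a reference. Your route is the natural one and uses only machinery already present in the paper: adapted coordinates $(x,h)$ with $\grade_M(x,h)=h$ (legitimate since $0$ is a regular value), a local trivialization of $E$ over a neighborhood of $\pi(f(m_0))$ so that the zero-section hypothesis reads $f_2(x,0)=0$, and the $r=1$ case of Theorem~\ref{th:taylor-r_minus_1} to write $f_2(x,h)=h\,R_1(x,h)$ with $R_1\in C^{k-1}$ and $R_1(x,0)=D_2f_2(x,0)$. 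The key verifications are all in place: that $\mathrm{Vert}\,df(m)$ annihilates $\ker d\grade_M(m)$ is exactly the vanishing $D_{x^i}f_2(x,0)=0$ obtained by differentiating $f_2(\cdot,0)\equiv 0$ along the level set, which gives existence of $e(m)$; uniqueness follows from surjectivity of $d\grade_M(m)$; and the identification $e(m)=R_1(x,0)$ shows the local candidate $(f_1,R_1)$ agrees with the global $\hat f$ of the statement. The one step worth stating explicitly is chart- and trivialization-independence of the local formula, but you handle it correctly: on $\{h\neq 0\}$ the formula agrees with the intrinsic fiberwise quotient $f/\grade_M$, and at $h=0$ with the intrinsically characterized $e(m)$, so the local expressions glue and $C^{k-1}$-regularity, being local, follows.
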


\begin{proof}
  This is Proposition 1
  in~\cite{ar:cuell_patrick-skew_critical_problems}.
\end{proof}

We close this section with a characterization of contact order in
terms of local bounds, as it appears, for instance, in Section 2.3
of~\cite{ar:marsden_west-discrete_mechanics_and_variational_integrators},
and, also, in terms of matching of derivatives.

\begin{lemma}\label{le:contact_order_with_local_bounds_and_derivatives}
  Let $(M,\grade_M)$ be a manifold with a grade and
  $f_1,f_2:M\rightarrow N$ be smooth maps. Then, the following
  assertions are equivalent.
  \begin{enumerate}
  \item \label{it:contact_order_with_local_bounds_and_derivatives-order}
    $f_2=f_1+\mathcal{O}(\grade_M^r)$.
  \item \label{it:contact_order_with_local_bounds_and_derivatives-bound}
    For each $m_0\in \grade_M^{-1}(0)$ there are coordinate charts
    $(V,\phi)$ and $(W,\psi)$ of $M$ and $N$, an open subset
    $U\subset \R^{\dim(M)-1}$, and constants $a>0$, $C\geq 0$ such
    that
    \begin{enumerate}
    \item \label{it:contact_order_with_local_bounds_and_derivatives-bound-contained}
      $m_0\in V$ and $f_j(V)\subset W$ for $j=1,2$,
    \item \label{it:contact_order_with_local_bounds_and_derivatives-bound-bound}
      $U\times (-a,a) \subset \phi(V)$; $\phi(m_0)=(u_0,0)$ with
      $u_0\in U$; $\grade_M(\phi^{-1}(u,h)) = h$; and where the local
      expressions $\check{f}_1$ and $\check{f}_2$ of $f_1$ and $f_2$
      satisfy
      \begin{equation}\label{eq:contact_order_with_local_bounds_and_derivatives-bound}
        \norm{\check{f}_2(u,h)-\check{f}_1(u,h)}\leq C h^r
        \stext{ for all } u\in U \text{ and } \abs{h}<a,
      \end{equation}
      where $\norm{\cdot}$ is the euclidean norm in $\R^{\dim(N)}$.
    \end{enumerate}
  \item \label{it:contact_order_with_local_bounds_and_derivatives-derivative}
    For each $m_0\in \grade_M^{-1}(0)$ there are coordinate charts
    $(V,\phi)$ and $(W,\psi)$ of $M$ and $N$, an open subset
    $U'\subset \R^{\dim(M)-1}$, and a constant $a'>0$ such that
    \begin{enumerate}
    \item \label{it:contact_order_with_local_bounds_and_derivatives-bound-contained}
      $m_0\in V$ and $f_j(V)\subset W$ for $j=1,2$,
    \item \label{it:contact_order_with_local_bounds_and_derivatives-bound-bound}
      $U'\times (-a',a') \subset \phi(V)$; $\phi(m_0)=(u_0,0)$ with
      $u_0\in U'$; $\grade_M(\phi^{-1}(u,h)) = h$; and where the local
      expressions $\check{f}_1$ and $\check{f}_2$ of $f_1$ and $f_2$
      satisfy
      \begin{equation}\label{eq:contact_order_with_local_bounds_and_derivatives-derivatives}
        D_2^j\check{f}_2(u,0)=D_2^j\check{f}_1(u,0)
        \stext{ for all } j=0,\ldots,r-1, \stext{ and } u \in U'.
      \end{equation}
    \end{enumerate}
  \end{enumerate}
\end{lemma}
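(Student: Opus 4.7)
The plan is to establish the three equivalences cyclically through (1) $\Rightarrow$ (3), (3) $\Rightarrow$ (1), (3) $\Rightarrow$ (2), and (2) $\Rightarrow$ (3), using adapted coordinate charts together with the parameterized Taylor theorem (Theorem~\ref{th:taylor-r_minus_1}) as the common engine. The preliminary observation, already exploited in the proof of Lemma~\ref{le:0_residual_implies_order+1}, is that since $0$ is a regular value of $\grade_M$, around any $m_0\in \grade_M^{-1}(0)$ one can choose a chart $(V,\phi)$ of $M$ in which $\check\grade(u,h)=h$. Paired with a chart $(W,\psi)$ of $N$ with $f_j(V)\subset W$, this furnishes the common coordinate setup required by the statements of (2) and (3).

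For (1) $\Rightarrow$ (3), one essentially recycles the computation inside the proof of Lemma~\ref{le:0_residual_implies_order+1}: Lemma~\ref{le:order_of_maps_in_terms_of_local_expression} provides $\check f_2(u,h)-\check f_1(u,h)=h^r\,\delta\check f(u,h)$ with $\delta\check f$ continuous, and Taylor-expanding both $\check f_a$ in $h$ around $0$ and matching coefficients yields exactly identity~\eqref{eq:local_delta_check_in_terms_of_remainder_terms-der}, which is the conclusion of (3).

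For (3) $\Rightarrow$ (1) and (3) $\Rightarrow$ (2), Theorem~\ref{th:taylor-r_minus_1} applied to each $\check f_a$ gives, on a neighborhood $U'\times(-a',a')$ of $(u_0,0)$,
\[
\check f_a(u,h)=\sum_{j=0}^{r-1}\frac{h^j}{j!}\,D_2^j\check f_a(u,0)+\frac{h^r}{r!}R_{r,a}(u,h),
\]
with $R_{r,a}$ continuous. Subtracting and using (3) cancels the polynomial parts, leaving $\check f_2(u,h)-\check f_1(u,h)=\frac{h^r}{r!}\bigl(R_{r,2}(u,h)-R_{r,1}(u,h)\bigr)$. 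Setting $\delta\check f:=(R_{r,2}-R_{r,1})/r!$, Lemma~\ref{le:order_of_maps_in_terms_of_local_expression} yields (1); restricting to a relatively compact neighborhood of $(u_0,0)$ bounds $\delta\check f$ uniformly by a constant $C$, producing~\eqref{eq:contact_order_with_local_bounds_and_derivatives-bound} and hence (2).

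The main obstacle is (2) $\Rightarrow$ (3), since one must extract pointwise vanishing of derivatives from an $L^\infty$-type bound. Starting from the same Taylor expansion, set $c_j(u):=D_2^j\check f_2(u,0)-D_2^j\check f_1(u,0)$, so that
\[
\check f_2(u,h)-\check f_1(u,h)=\sum_{j=0}^{r-1}\frac{h^j}{j!}c_j(u)+\frac{h^r}{r!}\bigl(R_{r,2}(u,h)-R_{r,1}(u,h)\bigr).
\]
Fix $u\in U$ and induct on $j$: assuming $c_0(u)=\cdots=c_{j-1}(u)=0$, divide the identity by $h^j$ and let $h\to 0$; the right-hand side tends to $c_j(u)/j!$, while the hypothesis~\eqref{eq:contact_order_with_local_bounds_and_derivatives-bound} forces the left-hand side to be bounded in norm by $C|h|^{r-j}\to 0$ since $j<r$. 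Hence $c_j(u)=0$, and the induction yields (3). Everything else is a repackaging of Taylor's theorem, but this limit argument is what genuinely uses the gap $j<r$ and constitutes the technical heart of the lemma.
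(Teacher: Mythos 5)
Your proposal is correct and follows exactly the route the paper intends: the paper's proof is the one-line remark that the lemma is ``a simple application of Theorem~\ref{th:taylor-r_minus_1} and the localization provided by Lemma~\ref{le:order_of_maps_in_terms_of_local_expression}'', and your argument simply fills in those details (the adapted chart with $\check{\grade}(u,h)=h$, the Taylor expansion with continuous remainder, and the divide-by-$h^j$ induction for (2)$\Rightarrow$(3), which mirrors the computation already carried out in the proof of Lemma~\ref{le:0_residual_implies_order+1}). No gaps.
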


\begin{proof}
  It is a simple application of Theorem~\ref{th:taylor-r_minus_1} and
  the localization provided by
  Lemma~\ref{le:order_of_maps_in_terms_of_local_expression}.
\end{proof}


\subsection{Critical problems}
\label{sec:critical_problems}

Just as the dynamics of \FMSs is defined in terms of a critical
problem, the dynamics of their discrete counterparts will be defined in
the same terms. It is convenient to use the geometric approach to such
problems provided by the notion of \jdef{skew critical problems},
considered in~\cite{ar:cuell_patrick-skew_critical_problems}. Next we
review some of the basic ideas.

\begin{definition}\label{def:critical_problem_and_point}
  A \jdef{critical problem} is a triple $(\gAF,g,\mathcal{D})$ where
  $g:M\rightarrow N$ is a smooth submersion, $\gAF$ is a smooth $1$-form
  on $M$ and $\mathcal{D}$ is a smooth distribution on $M$. In this paper
  the most common case is that $\mathcal{D}:=\ker(Tg)$; in this case
  we refer to the critical problem as $(\gAF,g)$.  A point $m\in M$ is
  a \jdef{critical point} of the critical problem
  $(\gAF,g,\mathcal{D})$ at $n\in N$ if
  \begin{equation*}
    \begin{cases}
      \gAF(m)(v)=0 \stext{ for all } v\in \mathcal{D}_{m},\\
      g(m)=n.
    \end{cases}
  \end{equation*}
\end{definition}

\begin{remark}
  Definition 1 of~\cite{ar:cuell_patrick-skew_critical_problems} is
  more general than our
  Definition~\ref{def:critical_problem_and_point} in two aspects: it
  allows for $C^k$ problems and, also, an infinite-dimensional
  context. As we don't need that level of generality in this paper, we
  stay with our simpler version. Also, we drop the ``skew'' modifier in
  the names for simplicity.
\end{remark}

\begin{remark}\label{rem:critical_condition_for_D=Tg_crit_problems}
  For critical problems $(\gAF,g)$ over $M$, that is, the distribution
  is $\mathcal{D}=\ker(Tg)$, for each $n\in N$, we can define the
  embedded submanifold $\mathcal{Z}_n:=g^{-1}(\{n\})\subset M$ and the
  inclusion map $i_{\mathcal{Z}_n}:\mathcal{Z}_n\rightarrow M$. Then
  $m\in M$ is a critical point of $(\gAF,g)$ at $n$ if and only if
  $m\in \mathcal{Z}_n$ and $i_{\mathcal{Z}_n}^*(\gAF)(m)=0$.
\end{remark}

\begin{definition}\label{def:hessian}
  Let $m\in M$ be a critical point of the problem
  $(\gAF,g,\mathcal{D})$. The \jdef{Hessian} of $(\gAF,g,\mathcal{D})$
  at $m$ is the bilinear form
  $d_{\mathcal{D},g}\gAF(m):\ker(T_{m}g)\times
  \mathcal{D}_{m}\rightarrow \R$ defined by
  \begin{equation}\label{eq:skew_hessian-def}
    d_{\mathcal{D},g}\gAF(m)(\delta u, \delta w) = 
    \big(d(\gAF(\delta W))\big)(m)(\delta u)
  \end{equation}
  where $\delta W$ is a (local) vector field on $M$ with values in
  $\mathcal{D}$ such that $\delta W(m) = \delta w$. The critical point
  $m$ is \jdef{nondegenerate} if $d_{\mathcal{D},g}\gAF(m)$ is
  nondegenerate in the sense that
  $d_{\mathcal{D},g}\gAF(m)(\delta u, \delta w) = 0$ for all
  $\delta w$ implies that $\delta u=0$.
\end{definition}

By Remark 1 in~\cite{ar:cuell_patrick-skew_critical_problems},
$d_{\mathcal{D},g}\gAF(m)$ is well defined (that is, it is independent
of the extension used in the construction).

\begin{lemma}\label{le:critical_problems_and_diffeomorphisms}
  For $j=1,2$, let $(\gAF_j,g_j,\mathcal{D}_j)$ be two critical
  problems for $g_j:M_j\rightarrow N_j$. If $F:M_1\rightarrow M_2$ is
  a diffeomorphism and $f:g_1(M_1)\rightarrow g_2(M_2)$ is a bijection
  so that $g_2\circ F = f\circ g_1$,
  $\mathcal{D}_2 = TF(\mathcal{D}_1)$ and
  $\gAF_1(v)(\delta v)=F^*(\gAF_2)(v)(\delta v)$ for all $v\in M_1$
  and $\delta v \in (\mathcal{D}_1)_{v}$, then there is a bijection
  between the sets of critical points of both problems. In particular,
  $m_1\in M_1$ is a critical problem of the first problem over
  $n_1\in N_1$ if and only if $F(m_1)$ is a critical point of the
  second problem over $f(n_1)$. When $f$ is a diffeomorphism and
  $\mathcal{D}_j := \ker(T g_j)$ for $j=1,2$, the hypothesis
  $\mathcal{D}_2 = TF(\mathcal{D}_1)$ is automatically satisfied.
\end{lemma}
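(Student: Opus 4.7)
The plan is to directly verify that the map $F$ sends critical points to critical points (and back, using $F^{-1}$), by checking that both defining conditions of a critical point (see Definition~\ref{def:critical_problem_and_point}) transport correctly along $F$. Since $F$ is a diffeomorphism, establishing a bijection only requires checking that $m_1$ is critical at $n_1$ precisely when $F(m_1)$ is critical at $f(n_1)$; the two conditions to be matched are the base-point condition $g_j(m_j)=n_j$ and the vanishing condition $\gAF_j(m_j)(\cdot)|_{(\mathcal{D}_j)_{m_j}}=0$.

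First, I would handle the base-point condition: the hypothesis $g_2\circ F=f\circ g_1$ together with the bijectivity of $f$ immediately gives $g_1(m_1)=n_1 \iff g_2(F(m_1))=f(n_1)$. Next, for the vanishing condition, I would exploit $\mathcal{D}_2=TF(\mathcal{D}_1)$: every element $\delta w\in (\mathcal{D}_2)_{F(m_1)}$ has the form $\delta w = T_{m_1}F(\delta v)$ with $\delta v\in(\mathcal{D}_1)_{m_1}$, and this representation is unique because $T_{m_1}F$ is a linear isomorphism. Then
\begin{equation*}
  \gAF_2(F(m_1))(\delta w) = \gAF_2(F(m_1))(T_{m_1}F(\delta v)) = F^*(\gAF_2)(m_1)(\delta v) = \gAF_1(m_1)(\delta v),
\end{equation*}
where the last equality uses the hypothesis that $\gAF_1$ and $F^*\gAF_2$ agree on $\mathcal{D}_1$. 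Hence $\gAF_1(m_1)$ vanishes on $(\mathcal{D}_1)_{m_1}$ if and only if $\gAF_2(F(m_1))$ vanishes on $(\mathcal{D}_2)_{F(m_1)}$, completing the equivalence. The bijection between the sets of critical points is then simply the restriction of $F$.

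For the final assertion, assume $f$ is a diffeomorphism and $\mathcal{D}_j=\ker(T g_j)$. Differentiating $g_2\circ F=f\circ g_1$ yields $Tg_2\circ TF=Tf\circ Tg_1$. If $\delta v\in\ker(T_{m_1}g_1)$, then $T_{F(m_1)}g_2(T_{m_1}F(\delta v))=T_{n_1}f(0)=0$, so $TF(\mathcal{D}_1)\subseteq \mathcal{D}_2$. Conversely, if $\delta w\in\ker(T_{F(m_1)}g_2)$, write $\delta w=T_{m_1}F(\delta v)$ using the isomorphism $T_{m_1}F$; then $T_{n_1}f(T_{m_1}g_1(\delta v))=0$, and since $Tf$ is injective we conclude $\delta v\in\ker(T_{m_1}g_1)$. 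Thus $TF(\mathcal{D}_1)=\mathcal{D}_2$, and no obstacle remains.

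There is no serious obstacle in this proof; the content is essentially a naturality check, with the only subtle point being to use that $T_{m_1}F$ is a linear isomorphism to represent arbitrary vectors in $(\mathcal{D}_2)_{F(m_1)}$ and, in the last part, that $T_{n_1}f$ is injective in order to deduce the reverse inclusion of distributions.
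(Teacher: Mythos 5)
Your proof is correct and follows essentially the same route as the paper: the commutation $g_2\circ F=f\circ g_1$ handles the base-point condition, and the isomorphism $T_{m_1}F|_{(\mathcal{D}_1)_{m_1}}:(\mathcal{D}_1)_{m_1}\rightarrow(\mathcal{D}_2)_{F(m_1)}$ together with the agreement of $\gAF_1$ and $F^*(\gAF_2)$ on $\mathcal{D}_1$ handles the vanishing condition. You additionally spell out the verification of the final assertion about $\ker(Tg_j)$, which the paper states without proof; that check is correct.
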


\begin{proof}
  If $m_1\in M_1$ is a critical problem of the first problem over
  $n_1\in N_1$ we have that $g_1(m_1)=n_1$ and
  $\gAF_1(m_1)(\delta m)=0$ for all
  $\delta m\in (\mathcal{D}_1)_{m_1}$. But then,
  $g_2(F(m_1)) = f(g_1(m_1)) = f(n_1)$ and, as
  $T_{m_1}F|_{(\mathcal{D}_1)_{m_1}}:(\mathcal{D}_1)_{m_1}\rightarrow
  (\mathcal{D}_2)_{F(m_1)}$ is an isomorphism, for any
  $\delta m' \in (\mathcal{D}_2)_{F(m_1)}$ we have
  \begin{equation*}
    \gAF_2(F(m_1))(\delta m') = \gAF_2(F(m_1))(T_{m_1}F(\delta m)) =
    F^*(\gAF_2)(m_1)(\delta m) = \gAF_1(m_1)(\delta m) =0.
  \end{equation*}
  Hence, $F(m_1)$ is a critical point of the second problem over
  $f(n_1)$. As the process can be applied to $F^{-1}$ and $f^{-1}$, we
  see that it defines a bijection between the corresponding sets of
  critical points.
\end{proof}

When the data of two critical problems are related as in the
hypotheses of Lemma~\ref{le:critical_problems_and_diffeomorphisms}, we
say that the problems are \jdef{equivalent}.

A typical problem is that of finding the critical points of a problem
for different ``boundary values'' $n\in N$. A technique that we will
use later starts by finding critical points for some special boundary
values where the computations are easier and, then, apply the
following result (Theorem 2
from~\cite{ar:cuell_patrick-skew_critical_problems}) to extend the
construction to other cases.

\begin{theorem}\label{th:thm_2_in_cp07}
  Let $(\gAF,\mathcal{D},g)$ be a critical problem, where
  $g:M\rightarrow N$ and assume that
  \begin{enumerate}[label=(\alph*)]
  \item $M_0\subset M$ and $N_0\subset N$ are closed submanifolds and
    $\gamma_0:N_0\rightarrow M_0$ is a diffeomorphism and
  \item for all $n\in N_0$, $\gamma_0(n)$ is a nondegenerate critical
    point of $(\gAF,\mathcal{D},g)$ at $n$.
  \end{enumerate}
  Then, there are open neighborhoods $U$ of $M_0$ and $V$ of $N_0$ and
  a smooth extension $\gamma:V\rightarrow U$ of $\gamma_0$ such that
  \begin{enumerate}
  \item for all $n\in V$, $\gamma(n)$ is a critical point of
    $(\gAF,\mathcal{D},g)$ at $n$ and
  \item $\gamma(n)$ is the unique critical point of
    $(\gAF,\mathcal{D},g)$ in $U$.
  \end{enumerate}
\end{theorem}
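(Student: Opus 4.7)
The plan is to prove this via the implicit function theorem applied to a map that simultaneously encodes the fiber constraint $g(m)=n$ and the critical-point condition $\gAF(m)|_{\mathcal{D}_m}=0$, and then to globalize the resulting local sections along $M_0$ using local uniqueness of critical points.

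First I would fix $n_0\in N_0$ and set $m_0:=\gamma_0(n_0)\in M_0$. Since $g$ is a submersion, I can choose coordinate charts around $m_0$ and $n_0$ in which $g$ becomes a linear projection; writing $m=(n,s)$ with $g(n,s)=n$, the fiber $g^{-1}(n)$ becomes the slice $\{n\}\times\R^d$ with $d=\dim M-\dim N$. I then pick a local smooth frame $\xi_1,\dots,\xi_k$ of $\mathcal{D}$ with $k=\rank(\mathcal{D})$, and define
\begin{equation*}
  \Phi(n,s):=\big(\gAF(n,s)(\xi_1(n,s)),\dots,\gAF(n,s)(\xi_k(n,s))\big)\in\R^k.
\end{equation*}
In these coordinates a critical point of $(\gAF,g,\mathcal{D})$ over $n$ is exactly a solution $(n,s)$ of $\Phi(n,s)=0$.

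Second, I would identify the Jacobian $\partial_s\Phi(n_0,s_0)$ with the matrix of the Hessian $d_{\mathcal{D},g}\gAF(m_0)$ in the chosen frames. At the critical point each $\gAF(\xi_i)$ vanishes, so on differentiating in the $s$-directions the contribution from the derivatives of the frame $\xi_i$ drops out, and the computation reduces to the bilinear form~\eqref{eq:skew_hessian-def} evaluated on the pair $(\partial_{s_j}|_{m_0},\xi_i(m_0))\in\ker(T_{m_0}g)\times\mathcal{D}_{m_0}$. Hypothesis~(b) then says this matrix is invertible, so the implicit function theorem gives a unique smooth $\sigma$ defined on some open neighborhood $V_{n_0}$ of $n_0$ with $\sigma(n_0)=s_0$ and $\Phi(n,\sigma(n))=0$ there. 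Setting $\gamma_{n_0}(n):=(n,\sigma(n))$ produces a local smooth critical-point-valued section of $g$, agreeing with $\gamma_0$ on $N_0\cap V_{n_0}$ by uniqueness at those points, and uniquely characterized inside a suitable open neighborhood $U_{n_0}\subset M$ of $m_0$.

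Third, I would globalize. Performing this construction at every $n_0\in N_0$ produces an open cover $\{V_{n_0}\}$ of $N_0$ and local sections $\gamma_{n_0}:V_{n_0}\to U_{n_0}$. On pairwise overlaps, after shrinking to a common neighborhood of $N_0$, both local sections take critical values, both agree with $\gamma_0$ on $N_0$, and both lie in a single uniqueness neighborhood around the corresponding point of $M_0$; hence local uniqueness forces them to coincide. The $\gamma_{n_0}$ therefore glue to a smooth $\gamma:V\to U$ extending $\gamma_0$, where $V:=\bigcup V_{n_0}\supset N_0$ and $U:=\bigcup U_{n_0}\supset M_0$. Finally, using that $M_0$ and $N_0$ are closed, I can shrink $V$ and $U$ further, still containing $N_0$ and $M_0$ respectively, so that every critical point of $(\gAF,g,\mathcal{D})$ lying in $U$ over a point of $V$ is forced into one of the local uniqueness neighborhoods $U_{n_0}$, which delivers global uniqueness.

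The main obstacle is the identification of $\partial_s\Phi(n_0,s_0)$ with the coordinate matrix of $d_{\mathcal{D},g}\gAF(m_0)$: a priori this Jacobian also involves derivatives of the frame $\xi_i$, and one must invoke the frame-independence argument behind Remark~1 of~\cite{ar:cuell_patrick-skew_critical_problems} to see that those derivative terms drop out at a critical point. Once that identification is in place, hypothesis~(b) is precisely the invertibility condition needed to apply the implicit function theorem, and the remaining gluing and shrinking are standard, made possible by the closedness of $M_0$ and $N_0$.
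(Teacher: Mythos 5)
The paper offers no proof of Theorem~\ref{th:thm_2_in_cp07}: it is quoted verbatim as Theorem~2 of~\cite{ar:cuell_patrick-skew_critical_problems}, so there is no internal argument to compare against. Your proposal is the standard proof of that result and, as far as I can tell, essentially the one in the cited source: straighten $g$ into a projection, express the critical condition as $\Phi(n,s)=0$ against a local frame of $\mathcal{D}$, identify $\partial_s\Phi$ at a critical point with the matrix of the Hessian of Definition~\ref{def:hessian}, apply the implicit function theorem, and glue the local sections along $M_0$ using local uniqueness. The identification of the Jacobian with the Hessian is in fact immediate from~\eqref{eq:skew_hessian-def} (each $\xi_i$ \emph{is} an admissible extension $\delta W$); what Remark~1 of~\cite{ar:cuell_patrick-skew_critical_problems} supplies is only that this quantity is independent of the extension, so your worry about frame-derivative terms is moot.

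Two caveats. First, your IFT step silently requires $\rank(\mathcal{D})=\dim M-\dim N$, so that $\partial_s\Phi$ is a square matrix and the one-sided nondegeneracy of Definition~\ref{def:hessian} (injectivity of $\delta u\mapsto d_{\mathcal{D},g}\gAF(m_0)(\delta u,\cdot)$) upgrades to invertibility. This rank condition is not stated in Definition~\ref{def:critical_problem_and_point}, but without it the theorem is false (the system is over- or under-determined), so it must be read as part of the hypotheses; it holds automatically in every application in the paper, where $\mathcal{D}=\ker(Tg)$. Second, the gluing and the final global uniqueness both hinge on shrinking the sets $U_{n_0}$ so that $U_{n_0}\subset g^{-1}(V_{n_0})$ and so that any critical point in $U=\bigcup U_{n_0}$ over $n\in V$ lands in a $U_{n_0}$ with $n\in V_{n_0}$; you state the conclusion of this shrinking rather than carry it out, but the argument is routine point-set topology given that $M_0$ and $N_0$ are closed and $\gamma_0$ is a homeomorphism.
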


The extensions $\gamma$ whose existence is proved in
Theorem~\ref{th:thm_2_in_cp07} will be called \jdef{critical point
  functions}.

Later on, we will consider the behavior of families of critical
problems and their contact order. When $(M,\grade_M)$ and
$(N,\grade_N)$ are manifolds with a grade and, for $j=1,2$,
$(\gAF_j,g_j,\mathcal{D}_j)$ are critical problems over $M$, the
meaning of $\gAF_2=\gAF_1+\mathcal{O}(\grade_M^r)$ and
$g_2=g_1+\mathcal{O}(\grade_M^r)$ is known as both
$g_j:M\rightarrow N$ and $\gAF_j:M\rightarrow T^*M$ are maps between
manifolds. In order to give a meaning to
$\mathcal{D}_2=\mathcal{D}_1+\mathcal{O}(\grade_M^r)$ we observe that
a (regular) distribution $\mathcal{D}$ of rank $r$ over $M$ assigns to
each $m\in M$ a subspace $\mathcal{D}_m\in\Gr(T_mM,r)$, the
Grassmannian of $r$ dimensional subspaces of $T_mM$. So, if
$\GrB(TM,r)$ is the Grassmann bundle over $M$ (the fiber at each
$m\in M$ is $\Gr(T_mM,r)$), we have the function
$i_\mathcal{D}:M\rightarrow \GrB(TM,r)$ defined by
$i_\mathcal{D}(m):= \mathcal{D}_m$; it turns out that $i_\mathcal{D}$
is a smooth map (see Appendices~\ref{sec:things_about_grassmannians}
and~\ref{sec:local_computations_in_the_grassmann_bundle} for more on
Grassmannians). Then, we say that
$\mathcal{D}_2=\mathcal{D}_1+\mathcal{O}(\grade_M^r)$ when
$i_{\mathcal{D}_2}=i_{\mathcal{D}_1}+\mathcal{O}(\grade_M^r)$. Eventually,
when $\mathcal{D}_2=\mathcal{D}_1+\mathcal{O}(\grade_M^r)$ we define
$\cpres^r(\mathcal{D}_2,\mathcal{D}_1)(m) :=
\cpres^r(i_{\mathcal{D}_2},i_{\mathcal{D}_1})(m)$ for all
$m\in \grade_M^{-1}(0)$. The following important result (Theorem 3
in~\cite{ar:cuell_patrick-skew_critical_problems}) relates the order
of contact of the critical point functions $\gamma_j$ corresponding to
the two critical problems $(\gAF_j,g_j,\mathcal{D}_j)$ over $M$ to the
order of contact of the corresponding data of the problems.

\begin{theorem}\label{th:thm_3_in_cp07}
  Let $(M,\grade_M)$ and $(N,\grade_N)$ be manifolds with a grade and,
  for $j=1,2$, $(\gAF_j,g_j,\mathcal{D}_j)$ critical problems over
  $M$. Let $M_0\subset \grade_M^{-1}(0)$ and
  $N_0\subset \grade_N^{-1}(0)$ be closed submanifolds and $\gamma_0$
  as in Theorem~\ref{th:thm_2_in_cp07}. Assume that
  $\gAF_2=\gAF_1+\mathcal{O}(\grade_M^r)$,
  $g_2=g_1+\mathcal{O}(\grade_M^r)$ and that
  $\mathcal{D}_2=\mathcal{D}_1+\mathcal{O}(\grade_M^r)$. Then the
  corresponding critical point maps (constructed by
  Theorem~\ref{th:thm_2_in_cp07}) satisfy
  $\gamma_2=\gamma_1+\mathcal{O}(\grade_N^r)$.
\end{theorem}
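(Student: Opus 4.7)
The plan is to pass to local coordinates near any fixed $n_0\in N_0$ and $m_0:=\gamma_0(n_0)\in M_0$, recast the critical point conditions as a smooth implicit equation $\Phi_j(n,m)=0$, and then transfer the contact order from the data of the problems to the solutions $\gamma_j$ via the nondegenerate Hessian and a Taylor-style comparison. By Lemma~\ref{le:contact_order_with_local_bounds_and_derivatives}(\ref{it:contact_order_with_local_bounds_and_derivatives-derivative}) it is enough to produce coordinates $(u,h)$ on $N$ with $\grade_N(u,h)=h$ and $n_0=(u_0,0)$ in which the local expressions $\check\gamma_1,\check\gamma_2$ have $\partial_h^s\check\gamma_1(u,0)=\partial_h^s\check\gamma_2(u,0)$ for $s=0,1,\ldots,r-1$.

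First I would recast the critical point conditions smoothly. Pick a local frame $\delta w_1,\ldots,\delta w_k$ for $\mathcal{D}_1$ around $m_0$; using the Grassmann-bundle description of distributions from Appendices~\ref{sec:things_about_grassmannians}--\ref{sec:local_computations_in_the_grassmann_bundle} together with the hypothesis $\mathcal{D}_2=\mathcal{D}_1+\mathcal{O}(\grade_M^r)$, I would produce a smooth local frame $\delta w_1',\ldots,\delta w_k'$ for $\mathcal{D}_2$ with $\delta w_l'=\delta w_l+\mathcal{O}(\grade_M^r)$ as sections of $TM$ (this is where Lemma~\ref{le:order_of_sections_of_VB} enters). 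With these frames, $\gamma_j(n)$ is characterized uniquely near $(n_0,m_0)$ by
\begin{equation*}
g_j(\gamma_j(n))=n,\qquad \gAF_j(\gamma_j(n))\bigl(\delta w_l^{(j)}(\gamma_j(n))\bigr)=0,\quad l=1,\ldots,k,
\end{equation*}
so I would bundle these into smooth maps $\Phi_j\colon N\times M\to N\times\R^k$ given by $\Phi_j(n,m)=(g_j(m)-n,\,\gAF_j(m)(\delta w_l^{(j)}(m)))$.

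Next I would establish the two structural properties needed for the comparison. On the one hand, a direct computation starting from~\eqref{eq:skew_hessian-def} identifies $\partial_m\Phi_1(n_0,m_0)$ (restricted to $\ker T_{m_0}g_1$ in its second factor and paired against the frame $\delta w_l$ in the third) with the Hessian $d_{\mathcal{D}_1,g_1}\gAF_1(m_0)$, whose nondegeneracy makes $\partial_m\Phi_1(n_0,m_0)$ invertible; this is exactly the content underlying Theorem~\ref{th:thm_2_in_cp07}. On the other hand, from $\gAF_2-\gAF_1=\mathcal{O}(\grade_M^r)$, $g_2-g_1=\mathcal{O}(\grade_M^r)$ and $\delta w_l'-\delta w_l=\mathcal{O}(\grade_M^r)$, repeated application of the chain/product rule for residuals (Proposition~3 in~\cite{ar:cuell_patrick-skew_critical_problems}) yields $\Phi_2=\Phi_1+\mathcal{O}(\grade_M^r)$, where the grade on $N\times M$ is taken to be $\grade_M\circ p_2$. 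Because both problems ultimately satisfy $g_j\circ\gamma_j=\mathrm{id}$ and both $g_j$ respect grade at points of $\grade_M^{-1}(0)$ (as $\gamma_0$ maps $N_0\subset\grade_N^{-1}(0)$ into $M_0\subset\grade_M^{-1}(0)$), one gets $\grade_M(\gamma_j(u,h))=h$ locally.

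Finally I would deduce the contact order of the solutions by induction on $s$. At $h=0$ we have $\check\gamma_1(u,0)=\check\gamma_0(u)=\check\gamma_2(u,0)$, so the case $s=0$ is trivial. Differentiating the identity $\check\Phi_j(u,h,\check\gamma_j(u,h))=0$ in $h$ exactly $s$ times and evaluating at $h=0$ gives, via the chain rule, an equation of the form
\begin{equation*}
\partial_m\check\Phi_j(u,0,\check\gamma_0(u))\,\partial_h^s\check\gamma_j(u,0)=\Psi_{j,s}\bigl(u,0,\check\gamma_0(u),\partial_h^a\partial_m^b\check\Phi_j,\partial_h^t\check\gamma_j(u,0):t<s\bigr),
\end{equation*}
where $\Psi_{j,s}$ involves only derivatives of $\check\Phi_j$ and lower-order derivatives of $\check\gamma_j$. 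By Lemma~\ref{le:0_residual_implies_order+1} and Remark~\ref{rem:residuals_from_local_description}, $\Phi_2-\Phi_1=\mathcal{O}(\grade_M^r)$ forces $\partial_h^a\partial_m^b\check\Phi_1(u,0,\cdot)=\partial_h^a\partial_m^b\check\Phi_2(u,0,\cdot)$ for all $a+b\le r-1$; combining this with the inductive hypothesis on lower-order derivatives of $\check\gamma_j$, and using that the invertible matrix $\partial_m\check\Phi_1(u_0,0,m_0)$ coincides with $\partial_m\check\Phi_2(u_0,0,m_0)$ at $h=0$, shrinking the neighborhood so that invertibility persists, we conclude $\partial_h^s\check\gamma_1(u,0)=\partial_h^s\check\gamma_2(u,0)$ for all $s\le r-1$. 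The main obstacle is the bookkeeping around the distribution: carrying the contact-order condition on $\mathcal{D}$ from the Grassmann-bundle picture into concrete frames and then into $\Phi_j$, and simultaneously tracking that the grade on $N$ pulls back correctly through $g_j$, is the source of most of the work and justifies the detour through the appendices.
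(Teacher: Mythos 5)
The paper does not actually prove this statement: it is imported verbatim as Theorem~3 of \cite{ar:cuell_patrick-skew_critical_problems}, so there is no internal proof to compare against. Your argument is a legitimate self-contained route --- recast criticality as $\Phi_j(n,m)=0$, linearize around $\gamma_0$ using nondegeneracy of the Hessian, and match $h$-derivatives up to order $r-1$ via Lemma~\ref{le:contact_order_with_local_bounds_and_derivatives}. It is equivalent in spirit to, but organized differently from, the cited proof, whose structure (an implicit linear characterization of the residual $\cpres^r(\gamma_2,\gamma_1)$ in terms of the Hessian, the residuals of $\gAF_j$, $g_j$, $\mathcal{D}_j$, and $Tg_1$) is visible in the equations $F(u)=0$, $G(u)=0$ quoted in the proof of Proposition~\ref{prop:invariance_of_res_gamma}. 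Your derivative-matching induction establishes the contact order but does not produce that residual formula, which the paper needs later; the residual-calculus route buys both at once. Your deferred step of extracting a frame of $\mathcal{D}_2$ with $\delta w_l'=\delta w_l+\mathcal{O}(\grade_M^r)$ from $i_{\mathcal{D}_2}=i_{\mathcal{D}_1}+\mathcal{O}(\grade_M^r)$ is sound and is exactly what the graph coordinates $\Delta_{\DD,\F}$ of Appendix~\ref{sec:local_computations_in_the_grassmann_bundle} deliver.

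Two points need repair. First, the key identity $\grade_M(\gamma_j(u,h))=h$ is not justified by your parenthetical remark that the $g_j$ ``respect grade at points of $\grade_M^{-1}(0)$'': knowing $g_j(M_0)\subset N_0$ says nothing for $h\neq 0$. You must assume $\grade_N\circ g_j=\grade_M$ (a hypothesis implicit in the Cuell--Patrick framework and stated explicitly as hypothesis~\ref{it:invariance_of_res_gamma-g_j_and_grades} of Proposition~\ref{prop:invariance_of_res_gamma}) and then derive $\grade_M\circ\gamma_j=\grade_N\circ g_j\circ\gamma_j=\grade_N$; without this the $\mathcal{O}(\grade_M^r)$ hypotheses cannot be converted into $h^r$ estimates along the solution. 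Second, the base case of your induction requires $\gamma_1=\gamma_2$ on all of $\grade_N^{-1}(0)$ in the common domain, not merely on $N_0$ where both restrict to $\gamma_0$; since the data of the two problems coincide on $\grade_M^{-1}(0)$, this follows from the uniqueness clause of Theorem~\ref{th:thm_2_in_cp07} after shrinking neighborhoods (or from assuming $N_0=\grade_N^{-1}(0)$, as the paper does where it applies the theorem), but it must be said. Finally, note that the last step of the induction only needs injectivity of $\partial_m\check\Phi_1(u,0,\check\gamma_0(u))$, which is what Hessian nondegeneracy actually gives; full invertibility additionally requires $\rank\mathcal{D}_j=\dim M-\dim N$, automatic when $\mathcal{D}_j=\ker(Tg_j)$ but not needed for your conclusion.
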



\subsection{Remarks on Cartesian products}
\label{sec:remarks_on_cartesian_products}

In this section we briefly review some constructions that are
available when considering smooth maps between manifolds that are
Cartesian products.

When $X:=X_1\times X_2$, we know that
$TX\simeq p_1^*TX_1 \oplus p_2^*TX_2$ and each $p_j^*TX_j$ can be
identified with a subbundle of $TX$:
\begin{equation*}
  p_1^*TX_1 \simeq \ker(Tp_2) \stext{ and } p_2^*TX_2 \simeq \ker(Tp_1),
\end{equation*}
so that $TX = \ker(Tp_2)\oplus \ker(Tp_1)$. Similarly,
$T^*X\simeq p_1^*T^*X_1 \oplus p_2^*T^*X_2$ where we can identify the
pullback bundles with annihilator subbundles:
\begin{equation*}
  p_1^*T^*X_1 \simeq (\ker(Tp_1))^\circ  \stext{ and }
  p_2^*T^*X_2 \simeq (\ker(Tp_2))^\circ,
\end{equation*}
so that $T^*X = (\ker(Tp_1))^\circ \oplus (\ker(Tp_2))^\circ$.

If $\gF\in\mathcal{A}^1(X)$, for each $x\in X$ we have that
$\gF(x) = \gF_1(x) + \gF_2(x)$ with
$\gF_j(x) \in (\ker(T_xp_j))^\circ$. We define $d_j\gF:=\gF_j$ and,
so, $d_j\gF \in \Gamma(X,(\ker(Tp_j))^\circ)$.

If $Y:=Y_1\times Y_2$ and $F:X\rightarrow Y$ we define
$F^j:= p_j\circ F$ and, then, for $(x_1,x_2)\in X$ decompose
\begin{equation}\label{eq:cartesian_product_TF_decomp}
  T_{(x_1,x_2)}F := \left(
    \begin{array}{cc}
      T^1_{(x_1,x_2)}F^1 & T^2_{(x_1,x_2)}F^1\\
      T^1_{(x_1,x_2)}F^2 & T^2_{(x_1,x_2)}F^2
    \end{array}
  \right),
\end{equation}
according to the decompositions of $TX$ and $TY$.  We will also use
the notation
\begin{equation*}
    T^1_{(x_1,x_2)}F := \left(
    \begin{array}{c}
      T^1_{(x_1,x_2)}F^1\\
      T^1_{(x_1,x_2)}F^2
    \end{array}
  \right) \stext{ and }
  T^2_{(x_1,x_2)}F := \left(
    \begin{array}{c}
      T^2_{(x_1,x_2)}F^1\\
      T^2_{(x_1,x_2)}F^2
    \end{array}
  \right)
\end{equation*}

If $\gF\in \mathcal{A}^1(Y)$ we have that
$F^*(\gF)\in \mathcal{A}^1(X)$ is given by
$F^*(\gF)(x)(\delta x) := \gF(F(x))(T_xF(\delta x))$ for all $x\in X$
and $\delta x \in T_xX$. Taking into account the corresponding
decompositions we can write $\gF = \gF_1+\gF_2$ with
$\gF_j \in (\ker(Tp_j))^\circ$, $\delta x = \delta x_1 +\delta x_2$
with $\delta x_j\in (\ker(Tp_{3-j}))$ and, so,
\begin{equation*}
  \begin{split}
    F^*(\gF)(x)(\delta x) =& (\gF_1(x)+\gF_2(x))(T_{(x_1,x_2)}F(\delta
    x_1+\delta x_2)) \\=& \gF_1(x)(T_{(x_1,x_2)}F^1(\delta x_1+\delta
    x_2)) + \gF_2(x)(T_{(x_1,x_2)}F^2(\delta x_1+\delta x_2)) \\=&
    \gF_1(x)(T^1_{(x_1,x_2)}F^1(\delta x_1)) +
    \gF_1(x)(T^2_{(x_1,x_2)}F^1(\delta x_2)) \\&+
    \gF_2(x)(T^1_{(x_1,x_2)}F^2(\delta x_1)) +
    \gF_2(x)(T^2_{(x_1,x_2)}F^2(\delta x_2))
  \end{split}
\end{equation*}
Notice that if $\gF = \gF_2 \in \Gamma(Y,(\ker(Tp_2))^\circ)$ and
$\delta x = \delta x_1 \in \ker(Tp_2)$, we have
\begin{equation*}
  \begin{split}
    F^*(\gF)(x)(\delta x) = \gF_2(x)(T^1_{(x_1,x_2)}F^2(\delta x_1))
  \end{split}
\end{equation*}
that need not vanish, so that $F^*(\gF)$ may not be in
$\Gamma(X,(\ker(Tp_2))^\circ)$. We can modify the pullback operation
so that this new operation maps $\Gamma(Y,(\ker(Tp_2))^\circ)$ into
$\Gamma(X,(\ker(Tp_2))^\circ)$. Let $i_2:\ker(Tp_1)\rightarrow TX$ be
the inclusion map and
$i_2^*:T^*X\rightarrow \ker(Tp_1)^*\simeq (\ker(Tp_2))^\circ$ be the
dual map; both maps are vector bundle maps over $id_X$. Then we define
\begin{equation}\label{eq:*_2-def}
  F^{*_2}:\Gamma(Y,(\ker(Tp_2))^\circ)\rightarrow \Gamma(X,(\ker(Tp_2))^\circ)
  \stext{ by } F^{*_2}(\gF) := i_2^*\circ F^*(\gF).
\end{equation}
In simple terms, $F^{*_2}(\gF)$ is $F^*(\gF)$ restricted to
$\ker(Tp_1)$ and vanishing on $\ker(Tp_2)$.

When $F:X\rightarrow Y$ is a diffeomorphism, there is the smooth map
$T^*F:T^*Y\rightarrow T^*X$ that, in a similar way
to~\eqref{eq:cartesian_product_TF_decomp}, can be written according to
the decompositions of $T^*X$ and $T^*Y$ as
\begin{equation*}
  T^*F = \left(
    \begin{array}{cc}
      (T^1)^*F^1 & (T^1)^* F^2\\
      (T^2)^*F^1 & (T^2)^* F^2
    \end{array}
  \right)
\end{equation*}
and, also, as
\begin{equation*}
  (T^1)^* F = \left(
    \begin{array}{cc}
      (T^1)^*F^1 & (T^1)^* F^2 
    \end{array}
  \right) \stext{ and }
    (T^2)^* F = \left(
    \begin{array}{cc}
      (T^2)^*F^1 & (T^2)^* F^2 
    \end{array}
  \right).
\end{equation*}

For notational convenience we may write the ``partial derivative
indexes'' as sub-indexes instead of super-indexes, as in $T_2F$
instead of $T^2F$ or $T_2^*F$ instead of $(T^2)^*F$.

Of course, all the previous discussion can be adapted to spaces that
are the Cartesian product of any finite number of factors.


\section{Discretizations (a la Cuell \& Patrick)}
\label{sec:discretizations_a_la_cuell_and_patrick}

In this section we discuss a type of discretization of mechanical
systems introduced by Cuell \& Patrick that allows for a careful study
of the asymptotic behavior of a family of discrete mechanical systems
when the discretization parameter $h$ approaches $0$. We first
introduce in Section~\ref{sec:discretizations_of_TQ} a notion of
discretization of $TQ$ that is purely geometrical and, in
Section~\ref{sec:discretizations_of_a_forced_mechanical_system},
consider discretizations of mechanical systems with forces. Then,
in Section~\ref{sec:flows_of_discretizations_of_FMS}, we prove that
discretizations of regular continuous systems have flows. Finally, in
Section~\ref{sec:trajectories_of_exact_forced_discrete_mechanical_systems},
we study the properties of certain discretizations of regular forced
mechanical systems that we call \jdef{exact}.


\subsection{Discretizations of $TQ$}
\label{sec:discretizations_of_TQ}

In broad terms, discretizations in Discrete Mechanics have been taken
to be diffeomorphisms from (an open neighborhood of the image of the
zero section $Z_Q$ of) $TQ$ into (an open neighborhood of the diagonal
$\Delta_Q$ in) $Q\times Q$. Cuell \& Patrick introduce
in~\cite{ar:patrick_cuell-error_analysis_of_variational_integrators_of_unconstrained_lagrangian_systems}
and~\cite{ar:cuell_patrick-geometric_discrete_analogues_of_tangent_bundles_and_constrained_lagrangian_systems}
a more refined notion as follows.

\begin{definition}\label{def:discretization_of_TQ}
  Let $Q$ be a manifold. A \jdef{smooth discretization} of $TQ$ is a
  triple $(\psi,\dBTp,\dBTm)$ where $\psi:U\rightarrow Q$ is a
  function from an open subset $U\subset\R^2\times TQ$ and, for some
  $a>0$, $\dBTp,\dBTm:(-a,a)\rightarrow \R$  are such that
  \begin{enumerate}
  \item $\{0\}\times\{0\}\times TQ \subset U$ and
    $\psi\in C^\infty(U,Q)$,
  \item $h\, \dBTp(h)\geq 0$ and $h\,\dBTm(h)\leq 0$ for all
    $h\in (-a,a)$,
  \item $\dBTp$ and $\dBTm$ are smooth and $\dBTp(h)-\dBTm(h)=h$ for
    all $h\in (-a,a)$,
  \item $\psi(h,0,v_q)=q$ and $\pd{\psi}{t}(h,t,v_q)|_{t=0}=v_q$ for all $h$
    and $v_q\in T_qQ$ with $(h,0,v_q)\in U$.
  \end{enumerate}
\end{definition}

In fact, Definition 1
in~\cite{ar:cuell_patrick-geometric_discrete_analogues_of_tangent_bundles_and_constrained_lagrangian_systems}
is more general as it allows for nonsmooth objects, which we don't
need in this paper. Also, we use $(-a,a)$ as the domain for
$\dBTp,\dBTm$ instead of $[0,a)$ in order to avoid working with
manifolds with boundary.

\begin{remark}\label{rem:bounds_for_boundary_times}
  It is easy to check that
  \begin{equation*}
    \dBTp(h)
    \begin{cases}
      \leq h \text{ if } h\geq 0,\\
      \geq h \text{ if } h\leq 0,
    \end{cases}
    \stext{ and }
    \dBTm(h)
    \begin{cases}
      \geq -h \text{ if } h\geq 0,\\
      \leq -h \text{ if } h\leq 0.
    \end{cases}
  \end{equation*}
\end{remark}

\begin{definition}\label{def:smooth_discretization}
  If $(\psi,\dBTp,\dBTm)$ is a smooth discretization of $TQ$,
  let $A^+,A^-: (-a,a)\times TQ\rightarrow \R\times\R\times TQ$ be
  defined by
  \begin{equation*}
    A^+(h,v) := (h,\dBTp(h),v) \stext{ and }
    A^-(h,v) := (h,\dBTm(h),v).
  \end{equation*}
  Then we have the open subsets $U^+:=(A^+)^{-1}(U)$ and
  $U^-:=(A^-)^{-1}(U)$, where $U$ is the domain of $\psi$.  The
  \jdef{boundary maps}
  \begin{equation*}
    \del^+:U^+\rightarrow Q \stext{ and } \del^-:U^-\rightarrow Q
  \end{equation*}
  are defined by
  \begin{equation*}
    \begin{split}
      \del^+(h,v):=& \psi(A^+(h,v)) = \psi(h,\dBTp(h),v) \stext{ and }\\
      \del^-(h,v):=& \psi(A^-(h,v)) = \psi(h,\dBTm(h),v).
    \end{split}
  \end{equation*}
  Both $\del^+,\del^-$ are smooth maps; for each fixed $h$, we also
  consider $\del^+_h$ as the restriction of $\del^+$ to
  $(\{h\}\times TQ)\cap U^+$ and, analogously, $\del^-_h$. Two
  associated operators
  $\del^\pm, \del^\mp:U^+\cap U^-\rightarrow Q\times Q$ are defined by
  \begin{equation*}
    \del^{\pm}(h,v) := (\del^+(h,v), \del^-(h,v)) \stext{ and }
    \del^{\mp}(h,v) := (\del^-(h,v), \del^+(h,v)).
  \end{equation*}
  We will also use
  $\ti{\del}^{\mp}, \ti{\del}^{\mp}: U^+\cap U^-\rightarrow \R\times
  Q\times Q$ defined by
  \begin{equation*}
    \ti{\del}^{\pm}(h,v):=(h,\del^\pm(h,v))\footnote{In~\cite{ar:cuell_patrick-skew_critical_problems}
      and~\cite{ar:patrick_cuell-error_analysis_of_variational_integrators_of_unconstrained_lagrangian_systems}
      the map $\ti{\del}^{\pm}$ is denoted by $\Psi$.} \stext{ and }
    \ti{\del}^{\mp}(h,v):=(h,\del^\mp(h,v)).
  \end{equation*}
  
\end{definition}

\begin{remark}\label{rem:motivation_for_discretization}
  The main motivation for the notion of smooth discretization of $TQ$
  is that one such object $(\psi,\dBTp,\dBTm)$ assigns, to each
  $v_q\in T_qQ$ and $h\in [0,a)$ (subject to domain restrictions), a
  smooth curve $t\mapsto \psi(h,t,v_q)\in Q$; this curve passes
  through $q$ with velocity $v_q$ when $t=0$. The values
  $\del^-_h(v_q)$ and $ \del^+_h(v_q)$ correspond to the
  ``boundaries'' of such a curve, for $t\in
  [\dBTm(h),\dBTp(h)]$. Hence, a choice of discretization $\psi$
  assigns to each $v_q\in T_qQ$ a family (parametrized by $h$) of
  curves passing through $v_q$ and also, using $\del^{\mp}_h$, a
  (family parametrized by $h$ of) pair of points in $Q$. This last
  assignment of pairs of points to tangent vectors is what intuitively
  corresponds to a discretization.
\end{remark}

\begin{example}\label{ex:linear_discretization}
  Let $U:=\R^2\times T\R^n$, $\psi:U\rightarrow \R^n$ be defined by
  $\psi(h,t,(q,v)):= q+t v$, $\dBTm(h):=0$ and $\dBTp(h):=h$ for
  all $h\in\R$. Then $(\psi,\dBTp,\dBTm)$ is a smooth
  discretization of $T\R^n$. The corresponding boundary maps are:
  \begin{equation*}
    \del^-_h(q,v) = q \stext{ and } \del^+_h(q,v) = q+h v.
  \end{equation*}
  Notice that, for all $h\neq 0$,
  \begin{equation*}
    \del^\mp_h(q,v) = (q,q+h v)
  \end{equation*}
  is a diffeomorphism
  $\del^\mp_h:T\R^n\rightarrow \R^n\times \R^n$ with
  \begin{equation*}
    (\del^\mp_h)^{-1}(q_0,q_1) = (q_0,\frac{q_1-q_0}{h}).
  \end{equation*}
  The same construction is interesting for other choices of $\dBTp$
  and $\dBTm$ as, for instance, $\dBTm(h):=-\frac{h}{2}$ and
  $\dBTp(h):=\frac{h}{2}$.
\end{example}

\begin{example}\label{ex:exact_discretization_forced}
  Let $(Q,L,f)$ be a regular \FMS. Let $X_{L,f}\in\VF(TQ)$ be the
  corresponding Lagrangian vector field and
  $U^{X_{L,f}}\subset \R\times TQ$ be the open subset containing
  $\{0\}\times TQ$ where the flow
  $F^{X_{L,f}}:U^{X_{L,f}}\rightarrow TQ$ is defined (see
  Section~\ref{sec:forced_mechanical_systems}). Observe that
  $(h,v)\in U^{X_{L,f}}$ if and only if $(-h,v)\in
  U^{X_{L,f}}$. Define the open set
  \begin{equation*}
    \begin{split}
      U^E:=&\{(h,t,v)\in\R\times\R\times TQ:(h,v)\in U^{X_{L,f}} \text{
        and } (t,v)\in U^{X_{L,f}}\} \\=& p_{13}^{-1}(U^{X_{L,f}})\cap
      p_{23}^{-1}(U^{X_{L,f}})
    \end{split}
  \end{equation*}
  and the function
  \begin{equation*}
    \psi^E:U^E\rightarrow Q \stext{ by } \psi^E(h,t,v) :=
    \tau_Q(F^{X_{L,f}}_t(v)).
  \end{equation*}
  If $\dBTp,\dBTm$ are functions as in
  Definition~\ref{def:discretization_of_TQ}, then
  $(\psi^E,\dBTp,\dBTm)$ is a smooth discretization of $TQ$ known as
  an \jdef{exact discretization} of $TQ$ arising from $(Q,L,f)$. It is
  easy to check using Remark~\ref{rem:bounds_for_boundary_times} that
  $(U^E)^+ = U^{X_{L,f}} = (U^E)^-$; hence, the exact boundary maps
  are $\del^{E+},\del^{E-}:U^{X_{L,f}}\rightarrow Q$.
\end{example}

\begin{example}\label{ex:particle_with_friction-exact_discretization}
  Now we specialize the construction of
  Example~\ref{ex:exact_discretization_forced} for the \FMS considered
  in Example~\ref{ex:particle_with_friction-def}. As the flow is
  globally defined (Example~\ref{ex:particle_with_friction-flow}) ,
  $U^{X_{L,f}} = \R\times TQ$ and, then, $U^E=\R\times\R\times
  TQ$. The exact discretization is
  \begin{equation*}
    \psi^E:\R\times\R\times TQ \stext{ for }
    \psi^E(h,t,q,v):= q+\frac{1}{\alpha} v (1-e^{-\alpha t}).
  \end{equation*}
  Any pair of maps $\alpha^+$ and $\alpha^-$ satisfying the conditions
  of Definition~\ref{def:discretization_of_TQ} could be used with
  $\psi^E$. In what follows we choose $\dBTp(h):=h$ and $\dBTm(h):=0$
  for all $h$. Therefore we have the boundary maps
  \begin{gather*}
    \del^{E+},\del^{E-}:\R\times\R\times TQ \rightarrow Q \stext{ with
    }\\ \del^{E+}(h,q,v) = q+\frac{1}{\alpha} v (1-e^{-\alpha h})
    \stext{ and } \del^{E-}(h,q,v) = q.
  \end{gather*}
\end{example}

A discretization $\psi$ seems to introduce, actually, a family of
``discrete'' versions of $TQ$ (in the sense of
Remark~\ref{rem:motivation_for_discretization}), parametrized by
$h$. Cuell \& Patrick introduce the following notion to capture the
structure obtained by fixing a value of $h$ in a given
discretization $\psi$.

\begin{definition}
  Let $Q$ be a manifold. A \jdef{discrete tangent bundle} of $Q$ is a
  triple $(\mathcal{V},\del^+,\del^-)$, where $\mathcal{V}$ is
  a manifold such that $\dim(\mathcal{V}) =2\dim(Q)$ and
  $\del^+,\del^-:\mathcal{V}\rightarrow Q$ satisfy:
  \begin{enumerate}
  \item $\del^+$ and $\del^-$ are submersions such that
    $\ker(T\del^+) \cap \ker(T\del^-) = \{0\}$ and
  \item for all $q\in Q$, if we let
    $\mathcal{V}^+_q:=(\del^+)^{-1}(q)$ and
    $\mathcal{V}^-_q:=(\del^-)^{-1}(q)$, then
    $\mathcal{V}^+_q\cap\mathcal{V}^-_q$ consists of a single point,
    denoted by $0_q$.
  \end{enumerate}
\end{definition}

\begin{remark}\label{rem:discrete_tangent_bundle_borderPM_is_diffeo}
  It follows from the definition that if $(\mathcal{V},\del^+,\del^-)$
  is a discrete tangent bundle of $Q$ the map
  $\del^\pm : \mathcal{V}\rightarrow Q\times Q$ defined by
  $\del^\pm(v):=(\del^+(v),\del^-(v))$ is a local diffeomorphism. On
  the other hand, also from the definition, if $Z_Q:=\{0_q:q\in Q\}$,
  we have that $\del^\pm|_{Z_Q}:Z_Q\rightarrow \Delta_Q$ is a
  bijection. Then, by Theorem 1
  in~\cite{ar:cuell_patrick-skew_critical_problems}, there are open
  neighborhoods of $Z_Q$ in $\mathcal{V}$ and of
  $\Delta_Q\subset Q\times Q$ so that $\del^\pm$ is a diffeomorphism
  between these open neighborhoods. The same is true for
  $\del^\mp(v):=(\del^-(v),\del^+(v))$.
\end{remark}

\begin{example}
  If $Q$ is a manifold, then $(Q\times Q,p_2,p_1)$ is a discrete
  tangent bundle of $Q$ (where $p_a$ is the projection from the
  product space onto the corresponding component).
\end{example}

The following important result essentially states that, for each fixed
$h$ (in a range), a discretization of $TQ$ defines a discrete tangent
bundle of $Q$.

\begin{theorem}\label{th:discretizations_yield_discrete_tangent_bundles}
  Let $(\psi,\dBTp,\dBTm)$ be a discretization of $TQ$ and let
  $Q_0\subset Q$ be a relatively compact\footnote{A subset
    $X\subset Y$ of a topological space is \jdef{relatively compact}
    if its closure $\conj{X}\subset Y$ is compact.} open
  submanifold. Then, there exists $a>0$ such that, for each
  $h\in (0,a)$, there is an open subset $\mathcal{V}_h\subset TQ$
  satisfying
  \begin{enumerate}
  \item the triple $(\mathcal{V}_h,\del^+_h,\del^-_h)$ is a
    discrete tangent bundle of $Q_0$ and
  \item $\del^{\pm}_h$ is a diffeomorphism from $\mathcal{V}_h$ onto
    an open neighborhood $\mathcal{W}_h$ of $\Delta_{Q_0}$ in
    $Q_0\times Q_0$.
  \end{enumerate}
  Moreover, given $v_q\in T_qQ_0$, the constant $a>0$ can be chosen so
  that $v_q\in\mathcal{V}_h$ for all $h\in (0,a)$.
\end{theorem}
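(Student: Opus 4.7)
\emph{Plan of proof.} The strategy is to work in local coordinates, compute the Jacobian of the combined map $\del^{\pm}_h=(\del^+_h,\del^-_h)$, and then combine compactness of $\overline{Q_0}$ with an inverse-function-theorem argument at each fixed $h>0$.

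In local coordinates $(q,v)$ on $TU$, parts~(1) and~(4) of Definition~\ref{def:discretization_of_TQ} give $\psi(h,0,q,v)=q$ and $\partial_t\psi(h,0,q,v)=v$, so Theorem~\ref{th:taylor-r_minus_1} applied in $t$ at $t=0$ with $r=2$ yields a smooth $\R^{n}$-valued remainder $R$ with $\psi(h,t,q,v)=q+tv+t^{2}R(h,t,q,v)$. Substituting $t=\dBTp(h)$ and $t=\dBTm(h)$ produces local formulas for $\del^+_h$ and $\del^-_h$, and a block computation of the $(2n)\times(2n)$ Jacobian of $\del^{\pm}_h$, using $\dBTp(h)-\dBTm(h)=h$, gives
\begin{equation*}
  \det T_{(q,v)}\del^{\pm}_h = (-h)^{n}\bigl(1+h\,\rho(h,q,v)\bigr)
\end{equation*}
for some smooth $\rho$, where $n:=\dim Q$. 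Hence $\del^{\pm}_h$ is a local diffeomorphism at any fixed point for every sufficiently small $h>0$, and from this both $\del^{+}_h$ and $\del^{-}_h$ are submersions and the kernel condition $\ker T\del^{+}_h\cap\ker T\del^{-}_h=\{0\}$ is automatic.

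The uniform constant $a>0$ comes from compactness: cover $\overline{Q_0}$ by finitely many charts and fix, inside each, a compact neighborhood of the zero section of $T\overline{Q_0}$; the smooth $\rho$ is uniformly bounded there, yielding a single $a>0$ and an open neighborhood $\mathcal{N}\subset TQ$ of $Z_{Q_0}:=\{0_q:q\in Q_0\}$ on which $\del^{\pm}_h$ is a local diffeomorphism for every $h\in(0,a)$. (To handle the last sentence of the theorem, enlarge the compact set to include the given $v_q$.) For each fixed $h\in(0,a)$ I would then construct a smooth section $s_h\colon Q_0\to\mathcal{N}$ with $\del^{\pm}_h(s_h(q))=(q,q)$ by solving the system
\begin{equation*}
  v+\dBTp(h)R(h,\dBTp(h),q,v)=0, \qquad v+\dBTm(h)R(h,\dBTm(h),q,v)=0
\end{equation*}
via the implicit function theorem, after a rescaling of $v$ that removes the factors of $h$. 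The standard diffeomorphism extension from a submanifold (Theorem~1 of~\cite{ar:cuell_patrick-skew_critical_problems}, quoted in Remark~\ref{rem:discrete_tangent_bundle_borderPM_is_diffeo}) applied to the local diffeomorphism $\del^{\pm}_h$ on $\mathcal{N}$ and the embedded submanifold $s_h(Q_0)$, which maps bijectively onto $\Delta_{Q_0}$, then produces the required open sets $\mathcal{V}_h\subset\mathcal{N}$ and $\mathcal{W}_h\subset Q_0\times Q_0$, with $0_q:=s_h(q)$ verifying $\mathcal{V}^+_q\cap\mathcal{V}^-_q=\{0_q\}$.

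The main obstacle is the degeneracy of $T\del^{\pm}_h$ at $h=0$: since its determinant vanishes to order $n$, the inverse function theorem cannot be applied uniformly down to $h=0$, and one is forced to handle each $h\in(0,a)$ separately, with $\mathcal{V}_h$ and $\mathcal{W}_h$ in general shrinking to $Z_{Q_0}$ and $\Delta_{Q_0}$ respectively as $h\to 0^{+}$. The second delicate point is the construction of the section $s_h$: proving its existence and smooth dependence on $h$, including continuity down to the zero section as $h\to 0^{+}$, requires a preliminary rescaling of the fiber variable to absorb the $O(h)$ factors before the implicit function theorem applies.
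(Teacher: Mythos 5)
Your Jacobian computation and the reduction of parts (1)--(2) to (i) $\del^{\pm}_h$ being a local diffeomorphism near the zero section and (ii) the existence of the section $s_h$ are sound, and this coordinate route is genuinely different from the paper's (which uses a tubular map $\zeta$ and the blow-up $\varphi(h,v)=(h,\frac{1}{h}\zeta^{-1}(\del^{\pm}_h(v)))$). However, step (ii) --- which is exactly the delicate point the paper isolates in Lemma~\ref{le:discretizations_yield_discrete_tangent_bundles-W_and_diagonal} and Remark~\ref{rem:problem_with_CP09_prop1} --- is not established by your argument, and the system you propose to solve is the wrong one. You freeze the base point at $q$ and solve for $v$ alone, so you have $2n$ equations in $n$ unknowns, and generically there is no solution. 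Concretely, take $Q=\R^n$, $\dBTp(h)=h/2$, $\dBTm(h)=-h/2$ and $\psi(h,t,q,v)=q+tv+t^2c$ with $c\neq 0$: then $\del^{\pm}_h(q',v)=(q,q)$ forces $q'=q-\frac{h^2}{4}c$ and $v=0$, so the point $0_q$ of the discrete tangent bundle does not lie over the base point $q$, and your two displayed equations (which here read $v=-\frac{h}{2}c$ and $v=+\frac{h}{2}c$) are incompatible. The correct system is the square one, $\del^{\pm}_h(q',v)=(q,q)$ in the $2n$ unknowns $(q',v)$.

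Even after this correction a second obstruction remains, and it is the heart of the theorem: at fixed small $h$ the Jacobian of $\del^{\pm}_h$ has determinant $\sim h^n$, so the neighborhood of $\del^{\pm}_h(q',v_0)$ that a naive inverse-function-theorem argument covers shrinks with $h$, while the target $(q,q)$ sits at distance $O(h^2)$ from $\del^{\pm}_h(q,0)$; whether $(q,q)$ is actually attained, uniformly for $q\in\conj{Q_0}$ and $h\in(0,a)$, is precisely what Example~\ref{ex:W_is_a_wedge} shows cannot be taken for granted --- the image of a neighborhood of the zero section can be a wedge pinching onto the diagonal as $h\to 0$. You acknowledge that a rescaling of the fiber variable is needed ``before the implicit function theorem applies,'' but you do not carry it out, and carrying it out correctly --- dividing the normal component of $\del^{\pm}_h$ by $h$ and checking smoothness and nondegeneracy at $h=0$ via Proposition~\ref{prop:prop_1_in_CP07} --- essentially reconstructs the paper's blow-up map $\varphi$ and the compactness argument of Lemma~\ref{le:discretizations_yield_discrete_tangent_bundles-W_and_diagonal}. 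Until that step is supplied, the existence of $s_h$ (hence of $0_q$, hence of $\mathcal{V}_h$ and of $\mathcal{W}_h\supset\Delta_{Q_0}$) is unproved. A smaller point: the final clause $v_q\in\mathcal{V}_h$ does not follow from merely enlarging the compact set on which $\rho$ is bounded; you must also ensure that the neighborhood produced by the extension theorem around $s_h(Q_0)$ contains $v_q$ for all small $h$, which again needs the uniform picture near $h=0$.
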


The proof of
Theorem~\ref{th:discretizations_yield_discrete_tangent_bundles} is
essentially that of Proposition 1
in~\cite{ar:cuell_patrick-geometric_discrete_analogues_of_tangent_bundles_and_constrained_lagrangian_systems}. Still,
below we sketch that proof so that we can clarify a point that, we
think, is unclear in the original argument (see
Remark~\ref{rem:problem_with_CP09_prop1}).

\begin{lemma}\label{le:discretizations_yield_discrete_tangent_bundles-U_and_W}
  Under the hypotheses of
  Theorem~\ref{th:discretizations_yield_discrete_tangent_bundles},
  there are subsets $\mathcal{U}\subset \R\times TQ$ and
  $\mathcal{W}\subset \R\times Q\times Q$ such that
  $\{0\}\times TQ\subset \mathcal{U}$, $\mathcal{U}$ is contained in
  the domain of $\ti{\del^\pm}$,
  $\{0\}\times \Delta_Q\subset \mathcal{W}$,
  $\ti{\del^\pm}(\mathcal{U}) = \mathcal{W}$ and $\ti{\del}^\pm$ is a
  diffeomorphism between the open subsets
  $\mathcal{U}\SM (\{0\}\times TQ)$ and
  $\mathcal{W}\SM (\{0\}\times Q\times Q)$. Furthermore,
  $\mathcal{U}\subset \R\times TQ$ is an open subset. The same result
  is valid with $\ti{\del}^\mp$ instead of $\ti{\del}^\pm$.
\end{lemma}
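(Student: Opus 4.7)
The plan is to reduce the statement to a local inverse function theorem applied to a smooth ``blown-up'' map $G$ that is a local diffeomorphism at every point with $h=0$; $\ti{\del}^{\pm}$ will then be recovered from $G$ via a factorization that is a diffeomorphism off the $\{h=0\}$ locus. Choosing coordinates $(q,v)$ on a chart of $TQ$, Taylor's Theorem (Theorem~\ref{th:taylor-r_minus_1}) applied to $\psi(h,t,q,v)$ in $t$ around $t=0$, together with $\psi(h,0,q,v)=q$ and $\pd{\psi}{t}(h,0,q,v)=v$, gives $\psi(h,t,q,v)=q+tv+t^{2}R(h,t,q,v)$ for a smooth $R$. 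The sign conditions in Definition~\ref{def:discretization_of_TQ} combined with $\dBTp-\dBTm=h$ force $\dBTp(0)=\dBTm(0)=0$, so by Hadamard's lemma there exist smooth $\beta^{\pm}$ with $\dBTp(h)=h\beta^{+}(h)$, $\dBTm(h)=h\beta^{-}(h)$, and $\beta^{+}(0)-\beta^{-}(0)=1$. Substituting yields
\begin{equation*}
  \del^{\pm}(h,q,v) \;=\; q + h\,\gamma^{\pm}(h,q,v),
\end{equation*}
with $\gamma^{\pm}$ smooth and $(\gamma^{+}-\gamma^{-})(0,q,v)=v$.

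I would then introduce the local smooth map
\begin{equation*}
  G(h,q,v) \;:=\; \bigl(h,\ \del^{+}(h,q,v),\ \gamma^{+}(h,q,v)-\gamma^{-}(h,q,v)\bigr),
\end{equation*}
observe that $G(0,q,v)=(0,q,v)$, and check that $DG|_{(0,q_{0},v_{0})}$ is block upper triangular with identity diagonal blocks in the $(q,v)$ components, hence invertible. By the inverse function theorem, $G$ is a local diffeomorphism at each $(0,q_{0},v_{0})$, and the factorization $\ti{\del}^{\pm}=F\circ G$ with $F(h,q^{+},\tilde v)=(h,q^{+},q^{+}-h\tilde v)$ recovers $\ti{\del}^{\pm}$. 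A direct Jacobian computation shows that $F$ is a diffeomorphism onto an open set when restricted to $\{h\ne 0\}$ (its Jacobian determinant is $(-h)^{\dim Q}$), while at $h=0$ it collapses onto $\{0\}\times\Delta_{Q}$.

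To obtain a single open $\mathcal{U}\subset\R\times TQ$ containing all of $\{0\}\times TQ$, I would apply Theorem~1 of~\cite{ar:cuell_patrick-skew_critical_problems} to $G$ chart by chart, with the closed submanifold $\{0\}\times TU_{\alpha}$; each application produces an open neighborhood of that submanifold on which $G$, and hence $\ti{\del}^{\pm}$, has the desired behaviour. Shrinking and patching these over a locally finite atlas of $Q$ yields the global $\mathcal{U}$, and setting $\mathcal{W}:=\ti{\del}^{\pm}(\mathcal{U})$ produces $\{0\}\times\Delta_{Q}\subset\mathcal{W}$ and the stated diffeomorphism $\mathcal{U}\SM(\{0\}\times TQ)\to\mathcal{W}\SM(\{0\}\times Q\times Q)$. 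The result for $\ti{\del}^{\mp}$ follows by interchanging $+$ and $-$ throughout.

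The main obstacle I anticipate is this last globalization step: $G$ is chart-dependent, so translating local diffeomorphism statements for $G$ into a single open $\mathcal{U}$ of $\{0\}\times TQ$ requires compatible shrinking of the local neighborhoods and careful tracking of their images so that the resulting $\ti{\del}^{\pm}$ remains globally injective on $\mathcal{U}\SM(\{0\}\times TQ)$. This is the heart of the Theorem-1-of-CP07-style argument and is where most of the technical work of the proof resides.
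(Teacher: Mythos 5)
Your local analysis is sound and is in fact the coordinate shadow of the argument the paper itself uses: the paper also factors $\ti{\del}^{\pm}$ as a scaling map that collapses onto the diagonal at $h=0$ (your $F$, the paper's $\ti{\varphi}$) composed with a blown-up map that is a local diffeomorphism along $\{h=0\}$ and restricts to a bijection there (your $G$, the paper's $\varphi$). The Taylor/Hadamard step correctly produces $\del^{\pm}=q+h\,\gamma^{\pm}$ with $\gamma^{\pm}$ smooth and $(\gamma^{+}-\gamma^{-})(0,q,v)=v$, the Jacobian computation for $G$ at $(0,q_0,v_0)$ is right, and the identity $F\circ G=\ti{\del}^{\pm}$ does hold.

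The genuine gap is the globalization, and ``shrinking and patching over a locally finite atlas'' does not close it. Your $G$ is chart-dependent: $\gamma^{+}-\gamma^{-}$ does not transform as a globally defined object, so each application of Theorem 1 of~\cite{ar:cuell_patrick-skew_critical_problems} yields injectivity of $\ti{\del}^{\pm}$ only within the neighborhood attached to that one chart. Taking the union of these neighborhoods gives an open set containing $\{0\}\times TQ$ on which $\ti{\del}^{\pm}$ is a local diffeomorphism off $h=0$, but nothing in your argument prevents two points $(h,v)$ and $(h,v')$ lying in different chart neighborhoods from having the same image under $\ti{\del}^{\pm}$; since the fibers of $TQ$ are noncompact there is no uniform bound on the displacement $h\,\gamma^{\pm}(h,q,v)$, and global injectivity is exactly the content that cannot be obtained chart by chart. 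The paper avoids this by making the blow-up global: it identifies the normal bundle of $\Delta_Q\subset Q\times Q$ with $E=\{(v,-v):v\in TQ\}$, fixes a tubular map $\zeta:W^E\rightarrow W^{Q\times Q}$, defines $\varphi(h,v)=\bigl(h,\tfrac{1}{h}\zeta^{-1}(\del^{\pm}_h(v))\bigr)$ for $h\neq 0$ (with smoothness across $h=0$ supplied by Proposition~\ref{prop:prop_1_in_CP07}), and then applies Theorem 1 of~\cite{ar:cuell_patrick-skew_critical_problems} \emph{once} to this single globally defined map along the closed submanifold $\{0\}\times TQ$. To repair your proof you would need to replace the chart-wise $G$ by such a globally defined map --- via a tubular neighborhood or a Riemannian exponential --- which is precisely the ingredient your proposal is missing.
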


The argument given
in~\cite{ar:cuell_patrick-geometric_discrete_analogues_of_tangent_bundles_and_constrained_lagrangian_systems}
uses the notion of tubular neighborhood that we review next. A
\jdef{tubular map} of an embedded submanifold $X\subset Y$ is a
diffeomorphism $\zeta: W^N\rightarrow W^Y$, where $W^N$ is an open
neighborhood of the (image of the) zero section $0_X$ of the normal
bundle $N$ associated to $X\subset Y$ and $W^Y$ is an open
neighborhood of $X$ in $Y$ called a \jdef{tubular neighborhood} of
$X$. In addition, if $i_X:X\rightarrow Y$ is the inclusion, the
relation $\zeta \circ 0_X = i_X$ holds. That such tubular maps and
neighborhoods exist is the content of the Tubular Neighborhood Theorem
(see \S 6.2
of~\cite{bo:cannasDaSilva-lectures_on_symplectic_geometry}).

\begin{proof}[Proof of
  Lemma~\ref{le:discretizations_yield_discrete_tangent_bundles-U_and_W}]
  The space $E:=\{(v,-v):v\in TQ\}$ is a vector bundle over the
  diagonal $\Delta_Q\subset Q\times Q$ via the map
  $(v,-v)\mapsto (\tau_Q(v),\tau_Q(v))$, where
  $\tau_Q:TQ\rightarrow Q$ is the tangent bundle. It is easy to check
  that $E$ is isomorphic, as a vector bundle over $\Delta_Q$, to the
  normal bundle associated to the embedding
  $\Delta_Q\subset Q\times Q$. So, as discussed above, there is a
  tubular map $\zeta:W^E\rightarrow W^{Q\times Q}$. By imposing
  $\zeta$ a convenient normalization and using
  Proposition~\ref{prop:prop_1_in_CP07} it is proved
  in~\cite{ar:cuell_patrick-geometric_discrete_analogues_of_tangent_bundles_and_constrained_lagrangian_systems}
  that the map
  $\varphi:(\del^{\pm})^{-1}(W^{Q\times Q})\rightarrow \R\times E$
  defined by
  \begin{equation*}
    \varphi(h,v):=
    \begin{cases}
      \left(h,\frac{1}{h}\zeta^{-1}(\del^{\pm}_h(v))\right),
      \stext{ if } h\neq 0,\\
      \left(0,(\frac{v}{2},-\frac{v}{2})\right), \stext{ if } h=0.
    \end{cases}
  \end{equation*}
  is smooth. As the derivative of $\varphi$ is nonsingular at each
  $(0,v)$, $\varphi$ is a local diffeomorphism at each one of these
  points; also, as $\varphi$ is a diffeomorphism from $\{0\}\times TQ$
  onto $\{0\}\times E$, we apply Theorem 1
  in~\cite{ar:cuell_patrick-skew_critical_problems} to conclude that
  $\varphi$ is a diffeomorphism from an open neighborhood
  $U\subset\R\times TQ$ of $\{0\}\times TQ$ onto an open neighborhood
  $V\subset \R\times E$ of $\{0\}\times E$. Notice that, in
  particular, $\varphi$ is defined in
  $(\del^{\pm})^{-1}(W^{Q\times Q})$ that is contained in the domain
  of $\del^\pm$ (that coincides with that of $\ti{\del^\pm}$); as
  $U\subset (\del^{\pm})^{-1}(W^{Q\times Q})$, the same thing happens
  to $U$.

  $\ti{W^E}:=\{(h,(v,-v))\in\R\times E:(hv,-hv)\in W^E\}$ is an open
  neighborhood of $\{0\}\times E$ in $\R\times E$. The map
  $\ti{\varphi}:\ti{W^E}\rightarrow \R\times Q\times Q$ defined by
  $\ti{\varphi}(h,(v,-v)):=(h,\zeta(h v, -h v))$ is smooth. In
  addition, since $(h,(v,-v))\mapsto (h,\zeta(v,-v))$ and, when
  $h\neq 0$, $(h,(v,-v)) \mapsto (h,(hv,-hv))$ are diffeomorphisms, we
  have that $\ti{\varphi}|_{\ti{W^E}\SM(\{0\}\times E)}$ is a
  diffeomorphism onto an open subset
  $\ti{W^{Q\times Q}}\subset \R\times Q\times Q$.
  
  We have that $\varphi^{-1}(\ti{W^E})$ is an open neighborhood of
  $\{0\}\times TQ$ in $(\del^{\pm})^{-1}(W^{Q\times Q})$. Also, for
  $(h,v)\in \varphi^{-1}(\ti{W^E})$,
  \begin{equation*}
    \ti{\varphi}(\varphi(h,v))=(h,\del^\pm_h(v)) =\ti{\del}^\pm(h,v)
  \end{equation*}

  Notice that $\ti{W^E}\cap V$ is an open neighborhood of
  $\{0\}\times E$ and $\varphi^{-1}(\ti{W^E}\cap V)$ is an open
  neighborhood of $\{0\}\times TQ$ contained in $U$. We have that
  $\varphi^{-1}(\{0\}\times E) = \{(h,v)\in
  (\del^{\pm})^{-1}(W^{Q\times Q}) : \varphi(h,v)\in \{0\}\times E\} =
  \{0\}\times TQ$ and $\varphi(\{0\}\times TQ) = \{0\}\times E$; thus,
  $\varphi$ establishes a bijection between $\{0\}\times TQ$ and
  $\{0\}\times E$.

  Let $\mathcal{U}:=\varphi^{-1}(\ti{W^E}\cap V)$; we have that
  $\mathcal{U}$ is an open neighborhood of $\{0\}\times TQ$ contained
  in $U\subset\R\times TQ$ (hence $\mathcal{U}$ is contained in the
  domain of $\ti{\del^\pm}$). Also, denoting the union of two
  \emph{disjoint} subsets $A$ and $B$ by $A\cupdot B$,
  \begin{equation*}
    \begin{split}
      \mathcal{U} =& \varphi^{-1}(\ti{W^E}\cap V) =
      \varphi^{-1}(((\ti{W^E}\cap V)\SM(\{0\}\times E)) \cupdot
      (\{0\}\times E)) \\=& \varphi^{-1}((\ti{W^E}\cap
      V)\SM(\{0\}\times E)) \cupdot \varphi^{-1}(\{0\}\times E) \\=&
      \underbrace{\varphi^{-1}((\ti{W^E}\cap V)\SM(\{0\}\times
        E))}_{\subset U} \cupdot (\{0\}\times TQ).
    \end{split}
  \end{equation*}
  As $\varphi|_U:U\rightarrow V$ is a diffeomorphism, it induces a
  diffeomorphism between the open subsets
  $\varphi^{-1}((\ti{W^E}\cap V)\SM(\{0\}\times E)))$ and
  $(\ti{W^E}\cap V)\SM(\{0\}\times E)$. Similarly, as
  $\ti{\varphi}|_{\ti{W^E}\SM (\{0\}\times E)}:\ti{W^E}\SM (\{0\}\times
  E)\rightarrow \ti{W^{Q\times Q}}$ is a diffeomorphism, it induces a
  diffeomorphism between the open subsets
  $(\ti{W^E}\cap V)\SM(\{0\}\times E)$ and
  $\ti{\varphi}((\ti{W^E}\cap V)\SM(\{0\}\times E))$.

  Letting
  $\mathcal{W}:=\ti{\varphi}((\ti{W^E}\cap V)\SM(\{0\}\times E))
  \cupdot (\{0\}\times \Delta_Q) \subset \R\times Q\times Q$, we have
  that
  $\mathcal{W} \SM (\{0\}\times \Delta_Q) = \ti{\varphi}((\ti{W^E}\cap
  V)\SM(\{0\}\times E))$ that is open and diffeomorphic to
  $\varphi^{-1}((\ti{W^E}\cap V)\SM(\{0\}\times E))) = \mathcal{U}
  \SM(\{0\}\times TQ)$ under $\ti{\del^\pm}$.

  Finally, we have
  \begin{equation*}
    \begin{split}
      \ti{\del^{\pm}}(\mathcal{U}) =&
      \ti{\del^{\pm}}((\mathcal{U}\SM(\{0\}\times TQ)) \cupdot
      (\{0\}\times TQ)) \\=& \ti{\del^{\pm}}((\mathcal{U}\SM(\{0\}\times
      TQ))) \cupdot \ti{\del^{\pm}}(\{0\}\times TQ) \\=& (\mathcal{W} \SM
      (\{0\}\times \Delta_Q)) \cupdot (\{0\}\times \Delta_Q) = \mathcal{W},
    \end{split}
  \end{equation*}
  thus proving that the sets $\mathcal{U}$ and $\mathcal{W}$
  constructed above have the required properties.
\end{proof}

\begin{lemma}
  \label{le:discretizations_yield_discrete_tangent_bundles-W_and_diagonal}
  In the context of
  Lemma~\ref{le:discretizations_yield_discrete_tangent_bundles-U_and_W},
  given a relatively compact open subset $Q_0\subset Q$, there is
  $a>0$ such that $(-a,a)\times \Delta_{Q_0} \subset \mathcal{W}$.
\end{lemma}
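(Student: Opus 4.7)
The plan is to exploit the compactness of $\conj{Q_0}$ via the tube lemma, using that $\mathcal{W}$ is an open neighborhood of $\{0\}\times\Delta_Q$ in $\R\times Q\times Q$ (as established in Lemma~\ref{le:discretizations_yield_discrete_tangent_bundles-U_and_W}).

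First, since $Q_0$ is relatively compact in $Q$, its closure $\conj{Q_0}$ is compact, and the diagonal map $q\mapsto (q,q)$ is a homeomorphism onto $\Delta_{\conj{Q_0}}\subset Q\times Q$, so $\Delta_{\conj{Q_0}}$ is a compact subset of $Q\times Q$. Since $\{0\}\times\Delta_Q\subset \mathcal{W}$, we have in particular $\{0\}\times\Delta_{\conj{Q_0}}\subset\mathcal{W}$.

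Next, for each $q\in\conj{Q_0}$, openness of $\mathcal{W}$ at $(0,q,q)$ gives $a_q>0$ and an open neighborhood $V_q$ of $q$ in $Q$ such that $(-a_q,a_q)\times V_q\times V_q\subset\mathcal{W}$ (one can always shrink a product neighborhood to have equal factors). The open sets $\{V_q:q\in\conj{Q_0}\}$ cover the compact set $\conj{Q_0}$, so a finite subcollection $V_{q_1},\ldots,V_{q_N}$ already covers $\conj{Q_0}$. Setting $a:=\min\{a_{q_1},\ldots,a_{q_N}\}>0$, any point $q\in Q_0$ lies in some $V_{q_i}$, whence $(h,q,q)\in(-a,a)\times V_{q_i}\times V_{q_i}\subset\mathcal{W}$ for every $h\in(-a,a)$. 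Thus $(-a,a)\times\Delta_{Q_0}\subset\mathcal{W}$, as desired.

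No real obstacle is expected here: this is a standard tube-lemma argument, and the only inputs needed are the openness of $\mathcal{W}$, the inclusion $\{0\}\times\Delta_Q\subset\mathcal{W}$, and the compactness of $\conj{Q_0}$.
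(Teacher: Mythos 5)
Your argument hinges on $\mathcal{W}$ being an \emph{open} subset of $\R\times Q\times Q$, and that is exactly the hypothesis that fails here. Lemma~\ref{le:discretizations_yield_discrete_tangent_bundles-U_and_W} only guarantees that $\mathcal{W}\SM(\{0\}\times Q\times Q)$ is open and that $\{0\}\times\Delta_Q\subset\mathcal{W}$; it does \emph{not} assert that $\mathcal{W}$ itself is open, and in general no open $\mathcal{W}$ with the stated properties exists. This is precisely the point of Remark~\ref{rem:problem_with_CP09_prop1} and of Example~\ref{ex:W_is_a_wedge}, where $\mathcal{W}$ is forced to lie inside a ``cusp'' $\{(h,(q^+,q^-)): h\neq 0,\ \abs{q^+-q^-}<\sqrt{\abs{h}}\}\cupdot(\{0\}\times\Delta_\R)$ that contains no product neighborhood $(-\epsilon,\epsilon)\times V\times V$ of any point $(0,q,q)$. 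So the step ``openness of $\mathcal{W}$ at $(0,q,q)$ gives $a_q>0$ and $V_q$ with $(-a_q,a_q)\times V_q\times V_q\subset\mathcal{W}$'' is not available, and the tube-lemma argument collapses at its first move.

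The correct route (and the one the paper takes) is to work upstream of $\mathcal{W}$, where genuine openness is available. One writes $(h,(q,q))=\ti{\varphi}(h,(0_q,-0_q))$, using the normalization $\zeta\circ 0_{\Delta_Q}=i_{\Delta_Q}$ of the tubular map, so the points of $(-a,a)\times\Delta_{Q_0}$ are images under $\ti{\varphi}$ of points $(h,(0_q,-0_q))$ lying over the zero section of $E$. Since $\varphi$ is a diffeomorphism from an \emph{open} $U\subset\R\times TQ$ onto an \emph{open} $V\subset\R\times E$ containing $\{0\}\times E$, and $\ti{W^E}$ is also open, for each $q$ one gets $a_q>0$ and a neighborhood $V_q$ with $(h,(0_{q'},-0_{q'}))\in\ti{W^E}\cap V=\varphi(\mathcal{U})$ for $\abs{h}<a_q$, $q'\in V_q$; hence $(h,(q',q'))\in\ti{\varphi}(\varphi(\mathcal{U}))=\mathcal{W}$. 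Only then does the compactness of $\conj{Q_0}$ enter, exactly as in your finite-subcover step. So your compactness argument is fine, but it must be applied to open sets in $\R\times E$, not to $\mathcal{W}$.
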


\begin{proof}
  Given $q\in Q$ and $h\in\R$, as $(h,(0_q,-0_q))\in \ti{W^E}$ (see
  the proof of
  Lemma~\ref{le:discretizations_yield_discrete_tangent_bundles-U_and_W})
  and the tubular map $\zeta$ satisfies
  $\zeta\circ\, 0_{\Delta_Q} = i_{\Delta_Q}$, we have that
  $\ti{\varphi}(h,(0_q,-0_q)) = (h,\zeta(h 0_q, -h 0_q)) =
  (h,\zeta(0_q,-0_q)) = (h,(q,q))$, hence $(h,(q,q))$ is in the
  image of $\ti{\varphi}$.

  On the other hand, $\varphi$ is a diffeomorphism from an open subset
  $U\subset \R\times TQ$ containing $\{0\}\times TQ$ onto an open
  subset $V\subset\R\times E$ containing $\{0\}\times E$ (see
  constructions in the proof of
  Lemma~\ref{le:discretizations_yield_discrete_tangent_bundles-U_and_W}),
  and we have $(0,(0_q,-0_q))\in V$, that is open, then there are
  $a_q>0$, $(V_q,\phi_q)$ a coordinate chart of $Q$ and
  $\ti{V}_q\subset \R^{\dim(Q)}$ open such that $q\in V_q$,
  $0\in \ti{V}_q$ and
  $(-a_q,a_q)\times T\phi_q^{-1}(\phi_q(V_q)\times \ti{V}_q)\subset
  V$. In particular, for $\abs{h}<a_q$ and $q'\in V_q$, we have that
  $(h,(0_{q'},-0_{q'}))\in V=\varphi(U)$; since we also have that
  $(h,(0_{q'},-0_{q'}))\in \ti{W^E}$, we conclude that
  $(h,(0_{q'},-0_{q'}))\in \ti{W^E}\cap V =\varphi(\mathcal{U})$, so
  that
  $(h,(q',q')) = \ti{\varphi}(h,(0_{q'},-0_{q'})) \in
  \ti{\varphi}(\varphi(\mathcal{U})) = \mathcal{W}$. The family
  $\{V_q:q\in \conj{Q_0}\}$ is an open cover of $\conj{Q_0}$, that is
  compact, so that there are finitely many $V_{q_1},\ldots, V_{q_N}$
  that still cover $\conj{Q_0}$; define
  $a:=\min\{a_{q_1},\ldots,a_{q_N}\}>0$. If $h\in\R$ with $\abs{h}<a$
  and $q\in Q_0$, we have that $q\in V_{q_k}$ for some $k=1,\ldots, N$
  and $\abs{h}<a\leq a_{q_k}$. Then, as we noticed above,
  $(h,(q,q)) \in \mathcal{W}$, proving that
  $(-a,a)\times \Delta_{Q_0}\subset \mathcal{W}$.
\end{proof}

\begin{proof}[Proof of Theorem~\ref{th:discretizations_yield_discrete_tangent_bundles}]
  Let $\mathcal{U}\subset\R\times TQ$ be the open neighborhood of
  $\{0\}\times TQ$ constructed in
  Lemma~\ref{le:discretizations_yield_discrete_tangent_bundles-U_and_W}
  and $a>0$ as in the statement of
  Lemma~\ref{le:discretizations_yield_discrete_tangent_bundles-W_and_diagonal}. As
  $Q_0\subset Q$ is open,
  $(\ti{\del^\pm})^{-1}(\R\times Q_0\times Q_0)$ is open in
  $\R\times TQ$ and contains $\{0\}\times TQ_0$; we have that
  $\mathcal{U}\cap (\ti{\del^\pm})^{-1}(\R\times Q_0\times Q_0)$ is an
  open neighborhood of $\{0\}\times TQ_0$ in $\R\times TQ$. Let $h$ be
  such that $0\neq \abs{h}<a$; as $p_1:\R\times TQ\rightarrow \R$ is a
  surjective submersion, $S_h:=p_1^{-1}(\{h\})$ is an embedded
  submanifold of $\R\times TQ$. We have that
  $S_h\cap \mathcal{U} \cap (\ti{\del^\pm})^{-1}(\R\times Q_0\times
  Q_0)$ is an open subset of $S_h$. Also, since
  $p_2:\R\times TQ\rightarrow TQ$ restricted to $S_h$ is a
  diffeomorphism with $TQ$,
  \begin{equation*}
    \mathcal{V}_h:= p_2(S_h\cap \mathcal{U} \cap
    (\ti{\del^\pm})^{-1}(\R\times Q_0\times Q_0)) \subset TQ
  \end{equation*}
  is an open subset. As $\ti{\del^\pm}$ is a diffeomorphism from
  $\mathcal{U}\SM(\{0\}\times TQ)$ onto
  $\mathcal{W}\SM(\{0\}\times Q\times Q)$, it follows that
  $\del^\pm_h$ is a diffeomorphism from $\mathcal{V}_h$ onto
  \begin{equation*}
    \mathcal{W}_h:= \del^\pm_h(\mathcal{V}_h) =
    p_{23}(p_1^{-1}(\{h\}) \cap \mathcal{W} \cap p_{23}^{-1}(Q_0\times Q_0)).
  \end{equation*}
  It is easy to conclude from the previous arguments that
  $(\mathcal{V}_h,\del^+_h,\del^-_h)$ is a discrete tangent bundle
  over $Q_0$. By
  Lemma~\ref{le:discretizations_yield_discrete_tangent_bundles-W_and_diagonal},
  $\{h\}\times \Delta_Q\subset \mathcal{W}$, so that
  $\{h\}\times \Delta_{Q_0}\subset p_1^{-1}(\{h\}) \cap \mathcal{W}
  \cap p_{23}^{-1}(Q_0\times Q_0)$ and, then,
  $\Delta_{Q_0}\subset \mathcal{W}_h$.

  Last, for a fixed $v_q\in TQ_0$ we have that
  $(0,v_q)\in \{0\}\times TQ\subset \mathcal{U}$ and, as $\mathcal{U}$
  is open in $\R\times TQ$, there is $a'>0$ such that if $\abs{h}<a'$,
  then $(h,v_q)\in \mathcal{U}$. Also, as
  $\ti{\del^\pm}(0,v_q)=(0,q,q)\in \R\times Q_0\times Q_0$ that is
  open in $\R\times Q\times Q$ we have that
  $(\ti{\del^\pm})^{-1}(\R\times Q_0\times Q_0)$ is an open
  neighborhood of $(0,v_q)$ in $\R\times TQ$ and, so, there is $a''>0$
  such that if $\abs{h}<a''$, then
  $\ti{\del^\pm}(h,v_q)\in \R\times Q_0\times Q_0$. Thus, if
  $0\neq \abs{h} <\min\{a,a',a''\}$, we have that
  $v_q\in \mathcal{V}_h$.
\end{proof}

\begin{remark}\label{rem:problem_with_CP09_prop1}
  In the proof of
  Theorem~\ref{th:discretizations_yield_discrete_tangent_bundles}
  given in~\cite[Proposition
  1]{ar:cuell_patrick-geometric_discrete_analogues_of_tangent_bundles_and_constrained_lagrangian_systems},
  after establishing what we called
  Lemma~\ref{le:discretizations_yield_discrete_tangent_bundles-U_and_W},
  they claim (but don't prove) at the top of page 981 that there is a
  set $\mathcal{W}$ that, in addition to satisfying the conditions
  stated in
  Lemma~\ref{le:discretizations_yield_discrete_tangent_bundles-U_and_W},
  is open. It appears that this last condition is used in the
  remaining part of their proof. Unfortunately, there are cases where
  no such \emph{open} $\mathcal{W}$ exists (see
  Example~\ref{ex:W_is_a_wedge}). This is the reason for basing our
  proof of
  Theorem~\ref{th:discretizations_yield_discrete_tangent_bundles} on
  Lemma~\ref{le:discretizations_yield_discrete_tangent_bundles-W_and_diagonal},
  that does not require $\mathcal{W}$ to be open.
\end{remark}

\begin{example}\label{ex:W_is_a_wedge}
  Here we introduce a very concrete example of a discretization and,
  next, discuss the possible subsets $\mathcal{U}$ and $\mathcal{W}$
  that arise from
  Lemma~\ref{le:discretizations_yield_discrete_tangent_bundles-U_and_W}
  in this case.  Consider the setup of
  Example~\ref{ex:linear_discretization} with $n=1$ and
  $U:=\{(h,t,(q,v)):h,t,q,v\in\R \text{ with }
  \sqrt{\abs{h}}\abs{v}<1\}$. We have that $U\subset \R^2\times T\R$
  is open and $\{0\}\times\{0\}\times T\R\subset U$.  Then,
  $\ti{\del^{\pm}}:\{(h,(q,v))\in\R^3:\sqrt{\abs{h}}\abs{v}<1\}\rightarrow
  \R\times \R^2$ is $\ti{\del^\pm}(h,(q,v)) := (h,\del^\pm_h(q,v))$
  with $\del^{\pm}_h(q,v):=(q+hv,q)$. For a fixed $h\neq 0$ we have
  that
  $\del^{\pm}_h(\R\times \{v\in\R:\abs{v}<\frac{1}{\sqrt{\abs{h}}}\})
  = \{(q^+,q^-)\in\R^2:\abs{q^+-q^-}<\sqrt{\abs{h}}\}$, while, if
  $h=0$,
  $\del^{\pm}_h(\R\times\R) = \Delta_\R =
  \{(q^+,q^-)\in\R^2:q^+=q^-\}$. Thus, $\ti{\del^\pm}$ maps
  $\{(h,(q,v))\in\R^3:\sqrt{\abs{h}}\abs{v}<1\}$ onto
  $(\{0\}\times \Delta_{\R}) \cupdot \{(h,(q^+,q^-)):h\neq 0 \text{ and }
  \abs{q^+-q^-}<\sqrt{\abs{h}} \}$, being a diffeomorphism for
  $h\neq 0$.
  
  Say that $\mathcal{U}\subset \R\times T\R$ and
  $\mathcal{W}\subset \R\times (\R\times\R)$ are subsets related as in
  the statement of
  Lemma~\ref{le:discretizations_yield_discrete_tangent_bundles-U_and_W}
  applied to the discretization of the previous paragraph and assume
  that $\mathcal{W}$ is open. In particular, as $\mathcal{U}$ is
  contained in the domain of $\ti{\del^\pm}$, we have
  $\mathcal{U}\subset
  \{(h,(q,v))\in\R^3:\sqrt{\abs{h}}\abs{v}<1\}$. Also, we know that
  $\{0\}\times \Delta_\R\subset \mathcal{W}$ and
  $\mathcal{W}=\ti{\del^\pm}(\mathcal{U}) \subset (\{0\}\times
  \Delta_{\R}) \cupdot \{(h,(q^+,q^-)):h\neq 0 \text{ and }
  \abs{q^+-q^-}<\sqrt{\abs{h}} \}$. Hence,
  $\mathcal{W}\SM(\{0\}\times \R^2) \subset \{(h,(q^+,q^-)):h\neq 0
  \text{ and } \abs{q^+-q^-}<\sqrt{\abs{h}} \}$. Then, for any
  $q\in \R$, as
  $(0,(q,q))\in \{0\}\times \Delta_\R\subset \mathcal{W}$ that is
  open, there exists $\epsilon\in (0,\frac{1}{2})$ such that
  $(-\epsilon,\epsilon)\times (q-\epsilon,q+\epsilon)\times
  (q-\epsilon,q+\epsilon)\subset \mathcal{W}$. Thus
  \begin{equation*}
    \begin{split}
      ((-\epsilon,\epsilon)\SM\{0\})\times
      (q-\epsilon,q+\epsilon)\times& (q-\epsilon,q+\epsilon) \subset\\&
      \{(h,(q^+,q^-)):h\neq 0 \text{ and }
      \abs{q^+-q^-}<\sqrt{\abs{h}} \}.
    \end{split}
  \end{equation*}
  The point $(\epsilon^2,(q-\frac{\epsilon}{2},q+\frac{\epsilon}{2}))$
  is in the set on the left side but, as
  $\abs{(q-\frac{\epsilon}{2}) - (q+\frac{\epsilon}{2})} = \epsilon =
  \sqrt{\epsilon^2}$, that point is not in the set on the right
  side. This shows that, in this example, no set $\mathcal{W}$
  compatible with
  Lemma~\ref{le:discretizations_yield_discrete_tangent_bundles-U_and_W}
  can be open.
\end{example}


\subsection{Discretizations of a forced mechanical system}
\label{sec:discretizations_of_a_forced_mechanical_system}

In this section we expand the notion of discretization of $TQ$ given
in section~\ref{sec:discretizations_of_TQ} to include the dynamical
parts of a forced mechanical system. This notion coincides with the
notion of discretization of a mechanical system given
in~\cite{ar:patrick_cuell-error_analysis_of_variational_integrators_of_unconstrained_lagrangian_systems},
when the force vanishes.

\begin{definition}\label{def:discretization_CP}
  Let $(Q,L,f)$ be a \FMS. A \jdef{discretization} of $(Q,L,f)$ is a
  quadruple $(Q,\psi,L_\CPDS,f_\CPDS)$\footnote{The sub-index $\CPS$
    that appears, for instance, in $L_\CPS$ comes from Cuell and
    Patrick who formulated a discrete mechanics based on $TQ$ in their
    work.} such that $\psi$ is a discretization of $TQ$,
  $L_\CPDS:\R\times TQ\rightarrow \R$ is smooth and satisfies
  $L_\CPHS = h L + \mathcal{O}(h^2)$ and, for
  $p_2:\R\times TQ\rightarrow TQ$ the projection onto the second
  variable, $f_\CPDS$ is a section of
  $p_2^*(T^*TQ)\rightarrow \R\times TQ$\footnote{$f_\CPDS$ is just an
    element of $\mathcal{A}^1(\R\times TQ)$ that satisfies
    $f_\CPHS(v)(\delta h) = 0$ for all
    $\delta h\in T_h\R\subset T_h R\oplus T_v TQ \simeq
    T_{(h,v)}(\R\times TQ)$.}  that satisfies
  $f_\CPHS = h f + \mathcal{O}(h^2)$. In this context, we consider
  $\R\times TQ$ with the grading $\grade_{\R\times TQ}(h,v):=h$.
\end{definition}

\begin{example}\label{ex:linear_discretization_with_FMS}
  Let $(\R^n,L,f)$ be a \FMS on $\R^n$. Define $L_\CPHS(v):=h L(v)$ and 
  $f_\CPHS(v):=h f(v)$, and let $(\psi,\dBTp,\dBTm)$ be the smooth
  discretization of $\R^n$ introduced in
  Example~\ref{ex:linear_discretization}. Then
  $(\R^n,\psi,L_\CPDS,f_\CPDS)$ is a discretization of
  $(\R^n,L,f)$.
\end{example}

\begin{remark}\label{rem:local_Ld_or_f}
  In some applications the discrete Lagrangian or the discrete force
  of a discretization of a \FMS may not be defined on $\R\times TQ$
  but, rather, on some open neighborhood $V_{L,f}\subset \R\times TQ$
  of $\{0\}\times TQ$ (see, for instance,
  Example~\ref{ex:exact_discretization_forced-system} below). In this case,
  $U^+$ and $U^-$, the domains of $\del^+$ and $\del^-$, are replaced
  by $U^+\cap V_{L,f}$ and $U^-\cap V_{L,f}$.
\end{remark}

\begin{example}\label{ex:exact_discretization_forced-system}
  Let $(Q,L,f)$ be a regular \FMS. Consider an exact discretization
  $(\psi^E,\dBTp,\dBTm)$ of $TQ$ as introduced in
  Example~\ref{ex:exact_discretization_forced} associated to
  $(Q,L,f)$. Define $L^E_\CPDS:U^{X_{L,f}}\rightarrow \R$ and
  $f^E_\CPDS:U^{X_{L,f}} \rightarrow T^*TQ$ as
  \begin{gather*}
    L^E_\CPHS(v) := \int_{\dBTm(h)}^{\dBTp(h)} (L\circ F^{X_{L,f}}_t)(v) dt,\\
    f^E_\CPHS(v)(\delta v) := \int_{\dBTm(h)}^{\dBTp(h)}
    f(F^{X_{L,f}}_t(v))(T_vF^{X_{L,f}}_t(\delta v)) dt.
  \end{gather*}
  Then, $(Q,\psi^E,L^E_\CPDS,f^E_\CPDS)$ is a discretization
  of $(Q,L,f)$ known as an \jdef{exact discretization}. Indeed, using
  a local description
  (Lemma~\ref{le:order_of_maps_in_terms_of_local_expression}),
  Theorem~\ref{th:taylor-r_minus_1} and the facts that
  $L^E_\CPZS(v) = 0$ and
  \begin{equation*}
    \begin{split}
      \pd{}{h}\bigg|_{h=0} L^E_\CPHS(v) =& (L\circ
      F^{X_{L,f}}_{\dBTp(0)})(v)(\dBTp)'(0) - (L\circ
      F^{X_{L,f}}_{\dBTm(0)})(v)(\dBTm)'(0) \\=& L(v)
      ((\dBTp)'(0)-(\dBTm)'(0)) = L(v),
    \end{split}
  \end{equation*}
  we have that $L^E_\CPHS(v) = h L(v) +\mathcal{O}(h^2)$. Similarly,
  $f^E_\CPZS(v)(\delta v)=0$ and
  \begin{equation*}
    \begin{split}
      \pd{}{h}\bigg|_{h=0}f^E_\CPHS(v)(\delta v) =&
      f(F^{X_{L,f}}_{\dBTp(0)}v)(T_vF^{X_{L,f}}_{\dBTp(0)}(\delta
      v))(\dBTp)'(0) \\& -
      f(F^{X_{L,f}}_{\dBTm(0)}v)(T_vF^{X_{L,f}}_{\dBTm(0)}(\delta
      v))(\dBTm)'(0) \\=& f(v)(\delta v)
      ((\dBTp)'(0)-(\dBTm)'(0)) = f(v)(\delta v),
    \end{split}
  \end{equation*}
  lead to
  $f^E_\CPHS(v)(\delta v) = h f(v)(\delta v) + \mathcal{O}(h^2)$.
\end{example}

\begin{remark}
  If $(Q,L,f)$ is a regular \FMS and $\check{f}:TQ\rightarrow T^*Q$ is
  the corresponding force field (see Remark~\ref{rem:force_field}),
  the discrete exact force can be expressed as
  \begin{equation*}
    f^E_\CPHS(v)(\delta v) := \int_{\dBTm(h)}^{\dBTp(h)}
    \check{f}(F^{X_{L,f}}_t(v))(T_v(\tau_Q\circ F^{X_{L,f}}_t)(\delta v)) dt.
  \end{equation*}
\end{remark}

\begin{example}\label{ex:particle_with_friction-exact_discretization_system}
  Now we specialize the construction given in
  Example~\ref{ex:exact_discretization_forced-system} of an exact
  discretization of a \FMS to the case of the system introduced in
  Example~\ref{ex:particle_with_friction-def}. We use the exact
  discretization $\psi^E$ and the maps $\dBTp,\dBTm$ from
  Example~\ref{ex:particle_with_friction-exact_discretization}. Some
  straightforward calculations lead to
  \begin{equation*}
    \begin{split}
      L^E_\CPHS(q,v) =& \frac{1}{4\alpha} v (1-e^{-2\alpha h}),\\
      f^E_\CPHS(q,v) =& -v \big( (1-e^{-\alpha h}) dq +
      \frac{1}{\alpha}\big( (1-e^{-\alpha h}) - \frac{1}{2}
      (1-e^{-2\alpha h})\big) dv\big).
    \end{split}
  \end{equation*}
\end{example}

Just as in the case of discretizations of $TQ$, a discretization of a
\FMS consists of a family of objects, parametrized by $h$. In what
follows we describe a type of dynamical system on $TQ$ that models the
data of a discretization of a \FMS at a fixed value of $h$.

\begin{definition}\label{def:forced_discrete_mechanicalCP_system}
  A \jdef{forced discrete mechanical system} (\FDMSCP) consists of a
  quadruple $(Q,(\mathcal{V},\del^+,\del^-),L_\CPS,f_\CPS)$, where $Q$
  is a differentiable manifold, $(\mathcal{V},\del^+,\del^-)$ is a
  discrete tangent bundle of $Q$, $L_\CPS:\mathcal{V}\rightarrow \R$
  is a smooth function and $f_\CPS$ is a smooth $1$-form over
  $\mathcal{V}$\footnote{More explicitly, the discrete force $f_\CPS$
    is a smooth section of the cotangent bundle
    $T^*\mathcal{V}\rightarrow \mathcal{V}$.}. For simplicity, we
  denote a \FDMSCP by $(Q,\mathcal{V},L_\CPS,f_\CPS)$.
\end{definition}

Below we introduce the notion of trajectory of a \FDMSCP, making it a
discrete-time dynamical system. We define the \jdef{space of first
  order discrete paths} on $\mathcal{V}$ of length $N$ as
\begin{equation}\label{eq:space_of_N-paths_CP-def}
  \mathcal{C}_N:=\{v_\cdot\in \mathcal{V}^N:\del^+(v_k) =
    \del^-(v_{k+1}) \text{ for } k=1,\ldots, N-1\}.
\end{equation}
By definition, $\del^+$, $\del^-$ are submersions and $\del^\pm$ is a
local diffeomorphism (see
Remark~\ref{rem:discrete_tangent_bundle_borderPM_is_diffeo}); it is
easy to see that, then, $\mathcal{C}_N\subset \mathcal{V}^N$ is an
embedded submanifold.  An \jdef{infinitesimal variation} over the path
$v_\cdot\in \mathcal{C}_N$ is an element
$\delta v_\cdot \in T_{v_\cdot}\mathcal{C}_N$.  Notice that the
tangency condition is equivalent to
$\delta v_k \in T_{v_k}\mathcal{V}$ for $k=1,\ldots, N$ and
$(T_{v_k}\del^+)(\delta v_k) = (T_{v_{k+1}}\del^-)(\delta v_{k+1})$
for $k=1,\ldots,N-1$. An infinitesimal variation $\delta v_\cdot$ over
$v_\cdot\in \mathcal{C}_N$ is said to have \jdef{fixed endpoints} if
$(T_{v_1}\del^-)(\delta v_1) =0$ and $(T_{v_N}\del^+)(\delta v_N) =0$;
notice that it is not true that a fixed endpoint infinitesimal
variation must satisfy $\delta v_1=0$ and $\delta v_N=0$.

\begin{definition}
  \label{def:forced_discrete_mechanicalCP_systems-variational_pple}
  Let $(Q,\mathcal{V},L_\CPS,f_\CPS)$ be an \FDMSCP. A discrete path
  $v_\cdot\in \mathcal{C}_N$ is a \jdef{trajectory} of the system if
  it satisfies the critical condition
  \begin{equation*}
    d(\sum_{k=1}^NL_\CPS\circ p_k)(v_\cdot)(\delta v_\cdot) + 
    \sum_{k=1}^N f_\CPS(v_k)(\delta v_k) = 0
  \end{equation*}
  for all infinitesimal variations $\delta v_\cdot$ over $v_\cdot$
  with fixed endpoints.
\end{definition}

\begin{remark}
  An \FDMSCP with vanishing force is exactly the same thing as a
  discrete Lagrangian system considered by Cuell \& Patrick in Section
  3
  of~\cite{ar:cuell_patrick-geometric_discrete_analogues_of_tangent_bundles_and_constrained_lagrangian_systems}.
\end{remark}

\begin{remark}\label{rem:forced_variational_pple_as_skew_critical}
  The trajectories of a \FDMSCP $(Q,\mathcal{V},L_\CPS,f_\CPS)$ can be
  seen as critical points of a critical problem. Indeed, with the
  notation used in Section~\ref{sec:critical_problems}, let
  $g:M\rightarrow N$ be defined by
  \begin{equation*}
    g_{\mathcal{C}_N}:\mathcal{C}_N\rightarrow Q\times Q \stext{ where }
    g_{\mathcal{C}_N}(v_\cdot) := (\del^-(v_1),\del^+(v_N)).
  \end{equation*}
  Also, define the distribution $\mathcal{D}$ on $\mathcal{C}_N$ such
  that, for $v_\cdot\in \mathcal{C}_N$, 
  \begin{equation*}
    \begin{split}
      \mathcal{D}_{v_\cdot} := \ker(Tg_{\mathcal{C}_N}(v_\cdot)) =&\{
      \delta v_\cdot \in T_{v_\cdot}\mathcal{C}_N :
      (T_{v_1}\del^-)(\delta v_1) = 0 \text{ and }
      (T_{v_N}\del^+)(\delta v_N) = 0\} \\=&
      T_{v_\cdot}(g_{\mathcal{C}_N}^{-1}(g_{\mathcal{C}_N}(v_\cdot))).
    \end{split}
  \end{equation*}
  Last, we define $\dAFop\in\mathcal{A}^1(\mathcal{V})$ and
  $\dAFnp\in \mathcal{A}^1(\mathcal{C}_N)$ by
  \begin{equation*}
    \dAFop := dL_\CPS + f_\CPS \stext{ and }
    \dAFnp := i_{\mathcal{C}_N,\mathcal{V}^N}^*
    \bigg(\underbrace{\sum_{k=1}^N p_k^*(\dAFop)}_{\in \mathcal{A}^1(\mathcal{V}^N)} 
    \bigg),
  \end{equation*}
  where
  $i_{\mathcal{C}_N,\mathcal{V}^N}:\mathcal{C}_N\rightarrow
  \mathcal{V}^N$ is the inclusion map and
  $p_k:\mathcal{V}^N \rightarrow \mathcal{V}$ is the projection onto
  the $k$-th factor.

  Then $(\dAFnp,g_{\mathcal{C}_N},\mathcal{D})$ is a critical problem
  and its critical points are the length $N$ trajectories of the
  \FDMSCP $(Q,\mathcal{V},L_\CPS,f_\CPS)$. As a consequence, Lemma 1
  in~\cite{ar:cuell_patrick-skew_critical_problems} can be used to
  prove, in certain cases, the existence of trajectories of
  $(Q,\mathcal{V},L_\CPS,f_\CPS)$.

  In the context of
  Remark~\ref{rem:critical_condition_for_D=Tg_crit_problems}, the
  criticality condition at $(q_0,q_1)\in Q\times Q$ for points
  $v_\cdot\in \mathcal{C}_N$ is that
  $v_\cdot\in \mathcal{Z}_{q_0,q_1}$ and that
  $i_{\mathcal{Z}_{q_0,q_1}}^*(\dAFnp)(v_\cdot)=0$, for
  $\mathcal{Z}_{q_0,q_1}:=\{v_\cdot\in \mathcal{C}_N: \del^-(v_1)=q_0
  \text{ and } \del^+(v_N)=q_1\}$.
\end{remark}


\subsection{Flows of discretizations of \FMSs}
\label{sec:flows_of_discretizations_of_FMS}

It can be seen that an arbitrary \FDMSCP (even with vanishing forces)
does not need to have trajectories. In this section we want to prove
that, under certain conditions, the \FDMSCPs arising from the
discretization of a regular \FMS for a fixed $h$ do have trajectories,
at least, for values of $h$ sufficiently small.

Before discussing the main results, we introduce some of the basic
objects that are required.

Let $(Q,(\psi,\del^+,\del^-),L_\CPDS,f_\CPDS)$ be a
discretization of the regular \FMS $(Q,L,f)$. Then, as $\psi$ is a
discretization of $TQ$, we have that
$\del^+_0(v_q) = q = \del^-_0(v_q)$, so there is an open neighborhood
$A\subset \R\times TQ$ of $\{0\}\times TQ$ on which $\del^+_h$ and
$\del^-_h$ are submersions. Define
\begin{equation}\label{eq:submanifold_of_matching_arrows_after_blow_up}
  \begin{gathered}
    \mathcal{C}:=\{(h,v,\ti{v}):(h,v),(h,\ti{v})\in A \text{ and }
    \del^+_h(v) = \del^-_h(\ti{v})\} \subset \R\times TQ\times TQ,\\
    \mathcal{C}^*:=\{(h,v,\ti{v})\in \mathcal{C}:h\neq 0\} \stext{ and
    } \mathcal{C}_0 := (p_1|_\mathcal{C})^{-1}(0).
  \end{gathered}
\end{equation}
We also define
\begin{equation}\label{eq:g_cp-def}
  g:\mathcal{C}\rightarrow \R\times Q\times Q \stext{ by }
  g(h,v,\ti{v}):=(h,\del_h^-(v),\del_h^+(\ti{v})).
\end{equation}
Last, we introduce $\dAFop_{\cdot}$ as a section of
$p_2^*(T^*TQ)\rightarrow \R\times TQ$ by
\begin{equation*}
  \dAFop_{\cdot} := d_2 L_\CPDS + f_\CPDS
\end{equation*}
where $d_2$ was defined in
Section~\ref{sec:remarks_on_cartesian_products} and then
$\dAFtpst\in\mathcal{A}^1(\mathcal{C}^*)$ by
\begin{equation}\label{eq:alpha_cp-def}
  \dAFtpst :=
  i_{\mathcal{C}^*,\R\times TQ\times TQ}^*(p_{12}^*(\dAFop) + 
  p_{13}^*(\dAFop)).
\end{equation}

\begin{lemma}\label{le:properties_of_C}
  With the definitions as above and $h_0\neq 0$ fixed, the following
  assertions are true.
  \begin{enumerate}
  \item \label{it:properties_of_C-submanifold}
    $\mathcal{C}\subset \R\times TQ\times TQ$ is an embedded
    submanifold.
  \item
    \label{it:properties_of_C-C_h_submanifold}
    $\mathcal{C}_{h_0} := \{(h_0,v,\ti{v})\in \mathcal{C}\}\subset
    \mathcal{C}$ is an embedded submanifold.
  \item \label{it:properties_of_C-regular_values} Let
    $g_{h_0}:\mathcal{C}_{h_0}\rightarrow \R\times Q\times Q$ be
    defined by $g_{h_0} := g\circ i_{\mathcal{C}_{h_0}}$. Then,
    $g|_{\mathcal{C}^*}:\mathcal{C}^*\rightarrow \R\times Q\times Q$
    and $g_{h_0}:\mathcal{C}_{h_0}\rightarrow \{h_0\}\times Q\times Q$
    are submersions.
  \end{enumerate}
\end{lemma}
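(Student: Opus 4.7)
The plan is to realize $\mathcal{C}$ and $\mathcal{C}_{h_0}$ as preimages of the diagonal $\Delta_Q\subset Q\times Q$ under suitable submersions built from $\del^\pm_h$, and then to derive the submersion properties of $g$ from the discrete tangent bundle structure of $\del^{\mp}_h$ supplied by Theorem~\ref{th:discretizations_yield_discrete_tangent_bundles}. For part~(1), let $\tilde A := \{(h,v,\tilde v)\in \R\times TQ\times TQ : (h,v),(h,\tilde v)\in A\}$, which is open, and consider the smooth map $\Phi:\tilde A \to Q\times Q$ defined by $\Phi(h,v,\tilde v) := (\del^+_h(v),\del^-_h(\tilde v))$. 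Since $\del^\pm_h$ are submersions in $v$ on $A$ by assumption, the partial derivatives $T_v\del^+_h$ and $T_{\tilde v}\del^-_h$ are each surjective, so varying $\delta v$ and $\delta \tilde v$ independently already covers all of $T_{\del^+_h(v)}Q \oplus T_{\del^-_h(\tilde v)}Q$, and $\Phi$ is a submersion. The preimage theorem gives $\mathcal{C} = \Phi^{-1}(\Delta_Q)$ as an embedded submanifold of $\tilde A$, hence of $\R\times TQ\times TQ$. For part~(2), the same argument applied to $\Phi_{h_0}(v,\tilde v) := (\del^+_{h_0}(v),\del^-_{h_0}(\tilde v))$ on the open set $\tilde A_{h_0} := \{(v,\tilde v): (h_0,v),(h_0,\tilde v)\in A\}$ exhibits $\mathcal{C}_{h_0}$ as an embedded submanifold of $\{h_0\}\times TQ\times TQ$, and therefore of $\mathcal{C}$.

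For part~(3), fix $(h,v,\tilde v)\in\mathcal{C}^*$ and an arbitrary target $(\eta,w_1,w_2)\in \R \oplus T_{\del^-_h(v)}Q \oplus T_{\del^+_h(\tilde v)}Q$. A tangent vector in $T_{(h,v,\tilde v)}\mathcal{C}$ is a triple $(\delta h,\delta v,\delta \tilde v)$ satisfying $T\del^+(\delta h,\delta v) = T\del^-(\delta h,\delta \tilde v)$, and it is sent by $Tg$ to $(\delta h, T\del^-(\delta h,\delta v), T\del^+(\delta h,\delta \tilde v))$. By Theorem~\ref{th:discretizations_yield_discrete_tangent_bundles} applied at $h\neq 0$, the map $\del^{\mp}_h$ is a local diffeomorphism at $v$ and at $\tilde v$ (after shrinking $A$ if necessary), so $(T_v\del^-_h,T_v\del^+_h)$ and $(T_{\tilde v}\del^-_h,T_{\tilde v}\del^+_h)$ are isomorphisms. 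Setting $\delta h := \eta$ and choosing any $u\in T_{\del^+_h(v)}Q = T_{\del^-_h(\tilde v)}Q$, I then solve uniquely for $\delta v$ with $T\del^-(\eta,\delta v) = w_1$ and $T\del^+(\eta,\delta v) = u$, and for $\delta \tilde v$ with $T\del^-(\eta,\delta \tilde v) = u$ and $T\del^+(\eta,\delta \tilde v) = w_2$. The compatibility condition defining $T\mathcal{C}$ holds by construction, and $Tg$ sends the resulting vector to $(\eta,w_1,w_2)$, so $g|_{\mathcal{C}^*}$ is a submersion; specialising the same construction to $\eta = 0$ gives the submersion claim for $g_{h_0}$.

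The main obstacle is the passage from the weaker hypothesis ``each $\del^\pm_h$ is a submersion'' to the stronger statement that the pair $(T_v\del^-_h, T_v\del^+_h)$ is a linear isomorphism, since only the latter decouples the constraints on $\delta v$ and $\delta \tilde v$ and permits matching an arbitrary target in part~(3). That isomorphism fails along $\mathcal{C}_0$ (where $\del^+_0 = \del^-_0 = \tau_Q$ share a common vertical kernel), but is precisely what Theorem~\ref{th:discretizations_yield_discrete_tangent_bundles} delivers for $h\neq 0$, which is also why the submersion conclusion for $g$ is restricted to $\mathcal{C}^*$.
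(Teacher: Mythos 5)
Your proof is correct and follows essentially the same route as the paper: parts (1) and (2) realize $\mathcal{C}$ and $\mathcal{C}_{h_0}$ as preimages of the diagonal under submersions built from the boundary maps (the paper obtains (2) instead by noting that $p_1|_{\mathcal{C}}$ is a submersion, which amounts to the same thing), and part (3) solves the tangent equations explicitly using the invertibility of $T\del^{\mp}_h$ for $h\neq 0$. The only cosmetic difference is that in (3) you use the isomorphism $(T_v\del^-_h,T_v\del^+_h)$ symmetrically at both $v$ and $\ti{v}$ via an auxiliary intermediate vector $u$, whereas the paper uses only the submersion property of $\del^-_h$ at $v$ and the full isomorphism at $\ti{v}$; both correctly identify that the decoupling fails at $h=0$, which is why the conclusion is restricted to $\mathcal{C}^*$.
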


\begin{proof}
  Let
  $F^\pm(h,v,\ti{v}):=(h,\del^+_h(v),\del^-_h(\ti{v})) \in
  \R\times Q\times Q$,
  that is defined and smooth in an open subset
  $U\subset \R\times TQ\times TQ$ containing $\{0\}\times TQ\times TQ$.
  Being $p_{12},p_{13}:\R\times TQ\times TQ\rightarrow \R\times TQ$
  continuous, we have that
  $U':=U\cap p_{12}^{-1}(A) \cap p_{13}^{-1}(A)\subset \R\times
  TQ\times TQ$
  is open; notice that $U'\supset \{0\}\times TQ\times TQ$. Using the
  Cartesian product structure of the respective spaces, for
  $(h,v,\ti{v}) \in U$ we can write
  \begin{equation*}
    T_{(h,v,\ti{v})}F^\pm = \left(
      \begin{array}{ccc}
        1 & 0 & 0\\
        T^1_{(h,v)}\del^+_h(v) & T^2_{(h,v)}\del^+_h(v) & 0\\
        T^1_{(h,\ti{v})}\del^-_h(\ti{v}) & 0 & T^2_{(h,\ti{v})} \del^-_h(\ti{v})
      \end{array}
    \right),
  \end{equation*}
  where, $T_{(h,v)}^jF$ was defined in
  Section~\ref{sec:remarks_on_cartesian_products}. It is easy to check
  that, for $(h,v,\ti{v}) \in U'$, $T_{(h,v,\ti{v})}F^\pm$ is onto, so
  that $F^\pm|_{U'}$ is a submersion into $\R\times Q\times Q$. Hence,
  as $\R\times \Delta_Q \subset \R\times Q\times Q$ is an embedded
  submanifold, $\mathcal{C} = (F^\pm|_{U'})^{-1}(\R\times \Delta_Q)$
  is an embedded submanifold of $U'$ (hence of
  $\R\times TQ\times TQ$), proving
  point~\eqref{it:properties_of_C-submanifold}.  In addition, for
  $(h,v,\ti{v})\in \mathcal{C}$,
  \begin{equation}\label{eq:tangent_to_C}
    \begin{split}
      T_{(h,v,\ti{v})}\mathcal{C} =&
      (T_{(h,v,\ti{v})}F^\pm)^{-1}(T_{F^\pm(h,v,\ti{v})}(\R\times
      \Delta_Q)) \\=& \{(\delta h, \delta v, \delta \ti{v}) \in
      T_{(h,v,\ti{v})}(\R\times TQ\times TQ) :\\& \phantom{\{(\delta
        h, \delta v, \delta \ti{v})} T^1_{(h,v)}\del^+_h(v)(\delta h)
      + T^2_{(h,v)}\del^+_h(v)(\delta v) \\& \phantom{\{(\delta h,
        \delta v, \delta \ti{v})} =
      T^1_{(h,\ti{v})}\del^-_h(\ti{v})(\delta h) + T^2_{(h,\ti{v})}
      \del^-_h(\ti{v})(\delta \ti{v})\}
    \end{split}
  \end{equation}
    
  Now, consider $p_1|_\mathcal{C}$, where
  $p_1:\R\times TQ\times TQ\rightarrow \R$ is the projection. For
  $(h,v,\ti{v})\in \mathcal{C}$, using the characterization of
  $T_{(h,v,\ti{v})}\mathcal{C}$ provided by~\eqref{eq:tangent_to_C}
  and recalling that $T^2_{(h,v)}\del^+_h(v)$ and
  $T^2_{(h,\ti{v})}\del^-_h(\ti{v})$ are onto, it is easy to see
  that $T_{(h,v,\ti{v})}(p_1|_\mathcal{C})$ is onto, so that
  $p_1|_\mathcal{C}$ is a submersion and
  $\mathcal{C}_{h_0} = (p_1|_\mathcal{C})^{-1}(\{h_0\})$ is an
  embedded submanifold of $\mathcal{C}$. Hence,
  point~\eqref{it:properties_of_C-C_h_submanifold} is valid.

  Let $\ti{g}:U\rightarrow \R\times Q\times Q$ be defined by
  $\ti{g}(h,v,\ti{v}) := (h, \del^-_h(v),\del^+_h(\ti{v}))$;
  in addition, let $g:=\ti{g}|_{\mathcal{C}}$.  Clearly $\ti{g}$ and
  $g$ are smooth. We want to prove that $g$ is a submersion if
  $h\neq 0$.  Using the Cartesian product structure of
  $\R\times TQ\times TQ$ and $\R\times Q\times Q$, we see that, for
  $(h,v,\ti{v})\in U$,
  \begin{equation*}
    T_{(h,v,\ti{v})}\ti{g} = \left(
      \begin{array}{ccc}
        1 & 0 & 0\\
        T^1_{(h,v)}\del^-_h(v) & T^2_{(h,v)}\del^-_h(v) & 0\\
        T^1_{(h,\ti{v})}\del^+_h(\ti{v}) & 0 & T^2_{(h,\ti{v})} \del^+_h(\ti{v})
      \end{array}
    \right)
  \end{equation*}
  and, if $(h,v,\ti{v})\in \mathcal{C}$,
  $T_{(h,v,\ti{v})}g =
  (T_{(h,v,\ti{v})}\ti{g})|_{T_{(h,v,\ti{v})}\mathcal{C}}$.  For
  $(h,v,\ti{v})\in \mathcal{C}$, let $q_0:=\del^-_h(v)$ and
  $q_2:=\del^+_h(\ti{v})$. Then, for any
  $(\delta h, \delta q_0, \delta q_2)\in T_{(h,q_0,q_2)}(\R\times
  Q\times Q)$, choose $\delta v\in T_vTQ$ so that
  $T^2_{(h,v)}\del^-_h(v)(\delta v) = \delta q_0 -
  T^1_{(h,v)}\del^-_h(v)(\delta h)\in T_{q_0}Q$, which is possible
  because $\del^-_h(v)$ is a submersion for $(h,v)\in A$ (which
  holds automatically for $(h,v,\ti{v})\in \mathcal{C}$). Next, we
  define $q_1:= \del^+_h(v)$ and
  $\delta q_1 := T^1_{(h,v)}\del^+_h(v)(\delta h) +
  T^2_{(h,v)}\del^+_h(v)(\delta v) -
  T^1_{(h,\ti{v})}\del^-_h(\ti{v})(\delta h) \in T_{q_1}Q$.  As
  $\del^\mp_h$ is a submersion\footnote{That $\del^\mp_h$ is a
    submersion at every $(h,\ti{v})$ for $h\neq 0$ is part of
    Theorem~\ref{th:discretizations_yield_discrete_tangent_bundles}.}
  at $(h,\ti{v})$ as long as $h\neq 0$, there are
  $\delta \ti{v} \in T_{\ti{v}} TQ$ such that
  \begin{equation*}
    (T^2_{(h,\ti{v})}\del^-_h(\ti{v})(\delta \ti{v}),
    T^2_{(h,\ti{v})}\del^+_h(\ti{v})(\delta \ti{v})) = 
    T^2_{(h,\ti{v})}\del^\mp_h(\ti{v})(\delta \ti{v}) = (\delta q_1, \delta
    q_2 - T^1_{(h,\ti{v})}\del^+_h(\ti{v})(\delta h)).
  \end{equation*}
  It is immediate to check that
  $(\delta h, \delta v, \delta \ti{v}) \in
  T_{(h,v,\ti{v})}\mathcal{C}$
  and that
  $T_{(h,v,\ti{v})}g(\delta h, \delta v, \delta \ti{v}) = (\delta h,
  \delta q_0, \delta q_1)$.
  This proves that $g|_{\{h\neq 0\}}$ is a submersion, so that for any
  $(h,q_0,q_2)\in \R\times Q\times Q$ with $h\neq 0$,
  $g^{-1}(h,q_0,q_2)\subset \mathcal{C}$ is a submanifold.  The same
  computation as above, with $\delta h=0$ shows that $g_{h_0}$ is also
  a submersion, concluding the proof of
  point~\eqref{it:properties_of_C-regular_values}.
\end{proof}

The main result of this section is the following extension of Theorem
3.7
in~\cite{ar:patrick_cuell-error_analysis_of_variational_integrators_of_unconstrained_lagrangian_systems}
to forced systems.

\begin{theorem}\label{th:flow_of_discretizations_of_FMS}
  Let $(Q,\psi,L_\CPDS,f_\CPDS)$ be a discretization of the
  regular \FMS $(Q,L,f)$. Then, there are open neighborhoods
  $W\subset \R\times TQ$ of $\{0\}\times TQ$ and
  $U\subset \R\times TQ\times TQ$ of $\{0\}\times \Delta_{TQ}$ such
  that for all $(h,v)\in W$ with $h\neq 0$ there is a unique
  $\ti{v}\in TQ$ such that $(h,v,\ti{v})\in U$ is a critical point of
  the problem
  $(\dAFtpst,g|_{\mathcal{C}^*},\ker(Tg|_{\mathcal{C}^*}))$. Moreover,
  $U$ and $W$ can be chosen so that $F:W\rightarrow TQ$ defined by
  \begin{equation*}
    F(h,v):=
    \begin{cases}
      \ti{v} \stext{ if } h\neq 0,\\
      v \stext{ if } h=0
    \end{cases}
  \end{equation*}
  is smooth (in $h$ and $v$).
\end{theorem}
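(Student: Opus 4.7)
The plan is to reduce this theorem to Theorem~\ref{th:thm_2_in_cp07} (the Cuell--Patrick implicit function theorem for skew critical problems) applied to an appropriately blown-up version of $(\dAFtpst,g|_{\mathcal{C}^*},\ker Tg|_{\mathcal{C}^*})$ that extends smoothly through $h=0$. Two modifications are required. First, rescale the 1-form: since $L_\CPHS = hL + \mathcal{O}(h^2)$ and $f_\CPHS = hf + \mathcal{O}(h^2)$, the section $\dAFop = d_2L_\CPDS + f_\CPDS$ vanishes identically on $\{0\}\times TQ$, so Proposition~\ref{prop:prop_1_in_CP07} (with grade $h$, $M=\R\times TQ$, $E=p_2^*T^*TQ$) produces a smooth extension $\hat{\dAFop}$ of $\dAFop/h$ with $\hat{\dAFop}(0,v) = (dL+f)(v) = \cAFop(v)$, the continuous Lagrange--D'Alembert 1-form. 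Pulling back by $i_\mathcal{C}^*(p_{12}^*\hat{\dAFop}+p_{13}^*\hat{\dAFop})$ gives a smooth 1-form $\hat{\dAFtp}$ on all of $\mathcal{C}$ which equals $\dAFtpst/h$ on $\mathcal{C}^*$, and hence has the same critical points there. Second, replace $g$: since $g(\mathcal{C}_0)\subset\{0\}\times\Delta_Q$, the map $g$ fails to be a submersion at $h=0$. Using the tubular-neighborhood construction of $\varphi$ from the proof of Lemma~\ref{le:discretizations_yield_discrete_tangent_bundles-U_and_W}, which resolves the diagonal $\{0\}\times\Delta_Q\subset\R\times Q\times Q$ by the normal bundle $\R\times E$, one builds a blow-up $\tilde g:\mathcal{C}\to N$ (for $N$ an open subset of $\R\times E$ or $\R\times TQ$) which is a submersion near $\mathcal{C}_0$ and whose kernel distribution agrees with $\ker Tg$ on $\mathcal{C}^*$.

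Next, I would verify the hypotheses of Theorem~\ref{th:thm_2_in_cp07} along the closed submanifold $M_0 := \{(0,v,v):v\in TQ\} \subset \mathcal{C}_0$, which has the correct dimension $2\dim Q$ to match $N_0 := \tilde g(M_0)$, with $\gamma_0 := (\tilde g|_{M_0})^{-1}$. A direct computation in the local coordinates provided by $\varphi$ shows that each $(0,v,v)\in M_0$ is a critical point of $(\hat{\dAFtp},\tilde g,\ker T\tilde g)$ at $\tilde g(0,v,v)$, essentially because the horizontality of $f$ makes $\cAFop$ vanish on the relevant vertical variations at $h=0$. The key step is to verify that the Hessian $d_{\ker T\tilde g,\tilde g}\hat{\dAFtp}(0,v,v)$ is nondegenerate: after unwinding the blow-up coordinates, this bilinear form reduces to (a twist of) the fiber second derivative $\F^2L(v)$, whose nondegeneracy is precisely the regularity hypothesis of Definition~\ref{def:regularity-GFMS}. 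Theorem~\ref{th:thm_2_in_cp07} then produces a smooth critical point function $\gamma$ on an open neighborhood $\hat V$ of $N_0$; since $(h,v)\mapsto \tilde g(h,v,v)$ is a local diffeomorphism near $\{0\}\times TQ$, composing with its inverse yields the desired smooth $F:W\to TQ$ with $F(0,v)=v$ and, for $h\neq 0$, $F(h,v)=\tilde v$ the unique critical point of the original problem.

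The main obstacle is the explicit Hessian computation at $h=0$ in the blown-up coordinates, where one must carefully track the $h\to 0$ limit of $d_{\ker T\tilde g,\tilde g}\hat{\dAFtp}$ to isolate the contribution of $\F^2L(v)$. This parallels the unforced computation of~\cite[Theorem 3.7]{ar:patrick_cuell-error_analysis_of_variational_integrators_of_unconstrained_lagrangian_systems}; the force $f$ introduces no new difficulty because its horizontality forces $f(v)(\delta v)=0$ on the vertical directions that span the nontrivial part of $\ker T\tilde g$ at $(0,v,v)$, so the vertical-vertical block of the Hessian (where full rank must be checked) depends only on $L$. Once that nondegeneracy is established, extension of critical points, smoothness through $h=0$, uniqueness, and the identification $F(0,v)=v$ follow immediately from Theorem~\ref{th:thm_2_in_cp07} together with the fact that $\hat{\dAFtp}$ and $\tilde g$ agree with the original data (up to nonvanishing rescaling and local diffeomorphism) on $\mathcal{C}^*$.
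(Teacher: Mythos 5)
Your proposal follows the same route as the paper's proof: rescale $\dAFop$ by $h$ via Proposition~\ref{prop:prop_1_in_CP07} to get $\hat{\dAFop}$ with $\hat{\dAFop}(0,\cdot)=dL+f$, replace $g$ by the tubular-neighborhood blow-up $\hat{\varphi}$, check that the points $(0,z_q,z_q)$ are critical at $(0,-z_q,z_q)$ (with the force dropping out by horizontality, so the condition reduces to $\F L(v)=\F L(\ti{v})$), identify the Hessian with a multiple of $\F^2L$, and invoke Theorem~\ref{th:thm_2_in_cp07}. All of this matches Lemmas~\ref{le:comparison_crit_problems}, \ref{le:critical_points_with_h_0} and~\ref{le:nondegeneray_of_critical_points} and the concluding argument of the paper.

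The one step that does not work as written is the extraction of $F$ from the critical-point map $\gamma$. You propose to reparametrize by the map $(h,v)\mapsto\hat{\varphi}(h,v,v)$ and its inverse; but the critical point lying over the boundary value $\hat{\varphi}(h,v,v)$ is some triple $(h,v',\ti{v}')$ with only $\del^-_h(v')=\del^-_h(v)$, and since $\del^-_h$ is far from injective one does not get $v'=v$ for $h\neq 0$. The resulting map would therefore not satisfy that $(h,v,F(h,v))$ is a critical point, which is what the theorem asserts. The correct reparametrization uses instead that $p_{12}\circ\gamma$ restricts at $h=0$ to the diffeomorphism $(0,-z_q,z_q)\mapsto(0,z_q)$ and is hence invertible near $\{0\}\times E$; equivalently, as the paper argues via Proposition 5 of~\cite{ar:cuell_patrick-skew_critical_problems}, the image of $\gamma(h,\cdot)$ is the graph of a smooth function $F(h,\cdot)$ because at $h=0$ it is the graph of the identity. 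This is a local fix rather than a change of strategy, but as stated your final step is incorrect.
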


Much of the proof is identical to the original proof of Theorem 3.7
in~\cite{ar:patrick_cuell-error_analysis_of_variational_integrators_of_unconstrained_lagrangian_systems},
and we go over it below in a number of small steps, so that we can
make the proper adaptations to the presence of a discrete force.

\begin{remark}\label{rem:discrete_flow_existence-for_local_data}
  As noticed in Remark~\ref{rem:local_Ld_or_f}, sometimes, the
  discrete Lagrangians or discrete forces are only defined in an open
  neighborhood $V_{L,f}\subset \R\times TQ$ of $\{0\}\times TQ$. After
  shrinking the $U^+$ and $U^-$, the domains of $\del^+$ and $\del^-$,
  as indicated in Remark~\ref{rem:local_Ld_or_f}, the open set
  $A\subset \R\times TQ$ given above satisfies
  $A\subset U^+\cap U^-\subset V_{L,f}$. Consequently,
  $\mathcal{C}\subset p_{12}^{-1}(V_{L,f})\cap
  p_{13}^{-1}(V_{L,f})$. Also, the open subsets $U$ and $W$ that
  appear in Theorem~\ref{th:flow_of_discretizations_of_FMS} can be
  chosen so that $W\subset V_{L,f}$ and
  $U\subset p_{12}^{-1}(V_{L,f})\cap p_{13}^{-1}(V_{L,f})$. Indeed, in
  the proof of Theorem~\ref{th:flow_of_discretizations_of_FMS} given
  below, $U$ arises as the open neighborhood
  $\hat{U}\subset \R\times TQ\times TQ$ of $\{0\}\times \Delta_{TQ}$
  such that $\hat{\gamma}(h,-z_q,z_q)$ is the unique critical point at
  $(h,-z_q,z_q)$ of
  $(\hat{\dAFtp},\ker(T\hat{\varphi}),\hat{\varphi})$ in $\hat{U}$. As
  all the critical points of that critical problem are contained in
  $W^\mathcal{C} \subset \mathcal{C}\subset p_{12}^{-1}(V_{L,f})\cap
  p_{13}^{-1}(V_{L,f})$, we can replace $\hat{U}$ by
  $\hat{U} \cap (p_{12}^{-1}(V_{L,f})\cap p_{13}^{-1}(V_{L,f}))$ that
  has the same properties as $\hat{U}$ and is contained in
  $p_{12}^{-1}(V_{L,f})\cap p_{13}^{-1}(V_{L,f})$. Also, $W$ arises as
  the domain of $F$ which can be cut with $V_{L,f}$ and, still, retain
  all the properties, in addition to being contained in $V_{L,f}$.
\end{remark}

\begin{remark}\label{rem:FDLS_from_discCP_as_crit_probs}
  Let $\discCP = (Q,\psi,L_\CPDS,f_\CPDS)$ be a
  discretization of the regular \FMS $(Q,L,f)$. Fix $Q_0\subset Q$, a
  relatively compact open submanifold. By
  Theorem~\ref{th:discretizations_yield_discrete_tangent_bundles},
  there is $a>0$ such that for any $h$ with $0\neq \abs{h}<a$,
  $(\mathcal{V}_h,\del^+_h,\del^-_h)$ is a discrete tangent bundle of
  $Q_0$, where $\mathcal{V}_h\subset TQ$ is open. Then,
  \begin{equation*}
    \aFDLS_h =
    (Q_0,\mathcal{V}_h,L_\CPHS|_{\mathcal{V}_h},f_\CPHS|_{\mathcal{V}_h})
  \end{equation*}
  is a \FDMSCP. As mentioned in
  Remark~\ref{rem:forced_variational_pple_as_skew_critical}, the
  trajectories of $\aFDLS_h$ can be described as the critical points
  of $(\dAFfhtp,g^{\mathcal{V}_h},\ker(Tg^{\mathcal{V}_h}))$ where
  $\mathcal{C}^{\mathcal{V}_h} := \{(v,\ti{v})\in \mathcal{V}_h\times
  \mathcal{V}_h:\del^+_h(v)=\del^-_h(\ti{v})\}$,
  $g^{\mathcal{V}_h}:\mathcal{C}^{\mathcal{V}_h} \rightarrow Q_0\times
  Q_0$ is
  $g^{\mathcal{V}_h}(v,\ti{v}) := (\del^-_h(v),\del^+_h(\ti{v}))$,
  $\dAFfhop := d L_\CPHS|_{\mathcal{V}_h} + f_\CPHS|_{\mathcal{V}_h}
  \in \mathcal{A}^1(\mathcal{V}_h)$ and
  $\dAFfhtp := p_1^*(\dAFfhop) + p_2^*(\dAFfhop) \in
  \mathcal{A}^1(\mathcal{C}^{\mathcal{V}_h})$.
\end{remark}

\begin{lemma}\label{le:trajectories_and_critical_points-h_not_0}
  With the notation as above, for $0\neq \abs{h} <a$, an element
  $(h,v,\ti{v})\in \{h\}\times
  \mathcal{C}^{\mathcal{V}_h}\subset\mathcal{C}^*$ defines a
  trajectory $(v,\ti{v})$ of the \FDMSCP $\aFDLS_h$ if and only if it is
  a critical point of the critical problem
  $(\dAFtpst, \ker(T(g|_{\mathcal{C}^*})), g|_{\mathcal{C}^*}))$ on
  $\mathcal{C}^*$.
\end{lemma}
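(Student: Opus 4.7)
The plan is to reduce the statement to a direct comparison of critical conditions, using Remark~\ref{rem:FDLS_from_discCP_as_crit_probs} to rephrase the ``trajectory'' side: $(v,\tilde{v})\in\mathcal{C}^{\mathcal{V}_h}$ is a trajectory of $\aFDLSh$ if and only if it is a critical point of $(\dAFfhtp,g^{\mathcal{V}_h},\ker(Tg^{\mathcal{V}_h}))$. So it suffices to show that the inclusion $F:\mathcal{C}^{\mathcal{V}_h}\hookrightarrow\mathcal{C}^*$ defined by $F(v,\tilde{v}):=(h,v,\tilde{v})$ identifies the criticality condition at the slice $h$ with that of the problem $(\dAFtpst,g|_{\mathcal{C}^*},\ker(T(g|_{\mathcal{C}^*})))$ evaluated at $(h,v,\tilde{v})$.

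First I would analyze the kernels. Since $g(h,v,\tilde{v})=(h,\del^-_h(v),\del^+_h(\tilde{v}))$ records $h$ as its first coordinate, any vector $(\delta h,\delta v,\delta\tilde{v})\in\ker(T_{(h,v,\tilde{v})}g|_{\mathcal{C}^*})$ must satisfy $\delta h=0$. Substituting $\delta h=0$ in the description~\eqref{eq:tangent_to_C} of $T_{(h,v,\tilde{v})}\mathcal{C}$ collapses the $\mathcal{C}$-tangency relation to $T^2_{(h,v)}\del^+_h(v)(\delta v)=T^2_{(h,\tilde{v})}\del^-_h(\tilde{v})(\delta\tilde{v})$, which is exactly the tangency to $\mathcal{C}^{\mathcal{V}_h}$ at $(v,\tilde{v})$; the remaining kernel conditions $T\del^-_h(\delta v)=0$ and $T\del^+_h(\delta\tilde{v})=0$ match the definition of $\ker(T_{(v,\tilde{v})}g^{\mathcal{V}_h})$ verbatim. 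Hence $T_{(v,\tilde{v})}F$ restricts to a linear isomorphism $\ker(Tg^{\mathcal{V}_h})\to\ker(Tg|_{\mathcal{C}^*})$.

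Next I would compute $F^*(\dAFtpst)$ on such variations. By construction $\dAFop=d_2L_\CPDS+f_\CPDS$, and the hypothesis that $f_\CPDS$ is a section of $p_2^*(T^*TQ)$ means that both $d_2L_\CPDS$ and $f_\CPDS$ annihilate vectors whose $\R$-component vanishes except in the $TQ$-direction; concretely, $\dAFop(h,v)(0,\delta v)=dL_\CPHS(v)(\delta v)+f_\CPHS(v)(\delta v)=\dAFfhop(v)(\delta v)$, and likewise for $\tilde{v}$. Pulling the definition~\eqref{eq:alpha_cp-def} of $\dAFtpst$ back through $F$ therefore yields
\begin{equation*}
F^*(\dAFtpst)(v,\tilde{v})(\delta v,\delta\tilde{v})
=\dAFfhop(v)(\delta v)+\dAFfhop(\tilde{v})(\delta\tilde{v})
=\dAFfhtp(v,\tilde{v})(\delta v,\delta\tilde{v}).
\end{equation*}
Combining this identity with the kernel isomorphism above, the criticality of $(h,v,\tilde{v})$ for $(\dAFtpst,g|_{\mathcal{C}^*},\ker(T(g|_{\mathcal{C}^*})))$ and the criticality of $(v,\tilde{v})$ for $(\dAFfhtp,g^{\mathcal{V}_h},\ker(Tg^{\mathcal{V}_h}))$ are literally the same equation, proving the equivalence.

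The proof is mostly bookkeeping; the one point that requires care is verifying that the intrinsic kernel $\ker(Tg^{\mathcal{V}_h})\subset T\mathcal{C}^{\mathcal{V}_h}$ matches under $TF$ with the kernel computed inside the ambient submanifold $\mathcal{C}^*\subset\R\times TQ\times TQ$. This is precisely where the hypothesis $h\neq 0$ enters, via Lemma~\ref{le:properties_of_C}\eqref{it:properties_of_C-regular_values}, ensuring that $g|_{\mathcal{C}^*}$ is a submersion so that $\ker(Tg|_{\mathcal{C}^*})$ has the expected dimension, and that $\mathcal{C}^{\mathcal{V}_h}$ sits smoothly inside $\mathcal{C}^*$ as the slice at $h$.
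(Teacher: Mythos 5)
Your proposal is correct and follows essentially the same route as the paper's own (very terse) proof: reduce the trajectory condition to criticality for $(\dAFfhtp,g^{\mathcal{V}_h},\ker(Tg^{\mathcal{V}_h}))$ via Remark~\ref{rem:FDLS_from_discCP_as_crit_probs}, and then observe that $(\delta h,\delta v,\delta\ti{v})\in\ker(Tg|_{\mathcal{C}^*})$ is equivalent to $\delta h=0$ together with $(\delta v,\delta\ti{v})\in\ker(Tg^{\mathcal{V}_h})$, on which the two action forms agree. You simply spell out the form comparison that the paper leaves implicit.
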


\begin{proof}
  Under the current hypotheses, the equivalence between $(v,\ti{v})$
  being a critical point of
  $(\dAFfhtp,g^{\mathcal{V}_h},\ker(Tg^{\mathcal{V}_h}))$ over
  $\mathcal{C}^{\mathcal{V}_h}$ and $(h,v,\ti{v})$ being a critical
  point of
  $(\dAFtpst, g|_{\mathcal{C}^*},\ker(T(g|_{\mathcal{C}^*})))$ over
  $\mathcal{C}^*$ is fairly clear once one notices that
  $(\delta h, \delta v, \delta \ti{v}) \in \ker(T g|_{\mathcal{C}^*})$
  is equivalent to $\delta h = 0$ and
  $(\delta v, \delta \ti{v}) \in \ker(Tg^{\mathcal{V}_h})$.
\end{proof}

The next step in the original proof, the construction of the map
$\hat{\varphi}$, is purely geometric, that is, independent of
Lagrangians or forces, so that it remains valid in its original
form. From the proof of
Lemma~\ref{le:discretizations_yield_discrete_tangent_bundles-U_and_W}
we recall the vector bundle $E$, the open sets $W^E$ and
$W^{Q\times Q}$, and the tubular map $\zeta:W^E\rightarrow W^{Q\times Q}$.

As $g:\mathcal{C}\rightarrow \R\times Q\times Q$ is continuous and
$g(\mathcal{C}_0)\subset \{0\}\times \Delta_Q$, we have that
$W^\mathcal{C}:= g^{-1}(\R\times W^{Q\times Q})\subset \mathcal{C}$ is
open and contains $\mathcal{C}_0$. Then, as in the original proof, the
map $\hat{\varphi}:W^\mathcal{C}\rightarrow \R\times E$ defined by
\begin{equation*}
  \hat{\varphi}(h,v,\ti{v}) := 
  \begin{cases}
    (h,\frac{1}{h}\zeta^{-1}(\del^-_h(v),\del^+_h(\ti{v})))
    \stext{ if } h\neq 0,\\
    (0,-\frac{1}{2}(v+\ti{v}),\frac{1}{2}(v+\ti{v})) \stext{ if } h=0,
  \end{cases}
\end{equation*}
is
smooth\footnote{In~\cite{ar:patrick_cuell-error_analysis_of_variational_integrators_of_unconstrained_lagrangian_systems}
  the function $\widehat{\varphi}$ is defined interchanging the second
  and third variable. Our choice is made to stay closer to the
  standard use of $(q_0,q_1)$ being a pair where $q_0$ ``is before''
  $q_1$.} (in $h$, $v$ and $\ti{v}$).

The previous definitions give rise to the following commutative
diagram.
\begin{equation*}
  \xymatrix{
    {W^\mathcal{C}} \ar[r]^{\hat{\varphi}} & {\R\times E}\\
    {W^\mathcal{C} \cap \mathcal{C}^*} \ar[r]_{g|_{\mathcal{C}^*}} 
    \ar[u]^{i_{\mathcal{C}^*}} & 
    {\R\times W^{Q\times Q}} \ar[u]_{(h,\frac{1}{h}\zeta^{-1})}
  }
\end{equation*}

We define the maps
\begin{equation*}
  \begin{split}
    \widehat{L_\CPS}(h,v) :=&
    \begin{cases}
      \frac{1}{h}L_\CPHS(v) \stext{ if } h\neq 0,\\
      L(v) \stext{ if } h=0
    \end{cases}
    \stext{ and }\\  \widehat{f_\CPS}(h,v):=&
    \begin{cases}
      \frac{1}{h} f_\CPHS(v) \stext{ if } h\neq 0,\\
      f(v) \stext{ if } h=0.
    \end{cases}
  \end{split}
\end{equation*}
Recalling that $L_\CPHS = h L + \mathcal{O}(h^2)$ and
$f_\CPHS = h f + \mathcal{O}(h^2)$,
Proposition~\ref{prop:prop_1_in_CP07} proves that $\widehat{L_\CPS}$ and
$\widehat{f_\CPS}$ are smooth (in $h$ and $v_q$).

Let $\hat{\dAFop}\in\mathcal{A}^1(\R\times TQ)$ and
$\hat{\dAFtp}\in \mathcal{A}^1(W^\mathcal{C})$ be defined by
\begin{equation}\label{eq:mu_hat_and_sigma_hat-def}
  \hat{\dAFop} := d_2 \widehat{L_\CPS} + \widehat{f_\CPS} \stext{ and } 
  \hat{\dAFtp} := 
  i_{W^\mathcal{C},\R\times TQ\times TQ}^*(p_{12}^*(\hat{\dAFop}) + 
  p_{13}^*(\hat{\dAFop})).
\end{equation}
Comparison of this last expression with~\eqref{eq:alpha_cp-def} shows
that, for $(h,v,\ti{v}) \in W^\mathcal{C}\cap \mathcal{C}^*$,
\begin{equation}\label{eq:comparison_sigma_and_hatsigma}
  h\ i_{W^\mathcal{C}\cap \mathcal{C}^*,W^\mathcal{C}}^*(\hat{\dAFtp})(h,v,\ti{v}) 
  = i_{W^\mathcal{C}\cap \mathcal{C}^*,\mathcal{C}^*}^*(\dAFtpst)(h,v,\ti{v}).
\end{equation}

\begin{lemma}\label{le:comparison_crit_problems}
  \begin{enumerate}
  \item \label{it:comparison_crit_problems-is_critical_problem}
    $(\hat{\dAFtp}, \hat{\varphi},\ker(T\hat{\varphi}))$ is a
    critical problem on $W^\mathcal{C}$ .
  \item The critical points of the problem
    $(\hat{\dAFtp}, \hat{\varphi}, \ker(T \hat{\varphi}))$ on
    $W^\mathcal{C}$ at $(h,-z_q,z_q)\in \R^*\times E$ coincide with
    those of the problem
    $(\dAFtpst, g|_{\mathcal{C}^*}, \ker(T(g|_{\mathcal{C}^*})))$ at
    $(h,q_0,q_2)$ (with $(q_0,q_2):=\zeta(h(-z_q,z_q))$) on
    $W^\mathcal{C}\cap \mathcal{C}^*$.
  \end{enumerate}
\end{lemma}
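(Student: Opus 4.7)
The plan is to verify the two clauses separately, exploiting the factorization of $\hat{\varphi}$ through $g$ and the proportionality relation~\eqref{eq:comparison_sigma_and_hatsigma} between $\dAFtpst$ and $\hat{\dAFtp}$.

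For part~(1), I need to check that $\hat{\varphi}$ is a smooth submersion and that $\hat{\dAFtp}$ is a smooth $1$-form on $W^\mathcal{C}$. Smoothness of $\hat{\dAFtp}$ follows from the smoothness of $\widehat{L_\CPS}$ and $\widehat{f_\CPS}$, which was already argued above using Proposition~\ref{prop:prop_1_in_CP07} from $L_\CPHS=hL+\mathcal{O}(h^2)$ and $f_\CPHS=hf+\mathcal{O}(h^2)$. Smoothness of $\hat{\varphi}$ itself is obtained by a second application of Proposition~\ref{prop:prop_1_in_CP07}, applied to $(h,v,\ti v)\mapsto \zeta^{-1}(\del^-_h(v),\del^+_h(\ti v))$: this map lands in the zero section $0(E)$ precisely when $(h,v,\ti v)\in\mathcal{C}_0$, since then $\del^-_0(v)=q=\del^+_0(\ti v)$ and $\zeta$ matches the diagonal to the zero section. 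The value at $h=0$ reconstructed by the Proposition is computed from $\pd{\psi}{t}(0,0,v_q)=v_q$ and matches the formula $(0,-\tfrac12(v+\ti v),\tfrac12(v+\ti v))$ prescribed in the definition of $\hat{\varphi}$.

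Submersivity of $\hat{\varphi}$ is the main obstacle. On $\mathcal{C}^*\cap W^\mathcal{C}$ it is automatic from the factorization $\hat{\varphi}=\Phi\circ g|_{\mathcal{C}^*}$, where
\begin{equation*}
\Phi:\R^*\times W^{Q\times Q}\rightarrow \R^*\times E,\qquad \Phi(h,q_0,q_2):=(h,\tfrac{1}{h}\zeta^{-1}(q_0,q_2)),
\end{equation*}
is a diffeomorphism onto its image, combined with Lemma~\ref{le:properties_of_C}\eqref{it:properties_of_C-regular_values}. At points of $\mathcal{C}_0$, I would carry out a direct tangent computation: differentiating $\hat{\varphi}$ along curves in $\mathcal{C}$ passing through $(0,v,\ti v)$, using the tangent characterization~\eqref{eq:tangent_to_C}, and extracting surjectivity onto $T_{(0,-z_q,z_q)}(\R\times E)$ from the matching pieces of the derivatives of $\del^\pm_h$ at $h=0$ (which, together with the diagonal part, cover the fiber and base directions).

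Part~(2) is then a formal consequence. The same factorization $\hat{\varphi}=\Phi\circ g|_{\mathcal{C}^*}$ on $\mathcal{C}^*\cap W^\mathcal{C}$ shows that the fibers of $\hat{\varphi}$ and of $g|_{\mathcal{C}^*}$ coincide under $\Phi$, so that $\hat{\varphi}(h,v,\ti v)=(h,-z_q,z_q)\iff g(h,v,\ti v)=(h,q_0,q_2)$ with $(q_0,q_2)=\zeta(h(-z_q,z_q))$, and also $\ker(T\hat{\varphi})=\ker(T(g|_{\mathcal{C}^*}))$ at every such point. By~\eqref{eq:comparison_sigma_and_hatsigma} the pullbacks of $\dAFtpst$ and $h\,\hat{\dAFtp}$ to $\mathcal{C}^*\cap W^\mathcal{C}$ agree, and since $h\ne 0$ there, the critical vanishing conditions $\dAFtpst(\cdot)|_{\ker(T(g|_{\mathcal{C}^*}))}=0$ and $\hat{\dAFtp}(\cdot)|_{\ker(T\hat{\varphi})}=0$ are equivalent. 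This gives a bijection between the critical points of the two problems at the corresponding boundary values, concluding the argument.
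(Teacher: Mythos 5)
Your part~(2) and the $h\neq 0$ half of part~(1) follow essentially the same route as the paper: the factorization of $\hat{\varphi}$ through $g|_{\mathcal{C}^*}$ and the fiberwise rescaling by $\frac{1}{h}\zeta^{-1}$ identifies the level sets and the kernels of the two constraint maps, and then~\eqref{eq:comparison_sigma_and_hatsigma} together with $h\neq 0$ converts one vanishing condition into the other. That part of the argument is sound and is what the paper does.

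The one place where your write-up has a real hole is the submersivity of $\hat{\varphi}$ along $\mathcal{C}_0$, which you announce as ``a direct tangent computation'' but do not carry out, and the description you give of it is slightly off target: you speak of extracting surjectivity from ``the matching pieces of the derivatives of $\del^{\pm}_h$ at $h=0$,'' whereas the value of $\hat{\varphi}$ on $\mathcal{C}_0$ is not obtained by differentiating the $h\neq 0$ formula but is prescribed directly as $\hat{\varphi}(0,v,\ti{v})=(0,-\tfrac{1}{2}(v+\ti{v}),\tfrac{1}{2}(v+\ti{v}))$. The clean way to close this (and the one the paper uses) is to split the target: first, $p_1\circ\hat{\varphi}=p_1|_{W^\mathcal{C}}$ is a submersion because $p_1|_{\mathcal{C}}$ is (this was established in Lemma~\ref{le:properties_of_C} via~\eqref{eq:tangent_to_C}); second, for each fixed $h$ the restriction $p_2\circ(\hat{\varphi}|_{\mathcal{C}_h})$ is a submersion --- at $h=0$ because $\mathcal{C}_0=\{0\}\times(TQ\oplus TQ)$ and the explicit formula above is a fiberwise surjective vector bundle morphism onto $E$, and at $h\neq 0$ by your factorization argument. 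Given a target vector $(\delta h,\delta e)$ one then first realizes $\delta h$ using the $p_1$-part and corrects the $E$-component by a vector tangent to $\mathcal{C}_h$. Without some such two-step argument, hitting the $\delta h$ direction (which forces you off $\mathcal{C}_0$) and the fiber directions simultaneously is exactly the point that needs proof, so as written your part~(1) is incomplete at the critical locus $h=0$.
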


\begin{proof}
  Notice that $p_1\circ \hat{\varphi} = p_1|_{W^\mathcal{C}}$ with
  $W^\mathcal{C}\subset \mathcal{C}$ open and, as
  $p_1|_{\mathcal{C}} : \mathcal{C}\rightarrow \R$ is a submersion
  (see the proof of Lemma~\ref{le:properties_of_C}), we conclude that
  $p_1\circ \hat{\varphi}$ is a submersion. Next we consider
  $p_2\circ (\hat{\varphi}|_{\mathcal{C}_h}):\mathcal{C}_h\rightarrow
  E$; there are two cases: $h=0$ and $h\neq 0$. If $h=0$, we have that
  $\mathcal{C}_0 = \{0\}\times (TQ\oplus TQ)$ (Whitney sum) is a
  vector bundle over $Q$ and
  $\hat{\varphi}|_{\mathcal{C}_0}:\mathcal{C}_0\rightarrow \{0\}\times
  E$ is given by
  $\hat{\varphi}(0,v,\ti{v}) =
  (0,-\frac{1}{2}(v+\ti{v}),\frac{1}{2}(v+\ti{v}))$, that is a vector
  bundle morphism over the identity that, fiberwise, is onto, so it is
  onto globally. Consequently, $\hat{\varphi}|_{\mathcal{C}_0}$ is a
  submersion. If $h\neq 0$,
  $(p_2\circ \hat{\varphi})(h,v,\ti{v}) =
  \frac{1}{h}\zeta^{-1}(\del^-_h(\ti{v}),\del^+_h(v))$, which
  is a submersion because
  $(v,\ti{v})\mapsto (\del^-_h(v),\del^+_h(\ti{v}))$ is a
  submersion (property of discretizations) and $\zeta$ is a
  diffeomorphism.
  
  Now we use the observations from the previous paragraph to check
  that $\hat{\varphi}:W^\mathcal{C}\rightarrow \R\times E$ is a
  submersion. Let $(h,v,\ti{v})\in W^\mathcal{C}$ and
  $(\delta h, \delta e)\in T_{\hat{\varphi}(h,v,\ti{v})}(\R\times
  E)$. As $p_1\circ \hat{\varphi}$ is a submersion, there is
  $(\delta h,\delta v_1,\delta \ti{v}_1) \in
  T_{(h,v,\ti{v})}W^\mathcal{C}$ such that
  $T_{(h,v,\ti{v})}(p_1\circ \hat{\varphi})(\delta h,\delta v_1,\delta
  \ti{v}_1) = \delta h$. Let
  $(\delta h,\delta e_1) := T_{(h,v,\ti{v})}\hat{\varphi}(\delta
  h,\delta v_1,\delta \ti{v}_1) \in
  T_{\hat{\varphi}(h,v,\ti{v})}(\R\times E)$. Then, as
  $p_2\circ (\hat{\varphi}|_{\mathcal{C}_h}):\mathcal{C}_h\rightarrow
  E$ is a submersion, there is
  $(0,\delta v_2, \delta \ti{v}_2) \in T_{(h,v,\ti{v})} \mathcal{C}_h$
  such that
  $T_{(h,v,\ti{v})}(p_2\circ \hat{\varphi})(0,\delta v_2, \delta
  \ti{v}_2) = \delta e -\delta e_1 \in
  T_{p_2(\hat{\varphi}(h,v,\ti{v}))}E$. All together we have that
  $(\delta h,\delta v_1 +\delta v_2, \delta \ti{v}_1+\delta \ti{v}_2)
  \in T_{(h,v,\ti{v})} W^{\mathcal{C}}$ and
  \begin{equation*}
    T_{(h,v,\ti{v})}\hat{\varphi}(\delta h,\delta v_1 +\delta v_2,
    \delta \ti{v}_1+\delta \ti{v}_2) =
    (\delta h, \delta e_1 ) + (0,\delta e - \delta e_1) = (\delta h, \delta e)
  \end{equation*}
  proving that $\hat{\varphi}$ is a submersion.
  
  By definition, $\hat{\dAFop}\in \mathcal{A}^1(\R\times TQ)$ and, by
  the previous analysis, $\hat{\varphi}$ is a submersion, so that
  $\ker(T \hat{\varphi})$ is a distribution. Hence,
  point~\eqref{it:comparison_crit_problems-is_critical_problem} holds.

  $(h,v,\ti{v})\in \mathcal{C}^*$ is a critical point of
  $(\hat{\dAFtp}, \hat{\varphi}, \ker(T \hat{\varphi}))$ at
  $(h,-z_q,z_q)\in \R^*\times E$ iff
  \begin{equation}\label{eq:criticality_hat}
    \begin{gathered}
      \hat{\varphi}(h,v,\ti{v}) = (h,-z_q,z_q) \stext{ and } \\
      \hat{\dAFtp}(h,v,\ti{v})(\delta h, \delta v, \delta \ti{v}) = 0
      \stext{ for all } (\delta h, \delta v, \delta \ti{v})\in \ker(T
      \hat{\varphi})
    \end{gathered}
  \end{equation}
  The condition $\hat{\varphi}(h,v,\ti{v}) = (h,-z_q,z_q)$ is
  equivalent to $g|_{\mathcal{C}^*}(h,v,\ti{v}) = (h,q_0,q_2)$ for
  $(q_0,q_2):=\zeta(h(-z_q,z_q))$. The condition
  $(\delta h, \delta v, \delta \ti{v})\in \ker(T \hat{\varphi})$ is
  equivalent to $\delta h = 0$ (because
  $p_1\circ\hat{\varphi} = p_1|_{\mathcal{C}^*}$), and
  $\delta \ti{v}\in \ker(T^2_{(h,\ti{v})}\del^+_h(\ti{v}))$ and
  $\delta v\in \ker(T^2_{(h,v)}\del^-_h(v))$ (because $\zeta^{-1}$
  is a diffeomorphism). Finally, taking into
  account~\eqref{eq:comparison_sigma_and_hatsigma},
  $\hat{\dAFtp}(h,v,\ti{v})(\delta h, \delta v, \delta \ti{v}) = 0$
  for all
  $(\delta h, \delta v, \delta \ti{v})\in \ker(T \hat{\varphi})$ is
  equivalent to
  $\dAFtpst(h,v,\ti{v})(\delta h, \delta v, \delta \ti{v}) = 0$ for
  all
  $(\delta h, \delta v, \delta \ti{v})\in
  \ker(T(g|_{\mathcal{C}^*}))$.  Hence,~\eqref{eq:criticality_hat} is
  equivalent to
  \begin{gather*}
    g|_{\mathcal{C}^*}(h,v,\ti{v}) = (h,q_0,q_2) \stext{ and }\\
    \dAFtpst(h,v,\ti{v})(\delta h, \delta v, \delta \ti{v})=0 \stext{
      for all } (\delta h, \delta v, \delta \ti{v})\in \ker(T
    g|_{\mathcal{C}^*}).
  \end{gather*}
  This last expression is equivalent to $(h,v,\ti{v})$ being a
  critical point of
  $(\dAFtpst, g|_{\mathcal{C}^*}, \ker(T(g|_{\mathcal{C}^*})))$ at
  $(h,q_0,q_2)$ (for $(q_0,q_2):=\zeta(h(-z_q,z_q))$).
\end{proof}

The key point is that even though the original critical problem
$(\dAFtpst, g|_{\mathcal{C}^*}, \ker(T(g|_{\mathcal{C}^*})))$ is not
well behaved as $h\rightarrow 0$, the (equivalent for $h\neq 0$)
critical problem $(\hat{\dAFtp}, \hat{\varphi},\ker(T \hat{\varphi}))$
extends seamlessly over $h=0$.  We analyze this last problem in what
follows.

\begin{lemma}\label{le:critical_points_with_h_0}
  For any $z_q\in T_qQ$ in a neighborhood of $0_q$,
  $(0,z_q,z_q)\in\mathcal{C}_0$ is a critical point of the problem
  $(\hat{\dAFtp}, \hat{\varphi},\ker(T \hat{\varphi}))$ at
  $(0,-z_q,z_q)$ on $W^\mathcal{C}$.
\end{lemma}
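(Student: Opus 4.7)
My plan is to verify the two conditions of~\eqref{eq:criticality_hat} at the point $(0, z_q, z_q)$, which lies in $\mathcal{C}_0 \subset W^\mathcal{C}$. The base-point equality $\hat{\varphi}(0, z_q, z_q) = (0, -z_q, z_q)$ is immediate from the $h = 0$ branch of the definition of $\hat{\varphi}$; the real work is to show that $\hat{\dAFtp}(0, z_q, z_q)$ vanishes on the kernel $K := \ker T_{(0, z_q, z_q)}\hat{\varphi}$.

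First I would compute $K$ explicitly. Since $p_1 \circ \hat{\varphi} = p_1|_{W^\mathcal{C}}$, every vector in $K$ has $\delta h = 0$. At $h = 0$ the identities $\psi(0, 0, v) = \tau_Q(v)$ and $\dBTp(0) = \dBTm(0) = 0$ give $\del^+_0 = \del^-_0 = \tau_Q$, so the tangency formula~\eqref{eq:tangent_to_C}, restricted to $\delta h = 0$, reduces to $T_{z_q}\tau_Q(\delta v) = T_{z_q}\tau_Q(\delta \ti{v})$. Differentiating the $h = 0$ branch $p_2\circ \hat{\varphi}(0, v, \ti{v}) = (-\tfrac{1}{2}(v+\ti{v}), \tfrac{1}{2}(v+\ti{v}))$ further forces $\delta \ti{v} = -\delta v$, and combining the two equations yields $T_{z_q}\tau_Q(\delta v) = 0$. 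Thus
\begin{equation*}
K = \{(0, \delta v, -\delta v) : \delta v \in \ker T_{z_q}\tau_Q\}.
\end{equation*}

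Next I would evaluate $\hat{\dAFtp}$ on $K$. Since $\widehat{L_\CPS}(0, \cdot) = L$ and $\widehat{f_\CPS}(0, \cdot) = f$, the $1$-form $\hat{\dAFop}(0, v) \in T^*_v TQ$ acts on a variation $\delta v$ as $(dL + f)(v)(\delta v)$. Hence by~\eqref{eq:mu_hat_and_sigma_hat-def},
\begin{equation*}
\hat{\dAFtp}(0, z_q, z_q)(0, \delta v, -\delta v) = (dL + f)(z_q)(\delta v) + (dL + f)(z_q)(-\delta v) = 0,
\end{equation*}
by linearity of the cotangent vector $(dL + f)(z_q) \in T^*_{z_q} TQ$. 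This vanishing on all of $K$ is precisely the critical condition, completing the verification.

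The only delicate step is the kernel computation, where one must simultaneously use the tangency to $\mathcal{C}$ and the $h = 0$ branch of $\hat{\varphi}$; once $K$ is seen to consist of antipodal pairs of vertical vectors, the vanishing of $\hat{\dAFtp}$ on it is a purely formal consequence of linearity, independent of any particular $L$ or $f$. This is exactly why the lemma is expected to hold for \emph{every} discretization of a regular \FMS, and it will be the foundation on which the later application of Theorem~\ref{th:thm_2_in_cp07} can build.
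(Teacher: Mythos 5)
Your proposal is correct and follows essentially the same route as the paper: identify the kernel of $T_{(0,z_q,z_q)}\hat{\varphi}$ as the antipodal pairs of vertical vectors and evaluate $\hat{\dAFtp}$ on it using $\widehat{L_\CPS}(0,\cdot)=L$, $\widehat{f_\CPS}(0,\cdot)=f$. The only (harmless) difference is that the paper works over the whole fiber $\hat{\varphi}^{-1}(0,-z_q,z_q)$ and reduces criticality to $\F L(v)=\F L(\ti{v})$ via Remark~\ref{rem:legendre_transform_from_1-form}, whereas you specialize at once to $v=\ti{v}=z_q$, where the two terms cancel by linearity alone.
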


\begin{proof}
  Recall that $\del^-_0 = \del^+_0 = \tau_Q$ so that condition
  $(0,v,\ti{v})\in \mathcal{C}_0$ means that
  $\tau_Q(v)=\tau_Q(\ti{v})$, while
  $\hat{\varphi}(0,v,\ti{v}) = (0,-z_q,z_q)$ means that
  $-\frac{1}{2}(v+\ti{v}) = -z_q$; as a consequence,
  $v,\ti{v},z_q\in T_qQ$.  Hence,
  $\hat{\varphi}^{-1}(0,-z_q,z_q) = \{(0,v,\ti{v})\in \{0\}\times
  T_qQ\times T_qQ : v+\ti{v}=2z_q\}$
  and, consequently, for
  $(0,v,\ti{v}) \in \hat{\varphi}^{-1}(0,-z_q,z_q)$,
  \begin{equation*}
    \begin{split}
      \ker(T_{(0,v,\ti{v})}\hat{\varphi}) =& T_{(0,v,\ti{v})}
      (\hat{\varphi}^{-1}(0,-z_q,z_q)) \\=&
      \left\{(0,\frac{d}{dt}\big|_{t=0}(v+t
        w),\frac{d}{dt}\big|_{t=0}(\ti{v}-t w)): w\in T_qQ\right\}
      \subset T_{(0,v,\ti{v})}\mathcal{C}.
    \end{split}
  \end{equation*}
  Then, the critical condition~\eqref{eq:criticality_hat} (with $h=0$)
  for $(0,v,\ti{v})$ satisfying $v+\ti{v}=2z_q$ becomes
  \begin{equation*}
    \big(\ti{\hat{\dAFop}}(0,v) - \ti{\hat{\dAFop}}(0,\ti{v})\big)(w) = 0
    \stext{ for all } w\in T_qQ
  \end{equation*}
  or, recalling Remark~\ref{rem:legendre_transform_from_1-form},
  \begin{equation*}
    \F L(v)-\F L(\ti{v}) = 0.
  \end{equation*}
  We see that $v=\ti{v}=z_q$ is a solution of this last critical
  condition, so that $(0,z_q,z_q)\in \mathcal{C}_0$ is a critical
  point of the problem
  $(\hat{\dAFtp}, \hat{\varphi},\ker(T \hat{\varphi}))$ at
  $(0,-z_q,z_q)$
\end{proof}

\begin{lemma}\label{le:nondegeneray_of_critical_points}
  In the context of the proof of
  Theorem~\ref{th:flow_of_discretizations_of_FMS}, all critical
  points $(0,v,\ti{v}) = (0,z_q,z_q)$ are nondegenerate.
\end{lemma}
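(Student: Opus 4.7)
My plan is to exploit the fact that the distribution here is $\mathcal{D}=\ker(T\hat{\varphi})$, so that, as noted in Remark~\ref{rem:critical_condition_for_D=Tg_crit_problems}, the critical condition at $(0,z_q,z_q)$ amounts to the vanishing of the pullback 1-form $\beta:=i_{\mathcal{Z}}^*(\hat{\dAFtp})$ on the fiber $\mathcal{Z}:=\hat{\varphi}^{-1}(0,-z_q,z_q)$. The Hessian $d_{\mathcal{D},\hat{\varphi}}\hat{\dAFtp}(0,z_q,z_q)$ is then precisely the derivative of $\beta$ at its zero $(0,z_q,z_q)$, and nondegeneracy of the critical point reduces to nondegeneracy of this derivative as a bilinear form on $T_{(0,z_q,z_q)}\mathcal{Z}$.

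From the computation inside the proof of Lemma~\ref{le:critical_points_with_h_0}, we have the explicit parametrization $w\mapsto (0,z_q+w,z_q-w)$ of $\mathcal{Z}$ by $w\in T_qQ$, with $w=0$ corresponding to the critical point, and the tangent space at $w=0$ spanned by vectors of the form $(0,w_1^{z_q},-w_1^{z_q})$ for $w_1\in T_qQ$, where $w_1^{z_q}$ denotes the vertical lift. Since at $h=0$ we have $\widehat{L_\CPS}(0,\cdot)=L$ and $\widehat{f_\CPS}(0,\cdot)=f$, the 1-form $\hat{\dAFop}(0,\cdot)$ restricted to $T_vTQ$ equals $dL(v)+f(v)$. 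Using that $f$ is horizontal (so it annihilates vertical vectors) and Remark~\ref{rem:legendre_transform_from_1-form}, I expect the pullback to evaluate as
\begin{equation*}
  \beta(w)(u_1)=\F L(z_q+w)(u_1)-\F L(z_q-w)(u_1)
\end{equation*}
for $u_1\in T_qQ$ parameterizing a tangent vector at $(0,z_q+w,z_q-w)$ in $\mathcal{Z}$. The minus sign arises because the parametrization sends $v=z_q+w$ forward while $\tilde v=z_q-w$ is sent backward, and the force contributions cancel by horizontality.

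Differentiating $\beta(w)(u_1)$ at $w=0$ in direction $w_1\in T_qQ$ gives
\begin{equation*}
  d_{\mathcal{D},\hat{\varphi}}\hat{\dAFtp}(0,z_q,z_q)(\delta u,\delta w)
  =2\,\F^2 L(z_q)(w_1,u_1),
\end{equation*}
so nondegeneracy of the Hessian is equivalent to nondegeneracy of $\F^2 L(z_q)$, which holds by the regularity hypothesis on $(Q,L,f)$ (Definition~\ref{def:regularity-GFMS}). The main technical obstacle is bookkeeping: one must verify that the extension $\delta W$ required by Definition~\ref{def:hessian} can indeed be chosen tangent to the fibers of $\hat{\varphi}$, so that the Hessian really coincides with the derivative of $\beta$; this is guaranteed because $\hat{\varphi}$ is a submersion (proved in Lemma~\ref{le:comparison_crit_problems}), making the fibers embedded submanifolds whose tangent spaces at the critical point coincide with $\ker(T_{(0,z_q,z_q)}\hat{\varphi})$ and with $\mathcal{D}_{(0,z_q,z_q)}$ simultaneously. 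Everything else is a routine local computation that I will carry out in the coordinates $w$ on $\mathcal{Z}$ described above, keeping careful track of the fact that both arguments of the Hessian live in the same fiber tangent space.
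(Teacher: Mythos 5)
Your proposal is correct and follows essentially the same route as the paper: identify the Hessian with the derivative of the action form restricted to the fiber $\mathcal{Z}=\hat{\varphi}^{-1}(0,-z_q,z_q)$, parametrize $\mathcal{Z}$ affinely, use horizontality of $f$ and the vertical-lift identification to reduce the restricted form to $\F L(z_q+w)-\F L(z_q-w)$, and differentiate to obtain $2\F^2L(z_q)$, whose nondegeneracy is exactly the regularity hypothesis. The paper organizes the computation slightly differently (splitting $\hat{\dAFtp}$ into its $p_{12}^*$ and $p_{13}^*$ summands and using a constant extension of the test vector field), but the content and the key identity are identical.
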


\begin{proof}
  Consider a critical point $(0,z_q,z_q)$ of the critical problem
  $(\hat{\dAFtp},\hat{\varphi},\ker(T\hat{\varphi}))$ at
  $(0,-z_q,z_q) \in\R\times E$. Let
  $\mathcal{Z}:=(\hat{\varphi})^{-1}(0,-z_q,z_q)\subset \mathcal{C}$,
  that is a submanifold because $\hat{\varphi}$ is a submersion
  (point~\eqref{it:comparison_crit_problems-is_critical_problem} of
  Lemma~\ref{le:comparison_crit_problems}). Then, the skew Hessian
  $d_{\ker(T\hat{\varphi}),\hat{\varphi}}\hat{\dAFtp}$ defined
  in~\eqref{eq:skew_hessian-def} becomes, for a critical point
  $(0,z_q,z_q)$ and
  $(0,\delta u,\delta \ti{u}), (0,\delta w,\delta \ti{w}) \in
  T_{(0,z_q,z_q)} \mathcal{Z}$,
  \begin{equation*}
    \begin{split}
      \conj{\mathcal{H}}(\hat{\dAFtp})(0,z_q,z_q))((0,\delta u,\delta
      \ti{u}), (0,\delta w,\delta \ti{w})) :=
      d_{\mathcal{Z}}(i_{\widehat{(\delta w,\delta
          \ti{w})}}(i_{\mathcal{Z},W^\mathcal{C}}^*(\hat{\dAFtp})))
      (0,z_q,z_q) (\delta u,\delta \ti{u}),
    \end{split}
  \end{equation*}
  where $\widehat{(\delta w,\delta \ti{w})} \in\VF(\mathcal{Z})$ is an
  extension of $(\delta w,\delta \ti{w})$. This is so, essentially,
  because all operations are natural with respect to the restriction
  to a submanifold (that is, everything can be restricted to
  $\mathcal{Z}$ and, then, extended, contracted or differentiated) and
  the ``input vectors''
  $(\delta u,\delta \ti{u}), (\delta w,\delta \ti{w}) \in
  T_{(0,z_q,z_q)} \mathcal{Z}$.

  Next notice that, by the computations performed in the proof of
  Lemma~\ref{le:critical_points_with_h_0},
  $\mathcal{Z} = \{(0,v,\ti{v})\in \{0\}\times T_qQ\times T_qQ :
  v+\ti{v}=2z_q\}$.  As $\theta:T_qQ\rightarrow \mathcal{Z}$ defined
  by $\theta(v):=(0,v,2z_q-v)$ is a diffeomorphism (an affine
  isomorphism, in fact), the bilinear form ($2$-tensor)
  $\conj{\mathcal{H}}(\hat{\dAFtp})$ on $T_{(0,z_q,z_q)}\mathcal{Z}$
  defines a bilinear form $\theta^*(\conj{\mathcal{H}}(\hat{\dAFtp}))$
  on $T_{z_q}T_qQ$, whose nondegeneracy is equivalent to that of
  $\conj{\mathcal{H}}(\hat{\dAFtp})$.

  Next we study the nondegeneracy of the bilinear form
  $\theta^*(\conj{\mathcal{H}}(\hat{\dAFtp}))$ on $T_{z_q}T_qQ$. As
  all elements of $T_{z_q}T_qQ$ are of the form $u^{z_q}$ and
  $w^{z_q}$ for $u,w\in T_qQ$, we have
  \begin{equation*}
    \begin{split}
      \theta^*(\conj{\mathcal{H}}(\hat{\dAFtp}))(z_q)(u^{z_q},w^{z_q})
      =& \conj{\mathcal{H}}(\hat{\dAFtp})(0,z_q,z_q)
      ((0,(u^{z_q},-(u^{z_q}))), (0,(w^{z_q},-(w^{z_q})))\\=&
      d_{\mathcal{Z}}(i_{\widehat{(w^{z_q},-(w^{z_q}))}}(\hat{\dAFtp}))
      (0,z_q,z_q)(0,(u^{z_q},-(u^{z_q})) \\=&
      d_{\mathcal{Z}}(i_{\widehat{(w^{z_q},-(w^{z_q}))}}(
      i_{\mathcal{Z},W^\mathcal{C}}^*(p_{12}^*(\hat{\dAFop}))))
      (0,z_q,z_q)(0,(u^{z_q},-(u^{z_q})) +\\&
      d_{\mathcal{Z}}(i_{\widehat{(w^{z_q},-(w^{z_q}))}}(
      i_{\mathcal{Z},W^\mathcal{C}}^*(p_{13}^*(\hat{\dAFop}))))
      (0,z_q,z_q)(0,(u^{z_q},-(u^{z_q})) \\=&
      d_{T_qQ}(i_{\widehat{w^{z_q}}}(\hat{\dAFop}))(z_q)(u^{z_q}) +
      d_{T_qQ}(i_{\widehat{-(w^{z_q}})}(\hat{\dAFop}))(z_q)(-u^{z_q})
      \\=&
      2d_{T_qQ}(i_{\widehat{w^{z_q}}}(\hat{\dAFop}))(z_q)(u^{z_q}).
    \end{split}
  \end{equation*}
  As the computation of the skew Hessian is independent of the field
  $\widehat{w^{z_q}}$ chosen to extend $w^{z_q}$, we use the constant
  extension. Then, by the relationship between $\ti{\cAFop}$ and
  $\F L$ mentioned in
  Remark~\ref{rem:legendre_transform_from_1-form},
  \begin{equation*}
    \begin{split}
      2d_{T_qQ}(i_{\widehat{w^{z_q}}}(\hat{\dAFop}))(z_q)(u^{z_q}) =&
      2d_{T_qQ}(\hat{\dAFop}(w^{z_q}))(z_q)(u^{z_q}) =
      2d_{T_qQ}(\ti{\hat{\dAFop}}(\cdot)(w))(z_q)(u^{z_q}) \\=& 2\F
      (\ti{\hat{\dAFop}}(\cdot)(w))(z_q)(u) = 2 \F (\F
      L(\cdot)(w))(z_q)(u) \\=& 2\F^2 L(z_q)(u,w).
    \end{split}
  \end{equation*}
  Then, as noticed in Remark~\ref{rem:def_of_fiber_derivative}, the
  regularity of $L$ is equivalent to the nondegeneracy of $\F^2 L$
  which, in our current context is equivalent to that of
  $\theta^*(\conj{\mathcal{H}}(\hat{\dAFtp}))$, and so, that of
  $\conj{\mathcal{H}}(\hat{\dAFtp})$ at the critical points
  $(0,z_q,z_q)$, concluding the proof of the result.
\end{proof}

\begin{proof}[Proof of Theorem~\ref{th:flow_of_discretizations_of_FMS}]
  Each critical point $(0,v,\ti{v}) = (0,z_q,z_q)$ at $(0,-z_q,z_q)$
  is nondegenerate, in the sense of Definition 3
  in~\cite{ar:cuell_patrick-skew_critical_problems} by
  Lemma~\ref{le:nondegeneray_of_critical_points}. So,
  $\hat{\gamma}_0(0,-z_q,z_q):=(0,z_q,z_q)$ is a smooth map from
  $N_0:=\{0\}\times E$ to $M_0:=\{0\}\times \Delta_{TQ}$.

  The rest of the proof is identical to the original proof. By
  Theorem~\ref{th:thm_2_in_cp07} there are open neighborhoods
  $\hat{U}\subset \R\times TQ\times TQ$ of $\{0\}\times \Delta_{TQ}$
  and $\hat{V}\subset \R\times E$ of $\{0\}\times E$ and a smooth map
  $\hat{\gamma}:\hat{V}\rightarrow \hat{U}$ extending $\hat{\gamma}_0$
  such that $\hat{\gamma}(h,-z_q,z_q)$ is the unique critical point at
  $(h,-z_q,z_q)$ of
  $(\hat{\dAFtp},\hat{\varphi},\ker(T\hat{\varphi}))$ in $\hat{U}$.

  Since, at $h=0$, we have
  $\hat{\gamma}(0,-z_q,z_q) = (0,z_q,z_q)\in \{0\}\times \Delta_{TQ}$,
  the image of $\hat{\gamma}(0,\cdot,\cdot)$ is the graph of the
  identity map $id_{TQ}:TQ\rightarrow TQ$. Hence, for small $h$, there
  is a smooth function $F(h,\cdot)$ whose graph coincides with the
  image of $\hat{\gamma}(h,\cdot,\cdot)$ (see Proposition 5
  in~\cite{ar:cuell_patrick-skew_critical_problems}). In particular,
  for $h$ small enough and $v$ in the domain of $F$, $F(h,v)=\ti{v}$
  such that $(h,v,\ti{v})\in\hat{U}$ is a critical point of
  $(\hat{\dAFtp},\hat{\varphi},\ker(T\hat{\varphi}))$ and, when
  $h\neq 0$, by Lemma~\ref{le:comparison_crit_problems}, a critical
  point of the problem
  $(\dAFtpst,g|_{\mathcal{C}^*},\ker(Tg|_{\mathcal{C}^*}))$.
\end{proof}

\begin{prop}\label{prop:FDLS_from_discretization-general_disc_TQ}
  Let $(Q,L,f)$ be a regular \FMS, $(\psi,\dBTp,\dBTm)$ a
  discretization of $TQ$ and $Q_0\subset Q$ a relatively compact open
  subset. Let $a>0$ and $\mathcal{U}$ be the ones produced by
  Theorem~\ref{th:discretizations_yield_discrete_tangent_bundles} and
  Lemma~\ref{le:discretizations_yield_discrete_tangent_bundles-U_and_W}
  applied to the discretization $\psi$ and the given $Q_0$.  Let
  $\discCP = (Q,\psi,L_\CPDS,f_\CPDS)$ be a discretization
  of $(Q,L,f)$ with $L_\CPDS$ and $f_\CPDS$ defined over
  $\mathcal{U}$. Then, for each $h$ where $0\neq \abs{h}<a$,
  $(\mathcal{V}_h, \del^{-}_h|_{\mathcal{V}_h},
  \del^{+}_h|_{\mathcal{V}_h})$ with
  \begin{equation}\label{eq:FDLS_from_discretization-general_disc_TQ-V_h}
    \mathcal{V}_h := p_2(p_1^{-1}(\{h\})\cap \mathcal{U}\cap
    (\del^{\mp})^{-1}(Q_0\times Q_0))
  \end{equation}
  is a discrete tangent bundle of $Q_0$ and
  $\aFDLS_h = (Q_0,\mathcal{V}_h, L_\CPHS|_{\mathcal{V}_h},
  f_\CPHS|_{\mathcal{V}_h})$ is a \FDMSCP.
\end{prop}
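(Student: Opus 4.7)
My plan is to recognize that the proposition is essentially a bookkeeping repackaging of \emph{already established} content: the discrete tangent bundle side comes directly from Theorem~\ref{th:discretizations_yield_discrete_tangent_bundles}, and the Lagrangian/force side comes directly from the hypothesis that $L_\CPDS$ and $f_\CPDS$ are smooth on $\mathcal{U}$. The crucial observation that I would record first is that the set $\mathcal{V}_h$ specified by~\eqref{eq:FDLS_from_discretization-general_disc_TQ-V_h} coincides with the open subset of $TQ$ denoted $\mathcal{V}_h$ in the proof of Theorem~\ref{th:discretizations_yield_discrete_tangent_bundles}. The only cosmetic discrepancy is that the proposition intersects with $(\del^{\mp})^{-1}(Q_0\times Q_0)$ while the proof of that theorem intersects with $(\ti{\del}^\pm)^{-1}(\R\times Q_0\times Q_0)$, but these two preimages coincide because $Q_0\times Q_0$ is invariant under factor swap and the $\R$-component is harmless after we already restrict to $p_1^{-1}(\{h\})$. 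Also, although Theorem~\ref{th:discretizations_yield_discrete_tangent_bundles} is stated for $h\in (0,a)$, its proof goes through verbatim for any $h$ with $0\neq \abs{h}<a$, which is the range used in the proposition.

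With that identification in place, I would then execute the proof in two steps. First, apply Theorem~\ref{th:discretizations_yield_discrete_tangent_bundles} directly to conclude that, for each such $h$, the triple $(\mathcal{V}_h,\del^+_h|_{\mathcal{V}_h},\del^-_h|_{\mathcal{V}_h})$ is a discrete tangent bundle of $Q_0$ (noting that $\del^+_h$ and $\del^-_h$ play symmetric roles in the Definition of a discrete tangent bundle, so the ordering used in the statement of the proposition is immaterial). Second, since $L_\CPDS$ and $f_\CPDS$ are by hypothesis smooth on $\mathcal{U}$, and since $\mathcal{V}_h$ is, by construction, contained in $p_2(p_1^{-1}(\{h\})\cap \mathcal{U})$, the slice embedding $v\mapsto (h,v)$ realizes $\mathcal{V}_h$ as a smooth submanifold of $\mathcal{U}$; pulling back along this embedding yields a smooth function $L_\CPHS|_{\mathcal{V}_h}:\mathcal{V}_h\rightarrow \R$ and a smooth $1$-form $f_\CPHS|_{\mathcal{V}_h}\in\mathcal{A}^1(\mathcal{V}_h)$ (recalling that $f_\CPDS$ is a section of $p_2^*(T^*TQ)$, so this pullback is genuinely a $1$-form on $\mathcal{V}_h$). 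Assembling the two steps, $\aFDLSh$ satisfies every requirement of Definition~\ref{def:forced_discrete_mechanicalCP_system}.

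I do not expect any serious obstacle: the whole content of the proposition is to record that the abstract output of Theorem~\ref{th:discretizations_yield_discrete_tangent_bundles} can be combined with the data of a discretization $\discCP$ of $(Q,L,f)$ to produce a genuine \FDMSCP for each admissible $h$. The only place where a careful reader might pause is in verifying that the set-theoretic description of $\mathcal{V}_h$ used in the proposition really matches the one produced inside the proof of Theorem~\ref{th:discretizations_yield_discrete_tangent_bundles}, and in confirming that the hypothesis ``$L_\CPDS$ and $f_\CPDS$ are defined on $\mathcal{U}$'' is compatible with the fact that $\mathcal{V}_h$ is built using the same $\mathcal{U}$; both checks are straightforward from the explicit formulas.
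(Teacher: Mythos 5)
Your proof is correct and follows essentially the same route as the paper's: the discrete tangent bundle claim is read off from Theorem~\ref{th:discretizations_yield_discrete_tangent_bundles} (after identifying the set $\mathcal{V}_h$ in the statement with the one built in that theorem's proof), and the smoothness of $L_\CPHS|_{\mathcal{V}_h}$ and $f_\CPHS|_{\mathcal{V}_h}$ comes from viewing $\{h\}\times\mathcal{V}_h$ as a submanifold of $\mathcal{U}$. Your extra checks (the swap-invariance of $Q_0\times Q_0$ reconciling $(\del^{\mp})^{-1}$ with $(\ti{\del}^{\pm})^{-1}$, and the validity of the theorem's proof for negative $h$) are details the paper leaves implicit but are accurate.
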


\begin{proof}
  The result about
  $(\mathcal{V}_h, \del^{-}_h|_{\mathcal{V}_h},
  \del^{+}_h|_{\mathcal{V}_h})$ follows by applying
  Theorem~\ref{th:discretizations_yield_discrete_tangent_bundles} (see
  the proof for the explicit form of $\mathcal{V}_h$) to the
  discretization $\psi$. The rest follows, essentially from the
  definitions and the fact that $\{h\}\times \mathcal{V}_h$ is a
  submanifold of $\mathcal{U}$.
\end{proof}

\begin{definition}\label{def:flow_discretization_CP}
  Let $\discCP = (Q,\psi,L_\CPDS,f_\CPDS)$ be a
  discretization of the regular \FMS $(Q,L,f)$. A \jdef{flow} of
  $\discCP$ is a smooth function $F:U\rightarrow TQ$ where
  $U\subset \R\times TQ$ is an open neighborhood of $\{0\}\times TQ$
  such that, for each $(h,v)\in U$, $(h,v,F(h,v))\in \mathcal{C}$ is a
  critical point of $\discCP$ and $F(0,v)=v$ for all $v\in TQ$. An
  \jdef{extended flow} is smooth a map
  $\ti{F}:U\rightarrow \R\times TQ$ of the form
  $\ti{F}(h,v) = (h,F(h,v))$, where $F$ is a flow.
\end{definition}

\begin{remark}
  By Theorem~\ref{th:flow_of_discretizations_of_FMS}, all
  discretizations of a regular \FMS $(Q,L,f)$ have flows.
\end{remark}


\subsection{Trajectories of exact forced discrete mechanical systems}
\label{sec:trajectories_of_exact_forced_discrete_mechanical_systems}

Let $(Q,L,f)$ be a regular \FMS. In this section we consider the
trajectories of the \FDMSCP obtained using an exact discretization
given in Example~\ref{ex:exact_discretization_forced-system}.

Given $v\in TQ$ and $h>0$, we write $q_v(t):=\tau_Q(F^{X_{L,f}}_t(v))$
for $t\in [\dBTm(h),\dBTp(h)]$. Similarly, given
$\delta v \in T_v TQ$, we define
$\delta q_v(t):=T_v(\tau_Q\circ F^{X_{L,f}}_t)(\delta v) \in
T_{q_v(t)}Q$ for $t\in [\dBTm(h),\dBTp(h)]$; clearly,
$\delta q_v(t)$ is an infinitesimal variation over $q_v(t)$. If we
write $\delta v = \frac{d}{d\epsilon}\big|_{\epsilon=0}v(\epsilon)$,
we have
\begin{equation*}
  \begin{split}
    (\delta q_v)'(t) =& \frac{d}{dt} \delta q_v(t) = \frac{d}{dt}
    T_v(\tau_Q\circ F^{X_{L,f}}_t)(\delta v) = \frac{d}{dt}
    T_v(\tau_Q\circ
    F^{X_{L,f}}_t)(\frac{d}{d\epsilon}\big|_{\epsilon=0}v(\epsilon))
    \\=& \frac{d}{dt} (\frac{d}{d\epsilon}\big|_{\epsilon=0}
    \tau_Q\circ F^{X_{L,f}}_t(v(\epsilon))) =\kappa\bigg(
    \frac{d}{d\epsilon}\big|_{\epsilon=0}(\frac{d}{dt} \tau_Q\circ
    F^{X_{L,f}}_t(v(\epsilon)))\bigg) \\=& \kappa\bigg(
    \frac{d}{d\epsilon}\big|_{\epsilon=0}( F^{X_{L,f}}_t(v(\epsilon)))\bigg)
    = \kappa\big((T_{v(0)}F^{X_{L,f}}_t)
    (\frac{d}{d\epsilon}\big|_{\epsilon=0}v(\epsilon))\big) \\=& \kappa\big(
    (T_{v}F^{X_{L,f}}_t)(\delta v)\big),
  \end{split}
\end{equation*}
where the sixth equality is due to $X_{L,f}$ being a second order
differential equation.

Next, for $(h,v) \in U^{X_{L,f}}$ and $\delta v\in T_vTQ$ we plug the
trajectory $q_v(t)$ and the infinitesimal variation $\delta q_v(t)$
into the left hand side of~\eqref{eq:variation_of_lagrangian-FMS} to
obtain
\begin{equation*}
  \begin{split}
    \int_{\dBTm(h)}^{\dBTp(h)}&dL(q_v'(t))(\kappa((\delta q_v)'(t)))dt +
    \int_{\dBTm(h)}^{\dBTp(h)} \FVF{f}(q_v'(t))(\delta q_v(t))
    dt \\=&
    \int_{\dBTm(h)}^{\dBTp(h)}dL(F^{X_{L,f}}_t(v))((T_{v}F^{X_{L,f}}_t)(\delta
    v))dt \\& + \int_{\dBTm(h)}^{\dBTp(h)}
    \FVF{f}(F^{X_{L,f}}_t(v))(T_v(\tau_Q\circ F^{X_{L,f}}_t)(\delta
    v)) dt \\=& \int_{\dBTm(h)}^{\dBTp(h)} d(L\circ
    F^{X_{L,f}}_t)(v)(\delta v) dt + f^E_\CPDS(v)(\delta v) = d_2
    L^E_\CPDS(v)(\delta v) + f^E_\CPDS(v)(\delta v).
  \end{split}
\end{equation*}
The same substitutions in the right hand side
of~\eqref{eq:variation_of_lagrangian-FMS} lead to
\begin{equation*}
  \begin{split}
    \F L(q_v'(t))(\delta q_v(t)) \big|_{\dBTm(h)}^{\dBTp(h)} =&
    \F L(F^{X_{L,f}}_t(v))(T_v(\tau_Q\circ F^{X_{L,f}}_t)(\delta
    v))\big|_{\dBTm(h)}^{\dBTp(h)} \\=& \F
    L(F^{X_{L,f}}_{\dBTp(h)}(v))(T_v(\tau_Q\circ
    F^{X_{L,f}}_{\dBTp(h)})(\delta v)) \\&- \F
    L(F^{X_{L,f}}_{\dBTm(h)}(v))(T_v(\tau_Q\circ
    F^{X_{L,f}}_{\dBTm(h)})(\delta v)) \\=& \F
    L(F^{X_{L,f}}_{\dBTp(h)}(v))(T_v\del^{E+}_h(\delta v)) - \F
    L(F^{X_{L,f}}_{\dBTm(h)}(v))(T_v\del^{E-}_h(\delta v))
  \end{split}
\end{equation*}

From~\eqref{eq:variation_of_lagrangian-FMS} we conclude that, for any
$(h,v)\in U^{X_{L,f}}$ and $\delta v\in T_vTQ$,
\begin{equation}
  \label{eq:eq:variation_of_lagrangian-FMS-variation_pushed}
  \begin{split}
    dL^E_\CPHS(v)(\delta v) + f^E_\CPHS(v)(\delta v) =& \F
    L(F^{X_{L,f}}_{\dBTp(h)}(v))(T_v\del^{E+}_h(\delta v)) \\&- \F
    L(F^{X_{L,f}}_{\dBTm(h)}(v))(T_v\del^{E-}_h(\delta v)).
  \end{split}
\end{equation}

In order to state the next two results, we let $(Q,L,f)$ be a regular
\FMS, $Q_0\subset Q$ be a relatively compact open subset and $a>0$ the
constant obtained when
Proposition~\ref{prop:FDLS_from_discretization-general_disc_TQ} is
applied to an exact discretization
$\discCPE = (Q,\psi^E,L_\CPDS^E,f_\CPDS^E)$ of $(Q,L,f)$, so
that for any $h$ with $0\neq \abs{h}<a$,
$\aFDLSE_h = (Q_0, \mathcal{V}_h, L_\CPHS^E|_{\mathcal{V}_h},
f_\CPHS^E|_{\mathcal{V}_h})$ is a \FDMSCP over $Q_0$. Also, define
$\mathcal{U}^E_0:= \mathcal{U}^E \cap (\del^{E\mp})^{-1}(Q_0\times
Q_0) \subset\R\times TQ$ where $\mathcal{U}^E\subset U^{X_{L,f}}$ is
the open subset produced when
Lemma~\ref{le:discretizations_yield_discrete_tangent_bundles-U_and_W}
is applied to the exact discretization $\psi^E$ of $TQ$.

\begin{lemma}\label{le:exact_discretization-continuous_imp_discrete_traj}
  For every $(h,v)\in \mathcal{U}^E_0$ with $0\neq \abs{h}<a$ such
  that $(h,F^{X_{L,f}}_h(v))\in \mathcal{U}^E_0$, the pair
  $(v,F^{X_{L,f}}_h(v))$ is a trajectory of $\aFDLSE_h$.
\end{lemma}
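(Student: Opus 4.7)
The plan is to exhibit $(v,F^{X_{L,f}}_h(v))$ as a critical point of the critical problem associated with $\aFDLSE_h$ (in the form described in Remark~\ref{rem:FDLS_from_discCP_as_crit_probs}), by a direct verification using the identity~\eqref{eq:eq:variation_of_lagrangian-FMS-variation_pushed} that was just established.

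\textbf{Step 1 (the matching condition).} Set $\ti{v}:=F^{X_{L,f}}_h(v)$. I would first check that $(v,\ti{v})\in \mathcal{C}^{\mathcal{V}_h}$: using $\dBTp(h)-\dBTm(h)=h$ and the one-parameter group property of the flow $F^{X_{L,f}}$, we have
\begin{equation*}
  \del^{E-}_h(\ti{v})=\tau_Q(F^{X_{L,f}}_{\dBTm(h)}(F^{X_{L,f}}_h(v)))
  = \tau_Q(F^{X_{L,f}}_{\dBTp(h)}(v)) = \del^{E+}_h(v).
\end{equation*}

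\textbf{Step 2 (apply the pushed-variation identity on each factor).} For any infinitesimal variation $(\delta v,\delta\ti{v})\in \ker(Tg^{\mathcal{V}_h})\subset T_{(v,\ti{v})}\mathcal{C}^{\mathcal{V}_h}$, I would apply formula~\eqref{eq:eq:variation_of_lagrangian-FMS-variation_pushed} separately to $(h,v,\delta v)$ and $(h,\ti{v},\delta\ti{v})$. The fixed-endpoint conditions built into $\ker(Tg^{\mathcal{V}_h})$ give $T_v\del^{E-}_h(\delta v)=0$ and $T_{\ti{v}}\del^{E+}_h(\delta\ti{v})=0$, so the two applications collapse to
\begin{align*}
  dL^E_\CPHS(v)(\delta v)+f^E_\CPHS(v)(\delta v)
  &= \F L(F^{X_{L,f}}_{\dBTp(h)}(v))(T_v\del^{E+}_h(\delta v)),\\
  dL^E_\CPHS(\ti{v})(\delta\ti{v})+f^E_\CPHS(\ti{v})(\delta\ti{v})
  &= -\F L(F^{X_{L,f}}_{\dBTm(h)}(\ti{v}))(T_{\ti{v}}\del^{E-}_h(\delta\ti{v})).
\end{align*}

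\textbf{Step 3 (cancellation using the tangency condition).} The flow identity $F^{X_{L,f}}_{\dBTm(h)}(\ti{v})=F^{X_{L,f}}_{\dBTp(h)}(v)$ (again from $\dBTm(h)+h=\dBTp(h)$) shows that the two Legendre transforms are evaluated at the same point. Moreover, being tangent to $\mathcal{C}^{\mathcal{V}_h}$ at $(v,\ti{v})$ is exactly the condition $T_v\del^{E+}_h(\delta v)=T_{\ti{v}}\del^{E-}_h(\delta\ti{v})$, so the sum of the two right-hand sides vanishes. This gives the critical-point condition characterizing trajectories of $\aFDLSE_h$.

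The bookkeeping in Step~2 is the only subtle point: one must use that $\ker(Tg^{\mathcal{V}_h})$ encodes both \emph{fixed endpoints} (to kill one boundary term in each identity) and \emph{internal compatibility} (to match the surviving boundary terms). Once the notation is aligned, the argument is a direct substitution, with the essential ingredient being the flow property $F^{X_{L,f}}_{\dBTm(h)}\circ F^{X_{L,f}}_h = F^{X_{L,f}}_{\dBTp(h)}$, which is what ``glues'' the two half-intervals $[\dBTm(h),\dBTp(h)]$ into a single continuous trajectory of length $2h$.
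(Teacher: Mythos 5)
Your proposal is correct and follows essentially the same route as the paper's proof: verify the matching condition $\del^{E+}_h(v)=\del^{E-}_h(\ti{v})$ via the flow property, apply~\eqref{eq:eq:variation_of_lagrangian-FMS-variation_pushed} on each factor, and cancel the surviving boundary terms using the tangency condition together with $F^{X_{L,f}}_{\dBTm(h)}(\ti{v})=F^{X_{L,f}}_{\dBTp(h)}(v)$. The only detail worth adding is the (one-line) observation that the hypothesis $(h,v),(h,\ti{v})\in\mathcal{U}^E_0$ guarantees $v,\ti{v}\in\mathcal{V}_h$, so that $(v,\ti{v})$ really lies in $\mathcal{C}^{\mathcal{V}_h}$.
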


\begin{proof}
  Let $\ti{v}:=F^{X_{L,f}}_h(v)$. As, by hypotheses,
  $(h,v),(h,\ti{v})\in \mathcal{U}^E_0$,
  using~\eqref{eq:FDLS_from_discretization-general_disc_TQ-V_h}, we
  have that $v,\ti{v}\in \mathcal{V}_h$.  As $\dBTp(h)-\dBTm(h)=h$ we
  have that
  \begin{equation*}
    \begin{split}
      \del^{E+}_h(v) =& \tau_Q(F^{X_{L,f}}_{\dBTp(h)}(v)) =
      \tau_Q(F^{X_{L,f}}_{\dBTm(h)+h}(v)) =
      \tau_Q(F^{X_{L,f}}_{\dBTm(h)}(F^{X_{L,f}}_h(v))) \\=&
      \del^{E-}_h(F^{X_{L,f}}_h(v)) = \del^{E-}_h(\ti{v}).
    \end{split}
  \end{equation*}
  Thus, using the notation introduced in
  Remark~\ref{le:trajectories_and_critical_points-h_not_0},
  $(v,\ti{v})\in \mathcal{C}^{\mathcal{V}_h}$. Also, for
  $g^{\mathcal{V}_h}(w,\ti{w}) :=
  (\del^{E-}_h(w),\del^{E+}_h(\ti{w}))$ we have that
  $(\delta v, \delta \ti{v}) \in
  \ker(T_{(v,\ti{v})}g^{\mathcal{V}_h})$ if and only if
  \begin{equation*}
    T_v\del^{E-}_h(\delta v) = 0,
    \quad T_v\del^{E+}_h(\delta v) =
    T_{\ti{v}}\del^{E-}_h(\delta\ti{v}),\quad
    T_{\ti{v}}\del^{E+}_h(\delta \ti{v}) = 0.
  \end{equation*}
  Then, using the previous equations as well
  as~\eqref{eq:eq:variation_of_lagrangian-FMS-variation_pushed}
  (recalling that
  $\mathcal{U}^E_0\subset\mathcal{U}^E\subset U^{X_{L,f}}$) we have
  \begin{equation*}
    \begin{split}
      \dAFfhtp(v,\ti{v})(\delta v,& \delta \ti{v}) =
      dL^E_\CPHS(v)(\delta v) + f^E_\CPHS(v)(\delta v) +
      dL^E_\CPHS(\ti{v})(\delta \ti{v}) + f^E_\CPHS(\ti{v})(\delta
      \ti{v}) \\=& \F
      L(F^{X_{L,f}}_{\dBTp(h)}(v))(T_v\del^{E+}_h(\delta v)) - \F
      L(F^{X_{L,f}}_{\dBTm(h)}(v))(\underbrace{T_v\del^{E-}_h(\delta
        v)}_{=0}) \\&+ \F
      L(F^{X_{L,f}}_{\dBTp(h)}(\ti{v}))(\underbrace{T_v\del^{E+}_h(\delta
        \ti{v})}_{=0}) - \F
      L(F^{X_{L,f}}_{\dBTm(h)}(\ti{v}))(T_v\del^{E-}_h(\delta \ti{v}))
      \\=& \F L(F^{X_{L,f}}_{\dBTp(h)}(v))(T_v\del^{E+}_h(\delta v))
      -\F L(F^{X_{L,f}}_{\dBTm(h)}(\ti{v}))(T_v\del^{E+}_h(\delta v))
      \\=& \F L(F^{X_{L,f}}_{\dBTp(h)}(v))(T_v\del^{E+}_h(\delta v))
      -\F
      L(F^{X_{L,f}}_{\dBTm(h)}(F^{X_{L,f}}_h(v)))(T_v\del^{E+}_h(\delta
      v)) \\=& \F L(F^{X_{L,f}}_{\dBTp(h)}(v))(T_v\del^{E+}_h(\delta
      v)) -\F L(F^{X_{L,f}}_{\dBTm(h)+h}(v))(T_v\del^{E+}_h(\delta
      v)) \\=& \F L(F^{X_{L,f}}_{\dBTp(h)}(v))(T_v\del^{E+}_h(\delta
      v)) -\F L(F^{X_{L,f}}_{\dBTp(h)}(v))(T_v\del^{E+}_h(\delta v))
      = 0.
    \end{split}
  \end{equation*}
  Thus, we have that $(v,\ti{v})$ is a critical point of the critical
  problem $(\dAFfhtp,g^{\mathcal{V}_h},\ker(Tg^{\mathcal{V}_h})$;
  hence, as observed in
  Remark~\ref{rem:FDLS_from_discCP_as_crit_probs} $(v,\ti{v})$ is a
  trajectory of $\aFDLSE_h$.
\end{proof}

Still in the context set up before
Lemma~\ref{le:exact_discretization-continuous_imp_discrete_traj},
we have the following result.

\begin{theorem}\label{thm:flow_of_exact_discrete_systems}
  There is an open neighborhood $\ti{U}\subset \R\times TQ\times TQ$
  of $\{0\}\times \Delta_{T{Q_0}}$ such that, for all
  $(h,v,\ti{v})\in \ti{U}$ with $0\neq \abs{h}<a$, the following
  statements are equivalent.
  \begin{enumerate}
  \item \label{it:flow_of_exact_discrete_systems-continuous}
    $\ti{v} = F^{X_{L,f}}_h(v)$.
  \item \label{it:flow_of_exact_discrete_systems-discrete}
    $(v,\ti{v})$ is a trajectory of $\aFDLSE_h$.
  \end{enumerate}
\end{theorem}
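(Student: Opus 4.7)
The direction $(\ref{it:flow_of_exact_discrete_systems-continuous}) \Rightarrow (\ref{it:flow_of_exact_discrete_systems-discrete})$ is essentially the content of Lemma~\ref{le:exact_discretization-continuous_imp_discrete_traj}, while the substantial direction $(\ref{it:flow_of_exact_discrete_systems-discrete}) \Rightarrow (\ref{it:flow_of_exact_discrete_systems-continuous})$ will follow from the uniqueness clause in Theorem~\ref{th:flow_of_discretizations_of_FMS} applied to $\discCPE$: that theorem furnishes a unique discrete flow near the zero section, and I will verify that the continuous flow $F^{X_{L,f}}$ provides another such object, so the two must coincide on a common open neighborhood of $\{0\}\times TQ_0$.

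More precisely, applying Theorem~\ref{th:flow_of_discretizations_of_FMS} to $\discCPE$ produces open neighborhoods $W_0\subset \R\times TQ$ of $\{0\}\times TQ$ and $U_0\subset \R\times TQ\times TQ$ of $\{0\}\times \Delta_{TQ}$, together with a smooth map $F_0:W_0\to TQ$ such that, for each $(h,v)\in W_0$ with $h\neq 0$, $F_0(h,v)$ is the unique $\ti{v}$ for which $(h,v,\ti{v})\in U_0$ is a critical point of the problem $(\dAFtpst, g|_{\mathcal{C}^*}, \ker(Tg|_{\mathcal{C}^*}))$ arising from $\discCPE$; replacing $U_0$ by $U_0\cap (W_0\times TQ)$ if necessary, I may also assume $p_{12}(U_0)\subset W_0$. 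The smooth map $G(h,v):=(h,v,F^{X_{L,f}}_h(v))$ satisfies $G(0,v)=(0,v,v)\in U_0$, so the set $D := G^{-1}(U_0)\cap \mathcal{U}^E_0 \cap G^{-1}(\R\times \mathcal{U}^E_0)$ is an open neighborhood of $\{0\}\times TQ_0$ in $\R\times TQ$. For $(h,v)\in D$ with $0\neq \abs{h}<a$, Lemma~\ref{le:exact_discretization-continuous_imp_discrete_traj} combined with Lemma~\ref{le:trajectories_and_critical_points-h_not_0} implies that $(h,v,F^{X_{L,f}}_h(v))\in U_0$ is a critical point of the above problem, and uniqueness forces $F^{X_{L,f}}_h(v)=F_0(h,v)$.

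Finally I would set $\ti{U}:=U_0\cap p_{12}^{-1}(D)$, which is an open neighborhood of $\{0\}\times \Delta_{TQ_0}$ since $(0,v)\in D$ for all $v\in TQ_0$. For $(h,v,\ti{v})\in \ti{U}$ with $0\neq \abs{h}<a$, direction $(\ref{it:flow_of_exact_discrete_systems-continuous}) \Rightarrow (\ref{it:flow_of_exact_discrete_systems-discrete})$ follows from Lemma~\ref{le:exact_discretization-continuous_imp_discrete_traj}, whose hypotheses are guaranteed by $(h,v)\in D\subset \mathcal{U}^E_0\cap G^{-1}(\R\times \mathcal{U}^E_0)$ (so both $(h,v)$ and $(h,\ti{v})=(h,F^{X_{L,f}}_h(v))$ lie in $\mathcal{U}^E_0$); conversely, if $(v,\ti{v})$ is a trajectory of $\aFDLSE_h$, then by Lemma~\ref{le:trajectories_and_critical_points-h_not_0} $(h,v,\ti{v})$ is a critical point lying in $U_0$, so the uniqueness clause in Theorem~\ref{th:flow_of_discretizations_of_FMS} yields $\ti{v}=F_0(h,v)$, and the identity $F_0(h,v)=F^{X_{L,f}}_h(v)$ established above closes the argument. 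The main obstacle is the bookkeeping of open neighborhoods ensuring all the arguments work simultaneously on a common domain; the substantive content reduces to combining Theorem~\ref{th:flow_of_discretizations_of_FMS} and Lemma~\ref{le:exact_discretization-continuous_imp_discrete_traj} via the uniqueness of critical points.
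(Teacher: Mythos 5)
Your proposal is correct and follows essentially the same route as the paper's proof: both directions rest on Lemma~\ref{le:exact_discretization-continuous_imp_discrete_traj}, the trajectory/critical-point dictionary of Lemma~\ref{le:trajectories_and_critical_points-h_not_0}, and the uniqueness clause of Theorem~\ref{th:flow_of_discretizations_of_FMS} applied to $\discCPE$, with the remaining work being the same bookkeeping of open neighborhoods (your $D$ and $\ti{U}$ play the roles of the paper's $W_0$ and $U\cap p_{12}^{-1}(W_0)\cap p_{12}^{-1}(F_3^{-1}(U))$). The only cosmetic difference is that you first identify the discrete flow $F_0$ with $F^{X_{L,f}}$ on $D$ and then read off the equivalence, whereas the paper exhibits the two triples $(h,v,\ti{v})$ and $(h,v,F^{X_{L,f}}_h(v))$ as critical points in $U$ and invokes uniqueness directly; this changes nothing of substance.
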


\begin{proof}
  As $L^E_\CPDS$ and $f^E_\CPDS$ are defined over
  $U^{X_{L,f}}$ let $W$ and $U$ be the open subsets obtained when
  Theorem~\ref{th:flow_of_discretizations_of_FMS} ---taking into
  account Remark~\ref{rem:discrete_flow_existence-for_local_data} with
  $V_{L,f}:=U^{X_{L,f}}$--- is applied to $\discCPE$. By construction,
  $W\subset U^{X_{L,f}}$ and
  $U\subset p_{12}^{-1}(U^{X_{L,f}})\cap p_{13}^{-1}(U^{X_{L,f}})$.
  Let $p_{12},p_{13}:\R\times TQ\times TQ\rightarrow \R\times TQ$ be
  the projection mappings and define the smooth functions
  $F_3:U^{X_{L,f}}\rightarrow \R\times TQ\times TQ$ and
  $F_2:U^{X_{L,f}}\rightarrow \R\times TQ$ by
  $F_3(h,v):=(h,v,F^{X_{L,f}}_h(v))$ and $F_2:=p_{13}\circ F_3$. Let
  $\mathcal{U}^E_0 := \mathcal{U}^E\cap (\del^{E\mp})^{-1}(Q_0\times
  Q_0) \subset U^{X_{L,f}}$ that is an open neighborhood of
  $\{0\}\times TQ_0$ and define
  $W_0 := W\cap \mathcal{U}^E_0\cap F_2^{-1}(\mathcal{U}^E_0)$.  Last,
  define the open subset of $\R\times TQ\times TQ$:
  \begin{equation*}
    \ti{U}:=U\cap p_{12}^{-1}(W_0) \cap p_{12}^{-1}(F_3^{-1}(U)).
  \end{equation*}
  Notice that $\ti{U}$ is an open neighborhood of
  $\{0\}\times \Delta_{T{Q_0}}$. Indeed,
  $\{0\}\times \Delta_{TQ}\subset U$ and, for any $v_0\in TQ_0$,
  $(0,v_0) \in W_0$ (because $\{0\}\times TQ\subset W$ and
  $\{0\}\times TQ_0\subset \mathcal{U}^E_0$), so that
  $(0,v_0,v_0)\in p_{12}^{-1}(W_0)$; also, as
  $(0,v,v)\in U$ for any $v\in TQ$, we have that $(0,v)\in F_3^{-1}(U)$
  and, then, $\{0\}\times TQ\subset F_3^{-1}(U)$.

  \eqref{it:flow_of_exact_discrete_systems-continuous} \imp
  \eqref{it:flow_of_exact_discrete_systems-discrete} As
  $(h,v,\ti{v})\in \ti{U}\subset U$ with $0\neq \abs{h}<a$ and
  $\ti{v}=F^{X_{L,f}}_h(v)$ we have that
  $(h,v)\in W_0\subset\mathcal{U}^E_0$ and
  $(h,F^{X_{L,f}}_h(v))=F_2(h,v)\in \mathcal{U}^E_0$. Then, by
  Lemma~\ref{le:exact_discretization-continuous_imp_discrete_traj},
  $(v,F^{X_{L,f}}_h(v))$ is a trajectory of the \FDMSCP $\aFDLSE_h$.

  \eqref{it:flow_of_exact_discrete_systems-discrete} \imp
  \eqref{it:flow_of_exact_discrete_systems-continuous} Let
  $(h,v,\ti{v})\in \ti{U}\subset U$ so that $(v,\ti{v})$ is a
  trajectory of $\aFDLSE_h$. Then
  $(h,v)\in W_0 \subset \mathcal{U}^E_0 \cap
  F_2^{-1}(\mathcal{U}^E_0)$ and
  $(h,v,F^{X_{L,f}}_h(v)) = F_3(h,v) \in U$. Then, both
  $(h,v),(h,F^{X_{L,f}}_h(v))=F_2(h,v) \in \mathcal{U}^E_0$ so that,
  by Lemma~\ref{le:exact_discretization-continuous_imp_discrete_traj},
  $(v,F^{X_{L,f}}_h(v))$ is a trajectory of $\aFDLSE_h$. As
  $(v,\ti{v})$ and $(v,F^{X_{L,f}}_h(v))$ are trajectories of
  $\aFDLSE_h$ both $(h,v,\ti{v})$ and $(h,v,F^{X_{L,f}}_h(v))$ are
  critical points of the critical problem associated to the exact
  discretization $\discCPE$ and, as both are contained in $U$, by the
  uniqueness of those critical points proved by
  Theorem~\ref{th:flow_of_discretizations_of_FMS} we conclude that
  $\ti{v} = F^{X_{L,f}}_h(v)$.
\end{proof}

\begin{remark}
  By Theorem~\ref{thm:flow_of_exact_discrete_systems} the critical
  points $(h,v,\ti{v})\in \ti{U}$ are exactly the triples of the form
  $(h,v,F^{X_{L,f}}_h(v))$. Thus, more informally, the flow of a \FMS
  sampled at discrete times coincides with the discrete trajectories
  of any of its exact forced discrete mechanical systems introduced in
  Example~\ref{ex:exact_discretization_forced-system}. This explains
  the name ``exact system'' for such \FDMSCP.
\end{remark}


\section{Error analysis}
\label{sec:error_analysis}

Discretizations of a \FMS are, as introduced in
Section~\ref{sec:discretizations_a_la_cuell_and_patrick} families of
\FDMSCP ``parametrized by $h$''. In this section we want to study how
the similarity of different discretizations (as $h\rightarrow 0$)
translates into the similarity of the corresponding discrete
trajectories. The proof of main result,
Theorem~\ref{th:contact_order_discrete_FMS}, is done in several parts
and involves the asymptotic behavior of the action forms in
Section~\ref{sec:analysis_of_sigma_bar}, the restriction maps in
Section~\ref{sec:analysis_of_phi_bar}, the constraints (associated to
the restriction maps) in Section~\ref{sec:analysis_of_D_bar} and the
critical-point map in Section~\ref{sec:analysis_of_bar_gamma}.

\begin{definition}\label{def:contact_order_of_discretizations}
  Let $(Q,L,f)$ be a \FMS. Two discretizations
  $(Q,\psi^j,L_\CPHS^j,f_\CPHS^j)$ ($j=1,2$) of $(Q,L,f)$ are said
  to have \jdef{order $r$ contact} if
  $\psi^2(h,t,v) = \psi^1(h,t,v) + \mathcal{O}(\grade_{\R^2\times
    TQ}^{r+1})$,
  $L_\CPHS^2(v) = L_\CPHS^1(v) + \mathcal{O}(\grade_{\R\times
    TQ}^{r+1})$ and
  $f_\CPHS^2(v) = f_\CPHS^1(v) + \mathcal{O}(\grade_{\R\times
    TQ}^{r+1})$ where
  \begin{equation}
    \label{eq:gades_RxTQ_and_R^2xTQ}
    \grade_{\R^2\times TQ}(h,t,v):=t \stext{ and } \grade_{\R\times TQ}(h,v):=h.
  \end{equation}
  A single discretization $(Q,\psi,L_\CPDS,f_\CPDS)$ is said
  to have \jdef{order $r$ contact} if it has order $r$ contact with
  the exact discretization of $(Q,L,f)$ introduced in
  Example~\ref{ex:exact_discretization_forced-system} (where $\dBTp$
  and $\dBTm$ are the ones coming with $\psi$).
\end{definition}


\begin{remark}\label{rem:contact_order_of_discretization-alternatives}
  The contact order conditions that appear in
  Definition~\ref{def:contact_order_of_discretizations} can be
  translated into alternative formulations using
  Lemma~\ref{le:contact_order_with_local_bounds_and_derivatives}. The
  order condition for the $\psi^j$ is equivalent to requiring that,
  for any point $(h_0,0,v_0)$ in the domain of the discretizations
  $\psi^j$, there are local coordinates and a constant $C_{h_0,v_0}>0$
  ---depending on the point $(h_0,v_0)$--- where the local expressions
  $\check{\psi^j}$ satisfy
  \begin{equation*}
    \norm{\check{\psi^2}(\check{h},\check{t},\check{v}) -
      \check{\psi^1}(\check{h},\check{t},\check{v})}\leq C_{h_0,v_0}
    \check{t}^{r+1}
  \end{equation*}
  for all $(\check{h},\check{t},\check{v})$ in the image of the
  coordinate domain. Another way of expressing the same conditions is
  that the local expressions have the same $\check{t}$ derivatives up
  to order $r$:
  \begin{equation*}
    D_2^j\check{\psi^2}(\check{h},0,\check{v}) =
    D_2^j\check{\psi^1}(\check{h},0,\check{v}), \stext{ for all } j=0,\ldots,r
  \end{equation*}
  and all $(\check{h},0,\check{v})$ in the image of the coordinate
  chart. In the same way, the order conditions for $L_\CPHS$ and
  $f_\CPHS$ are equivalent to the their coordinate expressions
  satisfying
  $\abs{\widecheck{L_\CPHS^2}(\check{v})
    -\widecheck{L_\CPHS^1}(\check{v})} \leq C^L_{v_0} h^{r+1}$ and
  $\norm{\widecheck{f_\CPHS^2}(\check{v})
    -\widecheck{f_\CPHS^1}(\check{v})} \leq C^f_{v_0} h^{r+1}$ for
  all $(h,\check{v})$ in the image of the coordinate domains ---where
  $C^L_{v_0}, C^f_{v_0}>0$ are constants depending on $v_0$--- or, in
  terms of derivatives, that
  $D_h^j|_{h=0}\widecheck{L_\CPHS^2}(\check{v}) =
  D_h^j|_{h=0}\widecheck{L_\CPHS^1}(\check{v})$ and
  $D_h^j|_{h=0}\widecheck{f_\CPHS^2}(\check{v}) =
  D_h^j|_{h=0}\widecheck{f_\CPHS^1}(\check{v})$, for all
  $(0,\check{v})$ in the image of the coordinate domain and
  $j=0,\ldots,r$.
\end{remark}

\begin{example}
  For any $r\in\N$, a discretization of order $r$ of the \FMS
  introduced in Example~\ref{ex:particle_with_friction-def} can be
  constructed as follows. Let
  $S_k(x) := -\sum_{j=1}^k \frac{1}{j!} x^j$ (the Taylor polynomial of
  order $k$ of $1-e^x$, centered at $x_0=0$). Then, using the
  expressions for the exact discretization system obtained in
  Examples~\ref{ex:particle_with_friction-exact_discretization}
  and~\ref{ex:particle_with_friction-exact_discretization_system}, we
  define
  \begin{gather*}
    \psi(h,t,q,v):=q+\frac{1}{\alpha} v S_r(-\alpha t), \quad
    L_\CPHS(q,v):=\frac{1}{4\alpha} v S_r(-2\alpha h), \stext{ and }\\
    f_\CPHS(q,v):=-v \big(S_r(-\alpha h) dq + \frac{1}{\alpha}\big(
    S_r(-\alpha h) - \frac{1}{2} S_r(-2\alpha h)\big) dv\big).
  \end{gather*}
  It is immediate that the discretization so defined has order $r$
  contact. Indeed, this is the case because the first $r$ derivatives
  of the $\psi$, $L_\CPHS$ and $f_\CPHS$ are the same as those of
  the exact system (see
  Remark~\ref{rem:contact_order_of_discretization-alternatives} and
  Lemma~\ref{le:contact_order_with_local_bounds_and_derivatives}).
\end{example}


The main result is the following theorem, that will be proved along
the section.

\begin{theorem}\label{th:contact_order_discrete_FMS}
  Let $\discCP^j=(Q,\psi^j,L_\CPDS^j,f_\CPDS^j)$ be
  discretizations of the regular \FMS $(Q,L,f)$ for $j=1,2$ and assume
  that they have order $r$ contact. Then, the corresponding discrete
  flows have order $r$ contact, that is,
  $F_h^2=F_h^1+\mathcal{O}(\grade_{\R\times TQ}^{r+1})$.
\end{theorem}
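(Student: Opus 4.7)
The plan is to reduce Theorem~\ref{th:contact_order_discrete_FMS} to Theorem~\ref{th:thm_3_in_cp07} applied to the blown-up critical problems $(\hat{\dAFtp}^{j},\hat{\varphi}^{j},\ker(T\hat{\varphi}^{j}))$ on $W^{\mathcal{C}}$ that were already used in the proof of Theorem~\ref{th:flow_of_discretizations_of_FMS}. The crucial structural point is that these hat-problems extend smoothly across $h=0$ and, at $h=0$, all of $\hat{\dAFop}|_{h=0}=dL+f$, $\hat{\varphi}|_{h=0}(0,v,\ti{v})=(0,-\tfrac12(v+\ti{v}),\tfrac12(v+\ti{v}))$ and $\hat{\gamma}(0,-z_q,z_q)=(0,z_q,z_q)$ depend only on the underlying \FMS $(Q,L,f)$, so $\hat{\dAFtp}^{2}=\hat{\dAFtp}^{1}$, $\hat{\varphi}^{2}=\hat{\varphi}^{1}$ and $\ker(T\hat{\varphi}^{2})=\ker(T\hat{\varphi}^{1})$ identically on the zero-slice. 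This gives ``zeroth-order'' contact for free and sets up a clean hypothesis for Theorem~\ref{th:thm_3_in_cp07}.

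The main bookkeeping occurs in the three preparatory subsections. In Section~\ref{sec:analysis_of_sigma_bar}, I would convert the hypothesis $L_\CPHS^{2}=L_\CPHS^{1}+\mathcal{O}(\grade_{\R\times TQ}^{r+1})$ and $f_\CPHS^{2}=f_\CPHS^{1}+\mathcal{O}(\grade_{\R\times TQ}^{r+1})$ into a contact-order statement for $\hat{\dAFtp}^{j}=i_{W^{\mathcal{C}}}^{*}(p_{12}^{*}\hat{\dAFop}^{j}+p_{13}^{*}\hat{\dAFop}^{j})$; the subtle point is that dividing by $h$ in the definitions of $\widehat{L_\CPS}$ and $\widehat{f_\CPS}$ (encoded in Proposition~\ref{prop:prop_1_in_CP07}) must be tracked carefully using Lemmas~\ref{le:0_residual_implies_order+1} and~\ref{le:order_of_sections_of_VB}. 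In Section~\ref{sec:analysis_of_phi_bar}, the hypothesis $\psi^{2}=\psi^{1}+\mathcal{O}(\grade_{\R^{2}\times TQ}^{r+1})$ (where $\grade_{\R^{2}\times TQ}(h,t,v)=t$) evaluated at $t=\alpha^{\pm}(h)=\mathcal{O}(h)$ gives $\del^{\pm,2}-\del^{\pm,1}=\mathcal{O}(h^{r+1})$, which after the $1/h$ rescaling inside $\hat{\varphi}^{j}(h,v,\ti{v})=(h,\tfrac{1}{h}\zeta^{-1}(\del^{-}_{h}(v),\del^{+}_{h}(\ti{v})))$ yields the required contact order for the restriction maps. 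Section~\ref{sec:analysis_of_D_bar} then lifts the contact order of $\hat{\varphi}^{j}$ to a contact order between the distributions $\ker(T\hat{\varphi}^{j})$ in the Grassmann-bundle sense of Section~\ref{sec:critical_problems}, which is where Appendices~\ref{sec:things_about_grassmannians} and~\ref{sec:local_computations_in_the_grassmann_bundle} are invoked to express the difference of two kernels of nearby surjective linear maps in local Grassmannian charts.

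With these three contact-order statements, Theorem~\ref{th:thm_3_in_cp07} applies, taking $M_0\subset\{0\}\times TQ\subset W^{\mathcal{C}}$ and $N_0\subset\{0\}\times E$ as the closed submanifolds and using the common critical-point map $\hat{\gamma}_{0}(0,-z_q,z_q)=(0,z_q,z_q)$ (nondegenerate by Lemma~\ref{le:nondegeneray_of_critical_points}) as the starting datum. This produces $\hat{\gamma}^{2}=\hat{\gamma}^{1}+\mathcal{O}(\grade_{\R\times E}^{r+1})$. Finally, in Section~\ref{sec:analysis_of_bar_gamma}, I would transfer this contact order to $F^{j}$ itself. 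Since the image of $\hat{\gamma}^{j}(h,\cdot,\cdot)$ is the graph of $v\mapsto F^{j}(h,v)$, and the reparametrization $(h,-z,z)\mapsto(h,v^{j}(h,z))$ is at $h=0$ the identity (because $\hat{\gamma}^{j}(0,-z,z)=(0,z,z)$), an implicit-function computation combined with Lemma~\ref{le:0_residual_implies_order+1} recovers $F^{2}=F^{1}+\mathcal{O}(\grade_{\R\times TQ}^{r+1})$.

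The main obstacle is precisely this order bookkeeping at the blow-up interface: the naive division by $h$ that is built into $\widehat{L_\CPS}$, $\widehat{f_\CPS}$ and $\hat{\varphi}$ threatens to cost one order of contact, and recovering it requires exploiting the forced vanishing of the residuals at $h=0$ (via the $h=0$ agreement of all the data and of the critical-point maps). Executing this bookkeeping uniformly for the action form, the restriction map, and the distribution, so that Theorem~\ref{th:thm_3_in_cp07} delivers exactly $\mathcal{O}(\grade^{r+1})$ rather than $\mathcal{O}(\grade^{r})$, is the chief technical content of the proof; in particular the Grassmann-bundle contact order for the constraint distribution (Section~\ref{sec:analysis_of_D_bar}) is the step I expect to be most computation-intensive.
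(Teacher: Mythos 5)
Your overall architecture (establish contact orders for the action form, the restriction map and the constraint distribution, then invoke Theorem~\ref{th:thm_3_in_cp07} for the blown-up critical problems) matches the paper's strategy, but there are two genuine gaps. First, you propose to apply Theorem~\ref{th:thm_3_in_cp07} directly to the hat-problems $(\hat{\dAFtp}_j,\hat{\varphi}^j,\ker(T\hat{\varphi}^j))$ ``on $W^{\mathcal{C}}$'', but for $j=1,2$ these problems live on \emph{different} manifolds: the constraint set $\mathcal{C}^j$ (hence $W^{\mathcal{C}^j}$) is cut out by the condition $\del^{+,j}_h(v)=\del^{-,j}_h(\ti{v})$, which depends on the discretization $\psi^j$. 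Theorem~\ref{th:thm_3_in_cp07} requires both problems to be defined on one and the same manifold with a grade. The paper resolves this by constructing diffeomorphisms $\Theta^j$ (via parallel transport along geodesics) that carry both problems onto a common open set $\conj{\mathcal{C}}\subset\R\times(TQ\oplus TQ)$, and all contact estimates are then carried out for the transported ``bar'' data; this step is absent from your plan.

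Second, and more seriously, your mechanism for recovering the order lost to the division by $h$ does not work. After the blow-up the data only satisfy $\hat{\dAFtp}_2=\hat{\dAFtp}_1+\mathcal{O}(\grade^{r})$, $\conj{\varphi}^2=\conj{\varphi}^1+\mathcal{O}(\grade^{r})$ and $\conj{\mathcal{D}}^2=\conj{\mathcal{D}}^1+\mathcal{O}(\grade^{r})$, so Theorem~\ref{th:thm_3_in_cp07} yields only $\conj{\gamma}_2=\conj{\gamma}_1+\mathcal{O}(\grade_{\R\times E}^{r})$ --- one order short of your claimed $\mathcal{O}(\grade^{r+1})$. You propose to recover the missing order ``via the $h=0$ agreement of all the data'' together with Lemma~\ref{le:0_residual_implies_order+1}; but that lemma requires the $r$-residual to \emph{vanish} identically on the zero-slice, whereas agreement of the data at $h=0$ only says the contact order is at least $1$ and gives no control on $\cpres^r$. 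The residuals do not vanish in general. The paper's actual mechanism is the $\Z_2$-symmetry under $(h,v,\ti{v})\mapsto(h,\ti{v},v)$: one proves that the residuals of the action form, the restriction map and the distribution are $\Z_2$-equivariant (Lemma~\ref{le:Z2_variance_of_sigma_2_bar}, Lemma~\ref{le:order_of_phi_bar}, Proposition~\ref{prop:residual_i_D_bar_is_Z2_equivariant}), deduces via Proposition~\ref{prop:invariance_of_res_gamma} that $\cpres^r(\conj{\gamma}_2,\conj{\gamma}_1)(0,-z_q,z_q)$ is a $\Z_2$-invariant vector, and only then applies Proposition 6 of~\cite{ar:cuell_patrick-skew_critical_problems}, which converts this symmetry of the residual of the critical-point map into the extra order of contact for the flow. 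Without the equivariance analysis your argument proves only $F^2=F^1+\mathcal{O}(\grade_{\R\times TQ}^{r})$.
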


An immediate and important consequence is the following.

\begin{corollary}\label{cor:contact_of_psi_vs_contact_exact_flow-new}
  Let $\discCP:=(Q,\psi,L_\CPDS,f_\CPDS)$ be a
  discretization of order $r$ of the regular \FMS $(Q,L,f)$ and
  $\conj{v}\in TQ$. If $F^\discCP$ is the discrete flow of $\discCP$
  given by Theorem~\ref{th:flow_of_discretizations_of_FMS} and
  $F^{X_{L,f}}$ is the (continuous) flow of $(Q,L,f)$ defined near
  $\conj{v}$, then $F^{\discCP}(h,\conj{v})$ and
  $F^{X_{L,f}}_h(\conj{v})$ have order $r$ contact, that is,
  $F^{\discCP}(h,\conj{v}) = F^{X_{L,f}}_h(\conj{v}) +
  \mathcal{O}(\grade_{\R\times TQ}^{r+1})$.
\end{corollary}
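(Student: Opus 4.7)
The plan is to derive the corollary by chaining together three facts already established in the paper, essentially unpacking a definition and combining Theorems~\ref{th:contact_order_discrete_FMS} and~\ref{thm:flow_of_exact_discrete_systems}.

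First, I would observe that by Definition~\ref{def:contact_order_of_discretizations} the hypothesis that $\discCP$ has order $r$ contact means precisely that $\discCP$ has order $r$ contact with the exact discretization $\discCPE:=(Q,\psi^E,L^E_\CPDS,f^E_\CPDS)$ of $(Q,L,f)$ built in Examples~\ref{ex:exact_discretization_forced} and~\ref{ex:exact_discretization_forced-system} (using the $\dBTp,\dBTm$ carried by $\psi$). Applying Theorem~\ref{th:contact_order_discrete_FMS} with $\discCP^1:=\discCP$ and $\discCP^2:=\discCPE$ then yields, for the corresponding discrete flows $F^\discCP$ and $F^{\discCPE}$ provided by Theorem~\ref{th:flow_of_discretizations_of_FMS}, the contact-order relation $F^{\discCPE}=F^\discCP+\mathcal{O}(\grade_{\R\times TQ}^{r+1})$ on the intersection of their domains, which is an open neighborhood of $\{0\}\times TQ$ containing $(0,\bar v)$.

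Next, I would identify $F^{\discCPE}$ with the sampled continuous flow near $(0,\bar v)$. Choosing a relatively compact open $Q_0\subset Q$ containing $\tau_Q(\bar v)$, Theorem~\ref{thm:flow_of_exact_discrete_systems} produces an open $\ti U\subset\R\times TQ\times TQ$ on which, for $0\neq|h|<a$, the trajectories of $\aFDLSE_h$ are exactly the pairs $(v,F^{X_{L,f}}_h(v))$. Since $(v,F^{\discCPE}(h,v))$ is by construction a trajectory of $\aFDLSE_h$ (Definition~\ref{def:flow_discretization_CP} together with Lemma~\ref{le:trajectories_and_critical_points-h_not_0}), this forces $F^{\discCPE}(h,v)=F^{X_{L,f}}_h(v)$ whenever $(h,v,F^{\discCPE}(h,v))\in\ti U$ with $h\neq 0$; and because $\ti U$ is an open neighborhood of $\{0\}\times\Delta_{TQ_0}$ and $F^{\discCPE}(0,v)=v$, continuity of $F^{\discCPE}$ places $(h,v,F^{\discCPE}(h,v))$ inside $\ti U$ on some open neighborhood of $(0,\bar v)$. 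The case $h=0$ is automatic, both sides reducing to $v$.

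Combining the two relations on a common open neighborhood of $(0,\bar v)$ gives
\[
F^\discCP(h,\bar v)=F^{\discCPE}(h,\bar v)+\mathcal{O}(\grade_{\R\times TQ}^{r+1})=F^{X_{L,f}}_h(\bar v)+\mathcal{O}(\grade_{\R\times TQ}^{r+1}),
\]
which is the required conclusion. I do not expect any substantial obstacle here: all of the analytic content is packaged in Theorems~\ref{th:contact_order_discrete_FMS} and~\ref{thm:flow_of_exact_discrete_systems}, and the only care required is in checking that the two neighborhoods on which those results apply share a common open subset containing $(0,\bar v)$, which is automatic because both can be arranged to contain $\{0\}\times TQ_0$ for a suitable $Q_0$.
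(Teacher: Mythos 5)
Your proposal is correct and follows essentially the same route as the paper's proof: apply Theorem~\ref{th:contact_order_discrete_FMS} to $\discCP$ and the exact discretization $\discCPE$, and then identify $F^{\discCPE}$ with the sampled continuous flow via Theorem~\ref{thm:flow_of_exact_discrete_systems}. Your extra care in checking that $(h,v,F^{\discCPE}(h,v))$ lands in the neighborhood $\ti{U}$ near $(0,\conj{v})$ is a detail the paper leaves implicit, but it does not change the argument.
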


\begin{remark}\label{rem:contact_order_condition_of_flows_in_terms_of_bounds}
  The contact order condition for the discrete flow $F^\Delta$
  guaranteed by
  Corollary~\ref{cor:contact_of_psi_vs_contact_exact_flow-new} is
  equivalent to stating that, for any point $(0,v_0)$ in the common
  domain of $F^\Delta$ and $F^{X_{L,f}}$ there are local coordinates
  and a constant $C_{v_0}>0$ ---depending of $v_0$--- where the local
  expressions of $F^\Delta$ and $F^{X_{L,f}}$ satisfy
  \begin{equation*}
    \norm{\widecheck{F^\Delta}(\check{h},\check{v}) -
      \widecheck{F^{X_{L,f}}}(\check{h},\check{v})} \leq C_{v_0} \check{h}^{r+1}
  \end{equation*}
  for all $(\check{h},\check{v})$ in the image of the coordinate
  domain. This condition is, also, equivalent to the fact that
  \begin{equation*}
    D_1^j\widecheck{F^\Delta}(0,\check{v}) =
    D_1^j\widecheck{F^{X_{L,f}}}(0,\check{v}), \stext{ for all } j=0,\ldots,r
  \end{equation*}
  and all $(0,\check{v})$ in the image of the coordinate chart (see
  Lemma~\ref{le:contact_order_with_local_bounds_and_derivatives}).
\end{remark}

\begin{proof}[Proof of
  Corollary~\ref{cor:contact_of_psi_vs_contact_exact_flow-new}]
  Let $\discCPE:=(Q,\psi^E,L^E_\CPDS,f^E_\CPDS)$ be the
  exact discretization of $(Q,L,f)$ corresponding to the functions
  $\dBTp$ and $\dBTm$ coming from the discretization $\psi$, defined
  in Example~\ref{ex:exact_discretization_forced-system}. As $\discCP$
  has order $r$, the contact order between $\discCP$ and $\discCPE$ is
  $r$. Then, by Theorem~\ref{th:contact_order_discrete_FMS}, the
  contact order between the corresponding flows is $r$. Thus,
  $F^{\discCP}(h,\conj{v}) = F^{\discCPE}(h,\conj{v}) +
  \mathcal{O}(\grade_{\R\times TQ}^{r+1})$ and, since by
  Theorem~\ref{thm:flow_of_exact_discrete_systems}
  $F^{\discCPE}(h,\conj{v}) = F^{X_{L,f}}_h(\conj{v})$, the statement
  follows.
\end{proof}

The proof of Theorem~\ref{th:contact_order_discrete_FMS} will parallel
that of Theorem 4.7
in~\cite{ar:patrick_cuell-error_analysis_of_variational_integrators_of_unconstrained_lagrangian_systems},
but in the more general context of forced systems. The plan is as
follows: Theorem~\ref{th:flow_of_discretizations_of_FMS} constructs
the flow $F$ of a discretization $(Q,\psi,L_\CPDS,f_\CPDS)$
of $(Q,L,f)$ by finding critical points of the problem
$(\hat{\dAFtp}, \hat{\varphi}, \ker(T \hat{\varphi}))$
(Lemma~\ref{le:comparison_crit_problems}) over given ``boundary data''
in $\R\times E$ when $h=0$ first and, then, constructing a function
$\hat{\gamma}$ that assigns the corresponding critical points to the
given boundary data, even for $h\neq 0$. Eventually, $F$ is defined in
terms of $\hat{\gamma}$. In the presence of two discretizations
$\psi^1$ and $\psi^2$ as in the statement of
Theorem~\ref{th:contact_order_discrete_FMS} it seems natural to use
Theorem~\ref{th:thm_3_in_cp07} that allows for the comparison of the
contact order of the functions $\hat{\gamma^j}$ ---the critical
points--- corresponding to two critical problems that have a known
contact order. Unfortunately, one hypothesis of the Theorem is that
both discretizations $\psi^j$ be defined on the same manifold, which
doesn't happen in our setting, where the manifolds $\mathcal{C}^1$ and
$\mathcal{C}^2$ need not coincide.

The path used at this point in the proof of Theorem 4.7
in~\cite{ar:patrick_cuell-error_analysis_of_variational_integrators_of_unconstrained_lagrangian_systems}
is to construct a new pair of critical problems
$(\conj{\dAFtp_j}, \conj{\varphi^j}, \ker(T\conj{\varphi^j}))$ over a
submanifold $\conj{\mathcal{C}}\subset \R\times TQ\times TQ$ (the same
$\conj{\mathcal{C}}$ for both problems) whose critical points
correspond to those of the problems
$(\widehat{\dAFtp_j},\hat{\varphi}_j, \ker(T \hat{\varphi}_j))$. Then,
Theorem~\ref{th:thm_3_in_cp07} can be used in this ``bar context'' to
prove that the contact order between the obtained critical points is,
at least, the one between the data of the critical problems. The
general result then follows readily.

In order to construct the bar context mentioned above, Cuell \&
Patrick consider $\R\times TQ\times TQ$ with the grade
$\grade_{\R\times TQ\times TQ} :=p_1$ and introduce functions
$\Theta^j:\R\times TQ\times TQ\rightarrow \R\times TQ\times TQ$ such
that
$\Theta^2=\Theta^1+\mathcal{O}(\grade_{\R\times TQ\times TQ}^{r+1})$
and, for $j=1,2$, $T_{(0,v,\ti{v})}\Theta^j$ are nonsingular for all
$v,\ti{v}\in TQ$,
$\Theta^j|_{\{0\}\times TQ\times TQ} = id_{\{0\}\times TQ\times TQ}$,
and
\begin{equation*}
  (\tau_Q\circ p_2\circ \Theta^j)(h,v,\ti{v}) = \del^{+,j}_h(v) \stext{ and }
  (\tau_Q\circ p_3\circ \Theta^j)(h,v,\ti{v}) = \del^{-,j}_h(\ti{v}).
\end{equation*}
Such functions $\Theta^j$ can be constructed, for instance, by putting
a Riemannian metric on $Q$ and letting $\CP_{q_2,q_1}$ be the
corresponding parallel transport along the geodesics starting at $q_1$
and ending at $q_2$ (for sufficiently close $q_1,q_2$). Then,
\begin{equation*}
  \Theta^j(h,v,\ti{v}) :=(h,\CP_{\del^{+,j}_h(v),\tau_Q(v)}(v),
  \CP_{\del^{-,j}_h(\ti{v}),\tau_Q(\ti{v})}(\ti{v}));
\end{equation*}
Patrick \& Cuell claim that $\Theta^j$ have the required
properties. We note that, in addition, the maps $\Theta^j$ constructed
are of ``product type'', in the sense that
$\Theta^j(h,v,\ti{v}) = (h, \Theta^{+,j}(h,v),
\Theta^{-,j}(h,\ti{v}))$.  In what follows, we assume that the
$\Theta^j$ are of product type.

Since $\{0\}\times TQ\times TQ\subset \R\times TQ\times TQ$ is a
closed submanifold,
$\Theta^j|_{\{0\}\times TQ\times TQ} = id_{\{0\}\times TQ\times TQ}$
is a diffeomorphism and $\Theta^j$ are local diffeomorphisms on
$\{0\}\times TQ\times TQ$, by Theorem 1
in~\cite{ar:cuell_patrick-skew_critical_problems}, there are open
neighborhoods $\widehat{U^j},\conj{U^j}\subset \R\times TQ\times TQ$
containing $\{0\}\times TQ\times TQ$ such that
$\Theta^j|_{\widehat{U^j}}:\widehat{U^j}\rightarrow \conj{U^j}$ are
diffeomorphisms. In particular, if $\mathcal{C}^j$ is defined
by~\eqref{eq:submanifold_of_matching_arrows_after_blow_up} for each
$j=1,2$, it is easy to verify that
$\Theta^j(\mathcal{C}^j\cap \widehat{U^j})$ is an open subset of
$(\R\times (TQ\oplus TQ))\cap \conj{U^j}$. Let
$\conj{\mathcal{C}} := \cap_{j=1}^2 \Theta^j(\mathcal{C}^j\cap
\widehat{U^j})$; by construction
$\conj{\mathcal{C}}\subset \R\times (TQ\oplus TQ)$ is open and
contains $\{0\}\times \Delta_{TQ}$. Furthermore
$(\Theta^j|_{\widehat{U^j}})^{-1}(\conj{\mathcal{C}}) \subset
\mathcal{C}^j\cap \widehat{U^j}$ is an open subset containing
$\{0\}\times \Delta_{TQ}$.


Next we consider the variational problem on $\mathcal{C}^j$ determined
by $\widehat{\dAFtp_j}$ and $\widehat{\varphi^j}$, whose critical
points $(h,v,\ti{v})$ determine the map $\widehat{\gamma^j}$. After
shrinking some of the open sets, we have $\widehat{\gamma^j}$ defined
on some open subset $V_{\widehat{\gamma^j}}\subset\R\times E$
containing $\{0\}\times Z_E$ and with values in
$\mathcal{C}^j\cap\widehat{U^j}$ so that we can define maps
\begin{equation}\label{eq:phi_bar_and_gamma_hat-def}
  \conj{\varphi^j}:=\widehat{\varphi^j}\circ
  (\Theta^j|_{\widehat{U^j}})^{-1}:\conj{\mathcal{C}}\rightarrow
  \R\times E \stext{ and }
  \conj{\gamma^j} := \Theta^j\circ \widehat{\gamma^j}.
\end{equation}
Last, we define
\begin{equation}\label{eq:sigma_bar-def}
  \conj{\dAFtp_j} := ((\Theta^j|_{\widehat{U^j}})^{-1})^{*_2}(\widehat{\dAFtp_j})
  = i_{2}^*\circ ((\Theta^j|_{\widehat{U^j}})^{-1})^*(\widehat{\dAFtp_j}),
\end{equation}
where $*_2$ and $i_2^*$ were defined in
Section~\ref{sec:remarks_on_cartesian_products} and we use, now,
associated to $X=X_1\times X_2=\R\times(TQ\times TQ)$. By
construction,
$\conj{\dAFtp_j}\in \Gamma(\conj{U^j}, (p_{23}^*T^*(TQ\times
TQ))|_{\conj{U^j}})$.

All together, we have the following commutative diagram
\begin{equation*}
  \xymatrix{
    {\R\times TQ\times TQ} \ar[r]^{\Theta^j} & {\R\times TQ\times TQ} &\\
    {\mathcal{C}^j \cap \widehat{U^j}} \ar[r]^{\Theta^j|_{\mathcal{C}^j \cap \widehat{U^j}}} \ar[dr]^{\widehat{\varphi^j}} 
    \ar@{^{(}->}[u] & {\conj{\mathcal{C}}} \ar[d]_{\conj{\varphi^j}} \ar@{^{(}->}[r] & 
    {\R\times TQ\oplus TQ} \ar@{_{(}->}[lu]\\
    {} & {\R\times E} \ar@/^1pc/[lu]^{\widehat{\gamma^j}} \ar@/_1pc/[u]_{\conj{\gamma^j}}& &
  }
\end{equation*}

\begin{lemma}\label{le:crit_problems_bar_vs_hat_are_equivalent}
  With the previous definitions,
  $\conj{\mathcal{K}}:=(\conj{\dAFtp_j}, \conj{\varphi}^j,
  \ker(T\conj{\varphi}^j))$ is a critical problem on
  $\conj{\mathcal{C}}$. Furthermore,
  $(\Theta^j|_{\mathcal{C}^j\cap \widehat{U^j}})^{-1}$ is a bijection
  between the set of critical points of this problem and those of
  $\widehat{\mathcal{K}}:=(\widehat{\dAFtp_j}, \widehat{\varphi}^j,
  \ker(T\widehat{\varphi}^j))$ on $\mathcal{C}^j\cap \widehat{U^j}$.
  Also, for each
  $(h,z_q,-z_q) \in V_{\widehat{\gamma^j}}\subset \R\times E$,
  $\conj{\gamma^j}(h,z_q,-z_q)$ is the unique critical point of
  $\conj{\mathcal{K}}$ over $(h,z_q,-z_q)$.
\end{lemma}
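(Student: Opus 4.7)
The plan is to use Lemma~\ref{le:critical_problems_and_diffeomorphisms} to transfer the critical-problem structure of $\widehat{\mathcal{K}}$ on $\mathcal{C}^j\cap\widehat{U^j}$ to $\conj{\mathcal{K}}$ on $\conj{\mathcal{C}}$ via the diffeomorphism $F:=\Theta^j|_{(\Theta^j)^{-1}(\conj{\mathcal{C}})}$ (a restriction of the global diffeomorphism $\Theta^j|_{\widehat{U^j}}$) together with $f:=id_{\R\times E}$. First I would check that $\conj{\mathcal{K}}$ is a bona fide critical problem: since $\widehat{\varphi^j}$ is a submersion (by Lemma~\ref{le:comparison_crit_problems}) and $(\Theta^j)^{-1}$ is a diffeomorphism, the composition $\conj{\varphi^j} = \widehat{\varphi^j}\circ(\Theta^j)^{-1}$ is a smooth submersion, so $\ker(T\conj{\varphi^j})$ is a smooth distribution, and $\conj{\dAFtp_j}$ is smooth by its construction in~\eqref{eq:sigma_bar-def}.

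Next, to apply Lemma~\ref{le:critical_problems_and_diffeomorphisms}, the commutativity $\conj{\varphi^j}\circ F = id_{\R\times E}\circ\widehat{\varphi^j}$ is immediate from the definition of $\conj{\varphi^j}$, and the distribution condition $\ker(T\conj{\varphi^j}) = TF(\ker(T\widehat{\varphi^j}))$ is automatic since $F$ is a diffeomorphism and both sides are kernels of submersions. The hard part is verifying the $1$-form compatibility $\widehat{\dAFtp_j}(v)(\delta v) = F^*(\conj{\dAFtp_j})(v)(\delta v)$ for $\delta v\in\ker(T\widehat{\varphi^j})_v$, and this is where the subtlety of the $*_2$ operation in~\eqref{eq:sigma_bar-def} enters. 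The key observation is that $p_1\circ\widehat{\varphi^j}=p_1$ and $p_1\circ\Theta^j=p_1$, so $\ker(T\widehat{\varphi^j})\subset\ker(Tp_1)$ and $T\Theta^j$ preserves $\ker(Tp_1)$; consequently $TF(\delta v)\in\ker(Tp_1)_{F(v)}$. The $*_2$ operator $i_2^*\circ(\cdot)^*$ only modifies a $1$-form by dropping its $(\ker(Tp_2))^\circ$ component (the ``$dh$-direction''), leaving its restriction to $\ker(Tp_1)$-vectors untouched. Hence for such $\delta v$,
\begin{equation*}
F^*(\conj{\dAFtp_j})(v)(\delta v) = \conj{\dAFtp_j}(F(v))(TF(\delta v)) = ((\Theta^j)^{-1})^*(\widehat{\dAFtp_j})(F(v))(TF(\delta v)) = \widehat{\dAFtp_j}(v)(\delta v).
\end{equation*}

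With the compatibility in hand, Lemma~\ref{le:critical_problems_and_diffeomorphisms} delivers the bijection of critical points implemented by $(\Theta^j|_{\mathcal{C}^j\cap\widehat{U^j}})^{-1}$. For the uniqueness of $\conj{\gamma^j}$ as a critical-point function, I would invoke the uniqueness already produced for $\widehat{\gamma^j}$ in the proof of Theorem~\ref{th:flow_of_discretizations_of_FMS}: $\widehat{\gamma^j}(h,z_q,-z_q)$ is the unique critical point of $\widehat{\mathcal{K}}$ over $(h,z_q,-z_q)$ in the neighborhood supplied by Theorem~\ref{th:thm_2_in_cp07}, after shrinking $\widehat{U^j}$ if necessary. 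Since $\conj{\gamma^j} = \Theta^j\circ\widehat{\gamma^j}$ and the bijection is realized by $\Theta^j$, this transfers directly to $\conj{\gamma^j}(h,z_q,-z_q)$ being the unique critical point of $\conj{\mathcal{K}}$ over $(h,z_q,-z_q)$ in the corresponding image neighborhood inside $\conj{\mathcal{C}}$. The principal obstacle, as flagged, is the $1$-form compatibility step; the remaining assertions are formal transfers along the diffeomorphism $\Theta^j$.
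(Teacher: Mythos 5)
Your proof is correct and follows essentially the same route as the paper: both arguments rest on the facts that $\ker(T\widehat{\varphi^j})\subset\ker(Tp_1)$, that the product-type $\Theta^j$ preserves $\ker(Tp_1)$, and that the $*_2$-pullback agrees with the ordinary pullback on $\ker(Tp_1)$-vectors, before invoking Lemma~\ref{le:critical_problems_and_diffeomorphisms} with $f=id_{\R\times E}$. The only (cosmetic) difference is that the paper routes through an intermediate problem $\ti{\mathcal{K}}$ built with the ordinary pullback and then observes it shares its critical points with $\conj{\mathcal{K}}$, whereas you fold that observation directly into the verification of the $1$-form compatibility hypothesis.
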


\begin{proof}
  As $\widehat{\varphi^j}$ are submersions
  (part~\eqref{it:comparison_crit_problems-is_critical_problem} in
  Lemma~\ref{le:comparison_crit_problems}) and $\Theta^j|_{\hat{U_j}}$
  are diffeomorphisms by construction, $\conj{\varphi^j}$ are
  submersions, hence, the first assertion in the statement is
  true. The same argument shows that
  $\ti{\mathcal{K}}:=(((\Theta^j|_{\widehat{U^j}})^{-1})^*(\widehat{\dAFtp_j}),
  \conj{\varphi}^j, \ker(T\conj{\varphi}^j))$ is a critical problem on
  $\conj{\mathcal{C}}$. As
  $\Theta_j|_{\mathcal{C}^j\cap \widehat{U^j}}$ is a diffeomorphism
  from $\mathcal{C}^j\cap \widehat{U^j}$ onto $\conj{\mathcal{C}}$
  that intertwines the corresponding data of the problems
  $\widehat{\mathcal{K}}$ and $\ti{\mathcal{K}}$, by
  Lemma~\ref{le:critical_problems_and_diffeomorphisms} (with
  $f=id_{\R\times E}$), $\Theta_j|_{\mathcal{C}^j\cap \widehat{U^j}}$
  is a bijection between the corresponding critical points.

  As $\ker(T\conj{\varphi}^j) \subset \ker(Tp_1)$ and
  $((\Theta^j|_{\widehat{U^j}})^{-1})^*(\widehat{\dAFtp_j})|_{\ker(Tp_1)}
  = \conj{\dAFtp_j}|_{\ker(Tp_1)}$, the criticality condition is the
  same for the problems $\ti{\mathcal{K}}$ and
  $\conj{\mathcal{K}}$. Hence, the problems $\ti{\mathcal{K}}$ and
  $\conj{\mathcal{K}}$ have the same critical points and the second
  assertion in the statement follows from the analysis of the previous
  paragraph. The last assertion follows immediately from the
  definition of $\widehat{\gamma^j}$ in terms of critical points of
  $\widehat{\mathcal{K}}$ and the definition of $\conj{\gamma^j}$ in
  terms of $\widehat{\gamma^j}$.
\end{proof}

Another tool that will be useful later on is the natural (left)
$\Z_2$-action on $\R\times TQ\times TQ$ defined by
\begin{equation}\label{eq:Z_2-action_RxTQxTQ-def}
  l^{\R\times TQ\times TQ}_{[1]}(h,v,\ti{v}) := (h,\ti{v},v).
\end{equation}
This action lifts to produce other $\Z_2$-actions on various bundles
over $\R\times TQ\times TQ$. More precisely, recall that when the
group $G$ acts on $X$, there are the \jdef{tangent} and
\jdef{cotangent} (left) actions defined by
\begin{equation}\label{eq:G_action_tangent_and_cotangent-def}
  l^{TX}_g(v_x) := T_xl^X_g(v_x) \stext{ and }
  l^{T^*X}_g(\alpha_x) := T^*_{l^X_g(x)}l^X_{g^{-1}}(\alpha_x) =
  \alpha_x\circ T_{l^X_g(x)} l^X_{g^{-1}},
\end{equation}
and, also, a (left) $G$-action on $\mathcal{A}^1(X)$ defined by
\begin{equation}\label{eq:G_action_on_1_forms-def}
  l^{\mathcal{A}^1(X)}_g(\gF) := (l^X_{g^{-1}})^*(\gF) =
  l^{T^*X}_g \circ \gF \circ l^X_{g^{-1}}.
\end{equation}
Notice that $\gF$ being $l^{\mathcal{A}^1(X)}$-invariant is equivalent
to saying that $\gF:X\rightarrow T^*X$ is $G$-equivariant for $l^X$
and $l^{T^*X}$.

The next goal is to find the order of contact of the critical problems
on $\conj{\mathcal{C}}$ given by
$(\conj{\dAFtp_j}, \conj{\varphi}^j, \ker(T\conj{\varphi}^j))$ for
$j=1,2$. This requires comparing $\conj{\dAFtp_2}$ to
$\conj{\dAFtp_1}$, $\ker(T\conj{\varphi}^2)$ to
$\ker(T\conj{\varphi}^1)$ and $\conj{\varphi}^2$ to $\conj{\varphi}^1$
in order to apply Theorem~\ref{th:thm_3_in_cp07} to deduce the contact
order of $\conj{\gamma^2}$ and $\conj{\gamma^1}$ and, so, that of the
corresponding flows. A first step will show us that if the contact
order of the original discretizations is $r$, then
$\conj{\gamma^2} = \conj{\gamma^1} +\mathcal{O}(\grade_{\R\times
  E}^r)$. A second step using the $\Z_2$-actions introduced above will
allow us to refine that result to obtain
$\conj{\gamma^2} = \conj{\gamma^1} +\mathcal{O}(\grade_{\R\times
  E}^{r+1})$.


\subsection{Analysis of $\conj{\dAFtp_j}$}
\label{sec:analysis_of_sigma_bar}

As before, let $\discCP^j:=(Q,\psi^j,L_\CPHS^j,f_\CPHS^j)$ be two
discretizations of the \FMS $(Q,L,f)$ such that their contact order is
$r$. Then
$L_\CPHS^2 = L_\CPHS^1 + \mathcal{O}(\grade_{\R\times
  TQ}^{r+1})$. Hence, by Proposition 2
in~\cite{ar:cuell_patrick-skew_critical_problems}\footnote{It should
  be noted that, even though the definition of contact order used
  in~\cite{ar:cuell_patrick-skew_critical_problems} is slightly
  different from the one we use here (see
  Remark~\ref{rem:no_same_contact_order_defs}), the proof of
  Proposition 2 remains valid for our definition. },
$\widehat{L_\CPS^2} = \widehat{L_\CPS^1} +
\mathcal{O}(\grade_{\R\times TQ}^{r})$, that is, there is a continuous
function $\delta \hat{L}$ such that
\begin{equation}
  \label{eq:order_condition_hat_L_cp}
  \widehat{L_\CPS^2}(h,v) - \widehat{L_\CPS^1}(h,v) =
  \grade_{\R\times TQ}^r(h,v) (\delta \hat{L})(h,v) = h^r (\delta
  \hat{L})(h,v).
\end{equation}
As we are assuming that $L_\CPHS^j$ are smooth, by Proposition 1
in~\cite{ar:cuell_patrick-skew_critical_problems},
$\widehat{L_\CPS^j}$ are smooth\footnote{If we assume that
  $L_\CPHS^j$ is $C^k$, then $\widehat{L_\CPS^j}$ are $C^{k-1}$.}.
Similarly, as
$f_\CPHS^2 = f_\CPHS^1 +\mathcal{O}(\grade_{\R\times TQ}^{r+1})$, by
Proposition 1 in~\cite{ar:cuell_patrick-skew_critical_problems}, we
have
$\widehat{f_\CPS^2} = \widehat{f_\CPS^1} +\mathcal{O}(\grade_{\R\times
  TQ}^{r})$.  Notice how in this process, one order of contact is lost
due to the division by $h$ involved in the ``hat construction''.

\begin{lemma}\label{le:order_of_sigma_2_hat}
  Let $\discCP^j$ be as above. Then the forms
  $\widehat{\dAFtp_j} \in \Gamma(\R\times TQ\times TQ,
  p_{23}^*T^*(TQ\times TQ))$ defined by\footnote{The forms
    $\widehat{\dAFop_j}$ had already been defined
    in~\eqref{eq:mu_hat_and_sigma_hat-def}. The sections
    $\widehat{\dAFtp_j}$ are slightly different from those introduced
    in~\eqref{eq:mu_hat_and_sigma_hat-def}: the latter are the
    pullback of the former to $W^{\mathcal{C}_j}$. In our current
    analysis it is important that the sections $\widehat{\dAFtp_j}$ be
    defined in an open set in $\R\times TQ\times TQ$ that is common to
    $j=1,2$. }
  \begin{equation}\label{eq:order_of_sigma_2_hat-sigma_hat_2-def}
    \begin{split}
      \widehat{\dAFtp_j} := p_{12}^* \widehat{\dAFop_j} +
      p_{13}^*\widehat{\dAFop_j} \stext{ with }
      \widehat{\dAFop_j}:=T_2 \widehat{L_\CPS^j} +
      \widehat{f^j_\CPS}\in \mathcal{A}^1(\R\times TQ)
    \end{split}
  \end{equation}
  have order $r-1$ contact, that is,
  $\widehat{\dAFtp_2} = \widehat{\dAFtp_1} +\mathcal{O}(\grade_{\R\times
    TQ\times TQ}^r)$.
\end{lemma}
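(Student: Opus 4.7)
The idea is to trace the construction of $\widehat{\dAFtp_j}$ from its two ingredients, checking that each operation involved (passage from $L_\CPHS$ to $\widehat{L_\CPS}$, taking $d_2$, summing, pulling back by $p_{12}$ and $p_{13}$) preserves order of contact, while keeping track of the single unit of order that is lost in the ``hat'' construction.

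The scalar/$1$-form order estimates I need are already at hand in the paragraph preceding the statement: from $L_\CPHS^2 = L_\CPHS^1 + \mathcal{O}(\grade_{\R\times TQ}^{r+1})$ and $f_\CPHS^2 = f_\CPHS^1 + \mathcal{O}(\grade_{\R\times TQ}^{r+1})$ one gets, via Propositions~1 and 2 of~\cite{ar:cuell_patrick-skew_critical_problems},
\begin{equation*}
  \widehat{L_\CPS^2} = \widehat{L_\CPS^1} + \mathcal{O}(\grade_{\R\times TQ}^r)
  \stext{ and }
  \widehat{f_\CPS^2} = \widehat{f_\CPS^1} + \mathcal{O}(\grade_{\R\times TQ}^r).
\end{equation*}
The crucial step is then to promote the first estimate to an estimate for $d_2 \widehat{L_\CPS^j}$. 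Applying Lemma~\ref{le:0_residual_implies_order+1}(\ref{it:0_residual_implies_order+1-smooth}) to the smooth functions $\widehat{L_\CPS^j}$ (smoothness given by Proposition~\ref{prop:prop_1_in_CP07}) produces a smooth $\delta\hat L$ with $\widehat{L_\CPS^2}-\widehat{L_\CPS^1} = h^r\delta\hat L$. Because $d_2$ differentiates only along the $TQ$ factor and $h$ is constant along those directions, Leibniz gives
\begin{equation*}
  d_2 \widehat{L_\CPS^2} - d_2 \widehat{L_\CPS^1} = h^r\, d_2(\delta\hat L),
\end{equation*}
and Lemma~\ref{le:order_of_sections_of_VB} applied to $p_2^*T^*TQ\to\R\times TQ$ identifies the right-hand side as $\mathcal{O}(\grade_{\R\times TQ}^r)$.

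Combining this with the estimate for $\widehat{f_\CPS^j}$ and using linearity of the $\mathcal{O}$ relation gives $\widehat{\dAFop_2} = \widehat{\dAFop_1} + \mathcal{O}(\grade_{\R\times TQ}^r)$. To conclude, I would pull this estimate back along the submersions $p_{12}, p_{13}:\R\times TQ\times TQ\to\R\times TQ$. Since $\grade_{\R\times TQ}\circ p_{1k} = \grade_{\R\times TQ\times TQ}$, the section-level identity $\widehat{\dAFop_2}-\widehat{\dAFop_1} = h^r\delta\widehat{\dAFop}$ pulls back to an identity of the same shape in the pullback bundle on $\R\times TQ\times TQ$, so Lemma~\ref{le:order_of_sections_of_VB} yields $p_{1k}^*\widehat{\dAFop_2} = p_{1k}^*\widehat{\dAFop_1}+\mathcal{O}(\grade_{\R\times TQ\times TQ}^r)$ for $k=2,3$. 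Summing the two pullbacks, as in the definition of $\widehat{\dAFtp_j}$, produces the claimed estimate.

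The only delicate point is the promotion from the scalar order estimate to one for $d_2\widehat{L_\CPS^j}$: without smoothness of the residual $\delta\hat L$, one could not legitimately apply $d_2$ and extract another factor of $h^r$. That smoothness is precisely what Lemma~\ref{le:0_residual_implies_order+1}(\ref{it:0_residual_implies_order+1-smooth}) guarantees for $C^\infty$ data, so no real obstacle arises; the remaining steps are bookkeeping in local trivializations.
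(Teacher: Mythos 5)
Your argument is correct and follows essentially the same route as the paper's proof: both obtain a smooth residual $\delta\hat L$ via Lemma~\ref{le:0_residual_implies_order+1}, differentiate the identity $\widehat{L_\CPS^2}-\widehat{L_\CPS^1}=h^r\delta\hat L$ along the fiber directions to control $T_2\widehat{L_\CPS^j}$, combine with the order estimate for $\widehat{f_\CPS^j}$, and conclude with Lemma~\ref{le:order_of_sections_of_VB} after pulling back by $p_{12}$ and $p_{13}$. Your explicit remark that $\grade_{\R\times TQ}\circ p_{1k}=\grade_{\R\times TQ\times TQ}$ justifies the pullback step the paper leaves implicit, but the substance is the same.
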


\begin{proof}
  By part~\eqref{it:0_residual_implies_order+1-smooth} in
  Lemma~\ref{le:0_residual_implies_order+1}, $\delta \hat{L}$
  in~\eqref{eq:order_condition_hat_L_cp} is smooth, so that we can
  differentiate~\eqref{eq:order_condition_hat_L_cp} with respect to
  $v$ and obtain
  $T_2\widehat{L_\CPS^2}(h,v) - T_2\widehat{L_\CPS^1}(h,v) = h^r
  T_2(\delta \widehat{L})(h,v)$, with $T_2(\delta \widehat{L})$
  smooth, proving that
  $T_2\widehat{L_\CPS^2} = T_2\widehat{L_\CPS^1} +
  \mathcal{O}(\grade_{\R\times TQ}^r)$.

  As
  $\widehat{f_\CPS^2} = \widehat{f_\CPS^1}
  +\mathcal{O}(\grade_{\R\times TQ}^{r})$, by
  Lemma~\ref{le:order_of_sections_of_VB}, we have
  $\widehat{f_\CPS^2}(h,v) - \widehat{f_\CPS^1}(h,v) = h^r
  \widehat{\delta f}(h,v)$ for some smooth section
  $\widehat{\delta f}$ of $T^*(\R\times TQ\times TQ)$. Using the
  previous computations on $\widehat{\dAFtp_j}$ we have
  \begin{equation*}
    \widehat{\dAFtp_2}(h,v,\ti{v}) -
    \widehat{\dAFtp_1}(h,v,\ti{v}) = h^r (T_2(\delta \hat{L})(h,v) +
    T_2(\delta \hat{L})(h,\ti{v}) + \widehat{\delta f}(h,v) + \widehat{\delta
      f}(h,\ti{v})),
  \end{equation*}
  which, by Lemma~\ref{le:order_of_sections_of_VB}, proves that
  $\widehat{\dAFtp_2} = \widehat{\dAFtp_1}
  +\mathcal{O}(\grade_{\R\times TQ}^r)$.
\end{proof}

\begin{lemma}\label{le:order_of_sigma_2_bar}
  In the context of Lemma~\ref{le:order_of_sigma_2_hat}, if
  $\conj{\dAFtp_j}\in \Gamma(\R\times TQ\times TQ,
  p_{23}^*T^*(TQ\times TQ)))$ are defined by~\eqref{eq:sigma_bar-def},
  we have that
  $\conj{\dAFtp_2} = \conj{\dAFtp_1} + \mathcal{O}(\grade_{\R\times
    TQ\times TQ}^r)$. Furthermore, for all $v,\ti{v}\in TQ$,
  \begin{equation}\label{eq:order_of_sigma_2_bar-equal_residuals}
    \cpres^r(\conj{\dAFtp_2}, \conj{\dAFtp_1})(0,v,\ti{v}) = 
    \cpres^r(\widehat{\dAFtp_2}, \widehat{\dAFtp_1})(0,v,\ti{v})
  \end{equation}
  in
  $T_{\widehat{\dAFtp_2}(0,v,\ti{v})} (p_{23}^*T^*(TQ\times
  TQ))\big|_{h=0}$.
\end{lemma}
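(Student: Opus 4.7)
The strategy is to work with $\Psi_j := (\Theta^j|_{\widehat{U^j}})^{-1}$ and exploit three facts: (i) $\Psi_j$ preserves the first coordinate $h$, so it preserves the grade; (ii) $\Psi_j|_{\{h=0\}} = \mathrm{id}$; and (iii) $\Psi_2 = \Psi_1 + \mathcal{O}(\grade_{\R\times TQ\times TQ}^{r+1})$. Fact (iii) is inherited from $\Theta^2 = \Theta^1 + \mathcal{O}(h^{r+1})$ by a standard inverse-function estimate: from $\Theta^2\circ\Psi_2 = \mathrm{id} = \Theta^1\circ\Psi_1$ one writes $T\Theta^2|_{\Psi_1(y)}(\Psi_2(y)-\Psi_1(y)) = -(\Theta^2-\Theta^1)(\Psi_1(y)) + \text{l.o.t.}$ and inverts $T\Theta^2$. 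With these in hand, the telescoping
\[
\conj{\dAFtp_2}-\conj{\dAFtp_1} = i_2^*\Psi_1^*\!\left(\widehat{\dAFtp_2}-\widehat{\dAFtp_1}\right) + i_2^*\!\left(\Psi_2^*-\Psi_1^*\right)(\widehat{\dAFtp_2})
\]
will reduce the lemma to showing that the first summand is $\mathcal{O}(\grade^r)$ with the expected residual, while the second is $\mathcal{O}(\grade^{r+1})$ and therefore invisible at order $r$.

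I then plan to verify the two bounds by a single local coordinate computation near an arbitrary point $(0,v,\ti v)$. Pick coordinates $(h,x)$ with $h$ the grade. Since $\widehat{\dAFtp_j}\in\Gamma(p_{23}^*T^*(TQ\times TQ))$ has no $dh$-component, write $\widehat{\dAFtp_j} = A_j(h,x)\,dx$; and since $\Psi_j$ preserves $h$ and is the identity at $h=0$, write $\Psi_j(h,x) = (h,\phi_j(h,x))$ with $\phi_j(0,x) = x$. Because $i_2^*$ discards the $dh$-part of a pulled-back form, the operator $i_2^*\Psi_j^*$ sends $A\,dx$ to $A(h,\phi_j(h,x))\,\partial_x\phi_j(h,x)\,dx$. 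Lemma \ref{le:order_of_sigma_2_hat} and fact (iii) give $A_2 = A_1 + h^r a$ and $\phi_2 = \phi_1 + h^{r+1} b$ with $a,b$ smooth, so a direct expansion collects every $b$-contribution into terms carrying a factor of $h^{r+1}$, leaving
\[
\conj{\dAFtp_2}-\conj{\dAFtp_1} = h^r\, a(h,\phi_1(h,x))\,\partial_x\phi_1(h,x)\,dx + \mathcal{O}(h^{r+1}).
\]

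This display gives the contact-order bound directly, and promoting it from a local statement to the required statement about sections of $p_{23}^*T^*(TQ\times TQ)$ is handled by Lemma \ref{le:order_of_sections_of_VB}. For the residual identity, evaluate the displayed leading term at $h=0$: since $\phi_1(0,x)=x$ and $\partial_x\phi_1(0,x) = \mathrm{Id}$, its coefficient reduces to $a(0,x)$, which by Remark \ref{rem:residuals_from_local_description} is precisely the local representative of $\cpres^r(\widehat{\dAFtp_2},\widehat{\dAFtp_1})(0,v,\ti v)$; this yields \eqref{eq:order_of_sigma_2_bar-equal_residuals}. The one genuinely nontrivial ingredient is the inverse-function estimate $\Psi_2=\Psi_1+\mathcal{O}(h^{r+1})$; everything else is bookkeeping that respects the product decomposition of $\Theta^j$, so in fact the computation can be performed separately on the $\Theta^{+,j}$ and $\Theta^{-,j}$ factors if convenient.
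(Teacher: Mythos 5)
Your proposal is correct and reaches the same conclusion by the same underlying mechanism as the paper: express $\conj{\dAFtp_j}=i_2^*\,((\Theta^j)^{-1})^*(\widehat{\dAFtp_j})$, use that $(\Theta^2)^{-1}=(\Theta^1)^{-1}+\mathcal{O}(\grade^{r+1})$ and that $\Theta^j$ is the identity on $\{h=0\}$, so that only the difference of the $\widehat{\dAFtp_j}$ survives at order $r$ and is transported by the identity at $h=0$. The packaging differs: the paper routes the estimate through its general Appendix machinery (Lemma~\ref{le:order_of_pullback_map_2}, whose residual formula has three terms of which two vanish, together with Corollary~\ref{cor:order_of_cotangent_map_2} and Proposition~4 of~\cite{ar:cuell_patrick-skew_critical_problems} for the inverse), whereas you telescope explicitly and do a single local-coordinate computation. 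Your route is more elementary and makes transparent the one genuinely delicate point, namely why pulling back by maps that agree only to order $r+1$ does not cost an order: differentiating $h^{r+1}b$ in the fiber variable $x$ (rather than in $h$) keeps the factor $h^{r+1}$, which is exactly the content of Remark~\ref{rem:order_of_tangent_map-on_subbundle} that the operator $i_2^*$ (restriction to $\ker(Tp_1)$) is there to exploit. Two small points you should make explicit: (i) the function $b$ in $\phi_2=\phi_1+h^{r+1}b$ must be $C^1$ in $x$, not merely continuous, for that step; this follows from part~\eqref{it:0_residual_implies_order+1-smooth} of Lemma~\ref{le:0_residual_implies_order+1} since all data are smooth; and (ii) the claim that the residual lies in $T_{\widehat{\dAFtp_2}(0,v,\ti{v})}(p_{23}^*T^*(TQ\times TQ))\big|_{h=0}$, which the paper derives from Lemma~\ref{le:residuals_of_maps_of_manifolds_with_a_grade}, is visible in your coordinates from the vanishing of the base-point and $h$-components of the leading term, but deserves a sentence.
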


\begin{proof}
  By Lemma~\ref{le:order_of_sigma_2_hat},
  $\widehat{\dAFtp_2} = \widehat{\dAFtp_1} + \mathcal{O}(\grade_{\R\times
    TQ\times TQ}^r)$. As, by construction,
  $\Theta^2=\Theta^1+\mathcal{O}(\grade_{\R\times TQ\times TQ}^{r+1})$,
  Proposition 4 from~\cite{ar:cuell_patrick-skew_critical_problems}
  can be applied and we see that
  $(\Theta^2)^{-1} = (\Theta^1)^{-1} + \mathcal{O}(\grade_{\R\times
    TQ\times TQ}^{r+1})$. Then, from
  Lemma~\ref{le:order_of_pullback_map_2} (with $f^j=(\Theta^j)^{-1}$
  and $\gF^k=\widehat{\dAFtp_k}$\footnote{Indeed, for this Lemma, we
    are considering $\widehat{\dAFtp_k}$ and $\conj{\dAFtp_k}$ as
    sections over $\R\times TQ\times TQ$. In the same spirit, we
    consider $\Theta^j$ as a diffeomorphism over (an open subset of)
    $\R\times TQ\times TQ$ rather that on the $\mathcal{C}^j$
    manifolds. The reason for doing so is that by working on the same
    manifold ($\R\times TQ\times TQ$ rather than on the differing
    $\mathcal{C}^j$) we can compare orders and use residuals.}), we
  obtain that
  \begin{equation*}
    \begin{split}
      ((\Theta^2)^{-1})^{*_2}(\widehat{\dAFtp_2}) =
      ((\Theta^1)^{-1})^{*_2}(\widehat{\dAFtp_1})
      +\mathcal{O}(\grade_{\R\times TQ\times TQ}^r)
    \end{split}
  \end{equation*}
  Using~\eqref{eq:sigma_bar-def} now, we conclude that
  $\conj{\dAFtp_2} = \conj{\dAFtp_1} + \mathcal{O}(\grade_{\R\times
    TQ\times TQ}^r)$.

  Observe that
  $(\Theta^2)^{-1} = (\Theta^1)^{-1} + \mathcal{O}(\grade_{\R\times
    TQ\times TQ}^{r+1})$ implies that
  $\cpres^r((\Theta^2)^{-1},(\Theta^1)^{-1}) = 0$ and, by
  Corollary~\ref{cor:order_of_cotangent_map_2}, that
  $\cpres^r(T_2^*(\Theta^2)^{-1}, T_2^*(\Theta^1)^{-1}) = 0$. Then,
  using~\eqref{eq:order_of_pullback_map_2-residual}, we have
  \begin{equation}\label{eq:order_of_sigma_2_bar-residual-1}
    \begin{split}
      \cpres^r(\conj{\dAFtp_2}, \conj{\dAFtp_1})(0,v,\ti{v}) =&
      T(T_2^*(\Theta^1)^{-1})(\widehat{\dAFtp_1}
      ((\Theta^2)^{-1}(0,v,\ti{v})))(\cpres^r(\widehat{\dAFtp_2},
      \widehat{\dAFtp_1})
      (\underbrace{(\Theta^2)^{-1}(0,v,\ti{v})}_{=(0,v,\ti{v})})) \\=&
      T(T_2^*(\Theta^1)^{-1})(\widehat{\dAFtp_1}
      ((0,v,\ti{v})))(\cpres^r(\widehat{\dAFtp_2}, \widehat{\dAFtp_1})
      (0,v,\ti{v})).
    \end{split}
  \end{equation}
  Recall that, by construction,
  $\grade_{p_{23}^*T^*(TQ\times TQ)}\circ \widehat{\dAFtp_j} = \grade_{\R\times
    TQ\times TQ}$, so that, applying
  Lemma~\ref{le:residuals_of_maps_of_manifolds_with_a_grade}, we have
  that
  \begin{equation*}
    \cpres^r(\widehat{\dAFtp_2},
    \widehat{\dAFtp_1})(0,v,\ti{v}) \in
    T_{\widehat{\dAFtp_2}(0,v,\ti{v})} \grade_{p_{23}^*T^*(TQ\times
      TQ)}^{-1}(0).
  \end{equation*}
  Also, as
  $\widehat{\dAFtp_j}:\R\times TQ\times TQ \rightarrow
  p_{23}^*T^*(TQ\times TQ))$, we know that, by definition,
  \begin{equation*}
    \cpres^r( \widehat{\dAFtp_2},
    \widehat{\dAFtp_1})(0,v,\ti{v}) \in
    T_{\widehat{\dAFtp_2}(0,v,\ti{v})}p_{23}^*T^*(TQ\times TQ))
  \end{equation*}
  for all $(0,v,\ti{v})\in \grade_{\R\times TQ\times TQ}^{-1}(0)$. Together
  with the previous result, we have that
  \begin{equation*}
    \cpres^r( \widehat{\dAFtp_2},
    \widehat{\dAFtp_1})(0,v,\ti{v}) \in
    T_{\widehat{\dAFtp_2}(0,v,\ti{v})}(p_{23}^*T^*(TQ\times TQ))|_{h=0}.
  \end{equation*}
  
  Then, back in~\eqref{eq:order_of_sigma_2_bar-residual-1} and
  recalling that
  $\Theta^j|_{\{0\}\times TQ\times TQ} = id|_{\{0\}\times TQ\times
    TQ}$,
  \begin{equation*}
    \begin{split}
      \cpres^r(\conj{\dAFtp_2}, \conj{\dAFtp_1})(0,v,\ti{v}) =&
      T(T_2^*(\Theta^1)^{-1}|_{(p_{23}^*T^*(TQ\times
        TQ))|_{h=0}})(\widehat{\dAFtp_1}
      ((0,v,\ti{v})))(\cpres^r(\widehat{\dAFtp_2},
      \widehat{\dAFtp_1}) (0,v,\ti{v})) \\=&
      T(T_2^*(\underbrace{(\Theta^1)^{-1}|_{\{0\}\times TQ\times
          TQ}}_{=id_{\{0\}\times TQ\times
          TQ}}))(\widehat{\dAFtp_1}
      ((0,v,\ti{v})))(\cpres^r(\widehat{\dAFtp_2},
      \widehat{\dAFtp_1}) (0,v,\ti{v})) \\=&
      \cpres^r(\widehat{\dAFtp_2},
      \widehat{\dAFtp_1}) (0,v,\ti{v}).
    \end{split}
  \end{equation*}
\end{proof}

The $\Z_2$-action on $\R\times TQ\times TQ$ introduced
in~\eqref{eq:Z_2-action_RxTQxTQ-def} induces an action
$l^{T^*(\R\times TQ\times TQ)}$ on $T^*(\R\times TQ\times TQ)$
by~\eqref{eq:G_action_tangent_and_cotangent-def}; the subbundle
$p_{23}^*T^*(TQ\times TQ)\subset T^*(\R\times TQ\times TQ)$ is
$l^{T^*(\R\times TQ\times TQ)}$-invariant, so we have a $\Z_2$-action
on $p_{23}^*T^*(TQ\times TQ)$. Also, it is easy to check that the
actions induced by $T^*l^{\R\times TQ\times TQ}_{\tau^{-1}}$ and
$T_2^*l^{\R\times TQ\times TQ}_{\tau^{-1}}$ are the same on
$p_{23}^*T^*(TQ\times TQ)$; we denote any of these actions by
$l^{p_{23}^*T^*(TQ\times TQ)}$. In addition, applying the tangent lift
to this $\Z_2$-action leads to the $\Z_2$-action
$l^{T(p_{23}^*T^*(TQ\times TQ))}$ defined by
$l^{T(p_{23}^*T^*(TQ\times TQ))}_\tau = T(l^{p_{23}^*T^*(TQ\times
  TQ)}_\tau)$ on $T(p_{23}^*T^*(TQ\times TQ))$.  The next result
explores the variance properties of the different objects associated
to the ``action forms''.

Below, if $\gF\in \mathcal{A}^1(X)$ and $Z\subset X$ is a submanifold,
we write $\gF|_Z$ to denote the restriction of $\gF:X\rightarrow T^*X$
to $Z$, so that $\gF|_Z\in \Gamma(Z,T^*X)$.

\begin{lemma}\label{le:Z2_variance_of_sigma_2_bar}
  In the context of Lemmas~\ref{le:order_of_sigma_2_hat}
  and~\ref{le:order_of_sigma_2_bar}, we have that
  \begin{gather}
    l^{\mathcal{A}^1(\R\times TQ\times TQ)}_\tau(\widehat{\dAFtp_j}) =
    \widehat{\dAFtp_j} \stext{ and } \label{eq:Z2_variance_of_sigma_2_bar-hat}\\
    (l^{\mathcal{A}^1(\R\times TQ\times TQ)}_\tau
    (\conj{\dAFtp_j}))|_{\grade_{\R\times TQ\times TQ}^{-1}(0)} =
    \conj{\dAFtp_j}|_{\grade_{\R\times TQ\times
        TQ}^{-1}(0)} \label{eq:Z2_variance_of_sigma_2_bar-bar}
  \end{gather}
  for $\tau\in\Z_2$ and the residuals
  $\cpres^r(\widehat{\dAFtp_2}, \widehat{\dAFtp_1})$ and
  $\cpres^r(\conj{\dAFtp_2}, \conj{\dAFtp_1})$ are $\Z_2$-equivariant.
\end{lemma}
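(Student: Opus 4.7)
The plan is to exploit the explicit formulas defining $\widehat{\dAFtp_j}$ and $\conj{\dAFtp_j}$ together with the compatibility between the $\Z_2$-action and the projections $p_{12}$, $p_{13}$. Under the swap $l^{\R\times TQ\times TQ}_{[1]}(h,v,\ti v)=(h,\ti v,v)$, a direct verification gives $p_{12}\circ l_{[1]}^{\R\times TQ\times TQ}=p_{13}$ and $p_{13}\circ l_{[1]}^{\R\times TQ\times TQ}=p_{12}$; moreover the subbundle $p_{23}^*T^*(TQ\times TQ)\subset T^*(\R\times TQ\times TQ)$ and the ``vertical'' subbundle $\ker(Tp_1)$ are both invariant under the induced actions. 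I will also note that the action leaves $\grade_{\R\times TQ\times TQ}=p_1$ fixed, so it restricts to an action on the level set $\grade_{\R\times TQ\times TQ}^{-1}(0)$.

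For~\eqref{eq:Z2_variance_of_sigma_2_bar-hat}, I would apply $l_{[1]}^{\mathcal{A}^1(\R\times TQ\times TQ)}=(l_{[1]}^{\R\times TQ\times TQ})^*$ (since $\Z_2$ has order $2$) to the defining identity~\eqref{eq:order_of_sigma_2_hat-sigma_hat_2-def} and use the interchange of $p_{12}$ and $p_{13}$ noted above; the two summands swap and the sum is fixed. For~\eqref{eq:Z2_variance_of_sigma_2_bar-bar}, I would use that $\Theta^j|_{\{0\}\times TQ\times TQ}=id$, so on the set $\grade_{\R\times TQ\times TQ}^{-1}(0)=\{0\}\times TQ\times TQ$ one has $\conj{\dAFtp_j}|_{\grade^{-1}(0)}=i_2^*(\widehat{\dAFtp_j})|_{\grade^{-1}(0)}$. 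Since the $\Z_2$-action preserves $\ker(Tp_1)$, the operator $i_2^*$ commutes with $l_{[1]}^{p_{23}^*T^*(TQ\times TQ)}$, and the already established invariance of $\widehat{\dAFtp_j}$ transfers to give~\eqref{eq:Z2_variance_of_sigma_2_bar-bar}.

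For the equivariance of $\cpres^r(\widehat{\dAFtp_2},\widehat{\dAFtp_1})$, I would argue as follows. The invariance~\eqref{eq:Z2_variance_of_sigma_2_bar-hat} is equivalent to $\Z_2$-equivariance of the smooth map $\widehat{\dAFtp_j}\colon \R\times TQ\times TQ\to p_{23}^*T^*(TQ\times TQ)$ for the actions $l^{\R\times TQ\times TQ}$ and $l^{p_{23}^*T^*(TQ\times TQ)}$. Then for any point $m\in \grade^{-1}(0)$, applying the local description of contact order given by Lemma~\ref{le:order_of_maps_in_terms_of_local_expression} in a pair of coordinate charts obtained by pulling back via $l_{[1]}$ and using Remark~\ref{rem:residuals_from_local_description} (together with the fact that $\grade\circ l_{[1]}=\grade$, so the ``$h$-factor'' on both sides is the same) yields the identity
\begin{equation*}
  T_{\widehat{\dAFtp_2}(l_{[1]}(m))}\,l^{p_{23}^*T^*(TQ\times TQ)}_{[1]}\bigl(\cpres^r(\widehat{\dAFtp_2},\widehat{\dAFtp_1})(m)\bigr)=\cpres^r(\widehat{\dAFtp_2},\widehat{\dAFtp_1})(l_{[1]}(m)),
\end{equation*}
which is precisely the stated equivariance in $T(p_{23}^*T^*(TQ\times TQ))$. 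The same computation works verbatim for $\conj{\dAFtp_j}$, using~\eqref{eq:Z2_variance_of_sigma_2_bar-bar} and noting that the residual at $m\in\grade^{-1}(0)$ depends only on the values of the forms in an arbitrarily small neighborhood whose order-$r$ Taylor data at $h=0$ is determined by $\conj{\dAFtp_j}|_{\grade^{-1}(0)}$ (equivalently, by~\eqref{eq:order_of_sigma_2_bar-equal_residuals}, one can transfer the equivariance from the ``hat'' residual to the ``bar'' residual).

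I expect the one genuinely delicate point to be the equivariance of the residuals: the maps $\widehat{\dAFtp_j}$ are not between manifolds with a grade mapped to one another by $l_{[1]}$ in the strict sense used elsewhere in the paper, so one must read off equivariance from the coordinate formulas. Once that bookkeeping is carried out, the identity~\eqref{eq:order_of_sigma_2_bar-equal_residuals} from Lemma~\ref{le:order_of_sigma_2_bar} reduces the ``bar'' case to the ``hat'' case.
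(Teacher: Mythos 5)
Your proposal is correct and follows essentially the same route as the paper: the swap $p_{12}\circ l_{[1]}=p_{13}$ gives \eqref{eq:Z2_variance_of_sigma_2_bar-hat}; the identity $\Theta^j|_{\{0\}\times TQ\times TQ}=id$ together with the invariance of $\ker(Tp_1)$ reduces \eqref{eq:Z2_variance_of_sigma_2_bar-bar} to the hat case; the equivariance of the hat residual comes from naturality of residuals under pullback by the grade-preserving diffeomorphism $l_{[1]}$ (the paper simply invokes Lemma~\ref{le:order_and_residuals_of_1_forms} where you redo the coordinate bookkeeping); and the bar residual is handled via \eqref{eq:order_of_sigma_2_bar-equal_residuals}. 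One caution: your first suggestion that the bar-residual equivariance follows ``verbatim'' because the order-$r$ data is determined by $\conj{\dAFtp_j}|_{\grade_{\R\times TQ\times TQ}^{-1}(0)}$ does not work --- \eqref{eq:Z2_variance_of_sigma_2_bar-bar} only constrains the forms at $h=0$ and says nothing about their $h$-derivatives, which is what the residual sees --- but your fallback via \eqref{eq:order_of_sigma_2_bar-equal_residuals} is exactly the paper's argument and closes that gap.
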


\begin{proof}
  In what follows, we denote $l^{\R\times TQ\times TQ}_\tau$ by
  $l_\tau$, for $\tau\in\Z_2$. It is immediate that
  $p_{13}\circ l_{[1]} = p_{12}$ and $p_{12}\circ l_{[1]} = p_{13}$ on
  $\R\times TQ\times TQ$. Then,
  from~\eqref{eq:order_of_sigma_2_hat-sigma_hat_2-def},
  \begin{equation*}
    \begin{split}
      l^{\mathcal{A}^1(\R\times TQ\times
        TQ)}_{[1]}(\widehat{\dAFtp_j}) =&
      (l_{[1]^{-1}})^*(\widehat{\dAFtp_j}) = (l_{[1]})^*(p_{12}^*
      \widehat{\dAFop_j} + p_{13}^*\widehat{\dAFop_j}) \\=&
      (l_{[1]})^*(p_{12}^* \widehat{\dAFop_j}) +
      (l_{[1]})^*(p_{13}^*\widehat{\dAFop_j}) = (p_{12}\circ
      l_{[1]})^*\widehat{\dAFop_j} + (p_{13}\circ
      l_{[1]})^*\widehat{\dAFop_j} \\=& p_{13}^* \widehat{\dAFop_j} +
      p_{12}^*\widehat{\dAFop_j} = \widehat{\dAFtp_j},
    \end{split}
  \end{equation*}
  proving~\eqref{eq:Z2_variance_of_sigma_2_bar-hat}.

  Using~\eqref{eq:Z2_variance_of_sigma_2_bar-hat}, the fact that
  $\grade_{\R\times TQ\times TQ}\circ l_\tau = \grade_{\R\times
    TQ\times TQ}$ and the fact that $l_\tau$ is a diffeomorphism, we
  apply Lemma~\ref{le:order_and_residuals_of_1_forms} to obtain
  \begin{equation*}
    \begin{split}
      \cpres^r(\widehat{\dAFtp_2},
      \widehat{\dAFtp_1})(l_{[1]}(0,v,\ti{v})) =&
      \cpres^r(\widehat{\dAFtp_2},
      \widehat{\dAFtp_1})(0,\ti{v},v) =
      \cpres^r(l_{[1]}^*(\widehat{\dAFtp_2}),
      l_{[1]}^*(\widehat{\dAFtp_1}))(0,\ti{v},v) \\=&
      T_{\widehat{\dAFtp_1}(0,v,\ti{v}))}(T^*l_{[1]})
      (\cpres^r(\widehat{\dAFtp_2},
      \widehat{\dAFtp_1})(0,v,\ti{v})) \\=&
      l^{T(p_{23}^*T^*(TQ\times
        TQ))}_{[1]}(\cpres^r(\widehat{\dAFtp_2},
      \widehat{\dAFtp_1})(0,v,\ti{v})),
    \end{split}
  \end{equation*}
  so that
  $\cpres^r(\widehat{\dAFtp_2},
  \widehat{\dAFtp_1})$ is $\Z_2$-equivariant for the
  corresponding actions.

  Next we check~\eqref{eq:Z2_variance_of_sigma_2_bar-bar}. Let
  $Z:=\{0\}\times TQ\times TQ = \grade_{\R\times TQ\times TQ}^{-1}(0)$
  and $i_Z:Z\rightarrow \R\times TQ\times TQ$ be the natural
  inclusion. As
  $TZ\subset \ker(Tp_1)|_Z\subset T(\R\times TQ\times TQ)|_Z$ and
  $\conj{\dAFtp_j}|_{\ker(Tp_1)}
  =((\Theta^j)^{-1})^*(\widehat{\dAFtp_j})|_{\ker(Tp_1)}$, we conclude
  that
  $i_Z^*(\conj{\dAFtp_j}) =
  i_Z^*(((\Theta^j)^{-1})^*(\widehat{\dAFtp_j}))$.  As
  $\Theta^j \circ i_Z = i_Z$, we have that
  $(\Theta^j)^{-1}\circ i_Z = i_Z$. Then,
  \begin{equation*}
    \begin{split}
      i_Z^*(\conj{\dAFtp_j}) =&
      i_Z^*(((\Theta^j)^{-1})^*(\widehat{\dAFtp_j})) =
      (\underbrace{((\Theta^j)^{-1})\circ
        i_Z}_{=i_Z})^*(\widehat{\dAFtp_j}) =
      i_Z^*(\widehat{\dAFtp_j}).
    \end{split}
  \end{equation*}
  As $Z$ is $l^{\R\times TQ\times TQ}$-invariant,
  $l^{\R\times TQ\times TQ}$ induces an action $l^Z$ on $Z$ such that
  $l^{\R\times TQ\times TQ} \circ i_Z = i_Z\circ l^Z$. Then
  \begin{equation*}
    \begin{split}
      l^{\mathcal{A}^1(Z)}_\tau(i_Z^*(\conj{\dAFtp_j})) =&
      (l^Z_{\tau^{-1}})^*(i_Z^*(\conj{\dAFtp_j})) =
      (l^Z_{\tau^{-1}})^*(i_Z^*(\widehat{\dAFtp_j})) \\=&
      i_Z^*((l^{\R\times TQ\times
        TQ}_{\tau^{-1}})^*(\widehat{\dAFtp_j})) =
      i_Z^*(\widehat{\dAFtp_j}) = i_Z^*(\conj{\dAFtp_j}).
    \end{split}
  \end{equation*}
  As $\ker(Tp_{23})\simeq p_1^*T\R$ is $\Z_2$-invariant and
  $\conj{\dAFtp_j}$ vanishes on $p_1^*T\R$, it follows that
  $l^{\mathcal{A}^1(Z)}_\tau(\conj{\dAFtp_j})$ also vanishes on
  $p_1^*T\R$. In other words,
  $l^{\mathcal{A}^1(Z)}_\tau(\conj{\dAFtp_j})(h,v,\ti{v})(\delta
  h,\delta v, \delta \ti{v}) =
  l^{\mathcal{A}^1(Z)}_\tau(\conj{\dAFtp_j})(h,v,\ti{v})(0,\delta v,
  \delta \ti{v})$ and, also,
  $\conj{\dAFtp_j}(h,v,\ti{v})(\delta h,\delta v, \delta \ti{v}) =
  \conj{\dAFtp_j}(h,v,\ti{v})(0,\delta v, \delta \ti{v})$. Then,
  \begin{equation*}
    \begin{split}
      l^{\mathcal{A}^1(\R\times TQ\times TQ)}_\tau(\conj{\dAFtp_j})|_Z
      =& i_Z^*(l^{\mathcal{A}^1(\R\times TQ\times
        TQ)}_\tau(\conj{\dAFtp_j})) \circ Tp_{23}|_Z \\=&
      i_Z^*((l_{\tau^{-1}})^*(\conj{\dAFtp_j})) \circ Tp_{23}|_Z =
      (l_{\tau^{-1}} \circ i_Z)^*(\conj{\dAFtp_j})) \circ Tp_{23}|_Z
      \\=& (i_Z \circ l^Z_{\tau^{-1}})^*(\conj{\dAFtp_j})) \circ
      Tp_{23}|_Z = (l^Z_{\tau^{-1}})^*(i_Z^*(\conj{\dAFtp_j}))\circ
      Tp_{23}|_Z \\=&
      l^{\mathcal{A}^1(Z)}_\tau(i_Z^*(\conj{\dAFtp_j}))\circ
      Tp_{23}|_Z = i_Z^*(\conj{\dAFtp_j})\circ Tp_{23}|_Z =
      \conj{\dAFtp_j}|_Z,
    \end{split}
  \end{equation*}
  which proves~\eqref{eq:Z2_variance_of_sigma_2_bar-bar}.
  
  Last, the $\Z_2$-equivariance of
  $\cpres^r(\conj{\dAFtp_2},\conj{\dAFtp_1})$ follows from that of
  $\cpres^r(\widehat{\dAFtp_2}, \widehat{\dAFtp_1})$
  and~\eqref{eq:order_of_sigma_2_bar-equal_residuals}, completing the
  proof.
\end{proof}


\subsection{Analysis of $\conj{\varphi}^j$}
\label{sec:analysis_of_phi_bar}

\begin{lemma}\label{le:order_of_phi_bar}
  Let $(Q,\psi^j,L_\CPHS^j,f_\CPHS^j)$ be two discretizations of the
  \FMS $(Q,L,f)$ that have order $r$ contact. Then $\conj{\varphi^2}$
  and $\conj{\varphi^1}$ defined
  in~\eqref{eq:phi_bar_and_gamma_hat-def} have order $r-1$ contact,
  that is,
  $\conj{\varphi^2} =\conj{\varphi^1} +
  \mathcal{O}(\grade_{\conj{\mathcal{C}}}^r)$. In addition, if we consider
  the $\Z_2$-action on $\R \times TQ\times TQ$ restricted to
  $\grade_{\conj{\mathcal{C}}}^{-1}(0)$,
  $\conj{\varphi^j}|_{\grade_{\conj{\mathcal{C}}}^{-1}(0)}$ and
  $\cpres^{r}(\conj{\varphi^2},
  \conj{\varphi^1}):\grade_{\conj{\mathcal{C}}}^{-1}(0)\rightarrow
  T(\R\times E)$ are $\Z_2$-invariant.
\end{lemma}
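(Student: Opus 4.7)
The plan is to establish the contact-order estimate by first analyzing the ``hat'' maps $\widehat{\varphi^j}$ (built directly from $\psi^j$, $\dBTp$, $\dBTm$) and then transfer that information to $\conj{\varphi^j}=\widehat{\varphi^j}\circ(\Theta^j|_{\widehat{U^j}})^{-1}$. The hypothesis $\psi^2=\psi^1+\mathcal{O}(\grade_{\R^2\times TQ}^{r+1})$ gives, after writing $\dBTp(h)=h\phi^+(h)$ and $\dBTm(h)=h\phi^-(h)$ with $\phi^\pm$ smooth (possible since $\dBTp(0)=\dBTm(0)=0$) and applying Theorem~\ref{th:taylor-r_minus_1} to $\psi^2-\psi^1$ evaluated at $t=\dBTp(h)$ and $t=\dBTm(h)$, the estimate $\del^{\pm,2}=\del^{\pm,1}+\mathcal{O}(\grade_{\R\times TQ}^{r+1})$. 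Since $\del^{\pm,j}_0=\tau_Q$ independently of $j$, the smooth map $(h,v,\ti{v})\mapsto\zeta^{-1}(\del^{-,j}_h(v),\del^{+,j}_h(\ti{v}))$ vanishes on $\{h=0\}\cap\conj{\mathcal{C}}$; then Proposition~\ref{prop:prop_1_in_CP07}, applied exactly as in the passage from $L^j_\CPDS$ to $\widehat{L_\CPS^j}$ within Lemma~\ref{le:order_of_sigma_2_hat}, yields $\widehat{\varphi^2}=\widehat{\varphi^1}+\mathcal{O}(\grade_{\R\times TQ\times TQ}^{r})$.

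To transfer this to $\conj{\varphi^j}$, I would use $\Theta^2=\Theta^1+\mathcal{O}(\grade^{r+1})$ together with Proposition~4 of~\cite{ar:cuell_patrick-skew_critical_problems} to get $(\Theta^2)^{-1}=(\Theta^1)^{-1}+\mathcal{O}(\grade^{r+1})$, and then apply Proposition~3 there (composition rule for residuals) to the factors $g_j=\widehat{\varphi^j}$ and $f_j=(\Theta^j)^{-1}$: since the $f_j$'s contact order is one higher than the $g_j$'s, the combined order is $r$, so $\conj{\varphi^2}=\conj{\varphi^1}+\mathcal{O}(\grade_{\conj{\mathcal{C}}}^{r})$, i.e., order $r-1$ contact. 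Moreover, because the $r$-residual of $(\Theta^j)^{-1}$ vanishes and $(\Theta^j)^{-1}|_{\{h=0\}}=\mathrm{id}$ with $\dot{(\Theta^j)^{-1}}\equiv 1$ by grade preservation, the same Proposition~3 yields the identity
\begin{equation*}
\cpres^r(\conj{\varphi^2},\conj{\varphi^1})(0,v,\ti{v}) \;=\; \cpres^r(\widehat{\varphi^2},\widehat{\varphi^1})(0,v,\ti{v}),
\end{equation*}
mirroring~\eqref{eq:order_of_sigma_2_bar-equal_residuals} in Lemma~\ref{le:order_of_sigma_2_bar}.

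For the $\Z_2$-statements I follow the blueprint of Lemma~\ref{le:Z2_variance_of_sigma_2_bar}. Since $\Theta^j|_{\{h=0\}}=\mathrm{id}$, one has $\conj{\varphi^j}|_{h=0}=\widehat{\varphi^j}|_{h=0}$, and the explicit formula $\widehat{\varphi^j}(0,v,\ti{v})=(0,-\tfrac12(v+\ti{v}),\tfrac12(v+\ti{v}))$ is manifestly invariant under $v\leftrightarrow\ti{v}$. For the invariance of the residual, I would apply Proposition~3 of~\cite{ar:cuell_patrick-skew_critical_problems} to the composition $\widehat{\varphi^j}\circ l_{[1]}^{\R\times TQ\times TQ}$, using that $l_{[1]}^{\R\times TQ\times TQ}$ is a grade-preserving diffeomorphism (so $\dot{l}_{[1]}\equiv 1$ and $\cpres^r(l_{[1]},l_{[1]})=0$) and that $\widehat{\varphi^j}\circ l_{[1]}^{\R\times TQ\times TQ}$ coincides with $\widehat{\varphi^j}$ on $\{h=0\}$. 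The main obstacle is the bookkeeping in this last step: one must carefully identify the $\Z_2$-action induced on the target $T(\R\times E)$ (via the component swap on $E=\{(v,-v):v\in TQ\}$) so that the invariance of $\widehat{\varphi^j}|_{h=0}$ propagates through the residual calculus with the correct signs; the remainder of the argument is a direct adaptation of the techniques already deployed for $\widehat{\dAFtp_j}$ and $\conj{\dAFtp_j}$.
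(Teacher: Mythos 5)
Your plan for the first assertion is close in spirit to the paper's, but the order in which you perform the two key operations --- dividing by $h$ and composing with $(\Theta^j)^{-1}$ --- creates a genuine domain problem at the transfer step. The maps $\widehat{\varphi^j}$ live on $W^{\mathcal{C}^j}\subset\mathcal{C}^j$, and $\mathcal{C}^1$, $\mathcal{C}^2$ are in general \emph{different} $(1+3\dim(Q))$-dimensional submanifolds of $\R\times TQ\times TQ$ agreeing only along $h=0$; the function $\frac{1}{h}\zeta^{-1}(\del^{-,j}_h(v),\del^{+,j}_h(\ti{v}))$ extends across $h=0$ only on submanifolds whose zero slice lies in $TQ\oplus TQ$, so there is no common manifold on which both $\widehat{\varphi^j}$ are defined and which contains the images of both $(\Theta^j)^{-1}|_{\conj{\mathcal{C}}}$. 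Hence ``$\widehat{\varphi^2}=\widehat{\varphi^1}+\mathcal{O}(\grade^{r})$'' is not well-formed for the actual $\widehat{\varphi^j}$, and Proposition~3 of~\cite{ar:cuell_patrick-skew_critical_problems} cannot be applied with $f_j=\widehat{\varphi^j}$, $g_j=(\Theta^j)^{-1}$. (If instead you define the hatted maps directly on $\conj{\mathcal{C}}$, as your wording suggests, they do share a domain, but composing them with $(\Theta^j)^{-1}$ no longer yields $\conj{\varphi^j}$, since $(\Theta^j)^{-1}$ maps $\conj{\mathcal{C}}$ into $\mathcal{C}^j$, not into $\conj{\mathcal{C}}$.) This is precisely the obstruction the bar construction exists to circumvent; the paper resolves it by reversing your order of operations: it forms $\chi^j:=\zeta^{-1}\circ p_{23}\circ g^j\circ(\Theta^j)^{-1}$ on the common domain $\conj{\mathcal{C}}$, chains the order-$(r+1)$ estimates for $\del^{\mp,j}$ (your Taylor argument is essentially Lemma~\ref{le:order_r_contact_for_borders_from_order_of_psis}) and for $(\Theta^j)^{-1}$ through Propositions~3 and~4 of~\cite{ar:cuell_patrick-skew_critical_problems} to get $\chi^2=\chi^1+\mathcal{O}(\grade_{\conj{\mathcal{C}}}^{r+1})$, and only then divides by $h$ via Proposition~\ref{prop:prop_1_in_CP07}, losing exactly one order.

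On the $\Z_2$ statements: the invariance of $\conj{\varphi^j}|_{\grade_{\conj{\mathcal{C}}}^{-1}(0)}$ via the explicit formula $(0,-\frac{1}{2}(w+\ti{w}),\frac{1}{2}(w+\ti{w}))$ is exactly the paper's argument (note the action on the target is trivial here; no swap on $E$ is involved). Your argument for the invariance of the residual, however, does not close: Proposition~3 reduces the claim to $\cpres^r(\conj{\varphi^2}\circ l_{[1]},\conj{\varphi^1}\circ l_{[1]})=\cpres^r(\conj{\varphi^2},\conj{\varphi^1})$ on $\grade_{\conj{\mathcal{C}}}^{-1}(0)$, and agreement of the maps at $h=0$ does not imply agreement of $r$-residuals, which encode $r$-th $h$-derivatives. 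The paper does not prove this point either --- it cites the last part of the proof of Theorem~4.7 of~\cite{ar:patrick_cuell-error_analysis_of_variational_integrators_of_unconstrained_lagrangian_systems} --- so you would need to import that argument rather than rely on the coincidence at $h=0$.
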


\begin{proof}
  Let $\chi^j:\conj{\mathcal{C}}\rightarrow E$ be defined by
  $\zeta^{-1}\circ p_{23}\circ g^j \circ (\Theta^j)^{-1}$, where
  $\zeta$ is the tubular map mentioned in
  Section~\ref{sec:flows_of_discretizations_of_FMS}. Explicitly,
  \begin{equation*}
    \chi^j(h,w,\ti{w}) = \zeta^{-1}
    (\del^{-,j}_h(p_2((\Theta^j)^{-1}(h,w,\ti{w}))),
    \del^{+,j}_h(p_3((\Theta^j)^{-1}(h,w,\ti{w}))).
  \end{equation*}
  Being a composition of smooth maps, it is a smooth map. For
  $(0,w,\ti{w})\in \conj{\mathcal{C}}$ we have
  \begin{equation*}
    \chi^j(0,w,\ti{w}) = \zeta^{-1}(\tau_Q(w),\tau_Q(\ti{w})) = 
    (0_{\tau_Q(w)},0_{\tau_Q(w)}), 
  \end{equation*}
  Therefore, by Proposition 1
  in~\cite{ar:cuell_patrick-skew_critical_problems}, there is
  $e:\grade_{\conj{\mathcal{C}}}^{-1}(0)\rightarrow E$ such that
  $\widehat{\chi^j}:\conj{\mathcal{C}}\rightarrow E$ defined by
  \begin{equation*}
    \widehat{\chi^j}(h,w,\ti{w}) :=
    \begin{cases}
      \frac{\chi^j(h,w,\ti{w})}{h}, \stext{ if } h\neq 0,\\
      e(0,w,\ti{w}), \stext{ if } h=0
    \end{cases}
  \end{equation*}
  is a smooth map. Notice that, by definition,
  \begin{equation*}
    \conj{\varphi^j}(h,w,\ti{w}) = 
    \widehat{\varphi^j}((\Theta^j)^{-1}(h,w,\ti{w})) =
    \begin{cases}
      (h,\frac{\chi^j(h,w,\ti{w})}{h}), \stext{ if } h\neq 0,\\
      (0,\frac{-1}{2}(w+\ti{w}),\frac{1}{2}(w+\ti{w})), \stext{ if }
      h=0.
    \end{cases}
  \end{equation*}
  As both $\widehat{\chi^j}$ and $\conj{\varphi^j}$ are smooth even at
  $h=0$ we conclude that
  $e(0,w,\ti{w}) = \frac{1}{2}(-(w+\ti{w}),(w+\ti{w}))$ and, also, that
  \begin{equation}\label{eq:order_of_phi_bar-conj_phi_and_chi_hat}
    \conj{\varphi^j}(h,w,\ti{w}) = (h,\widehat{\chi^j}(h,w,\ti{w})).
  \end{equation}

  Being $\psi^2 = \psi^1 + \mathcal{O}(t^{r+1})$, from
  Lemma~\ref{le:order_r_contact_for_borders_from_order_of_psis}, we
  have that
  $\del^{+,2} = \del^{+,1} + \mathcal{O}(\grade_{\R\times TQ\times
    TQ}^{r+1})$ and that
  $\del^{-,2} = \del^{-,1} + \mathcal{O}(\grade_{\R\times TQ\times
    TQ}^{r+1})$ and it follows immediately that
  $\del^{\mp,2} = \del^{\mp,1} + \mathcal{O}(\grade_{\R\times
    TQ\times TQ}^{r+1})$. Being $\zeta$ a diffeomorphism, it follows
  from Proposition 3
  in~\cite{ar:cuell_patrick-skew_critical_problems}, that
  $\zeta^{-1}\circ \del^{\mp,2} = \zeta^{-1}
  \circ \del^{\mp,1} + \mathcal{O}(\grade_{\R\times TQ\times
    TQ}^{r+1})$. Being
  $\Theta^2=\Theta^1 +\mathcal{O}(\grade_{\R\times TQ\times TQ}^{r+1})$, by
  Proposition 4 in~\cite{ar:cuell_patrick-skew_critical_problems}, we
  have that
  $(\Theta^2)^{-1} = (\Theta^1)^{-1} +\mathcal{O}(\grade_{\R\times TQ\times
    TQ}^{r+1})$ and, once again by Proposition 3, we have that
  $\chi^2 = \chi^1 +\mathcal{O}(\grade_{\R\times TQ\times TQ}^{r+1})$. By
  Proposition 2 in~\cite{ar:cuell_patrick-skew_critical_problems} we
  have that
  $\widehat{\chi^2} = \widehat{\chi^1} +
  \mathcal{O}(\grade_{\conj{\mathcal{C}}}^r)$. An immediate computation
  using~\eqref{eq:order_of_phi_bar-conj_phi_and_chi_hat} now leads to
  $ \conj{\varphi^2} = \conj{\varphi^1} +
  \mathcal{O}(\grade_{\conj{\mathcal{C}}}^r)$, as wanted.

  Notice that for $(0,w,\ti{w}) \in \grade_{\conj{\mathcal{C}}}^{-1}(0)$,
  \begin{equation}\label{eq:phi_bar_when_h=0}
    \conj{\varphi^j}(0,w,\ti{w}) =
    \widehat{\varphi^j}((\Theta^j)^{-1}(0,w,\ti{w})) =
    \widehat{\varphi^j}(0,w,\ti{w}) =
    (0,-\frac{1}{2}(w+\ti{w}),\frac{1}{2}(w+\ti{w}))
  \end{equation}
  that is invariant under interchanging $w$ with $\ti{w}$, so that
  $\conj{\varphi^j}|_{\grade_{\conj{\mathcal{C}}}^{-1}(0)}$ is
  $\Z_2$-invariant.  The last assertion of the statement is contained
  in the last part of the proof of Theorem 4.7
  in~\cite{ar:patrick_cuell-error_analysis_of_variational_integrators_of_unconstrained_lagrangian_systems}.
\end{proof}
  

\subsection{Analysis of $\ker(T\conj{\varphi}^j)$}
\label{sec:analysis_of_D_bar}

We first state a few general results and, then, specialize the
considerations to the context of discretizations.

It is common for a critical problem $(\gAF,g,\mathcal{D})$ that
$\mathcal{D}=\ker(Tg)$, which is a regular distribution because $g$ is
a submersion. The following result explores some of the properties of
systems of this particular form.
\begin{lemma}\label{le:skew_problem_with_D_from_g}
  Let $g:M\rightarrow N$ be a submersion such that
  $g:(M,\grade_M)\rightarrow (N,\grade_N)$ is a map of manifolds with
  a grade. Then,
  \begin{enumerate}
  \item \label{it:skew_problem_with_D_from_g-contained}
    $\mathcal{D}|_{\grade_M^{-1}(0)}\subset T \grade_M^{-1}(0)$.
  \item \label{it:skew_problem_with_D_from_g-invariant} If the Lie
    group $G$ acts smoothly on $M$ and $N$ in such a way that both
    $\grade_M^{-1}(0)$ is $G$-invariant and $g|_{\grade_M^{-1}(0)}$ is
    $G$-equivariant, then $\mathcal{D}|_{\grade_M^{-1}(0)}$ is
    $G$-invariant. In addition, if $M_0\subset \grade_M^{-1}(0)$ is a
    $G$-invariant submanifold, $\mathcal{D}|_{M_0}$ is also
    $G$-invariant.
  \end{enumerate}
\end{lemma}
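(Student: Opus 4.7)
The plan is to handle the two parts in sequence, as the second relies on the first.

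For part~\ref{it:skew_problem_with_D_from_g-contained}, since $g:(M,\grade_M)\rightarrow (N,\grade_N)$ is a map of manifolds with a grade, we have $\grade_N\circ g = \grade_M$. Taking the tangent map at a point $m\in \grade_M^{-1}(0)$, for any $v\in \mathcal{D}_m = \ker(T_m g)$,
\begin{equation*}
T_m\grade_M(v) = T_{g(m)}\grade_N(T_m g(v)) = T_{g(m)}\grade_N(0) = 0,
\end{equation*}
so $v\in \ker(T_m\grade_M) = T_m \grade_M^{-1}(0)$ (the last equality uses that $0$ is a regular value of $\grade_M$, which is part of the definition of a manifold with a grade). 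This establishes the inclusion $\mathcal{D}|_{\grade_M^{-1}(0)}\subset T\grade_M^{-1}(0)$.

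For part~\ref{it:skew_problem_with_D_from_g-invariant}, let $l^M$ and $l^N$ denote the $G$-actions. Because $\grade_M^{-1}(0)$ is $G$-invariant, the restriction $l^M_\tau|_{\grade_M^{-1}(0)}$ makes sense and the equivariance hypothesis reads $g\circ l^M_\tau|_{\grade_M^{-1}(0)} = l^N_\tau\circ g|_{\grade_M^{-1}(0)}$. Pick $m\in \grade_M^{-1}(0)$ and $v\in \mathcal{D}_m$. By part~\ref{it:skew_problem_with_D_from_g-contained}, $v\in T_m\grade_M^{-1}(0)$, so we may differentiate the equivariance relation along $v$ (viewed as a tangent vector to $\grade_M^{-1}(0)$) to obtain
\begin{equation*}
T_{l^M_\tau(m)}g\bigl(T_m l^M_\tau(v)\bigr) = T_{g(m)}l^N_\tau\bigl(T_m g(v)\bigr) = T_{g(m)}l^N_\tau(0) = 0,
\end{equation*}
which shows $T_m l^M_\tau(v)\in \mathcal{D}_{l^M_\tau(m)}$. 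Hence $\mathcal{D}|_{\grade_M^{-1}(0)}$ is $G$-invariant.

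Finally, if $M_0\subset \grade_M^{-1}(0)$ is a $G$-invariant submanifold, then for $m\in M_0$ we have $l^M_\tau(m)\in M_0$, and the previous computation still gives $T_m l^M_\tau(\mathcal{D}_m)\subset \mathcal{D}_{l^M_\tau(m)}$, so $\mathcal{D}|_{M_0}$ is $G$-invariant as well. No step looks like a genuine obstacle; the only subtlety is that the equivariance is only assumed on $\grade_M^{-1}(0)$, and part~\ref{it:skew_problem_with_D_from_g-contained} is exactly what is needed to make the tangential derivative of that restricted identity make sense on vectors of $\mathcal{D}$.
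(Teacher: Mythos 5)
Your proof is correct and follows essentially the same route as the paper: part (2) is argued identically, by differentiating the restricted equivariance relation along vectors of $\mathcal{D}_m$, which is legitimate precisely because of part (1). The only cosmetic difference is in part (1), where you differentiate $\grade_N\circ g=\grade_M$ directly and use that $0$ is a regular value of $\grade_M$, whereas the paper observes that the fiber $g^{-1}(g(m))$ is a submanifold contained in $\grade_M^{-1}(0)$ with tangent space $\ker(T_mg)$; both are one-line arguments for the same inclusion.
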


\begin{proof}
  Let $\delta m\in \mathcal{D}_{m}$ for some $m\in \grade_M^{-1}(0)$;
  then, if $n:=g(m)$, as $g$ is a submersion, $g^{-1}(n)\subset M$ is
  a regular submanifold, $m \in g^{-1}(n)$ and
  $T_m g^{-1}(n) = \ker(Tg(m)) = \mathcal{D}_m$. On the other hand, as
  $\grade_M = \grade_N \circ g$ and $\grade_M(m)=0$, we have that
  $\grade_N(n)=0$ and $g^{-1}(n)\subset \grade_M^{-1}(0)$. All
  together,
  $\delta m \in \mathcal{D}_m = T_m g^{-1}(n) \subset T
  \grade_M^{-1}(0)$, proving
  point~\eqref{it:skew_problem_with_D_from_g-contained}.

  In the context of
  point~\eqref{it:skew_problem_with_D_from_g-invariant} by the
  $G$-invariance of $\grade_M^{-1}(0)$ and the $G$-equivariance of
  $g|_{\grade_M^{-1}(0)}$ we have that
  $g\circ l^M_\tau|_{\grade_M^{-1}(0)} = l^N_\tau \circ
  g|_{\grade_M^{-1}(0)}$. Taking derivatives at $m\in \grade_M^{-1}(0)$ we
  obtain
  \begin{equation}\label{eq:skew_problem_with_D_from_g-derivative}
    T_{l^M_\tau(m)}g \circ T_ml^M_\tau|_{T_m \grade_M^{-1}(0)} = T_{g(m)}
    l^N_\tau \circ T_mg|_{T_m \grade_M^{-1}(0)}.
  \end{equation}
  Assume that $\delta m \in \mathcal{D}_m = \ker(T_mg)$ for
  $m\in \grade_M^{-1}(0)$ so that, by
  point~\eqref{it:skew_problem_with_D_from_g-contained}
  $\delta m \in
  T_m\grade_M^{-1}(0)$. Applying~\eqref{eq:skew_problem_with_D_from_g-derivative}
  to $\delta m$ we see that
  $l^{TM}_\tau(\delta m) = T_ml^M_\tau(\delta m) \in \ker(T_{l^M_\tau(m)}
  g) = \mathcal{D}_{l^M_\tau(m)}\subset
  \mathcal{D}|_{\grade_M^{-1}(0)}$, proving the $G$-invariance of
  $\mathcal{D}|_{\grade_M^{-1}(0)}$. Last, if $M_0\subset \grade_M^{-1}(0)$ is
  $G$-invariant and $\delta m \in \mathcal{D}_{m_0}$ for some
  $m_0\in M_0$, we have that
  $l^{TM}_\tau(\delta m) \in \mathcal{D}_{l^M_\tau(m_0)} \subset
  \mathcal{D}|_{M_0}$, completing the proof of
  point~\eqref{it:skew_problem_with_D_from_g-invariant}.
\end{proof}

\begin{lemma}\label{le:skew_problem_with_D_from_tig}
  Let $\ti{M},\ti{N}$ be manifolds and define $M:=\R\times \ti{M}$ and
  $N:=\R\times \ti{N}$ with the submersions $\grade_M:=p_1$ and
  $\grade_N:=p_1$. Given a smooth map $\ti{g}:M\rightarrow \ti{N}$ we
  define $g:M\rightarrow N$ by
  $g(h,\ti{m}):=(h,\ti{g}(h,\ti{m}))$. Assume that $g$ is a submersion
  (equivalently, $\ti{g}|_{\grade_M^{-1}(h_0)}$ is a submersion for each
  $h_0\in\R$). Then,
  \begin{enumerate}
  \item \label{it:skew_problem_with_D_from_tig-distribution}
    $\mathcal{D}:=\ker(Tg)$ is a smooth regular distribution on $M$
    and
    \begin{equation}\label{eq:skew_problem_with_D_from_tig-D_def}
      \mathcal{D}_{(h,\ti{m})} = 
      \{(0,\delta{\ti{m}}) \in T_{(h,\ti{m})} M: 
      \delta\ti{m} \in \ker(T_2\ti{g}(h,\ti{m}))\} \subset T_{(h,\ti{m})}M.
    \end{equation}
  \item \label{it:skew_problem_with_D_from_tig-contained}
    $\mathcal{D}|_{\grade_M^{-1}(0)}\subset T \grade_M^{-1}(0)$.
  \item \label{it:skew_problem_with_D_from_tig-invariant} Still in the
    context of
    point~\eqref{it:skew_problem_with_D_from_tig-contained}, if the
    Lie group $G$ acts smoothly on $\ti{M}$ and $\ti{N}$, define a
    $G$-action on $M$ by
    $l^M_\tau(h,\ti{m}) := (h,l^{\ti{M}}_\tau(\ti{m}))$. Assume that
    $\ti{g}|_{\grade_M^{-1}(0)}:\ti{M}\rightarrow \ti{N}$ is
    $G$-equivariant; then $\mathcal{D}|_{\grade_M^{-1}(0)}$ is
    $G$-invariant (for the $G$-action $l^{TM}$). In addition, if
    $\ti{M}_0\subset \ti{M}$ is a $G$-invariant submanifold, then
    $M_0:=\{0\}\times \ti{M}_0 \subset M$ and $\mathcal{D}|_{M_0}$ are
    $G$-invariant.
  \end{enumerate}
\end{lemma}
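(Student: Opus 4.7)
The plan is to reduce all three parts to direct consequences of the product structure $M=\R\times\ti M$, $N=\R\times\ti N$ together with the preceding Lemma~\ref{le:skew_problem_with_D_from_g} applied to the map $g$ as a morphism of manifolds with a grade.

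For part~\eqref{it:skew_problem_with_D_from_tig-distribution}, I would begin by computing $Tg$ in the product decompositions. Since $g(h,\ti m)=(h,\ti g(h,\ti m))$, using the notation of Section~\ref{sec:remarks_on_cartesian_products},
\begin{equation*}
  T_{(h,\ti m)}g(\delta h,\delta\ti m)
  =\bigl(\delta h,\,T^1_{(h,\ti m)}\ti g(\delta h)+T^2_{(h,\ti m)}\ti g(\delta\ti m)\bigr).
\end{equation*}
Hence $(\delta h,\delta\ti m)\in\ker T_{(h,\ti m)}g$ iff $\delta h=0$ and $T^2_{(h,\ti m)}\ti g(\delta\ti m)=0$, giving formula~\eqref{eq:skew_problem_with_D_from_tig-D_def}. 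The fact that $g$ is a submersion (equivalently, each $\ti g|_{\grade_M^{-1}(h_0)}$ is a submersion, by a dimension count using the displayed $Tg$) guarantees that $\ker Tg$ has constant rank equal to $\dim\ti M-\dim\ti N$ and is therefore a smooth regular distribution.

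For part~\eqref{it:skew_problem_with_D_from_tig-contained}, I would simply observe that $\grade_N\circ g=\grade_M$ because both are $p_1$, so $g\colon (M,\grade_M)\to(N,\grade_N)$ is a smooth map of manifolds with a grade. Then part~\eqref{it:skew_problem_with_D_from_g-contained} of Lemma~\ref{le:skew_problem_with_D_from_g} immediately gives $\mathcal{D}|_{\grade_M^{-1}(0)}\subset T\grade_M^{-1}(0)$. (Alternatively, this is plain from~\eqref{eq:skew_problem_with_D_from_tig-D_def}, since the first component of any vector in $\mathcal{D}_{(h,\ti m)}$ is $0$, while $T\grade_M^{-1}(0)=\ker Tp_1$.)

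For part~\eqref{it:skew_problem_with_D_from_tig-invariant}, I would first extend the given $G$-action to $N$ by $l^N_\tau(h,\ti n):=(h,l^{\ti N}_\tau(\ti n))$, and similarly on $M$. Then $\grade_M$ and $\grade_N$ are $G$-invariant (the $G$-action is trivial on the $\R$-factor), so $\grade_M^{-1}(0)=\{0\}\times\ti M$ is $G$-invariant. The $G$-equivariance hypothesis on $\ti g|_{\grade_M^{-1}(0)}$ translates to
\begin{equation*}
  g\bigl(0,l^{\ti M}_\tau(\ti m)\bigr)=\bigl(0,\ti g(0,l^{\ti M}_\tau(\ti m))\bigr)=\bigl(0,l^{\ti N}_\tau(\ti g(0,\ti m))\bigr)=l^N_\tau\bigl(g(0,\ti m)\bigr),
\end{equation*}
so $g|_{\grade_M^{-1}(0)}$ is $G$-equivariant. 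Part~\eqref{it:skew_problem_with_D_from_g-invariant} of Lemma~\ref{le:skew_problem_with_D_from_g} then yields that $\mathcal{D}|_{\grade_M^{-1}(0)}$ is $G$-invariant under $l^{TM}$. Finally, if $\ti M_0\subset\ti M$ is $G$-invariant, then $M_0:=\{0\}\times\ti M_0$ is a $G$-invariant submanifold of $\grade_M^{-1}(0)$, and the last statement in Lemma~\ref{le:skew_problem_with_D_from_g}\eqref{it:skew_problem_with_D_from_g-invariant} gives the $G$-invariance of $\mathcal{D}|_{M_0}$.

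There is no real obstacle here: the lemma is essentially a bookkeeping exercise showing that the product structure $g=\mathrm{id}_\R\times\ti g$ lets everything factor through $p_1$, at which point Lemma~\ref{le:skew_problem_with_D_from_g} applies verbatim. The only point requiring any care is verifying that the explicit formula~\eqref{eq:skew_problem_with_D_from_tig-D_def} follows from the submersion hypothesis on $g$ being equivalent to $T^2\ti g$ being fiberwise surjective, which is routine from the block form of $Tg$.
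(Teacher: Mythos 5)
Your proposal is correct and follows essentially the same route as the paper: verify that $\grade_N\circ g=\grade_M$, check the $G$-invariance of $\grade_M^{-1}(0)$ and the $G$-equivariance of $g|_{\grade_M^{-1}(0)}$ by the same chain of equalities, then invoke Lemma~\ref{le:skew_problem_with_D_from_g}, with the kernel formula read off from the block form of $Tg$. No gaps.
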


\begin{proof}
  By definition, we have that $\grade_N\circ g = \grade_M$, so that
  $g:(M,\grade_M)\rightarrow (N,\grade_N)$ is a map of manifolds with a grade.
  Again by definition, $\grade_M^{-1}(0)=\{0\}\times \ti{M}$ is
  $G$-invariant and, by hypothesis,
  \begin{equation*}
    \begin{split}
      g|_{\grade_M^{-1}(0)}(l^M_\tau(0,\ti{m})) =&
      g(0,l^{\ti{M}}_\tau(\ti{m})) =
      (0,\ti{g}(0,l^{\ti{M}}_\tau(\ti{m}))) =
      (0,l^{\ti{N}}_\tau(\ti{g}(0,\ti{m}))) \\=&
      l^N_\tau(0,(\ti{g}(0,\ti{m}))) = l^N_\tau(g(0,\ti{m})) =
      l^N_\tau(g|_{\grade_M^{-1}(0)}(0,\ti{m})),
    \end{split}
  \end{equation*}
  so that $g|_{\grade_M^{-1}(0)}$ is $G$-equivariant. Last, if
  $M_0:=\{0\}\times \ti{M}_0$ with $\ti{M}_0\subset \ti{M}$ a
  $G$-invariant submanifold, it is clear that $M_0$ is
  $G$-invariant. Then, the results follow directly from
  Lemma~\ref{le:skew_problem_with_D_from_g}. The
  expression~\eqref{eq:skew_problem_with_D_from_tig-D_def} is obtained
  from the Cartesian product structure of $M$ and $N$ and the
  corresponding splitting of the tangent spaces:
  \begin{equation*}
    \begin{split}
      \ker(Tg(h,\ti{m})) =& \{ (\delta h,\delta \ti{m}) \in
      T_{(h,\ti{m})}M : Tg(h,\ti{m})(\delta h,\delta \ti{m}) = (0,0)\}
      \\=& \{ (0,\delta \ti{m}) \in T_{(h,\ti{m})}M :
      T_2\ti{g}(h,\ti{m})(\delta \ti{m}) = 0\}.
    \end{split}
  \end{equation*}
\end{proof}

\begin{remark}\label{rem:skew_problem_with_D_from_tig-open}
  The results of Lemma~\ref{le:skew_problem_with_D_from_tig} remain
  valid when $M\subset \R\times \ti{M}$ is a $G$-invariant open subset
  and $\grade_M:=p_1|_M$. Also, $M_0:=\{0\}\times \ti{M}_0\subset M$ must
  be required.
\end{remark}

\begin{lemma}\label{le:contact_order_of_D_from_tig}
  Let $\ti{M},\ti{N}$ be manifolds and $M\subset \R\times \ti{M}$ an
  open subset; let $N:=\R\times \ti{N}$. Consider the grades
  $\grade_M:=p_1|_M$ and $\grade_N:=p_1$. For $j=1,2$ let $g_j:M\rightarrow N$
  be submersions such that such that $\grade_N\circ g_j = \grade_M$ and so that
  $g_2=g_1+\mathcal{O}(\grade_M^r)$. Then the distributions
  $\mathcal{D}^j := \ker(T g_j)$ over $M$ satisfy
  $\mathcal{D}^2=\mathcal{D}^1 + \mathcal{O}(\grade_M^r)$.
\end{lemma}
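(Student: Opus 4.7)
The plan is to argue locally and reduce the claim to showing that the kernels of two families of surjective linear maps with order $r$ contact also have order $r$ contact, using the smooth local charts on the Grassmann bundle provided in Appendix~\ref{sec:local_computations_in_the_grassmann_bundle}. By definition,
$\mathcal{D}^2 = \mathcal{D}^1 + \mathcal{O}(\grade_M^r)$ means that the maps $i_{\mathcal{D}^j}:M\to\GrB(TM,k)$ (with $k=\dim\ti M-\dim\ti N$) have order $r$ contact, so we work directly with these maps using Lemma~\ref{le:order_of_maps_in_terms_of_local_expression}.

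First, I would fix a point $(0,\ti m_0)\in\grade_M^{-1}(0)$ with $\ti n_0:=\ti g_1(0,\ti m_0)=\ti g_2(0,\ti m_0)$ (the two values agree by the hypothesis $g_2=g_1+\mathcal{O}(\grade_M^r)$, which forces them to coincide on $\grade_M^{-1}(0)$) and take local charts centered at $(0,\ti m_0)$ in $M$ and $\ti n_0$ in $\ti N$. Projecting the identity $g_2=g_1+\mathcal{O}(\grade_M^r)$ on the second factor yields $\ti g_2=\ti g_1+\mathcal{O}(\grade_M^r)$, and in coordinates this reads $\check{\ti g}_2(h,x)-\check{\ti g}_1(h,x)=h^r\,\delta(h,x)$ for some continuous function $\delta$. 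By Lemma~\ref{le:0_residual_implies_order+1}~(\ref{it:0_residual_implies_order+1-smooth}) applied iteratively (the $\ti g_j$ are $C^\infty$), $\delta$ is in fact smooth, so differentiation in the non-grade variable $x$ preserves the factor $h^r$: $\partial_x\check{\ti g}_2(h,x)-\partial_x\check{\ti g}_1(h,x)=h^r\partial_x\delta(h,x)$. Hence $T_2\ti g_2=T_2\ti g_1+\mathcal{O}(\grade_M^r)$ as smooth local matrix-valued maps representing elements of $\hom(T\ti M,T\ti N)$.

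Next, since $g_j$ is a submersion, $T_2\ti g_j(h,\ti m)$ is surjective (as explained after the statement of Lemma~\ref{le:skew_problem_with_D_from_tig}), so its kernel has constant dimension $k$. Using the Grassmann-bundle charts from Appendix~\ref{sec:local_computations_in_the_grassmann_bundle}, fix a decomposition $T_{\ti m_0}\ti M=V_1\oplus V_2$ with $\dim V_2=\dim\ti N$ and such that $T_2\ti g_1(0,\ti m_0)|_{V_2}$ is invertible; near $(0,\ti m_0)$ the same property holds for both $T_2\ti g_1$ and $T_2\ti g_2$. In such a chart a subspace transverse to $V_2$ is represented as the graph of a linear map $V_1\to V_2$, and for $A=(A_1,A_2)\colon V_1\oplus V_2\to T_{\ti n_0}\ti N$ with $A_2$ invertible one has $\ker A=\{(v_1,-A_2^{-1}A_1v_1):v_1\in V_1\}$. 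The assignment $A\mapsto -A_2^{-1}A_1$ is smooth in the open locus where $A_2$ is invertible, so the map $T_2\ti g_j\mapsto i_{\ker T_2\ti g_j}$ is given, in these charts, by composition of $T_2\ti g_j$ with a smooth function. By Proposition~3 of~\cite{ar:cuell_patrick-skew_critical_problems} (smooth maps preserve contact order), $\ker T_2\ti g_2=\ker T_2\ti g_1+\mathcal{O}(\grade_M^r)$ as maps into $\GrB(T\ti M,k)$.

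Finally, the identification $\mathcal{D}^j_{(h,\ti m)}=\{0\}\times\ker T_2\ti g_j(h,\ti m)$ from Lemma~\ref{le:skew_problem_with_D_from_tig}~(\ref{it:skew_problem_with_D_from_tig-distribution}) says that $i_{\mathcal{D}^j}$ is the composition of $i_{\ker T_2\ti g_j}$ with the smooth bundle map $\GrB(T\ti M,k)\to\GrB(TM,k)$, $V_{\ti m}\mapsto\{0\}\times V_{\ti m}$, which is smooth in the standard local charts of these Grassmann bundles. One more application of Proposition~3 of~\cite{ar:cuell_patrick-skew_critical_problems} gives $i_{\mathcal{D}^2}=i_{\mathcal{D}^1}+\mathcal{O}(\grade_M^r)$, which by definition is the assertion $\mathcal{D}^2=\mathcal{D}^1+\mathcal{O}(\grade_M^r)$. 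The main delicate point is setting up the Grassmann charts so that the kernel assignment is written as a concrete smooth composition of inversion and multiplication on matrix entries; once that is in place, the contact order simply propagates through each smooth operation.
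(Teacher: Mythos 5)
Your proof is correct and follows essentially the same route as the paper's: the lemma is proved there by the explicit local computation of Appendix C (Sections C.1--C.5), which writes the kernel in a Grassmannian graph chart as $-B^{-1}A$ for blocks of the derivative matrix and extracts the factor $h^r$ through the resolvent identity $(B^1)^{-1}-(B^2)^{-1}=(B^2)^{-1}\delta B\,(B^1)^{-1}$ --- exactly the content of your ``smooth kernel-chart map'' step. The only difference is packaging: the paper's explicit matrix manipulation also produces the formula for the residual $\cpres^r(i_{\mathcal{D}^2},i_{\mathcal{D}^1})$ needed later for the $\Z_2$-equivariance, whereas your appeal to Proposition~3 of Cuell--Patrick yields only the contact-order statement, which is all this lemma requires.
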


We prove Lemma~\ref{le:contact_order_of_D_from_tig} by a local
computation in Section~\ref{sec:computation_of_residual_of_i_D_bar}.

We can specialize the previous result to the case of discretizations,
where $\ti{M}:=TQ\oplus TQ$,
$M:=\conj{\mathcal{C}}\subset\R\times (TQ\oplus TQ)$, $\ti{N}:=E$ and
$g_j := \conj{\varphi^j}$. By Lemma~\ref{le:order_of_phi_bar}
$\conj{\varphi}^2 = \conj{\varphi}^1 +
\mathcal{O}(\grade_{\conj{\mathcal{C}}}^r)$.

\begin{lemma}\label{le:contact_order_of_ker_T_bar_phi}
  The distributions $\conj{\mathcal{D}}^j:=\ker(T\conj{\varphi}^j)$
  over $\conj{\mathcal{C}}$ satisfy
  $\conj{\mathcal{D}}^2 = \conj{\mathcal{D}}^1 +
  \mathcal{O}(\grade_{\conj{\mathcal{C}}}^r)$.
\end{lemma}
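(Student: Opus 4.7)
The proof is a direct application of Lemma~\ref{le:contact_order_of_D_from_tig} to the specific data of the two discretizations. My plan is therefore to set up the correspondence with the hypotheses of that lemma and verify each of its conditions in turn, all of which have already been established in the text above.

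Concretely, I take $\ti{M} := TQ\oplus TQ$ and $\ti{N}:=E$, set $M:=\conj{\mathcal{C}}$, which by construction is an open subset of $\R\times(TQ\oplus TQ)$ containing $\{0\}\times\Delta_{TQ}$, and let $N:=\R\times E$, with the grades $\grade_{\conj{\mathcal{C}}}:=p_1|_{\conj{\mathcal{C}}}$ and $\grade_{\R\times E}:=p_1$. For $j=1,2$ I take $g_j:=\conj{\varphi^j}:\conj{\mathcal{C}}\rightarrow \R\times E$. To invoke Lemma~\ref{le:contact_order_of_D_from_tig} I must check three things:
\begin{enumerate}
\item each $\conj{\varphi^j}$ is a submersion; this is precisely the content of part~\eqref{it:comparison_crit_problems-is_critical_problem} of Lemma~\ref{le:crit_problems_bar_vs_hat_are_equivalent}, which states that $(\conj{\dAFtp_j},\conj{\varphi^j},\ker(T\conj{\varphi^j}))$ is a critical problem on $\conj{\mathcal{C}}$;
\item the compatibility $\grade_{\R\times E}\circ \conj{\varphi^j} = \grade_{\conj{\mathcal{C}}}$ holds; this is immediate from formula~\eqref{eq:order_of_phi_bar-conj_phi_and_chi_hat}, which reads $\conj{\varphi^j}(h,w,\ti{w}) = (h,\widehat{\chi^j}(h,w,\ti{w}))$, so that the first component of $\conj{\varphi^j}$ is the first coordinate on $\conj{\mathcal{C}}$;
\item the contact order $\conj{\varphi^2} = \conj{\varphi^1} + \mathcal{O}(\grade_{\conj{\mathcal{C}}}^r)$ holds; this is exactly the first assertion of Lemma~\ref{le:order_of_phi_bar}.
\end{enumerate}

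All hypotheses of Lemma~\ref{le:contact_order_of_D_from_tig} are therefore satisfied with this choice of data, and the conclusion $\conj{\mathcal{D}}^2=\conj{\mathcal{D}}^1+\mathcal{O}(\grade_{\conj{\mathcal{C}}}^r)$ follows at once. Since all the substantive work (the product form of $\conj{\varphi^j}$ in its first coordinate, its submersion property, and the order-$r$ agreement of the maps) has already been done, there is no real obstacle here; the only thing to be careful about is matching the open-subset conventions, which is covered by Remark~\ref{rem:skew_problem_with_D_from_tig-open}.
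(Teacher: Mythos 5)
Your proof is correct and is essentially the paper's own argument: the lemma is stated there precisely as the specialization of Lemma~\ref{le:contact_order_of_D_from_tig} to $\ti{M}=TQ\oplus TQ$, $M=\conj{\mathcal{C}}$, $\ti{N}=E$, $g_j=\conj{\varphi^j}$, with the contact order of the $\conj{\varphi^j}$ supplied by Lemma~\ref{le:order_of_phi_bar}. Your explicit verification of the submersion and grade-compatibility hypotheses is accurate and matches what the paper leaves implicit.
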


\begin{prop}\label{prop:residual_i_D_bar_is_Z2_equivariant}
  Considering the $\Z_2$-actions $l^{\conj{\mathcal{C}}}$ and
  $l^{T\GrB} := Tl^{\GrB}$,
  $\cpres^r(i_{\conj{\mathcal{D}^2}},i_{\conj{\mathcal{D}^1}}):
  \grade_{\conj{\mathcal{C}}}^{-1}(0)\rightarrow T\GrB$ is
  $\Z_2$-equivariant.
\end{prop}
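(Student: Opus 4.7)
The plan is to leverage the $\Z_2$-symmetry of $\conj{\varphi^j}$ established in Lemma~\ref{le:order_of_phi_bar} together with the local formula for $\cpres^r(i_{\conj{\mathcal{D}^2}},i_{\conj{\mathcal{D}^1}})$ developed in Section~\ref{sec:computation_of_residual_of_i_D_bar}. The key observation is that the distribution residual is determined by data which inherits $\Z_2$-equivariance from the corresponding data for $\conj{\varphi^j}$.

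First I would show that $i_{\conj{\mathcal{D}^j}}|_{\grade_{\conj{\mathcal{C}}}^{-1}(0)}$ is $\Z_2$-equivariant. Formula~\eqref{eq:phi_bar_when_h=0} gives $\conj{\varphi^j}(0,w,\ti{w})=(0,-\tfrac{1}{2}(w+\ti{w}),\tfrac{1}{2}(w+\ti{w}))$, which is symmetric in $w\leftrightarrow\ti{w}$, so $\conj{\varphi^j}|_{\grade_{\conj{\mathcal{C}}}^{-1}(0)}$ is $\Z_2$-invariant (as already noted at the end of the proof of Lemma~\ref{le:order_of_phi_bar}). Since $\conj{\varphi^j}$ preserves the grade and $\conj{\mathcal{D}^j}=\ker(T\conj{\varphi^j})$, Lemma~\ref{le:skew_problem_with_D_from_tig}\eqref{it:skew_problem_with_D_from_tig-invariant} (via Remark~\ref{rem:skew_problem_with_D_from_tig-open}) yields that $\conj{\mathcal{D}^j}|_{\grade_{\conj{\mathcal{C}}}^{-1}(0)}$ is $Tl^{\conj{\mathcal{C}}}$-invariant, which is exactly the desired $\Z_2$-equivariance of $i_{\conj{\mathcal{D}^j}}$ at the zero-grade set.

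Next I would invoke the local formula for the residual of the distribution map. The computation underlying Lemma~\ref{le:contact_order_of_D_from_tig} (Section~\ref{sec:computation_of_residual_of_i_D_bar}) expresses $\cpres^r(i_{\conj{\mathcal{D}^2}},i_{\conj{\mathcal{D}^1}})(m)$, at each $m\in\grade_{\conj{\mathcal{C}}}^{-1}(0)$, as an algebraic combination of (i) the value $T_2\conj{\varphi^1}(m)$ whose kernel is $\conj{\mathcal{D}^1}_m$, and (ii) the vertical component of $\cpres^r(\conj{\varphi^2},\conj{\varphi^1})(m)$. Ingredient (i) is $\Z_2$-equivariant by the preceding step, and ingredient (ii) is $\Z_2$-equivariant because $\cpres^r(\conj{\varphi^2},\conj{\varphi^1})$ is $\Z_2$-equivariant by Lemma~\ref{le:order_of_phi_bar} and the projection onto the vertical part commutes with the lifted $\Z_2$-action. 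The $\Z_2$-equivariance of $\cpres^r(i_{\conj{\mathcal{D}^2}},i_{\conj{\mathcal{D}^1}})$ then follows from the naturality of the algebraic formula under the $\Z_2$-action lifted to $\GrB$.

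The principal difficulty is the last assertion: one must verify that the local formula from Section~\ref{sec:computation_of_residual_of_i_D_bar}, expressed in coordinate charts on $\GrB$ built from Appendices~\ref{sec:things_about_grassmannians} and~\ref{sec:local_computations_in_the_grassmann_bundle}, transforms covariantly under the lifted $\Z_2$-action. This can be handled by choosing adapted charts on $\conj{\mathcal{C}}$ in which the $\Z_2$-action is linear---which is possible since the action fixes the grade and acts linearly on the fibers of $TQ\oplus TQ\rightarrow Q$---and checking that the induced charts on the Grassmann bundle inherit this structure. It is lengthy but essentially mechanical bookkeeping once the adapted charts are in place, in the spirit of the proof of Lemma~\ref{le:Z2_variance_of_sigma_2_bar}.
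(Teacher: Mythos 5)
Your proposal follows essentially the same route as the paper: the $\Z_2$-invariance of $\conj{\varphi^j}|_{h=0}$ and of $\cpres^{r}(\conj{\varphi^2},\conj{\varphi^1})$ supplied by Lemma~\ref{le:order_of_phi_bar}, combined with the explicit coordinate formula for the distribution residual from Section~\ref{sec:computation_of_residual_of_i_D_bar} and the computed local expression of the lifted $\Z_2$-action on $\GrB$. The one point to state more carefully is your ingredient (ii): formula~\eqref{eq:res^r(i_D_bar)} depends not on the pointwise value of $\cpres^r(\conj{\varphi^2},\conj{\varphi^1})$ but on the fiber derivatives $D_3$, $D_4$ of the local residual function $\delta\check{\conj{\varphi}}$, so what is actually needed is the identity $\delta\check{\conj{\varphi}}(0,x,a,b)=\delta\check{\conj{\varphi}}(0,x,b,a)$ holding on the whole slice $h=0$, which can then be differentiated in the $a$ and $b$ directions --- exactly the step carried out in Section~\ref{sec:res^r_at_different_points}.
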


The proof is given in
Section~\ref{sec:equivariance_of_residual_kernel_distributions}.


\subsection{Analysis of $\conj{\gamma^j}$}
\label{sec:analysis_of_bar_gamma}

The previous sections provide us with contact estimates of
$\conj{\dAFtp_j}$, $\conj{\varphi^j}$ and $\ker(T\conj{\varphi^j})$. These
are the input for the contact estimates for $\conj{\gamma^j}$ that we
obtain in this section.

In the proof of the existence of a flow for a discretization of a
\FDMSCP we first considered the ``degenerate case'' where $h=0$ and
assigned a unique nondegenerate critical point to each ``boundary
value'', defining a function $\gamma_0$. Then, we extended $\gamma_0$
to a function defined for arbitrary (small) values of $h$ using
Theorem~\ref{th:thm_2_in_cp07}.

Let $(M,\grade_M)$ and $(N,\grade_N)$ be manifolds with grades. Say
that $(\gAF_j,g_j,\mathcal{D}_j)$ for $j=1,2$ are critical problems
over $M$ and $g_j$ have values in $N$ and we assume that
$\gAF_2=\gAF_1+\mathcal{O}(\grade_M^r)$,
$g_2=g_1+\mathcal{O}(\grade_M^r)$ and
$\mathcal{D}_2=\mathcal{D}_1+\mathcal{O}(\grade_M^r)$. In addition,
say that the corresponding $\gamma_j:V_j\rightarrow U_j$ have been
constructed as described above using Theorem~\ref{th:thm_2_in_cp07} as
extensions of the same $\gamma_0:N_0\rightarrow M_0$ for both problems
and where $M_0\subset \grade_M^{-1}(0)$ and
$N_0\subset \grade_N^{-1}(0)$. We will, furthermore, assume that
$N_0 = \grade_N^{-1}(0)$. By Theorem~\ref{th:thm_3_in_cp07} we have
that $\gamma_2 = \gamma_1 + \mathcal{O}(\grade_N^r)$.  Let
$V_\gamma := V_1\cap V_2$, that is an open subset in $N$ and
$V_\gamma^0:=V_\gamma\cap \grade_N^{-1}(0)$, that is an open
submanifold of $\grade_N^{-1}(0)$. In what follows and in order to
simplify the notation, we assume that $N=V_\gamma$, so that
$V_\gamma^0 = \grade_N^{-1}(0) = N_0$ (that is, the domain of both
$\gamma_j$ is all of $N$, which can be achieved by shrinking $N$ as
needed). The following result is a variation of Proposition 7
in~\cite{ar:cuell_patrick-skew_critical_problems}.

\begin{prop}\label{prop:invariance_of_res_gamma}
  In the context of the previous paragraph, let the Lie group $G$ act
  on $M$ preserving the grade, that is, $\grade_M \circ l^M_\tau = \grade_M$ for
  all $\tau \in G$. Assume that
  \begin{enumerate}[label=(\alph*)]
  \item \label{it:invariance_of_res_gamma-g_j_and_grades}
    $\grade_N\circ g_j = \grade_M$ for $j=1,2$,
  \item \label{it:invariance_of_res_gamma-D_j_invariance} for $j=1,2$,
    $\mathcal{D}_j|_{M_0}\subset T \grade_M^{-1}(0)$ and that $M_0$
    and $\mathcal{D}_j|_{\grade_M^{-1}(0)}$ are $G$-invariant subsets,
    while, for all $\tau\in G$,
    \begin{equation}
      \label{eq:invariance_of_res_gamma-invariance}
      l^{\mathcal{A}^1(M)}_\tau(\gAF_j)|_{\grade_M^{-1}(0)} =
      \gAF_j|_{\grade_M^{-1}(0)} \stext{ and }
      g_j\circ l^M_\tau |_{\grade_M^{-1}(0)} = g_j|_{\grade_M^{-1}(0)}.
    \end{equation}
  \item \label{it:invariance_of_res_gamma-alphas_fixed_at_critical_pts}
    for each $m_0\in M_0$, $\tau\in G$ and $j=1,2$,
    $((l^M_\tau)^*(\gAF_j))(m_0) = \gAF_j(m_0)$ in $T^*_{m_0}M$,
  \item \label{it:invariance_of_res_gamma-residuals_variance}
    $\cpres^r(\gAF_2,\gAF_1)$ and
    $\cpres^r(\mathcal{D}_2,\mathcal{D}_1)$ are $G$-equivariant while
    $\cpres^r(g_2,g_1)$ is $G$-invariant.
  \end{enumerate}
  Then, 
  \begin{enumerate}
  \item   
    \begin{equation}
      \label{eq:gamma_preserves_grades}
      \grade_M \circ \gamma_j = \grade_N \stext{ for } j=1,2,
    \end{equation}
  \item \label{it:invariance_of_res_gamma-cpres_gamma_contained}
    $\cpres^r(\gamma_2,\gamma_1)(\grade_N^{-1}(0)) \subset T(\grade_M^{-1}(0))$,
    and
  \item \label{it:invariance_of_res_gamma-cpres_gamma_invariant}
    $\cpres^r(\gamma_2,\gamma_1)(n_0)\in
    T_{\gamma_2(n_0)}(\grade_M^{-1}(0))$ is a $G$-invariant vector for all
    $n_0\in \grade_N^{-1}(0)$.
  \end{enumerate}
\end{prop}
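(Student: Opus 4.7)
The first two assertions are direct. For \eqref{eq:gamma_preserves_grades}, note that $g_j \circ \gamma_j = \mathrm{id}_N$ by construction of the critical point function, so composing with $\grade_N$ and using hypothesis~\ref{it:invariance_of_res_gamma-g_j_and_grades} gives $\grade_M \circ \gamma_j = \grade_N \circ g_j \circ \gamma_j = \grade_N$. Thus $\gamma_j : (N, \grade_N) \to (M, \grade_M)$ is a smooth map of manifolds with a grade; since Theorem~\ref{th:thm_3_in_cp07} already provides $\gamma_2 = \gamma_1 + \mathcal{O}(\grade_N^r)$, Lemma~\ref{le:residuals_of_maps_of_manifolds_with_a_grade} immediately yields part~\ref{it:invariance_of_res_gamma-cpres_gamma_contained}.

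For part~\ref{it:invariance_of_res_gamma-cpres_gamma_invariant}, I would first show that $\gamma_0(n_0)$ is $G$-fixed for every $n_0 \in N_0$, so that the tangent action $l^{TM}_\tau$ restricts to a linear automorphism of $T_{\gamma_0(n_0)} M$ and $G$-invariance of the residual makes sense. This is a uniqueness argument mirroring Lemma~\ref{le:critical_problems_and_diffeomorphisms}: by hypothesis~\ref{it:invariance_of_res_gamma-D_j_invariance}, $l^M_\tau(\gamma_0(n_0))$ lies in $M_0$, and the $G$-invariance of $\gAF_j$, $g_j$ and $\mathcal{D}_j$ on $\grade_M^{-1}(0) \supset M_0$ imposed in \ref{it:invariance_of_res_gamma-D_j_invariance} shows that $l^M_\tau(\gamma_0(n_0))$ is also a critical point of $(\gAF_j,g_j,\mathcal{D}_j)$ at $n_0$; since $\gamma_0 : N_0 \to M_0$ is a bijection, we conclude $l^M_\tau(\gamma_0(n_0)) = \gamma_0(n_0)$.

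Next, consider the twisted critical point functions $\gamma_j^\tau := l^M_\tau \circ \gamma_j$. Each $\gamma_j^\tau$ is the critical point function of the pullback problem $\bigl((l^M_{\tau^{-1}})^* \gAF_j,\; g_j \circ l^M_{\tau^{-1}},\; l^{TM}_\tau \mathcal{D}_j\bigr)$ and, by the previous step, extends the same $\gamma_0$. Applying the chain rule for residuals (Proposition 3 in~\cite{ar:cuell_patrick-skew_critical_problems}, as used in the proof of Lemma~\ref{le:residuals_of_maps_of_manifolds_with_a_grade}) to the composition with the diffeomorphism $l^M_\tau$, which fixes $\gamma_0(n_0)$, one gets
\begin{equation*}
\cpres^r(\gamma_2^\tau, \gamma_1^\tau)(n_0) = T_{\gamma_0(n_0)} l^M_\tau\bigl(\cpres^r(\gamma_2, \gamma_1)(n_0)\bigr) = l^{TM}_\tau\bigl(\cpres^r(\gamma_2, \gamma_1)(n_0)\bigr).
\end{equation*}
Assertion~\ref{it:invariance_of_res_gamma-cpres_gamma_invariant} is therefore equivalent to the identity $\cpres^r(\gamma_2^\tau, \gamma_1^\tau)(n_0) = \cpres^r(\gamma_2, \gamma_1)(n_0)$.

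The key obstacle lies in this last identity. A direct application of Theorem~\ref{th:thm_3_in_cp07} to compare the original and pulled-back problems cannot be used, because hypotheses~\ref{it:invariance_of_res_gamma-D_j_invariance} and~\ref{it:invariance_of_res_gamma-alphas_fixed_at_critical_pts} only give that the pulled-back data agree with the original on $\grade_M^{-1}(0)$, that is, to contact order $\mathcal{O}(\grade_M^1)$ in the sense of Definition~\ref{def:contact_order}; this yields only first-order agreement between $\gamma_j^\tau$ and $\gamma_j$, whereas the $r$-th residual is wanted. My plan is instead to revisit the proof of Theorem~\ref{th:thm_3_in_cp07} locally and extract an explicit formula expressing $\cpres^r(\gamma_2, \gamma_1)(n_0)$ as the unique solution of a linear system whose coefficient matrix is built exclusively from Taylor derivatives along $\grade_M^{-1}(0)$ of the common lower-order ($< r$) data of $\gAF_j$, $g_j$ and $\mathcal{D}_j$, and whose right-hand side is a linear expression in $\cpres^r(\gAF_2, \gAF_1)$, $\cpres^r(g_2, g_1)$ and $\cpres^r(\mathcal{D}_2, \mathcal{D}_1)$. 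Running the same construction for the pulled-back problems, hypotheses~\ref{it:invariance_of_res_gamma-D_j_invariance} and~\ref{it:invariance_of_res_gamma-alphas_fixed_at_critical_pts} ensure that the coefficient matrix is unchanged (only the data restricted to $\grade_M^{-1}(0)$ enters), while hypothesis~\ref{it:invariance_of_res_gamma-residuals_variance} together with $G$-fixity of the base point converts the tangent action on the right-hand side into the identity; hence both linear systems coincide, their unique solutions agree, and the proof of~\ref{it:invariance_of_res_gamma-cpres_gamma_invariant} is complete.
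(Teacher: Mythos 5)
Your proposal is correct and follows essentially the same route as the paper: the paper also reduces part (3) to showing that the residual of the $l^M_\tau$-twisted critical-point maps $\gamma_j^\tau=l^M_\tau\circ\gamma_j$ equals the untwisted one, and then, exactly as you plan, invokes the implicit characterization of $\cpres^r(\gamma_2,\gamma_1)(n_0)$ as the unique solution of a constrained linear system (equations (13)--(14) of Cuell--Patrick) and checks term by term that this system is unchanged under the twist, using that the unknown $u$ ranges over $T_{m_0}\grade_M^{-1}(0)$ so that only data restricted to $\grade_M^{-1}(0)$ enters the coefficient part. The only content your sketch defers is that term-by-term verification, which occupies the bulk of the paper's proof but goes through as you describe.
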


\begin{proof}
  \begin{enumerate}[label=(\roman*)]
  \item As $\grade_M\circ l^M_\tau = \grade_M$ for each $\tau\in G$, the
    embedded submanifold $\grade_M^{-1}(0)\subset M$ is $G$-invariant and,
    for $\tau\in G$ and $m\in M$, $\dot{l^M_\tau}(m)=1$ (see paragraph
    immediately before Proposition 3
    in~\cite{ar:cuell_patrick-skew_critical_problems} for the
    definitions).

  \item As $g_j\circ \gamma_j= id_N$, we have
    $\grade_N = \grade_N\circ g_j\circ \gamma_j = \grade_M\circ \gamma_j$, which
    proves~\eqref{eq:gamma_preserves_grades}.  Since, by
    Theorem~\ref{th:thm_3_in_cp07}, we also have that
    $\gamma_2 = \gamma_1 +\mathcal{O}(\grade_N^r)$, by
    Lemma~\ref{le:residuals_of_maps_of_manifolds_with_a_grade}, we
    conclude that
    $\cpres^r(\gamma_2,\gamma_1)(n) \in T_{\gamma_2(n)} \grade_M^{-1}(0)$
    for all $n\in \grade_N^{-1}(0)$, so that
    point~\eqref{it:invariance_of_res_gamma-cpres_gamma_contained} in
    the statement is true.

  \item In what follows, we fix $\tau\in G$ and define, for $j=1,2$,
    \begin{equation*}
      \ti{\gAF}_j := (l^M_{\tau^{-1}})^*(\gAF_j), \quad 
      \ti{g}_j:= g_j \circ l^M_{\tau^{-1}}, \quad 
      \ti{\mathcal{D}}_j := l^{TM}_\tau(\mathcal{D}_j).
    \end{equation*}

    Recall that, for $n\in N$, $m_j=\gamma_j(n)$ if $m_j\in M$ is the
    unique critical point of the problem
    $(\gAF_j, g_j, \mathcal{D}_j)$ over $n\in N$ contained in $U_j$
    (see paragraph before the Proposition). Then, if we define
    $\ti{m}_j:= l^M_{\tau}(m_j) \in l^M_\tau(U_j)$, we see that
    \begin{equation*}
      \ti{g}_j(\ti{m}_j) = 
      g_j(l^M_{\tau^{-1}}(l^M_{\tau}(m_j))) = g_j(m_j) = n
    \end{equation*}
    and
    \begin{equation*}
      \begin{split}
        \ti{\gAF}_j(\ti{\mathcal{D}}_j|_{\ti{m}_j}) =
        (l^M_{\tau^{-1}})^*(\gAF_j)(l^{TM}_\tau(\mathcal{D}_j|_{m_j}))
        = \gAF_j(\underbrace{Tl^M_{\tau^{-1}}}_{=l^{TM}_{\tau^{-1}}}
        (l^{TM}_\tau(\mathcal{D}_j|_{m_j}))) =
        \gAF_j(\mathcal{D}_j|_{m_j}).
      \end{split}
    \end{equation*}
    Then, if $m_j$ is a critical point of $(\gAF_j,g_j,\mathcal{D}_j)$
    over $n$, we see that
    $\ti{\gAF}_j(\ti{\mathcal{D}}_j|_{\ti{m}_j}) =
    \gAF_j(\mathcal{D}_j|_{m_j})=0$, so that
    $\ti{m}_j\in l^M_\tau(U_j)$ is a critical point of
    $(\ti{\gAF}_j,\ti{g}_j,\ti{\mathcal{D}}_j)$ over $n$. Thus,
    \begin{equation}\label{eq:equivariance_residuals-gammas_and_ti_gammas}
      \ti{\gamma}_j := l^M_\tau \circ \gamma_j.
    \end{equation}
    assigns the (unique) critical points of
    $(\ti{\gAF}_j,\ti{g}_j,\ti{\mathcal{D}}_j)$ contained in
    $l^M_\tau(U_j)$.

  \item \label{it:equivariance_residuals-gammas_restricted_to_h=0}
    Next, we consider $\gamma_j|_{\grade_N^{-1}(0)}$. If
    $n\in \grade_N^{-1}(0)$, let $m := \gamma_j(n) \in M_0$ (here we use
    that $N_0=\grade_N^{-1}(0)$ and that $M_0 = \gamma_j(N_0)$) and
    $\ti{m} := \ti{\gamma}_j(n) = l^M_\tau(m) \in M_0$ (by the
    $G$-invariance of $M_0$); notice that $\gamma_2(n)=\gamma_1(n)$
    since, by Theorem~\ref{th:thm_3_in_cp07},
    $\gamma_2=\gamma_1+\mathcal{O}(\grade_N^r)$; also,
    applying~\eqref{eq:equivariance_residuals-gammas_and_ti_gammas},
    it follows that $\ti{\gamma}_2(n) = \ti{\gamma}_1(n)$.

    As $\gamma_j(\grade_N^{-1}(0))\subset \grade_M^{-1}(0)$ and
    $n\in \grade_N^{-1}(0)$,
    using~\eqref{eq:invariance_of_res_gamma-invariance}, we have
    \begin{equation*}
      g_j(\ti{m}) = g_j(\ti{\gamma}_j(n)) = 
      g_j(\underbrace{l^M_\tau(\gamma_j(n))}_{\in \grade_M^{-1}(0)}) =
      g_j(\gamma_j(n)) = n.
    \end{equation*}
    
    Notice that, as $\grade_M^{-1}(0)$ and $\mathcal{D}_j|_{\grade_M^{-1}(0)}$
    are $G$-invariant,
    \begin{equation*}
      (l^{TM}_\tau(\mathcal{D}_j))|_{\grade_M^{-1}(0)} =
      l^{TM}_\tau(\mathcal{D}_j|_{\grade_M^{-1}(0)}) \subset
      \mathcal{D}_j|_{\grade_M^{-1}(0)},
    \end{equation*}
    hence, considering ranks,
    \begin{equation}\label{eq:equality_of_distributions_on_h=0}
      \ti{\mathcal{D}}_j|_{\grade_M^{-1}(0)} = (l^{TM}_\tau(\mathcal{D}_j))|_{\grade_M^{-1}(0)} = 
      \mathcal{D}_j|_{\grade_M^{-1}(0)}.
    \end{equation}
    Using this last result
    and~\eqref{eq:invariance_of_res_gamma-invariance} we compute
    \begin{equation*}
      \begin{split}
        \gAF_j(\ti{m})(\mathcal{D}_j|_{\ti{m}}) =&
        (l^M_{\tau^{-1}})^*(\gAF_j)(\ti{m})(\mathcal{D}_j|_{\ti{m}}) =
        (l^M_{\tau^{-1}})^*(\gAF_j)(\ti{m})(\ti{\mathcal{D}}_j|_{\ti{m}})
        = \ti{\gAF}_j(\ti{m})(\ti{\mathcal{D}}_j|_{\ti{m}}) = 0
      \end{split}
    \end{equation*}
    because $\ti{m}$ is a critical point of
    $(\ti{\gAF}_j,\ti{g}_j,\ti{\mathcal{D}}_j)$.

    Putting together the last two analyses, we conclude that
    $\ti{m}\in M_0$ is also a critical point of
    $(\gAF_j,g_j,\mathcal{D}_j)$ over $n$. As critical points of
    $(\gAF_j,g_j,\mathcal{D}_j)$ over $n$ are unique in $M_0$ (they
    are unique in the open set $U_j\supset M_0$ where the image of
    $\gamma_j$ is contained, see Theorem~\ref{th:thm_2_in_cp07}), we
    have that $\ti{m}=m$ and, so,
    $\ti{\gamma}_j(n) = \ti{m} = \gamma_j(n)$ and, consequently,
    \begin{equation}
      \label{eq:equivariance_residuals-gamma_and_ti_gamma_coincide}
      \ti{\gamma}_j|_{\grade_N^{-1}(0)} = \gamma_j|_{\grade_N^{-1}(0)}.
    \end{equation}
    It also follows that $l^M_\tau(m) = m$ for all $m\in M_0$.
    
 \item Next we do a first comparison of residuals of $\gamma_j$ and
    $\ti{\gamma}_j$. If $n\in \grade_N^{-1}(0)$, using (12)
    from~\cite{ar:cuell_patrick-skew_critical_problems},
    \begin{equation*}
      \begin{split}
        \cpres^r(\ti{\gamma}_2,\ti{\gamma}_1)(n) =& \cpres^r(l^M_\tau
        \circ \gamma_2, l^M_\tau \circ \gamma_1)(n) \\=&
        T_{\gamma_2(n)}l^M_\tau(\cpres^r (\gamma_2, \gamma_1)(n)) =
        l^{TM}_\tau(\cpres^r (\gamma_2, \gamma_1)(n)).
      \end{split}
    \end{equation*}

  \item \label{it:equivariance_residuals-Gs_and_Fs} The next step is
    to recall the implicit characterization of
    $\cpres^r(\gamma_2,\gamma_1)$ at a critical point given by (13)
    and (14) in~\cite{ar:cuell_patrick-skew_critical_problems}. When
    $n_0\in \grade_N^{-1}(0)$ and $m_0:=\gamma_2(n_0)\in M_0$,
    $u_0:=\cpres^r(\gamma_2, \gamma_1)(n_0)\in T_{m_0}M$ is the unique
    solution of the equation $F(u)=0$ constrained by $G(u)=0$ for
    $F:T_{m_0}M\rightarrow (\mathcal{D}_1)_{m_0}^*$ and
    $G:T_{m_0}M\rightarrow T_{n_0}N$ given by\footnote{In the original
      definition of (13)
      in~\cite{ar:cuell_patrick-skew_critical_problems} the last term
      appears without a bar on top of the residual but, in order to
      make sense of the composition,
      $\cpres^r(\mathcal{D}_2,\mathcal{D}_1)(m_0)$ must be taken as an
      element of
      $\hom((\mathcal{D}_2)_{m_0},T_{m_0}M/(\mathcal{D}_2)_{m_0})$,
      which we denote by putting a bar on top (see
      Section~\ref{sec:grassmannian-tangent_spaces}). In addition,
      by~\eqref{eq:equality_of_distributions_on_h=0},
      $(\mathcal{D}_1)_{m_0} = (\mathcal{D}_2)_{m_0}$ if $m_0\in M_0$
      (so that $\conj{\gAF_1(m_0)}$ is well defined), so that the
      composition is well defined.}
    \begin{gather*}
      F(u) := \dot{\gamma_2}(n_0) (d_{\mathcal{D}_1}\gAF_1)(m_0)(u)+
      \cpres^r(\gAF_2,\gAF_1)(m_0) +
      \conj{\gAF_1(m_0)}\circ \conj{\cpres^r(\mathcal{D}_2,\mathcal{D}_1)(m_0)}\\
      G(u) := T_{m_0} g_1(u) + \cpres^r(g_2,g_1)(m_0).
    \end{gather*}
    We want to compare the conditions $F(u)=0$ and $G(u)=0$ with
    the conditions $\ti{F}(u)=0$ and $\ti{G}(u)=0$ corresponding to
    the characterization of
    $\ti{u}_0:=\cpres^r(\ti{\gamma}_2,\ti{\gamma}_1)(n_0)$. Notice
    that, as $n_0\in N_0=\grade_N^{-1}(0)$, $m_0,\ti{m}_0 \in M_0$, and
    by~\eqref{eq:equivariance_residuals-gamma_and_ti_gamma_coincide},
    $m_0 = \gamma_j(n_0) = \ti{\gamma}_j(n_0) = \ti{m}_0$, so that
    $u_0, \ti{u}_0 \in T_{m_0} \grade_M^{-1}(0)\subset T_{m_0}M$.

    \begin{enumerate}[label=(\Alph*)]
    \item \label{it:equivariance_residuals-Gs_and_Fs-F} Analysis of
      $\ti{F}$. Fix $m_0\in M_0\subset \grade_M^{-1}(0)$ as above,
      $u\in T_{m_0}\grade_M^{-1}(0)$ and $v\in (\mathcal{D}_1)_{m_0}$.

      \begin{enumerate}[label=(\Roman*)]
      \item First term in $\ti{F}$. For $m_0\in M$ and
        $u,v \in \ker(T_{m_0}g_1)$ we compute
        $d_{\mathcal{D}_1}\gAF_1(m_0)(u)(v)$
        using~\eqref{eq:skew_hessian-def}. In our case,
        $m_0\in \grade_M^{-1}(0)$ and, if
        $u\in T_{m_0}\grade_M^{-1}(0)$ (as is the case when
        $u:=\cpres(\gamma_2,\gamma_1)(g_1(m_0))$), we have
          \begin{equation*}
            \begin{split}
              d_{\mathcal{D}_1}\gAF_1(m_0)(u)(v) =&
              d(\gAF_1(V))(m_0)(u) =
              i_{\grade_M^{-1}(0),M}^*(d(\gAF_1(V)))(m_0)(u) \\=&
              d(i_{\grade_M^{-1}(0),M}^*(\gAF_1(V)))(m_0)(u),
            \end{split}
          \end{equation*}
          where $V\in \VF(M)$ is such that
          $V(m') \in (\mathcal{D}_1)_{m'}$ for all $m'\in M$ and
          $V(m_0)=v$. In addition, as
          $V|_{\grade_M^{-1}(0)} \in
          \mathcal{D}_1|_{\grade_M^{-1}(0)}\subset T\grade_M^{-1}(0)$,
          we have that
          $i_{\grade_M^{-1}(0),M}^*(\gAF_1(V)) =
          i_{\grade_M^{-1}(0),M}^*(\gAF_1)(V|_{\grade_M^{-1}(0)})$.
          As $T\grade_M^{-1}(0)\subset TM|_{\grade_M^{-1}(0)}$, from
          the first identity
          in~\eqref{eq:invariance_of_res_gamma-invariance},
          \begin{equation*}
            i_{\grade_M^{-1}(0),M}^*((l^M_{\tau^{-1}})^*(\gAF_1)) =
            i_{\grade_M^{-1}(0),M}^*(\gAF_1), 
          \end{equation*}
          so that
          $i_{\grade_M^{-1}(0),M}^*(\ti{\gAF}_1) =
          i_{\grade_M^{-1}(0),M}^*(\gAF_1)$ and, then,
          \begin{equation*}
            \begin{split}
              i_{\grade_M^{-1}(0),M}^*(\ti{\gAF}_1)(V|_{\grade_M^{-1}(0)})
              =
              i_{\grade_M^{-1}(0),M}^*(\gAF_1)(V|_{\grade_M^{-1}(0)}).
            \end{split}
          \end{equation*}
          As the computation of $d_{\mathcal{D}_1}\gAF_1(m_0)(u)(v)$
          is a local matter near $m_0\in \grade_M^{-1}(0)$ we only
          need a local extension $V$ of $v$ with values in
          $\mathcal{D}_1$ defined near $m_0$. But then, as
          $\mathcal{D}_1|_{\grade_M^{-1}(0)} =
          \ti{\mathcal{D}}_1|_{\grade_M^{-1}(0)}$
          by~\eqref{eq:equality_of_distributions_on_h=0}, we can find
          a local extension $\ti{V}$ of $v$ defined near $m_0$ and
          with values in $\ti{\mathcal{D}}_1$ such that
          $V|_{\grade_M^{-1}(0)} = \ti{V}|_{\grade_M^{-1}(0)}$. All
          together, for $m_0\in \grade_M^{-1}(0)$ and
          $u\in T\grade_M^{-1}(0)$, we have
          \begin{equation*}
            \begin{split}
              d_{\mathcal{D}_1}\gAF_1(m_0)(u)(v) =&
              d(i_{\grade_M^{-1}(0),M}^*(\gAF_1)(V|_{\grade_M^{-1}(0)}))(m_0)(u)
              \\=&
              d(i_{\grade_M^{-1}(0),M}^*(\ti{\gAF}_1)(V|_{\grade_M^{-1}(0)}))(m_0)(u)
              \\=&
              d(i_{\grade_M^{-1}(0),M}^*(\ti{\gAF}_1)(\ti{V}|_{\grade_M^{-1}(0)}))(m_0)(u)
              = d_{\ti{\mathcal{D}}_1}\ti{\gAF}_1(m_0)(u)(v).
            \end{split}
          \end{equation*}
          Notice also that, as $\grade_M\circ \gamma_j = \grade_N$, we
          have $\dot{\gamma_j}(m)=1$ for all $m\in M$. For the same
          reason, $\dot{\ti{\gamma}_j}(m)=1$. We conclude that the
          first terms of $F(u)$ and $\ti{F}(u)$ coincide.
        
      \item Second term in $\ti{F}$. First, we have to understand how,
        when $m'\in \grade_M^{-1}(0) \subset M$,
        $\cpres^r(\gAF_2,\gAF_1)(m')$ is an element of
        $\mathcal{D}_{m'}^*\subset T_{m'}^* M$: as noted in
        Remark~\ref{rem:vertical_vectors_and_base_vectors_in_tangent_bundle},
        $\cpres^r(\gAF_2,\gAF_1)(m') \in T_{\gAF_2(m')}T^*M$ is a
        vertical element of the bundle and, as such, it is naturally
        identified with an element of $T_{m'}^*M$ and, as
        $\mathcal{D}_{m'}\subset T_{m'}M$,
        $\cpres^r(\gAF_2,\gAF_1)(m') \in \mathcal{D}_{m'}^*$.

        For $m_0\in M_0$ as above, we proved
        in~\eqref{it:equivariance_residuals-gammas_restricted_to_h=0}
        that $l^M_\tau(m_0) = m_0$. Then, by the $G$-equivariance of
        $\cpres^r(\gAF_2,\gAF_1)$, we have that
        \begin{equation*}
          \begin{split}
            \cpres^r(\gAF_2,\gAF_1)(m_0) =&
            \cpres^r(\gAF_2,\gAF_1)(l^M_{\tau^{-1}}(m_0)) =
            l^{TT^*M}_{\tau^{-1}}(\cpres^r(\gAF_2,\gAF_1)(m_0)) \\=&
            T_{\gAF_2(m_0)}
            l^{T^*M}_{\tau^{-1}}(\cpres^r(\gAF_2,\gAF_1)(m_0)) \\=&
            \cpres^r(l^{T^*M}_{\tau^{-1}} \circ \gAF_2,
            l^{T^*M}_{\tau^{-1}} \circ \gAF_1)(m_0) \\=&
            \cpres^r(l^{T^*M}_{\tau^{-1}} \circ \gAF_2\circ
            l^M_{\tau}, l^{T^*M}_{\tau^{-1}} \circ \gAF_1\circ
            l^M_{\tau})(l^M_{\tau^{-1}}(m_0)) \\=&
            \cpres^r((l^M_{\tau^{-1}})^*\gAF_2,
            (l^M_{\tau^{-1}})^*\gAF_1)(m_0) = \cpres^r(\ti{\gAF}_2,
            \ti{\gAF}_1) (m_0).
          \end{split}
        \end{equation*}
        Hence the second terms of $F(u)$ and $\ti{F}(u)$ coincide.

      \item Third term in $\ti{F}$. Recall that the Grassmann bundle
        is $\GrB(TM,k) := \cup_{m\in M} (\{m\}\times \Gr(T_mM,k))$ and
        that the action $l^M_\tau$ induces an action $l^{TM}_\tau$ on
        $TM$. As
        $l^{TM}_\tau|_{T_m M}:T_mM\rightarrow T_{l^M_\tau(m)}M$ is an
        isomorphism, it induces a diffeomorphism
        $l^{\GrB}_\tau:\GrB(TM,k)\rightarrow \GrB(TM,k)$ by
        $l^{\GrB}_\tau(m,\mathcal{D}) = (l^M_\tau(m),
        l^{TM}_\tau(\mathcal{D}))$. As
        $i_\mathcal{D}:M\rightarrow \GrB(TM,k)$ is defined by
        $i_\mathcal{D}(m):=(m,\mathcal{D}_m)$, we have that
        \begin{equation*}
          \begin{split}
            i_{\ti{\mathcal{D}}}(m) =& (m, \ti{\mathcal{D}}_m) =
            (l^M_\tau(l^M_{\tau^{-1}}(m)),
            l^{TM}_\tau(\mathcal{D}_{l^M_{\tau^{-1}}(m)})) \\=&
            l^{\GrB}_\tau(l^M_{\tau^{-1}}(m),
            \mathcal{D}_{l^M_{\tau^{-1}}(m)}) = (l^{\GrB}_\tau\circ
            i_\mathcal{D} \circ l^M_{\tau^{-1}})(m).
          \end{split}
        \end{equation*}
        For $\ti{m}\in \grade_M^{-1}(0)$ we have
        \begin{equation*}
          \begin{split}
            \cpres^r(\ti{\mathcal{D}}_2,\ti{\mathcal{D}}_1)(\ti{m}) =&
            \cpres^r(i_{\ti{\mathcal{D}}_2},
            i_{\ti{\mathcal{D}}_1})(\ti{m}) = \\=&
            \cpres^r(l^{\GrB}_\tau \circ i_{\mathcal{D}_2} \circ
            l^M_{\tau^{-1}}, l^{\GrB}_\tau \circ i_{\mathcal{D}_1}
            \circ l^M_{\tau^{-1}})(\ti{m}) \\=&
            T_{(\mathcal{D}_2)_{\ti{m}'}}l^{\GrB}_\tau
            (\cpres^r(i_{\mathcal{D}_2},
            i_{\mathcal{D}_1})(\underbrace{l^M_{\tau^{-1}}(\ti{m})}_{=:\ti{m}'})).
          \end{split}
        \end{equation*}
        Notice that $Tl^{\GrB}_\tau = l^{T\GrB}_\tau$ is the action on
        $T\GrB(TM,k)$ that makes
        $\cpres^r(i_{\mathcal{D}_2},i_{\mathcal{D}_1}):\grade_M^{-1}(0)\rightarrow
        T\GrB(TM,k)$ $G$-equivariant in the hypothesis of the current
        result. Thus,
        \begin{equation*}
          \begin{split}
            T_{(\mathcal{D}_2)_{\ti{m}'}}
            l^{\GrB}_\tau(\cpres^r(i_{\mathcal{D}_2},
            i_{\mathcal{D}_1})(\ti{m}')) =&
            l^{T\GrB}_\tau(\cpres^r(i_{\mathcal{D}_2},
            i_{\mathcal{D}_1})(\ti{m}')) = \cpres^r(i_{\mathcal{D}_2},
            i_{\mathcal{D}_1})(l^M_\tau(\ti{m}')) \\=&
            \cpres^r(i_{\mathcal{D}_2}, i_{\mathcal{D}_1})(\ti{m}).
          \end{split}
        \end{equation*}       
        Putting together the two computations we conclude that, for any 
        $\ti{m}\in \grade_M^{-1}(0)$,
        \begin{equation*}
          \cpres^r(\ti{\mathcal{D}}_2,\ti{\mathcal{D}}_1)(\ti{m}) = 
          \cpres^r(i_{\mathcal{D}_2},i_{\mathcal{D}_1})(\ti{m}) =
          \cpres^r(\mathcal{D}_2, \mathcal{D}_1)(\ti{m}).
        \end{equation*}
        
        On the other hand, using
        hypothesis~\ref{it:invariance_of_res_gamma-alphas_fixed_at_critical_pts},
        if $m_0\in M_0$, we have
        $\gAF_j(m_0) = ((l^M_{\tau^{-1}})^*(\gAF_j))(m_0) =
        \ti{\gAF}_j(m_0)$, so that so that
        \begin{equation*}
          \conj{\gAF_j(m_0)} = \conj{\ti{\gAF}_j(m_0)},
        \end{equation*}
        where the bar denotes the linear map
        $(T_{m_0}M)/(\mathcal{D}_j)_{m_0}\rightarrow \R$ induced by
        the corresponding covector. We conclude that
        \begin{equation*}
          \conj{\ti{\gAF}_j(m_0)} \circ \conj{\cpres^r(\ti{\mathcal{D}}_2, 
            \ti{\mathcal{D}}_1)(m_0)} = 
          \conj{\gAF_j(m_0)} \circ 
          \conj{\cpres^r(\mathcal{D}_2, \mathcal{D}_1)(m_0)},
        \end{equation*}
        proving that the third terms of $F(u)$ and $\ti{F}(u)$
        coincide.
      \end{enumerate}

    \item \label{it:equivariance_residuals-Gs_and_Fs-G} Analysis of
      $\ti{G}$. Let $\ti{m}\in \grade_M^{-1}(0)$ and
      $u\in T_{\ti{m}}h^{-1}_M(0)$. Choose
      $\lambda:\R\rightarrow h^{-1}_M(0)$ such that
      $\lambda(0)=\ti{m}$ and $\lambda'(0)=u$. Then, by the
      $G$-invariance of $g_1|_{\grade_M^{-1}(0)}$,
      \begin{equation*}
        T_{\ti{m}}\ti{g}_1(u) = \frac{d}{dt}\bigg|_{t=0} \ti{g}_1(\lambda(t)) =
        \frac{d}{dt}\bigg|_{t=0} g_1(l^M_{\tau^{-1}}(\lambda(t))) = 
        \frac{d}{dt}\bigg|_{t=0} g_1(\lambda(t)) = T_{\ti{m}} g_1(u).
      \end{equation*}
      On the other hand, as $l^N\circ g_j=l^M$, we have
      $\dot{l^M_{\tau^{-1}}}(\ti{m})=1$ and, then,
      \begin{equation*}
        \begin{split}
          \cpres^r(\ti{g}_2,\ti{g}_1)(\ti{m}) =& \cpres^r(g_2 \circ
          l^M_{\tau^{-1}}, g_1 \circ l^M_{\tau^{-1}})(\ti{m}) =
          \cpres^r(g_2, g_1)(l^M_{\tau^{-1}}(\ti{m})).
        \end{split}
      \end{equation*}
      All together, for $m_0\in M_0$, letting $\ti{m} := m_0$, so that
      $l^M_{\tau^{-1}}(\ti{m})=m_0$, and any
      $u\in T_{m_0}\grade_M^{-1}(0)$, we conclude that,
      \begin{equation}\label{eq:tiG_vs_G}
        \begin{split}
          \ti{G}(u) =& T_{m_0}\ti{g}_1(u) +
          \cpres^r(\ti{g}_2,\ti{g}_1)(m_0) = T_{m_0} g_1(u) +
          \cpres^r(g_2,g_1)(m_0) = G(u).
        \end{split}
      \end{equation}
    \end{enumerate}

    As a consequence of the analysis carried out in
    parts~\ref{it:equivariance_residuals-Gs_and_Fs-F}
    and~\ref{it:equivariance_residuals-Gs_and_Fs-G}, we conclude that,
    for $m_0\in M_0$ and $u\in T_{m_0}\grade_M^{-1}(0)$,
    \begin{equation}\label{eq:Fs_and_Gs-conclusion}
      \ti{F}(u) = F(u) \stext{ and } \ti{G}(u) = G(u).
    \end{equation}

    Continuing in the setting of the beginning of
    part~\ref{it:equivariance_residuals-Gs_and_Fs}, we have that
    \begin{equation*}
      F(u_0) = 0,\quad G(u_0)=0 \stext{ and } 
      \ti{F}(\ti{u}_0)=0,\quad \ti{G}(\ti{u}_0) = 0, 
    \end{equation*}
    and, by~\eqref{eq:Fs_and_Gs-conclusion},
    \begin{equation*}
      F(\ti{u}_0) = \ti{F}(\ti{u}_0) = 0 \stext{ and } 
      G(\ti{u}_0) = \ti{G}(\ti{u}_0) = 0
    \end{equation*}
    we see that both $u_0$ and $\ti{u}_0$ are solutions of $F(u)=0$
    and $G(u)=0$, which has a unique solution. Hence,
    \begin{equation}
      \label{eq:res_gamma_equals_res_gamma_tilde}
      \cpres^r(\gamma_2, \gamma_1)(n_0) = u_0 =\ti{u}_0 = 
      \cpres^r(\ti{\gamma}_2,\ti{\gamma}_1)(n_0) 
      \stext{ for all } n_0\in \grade_N^{-1}(0)
    \end{equation}

  \item Last, for $n_0\in \grade_N^{-1}(0)$,
    using~\eqref{eq:res_gamma_equals_res_gamma_tilde},
    \begin{equation*}
      \begin{split}
        l^{TM}_{\tau^{-1}}(\cpres^r(\gamma_2,\gamma_1)(n_0)) =&
        T_{\gamma_2(n_0)}l^M_{\tau^{-1}}(\cpres^r(\gamma_2,\gamma_1)(n_0))
        = \cpres^r(l^M_{\tau^{-1}}\circ\gamma_2,
        l^M_{\tau^{-1}}\circ\gamma_1)(n_0)) \\=& \cpres^r(\ti{\gamma}_2,
        \ti{\gamma}_1)(n_0) = \cpres^r(\gamma_2,\gamma_1)(n_0),
      \end{split}
    \end{equation*}
    proving the $G$-invariance of the vector
    $\cpres^r(\gamma_2,\gamma_1)(n_0)$ for all $n_0\in \grade_N^{-1}(0)$ as
    we wanted, completing the proof of the Proposition.
    
  \end{enumerate}
\end{proof}

Proposition~\ref{prop:invariance_of_res_gamma} is a general result for
critical problems. The next result is an application of that result to
the case of discretizations of a \FMS.

Let $\discCP^j:=(Q,\psi^j,L_\CPDS^j,f_\CPDS^j)$ with $j=1,2$
be two discretizations of the \FMS $(Q,L,f)$ with contact order $r$
among themselves. We have the associated critical problems
$(\conj{\dAFtp_j},\conj{\varphi^j}, \ker(T\conj{\varphi^j}))$ over the
open subset $\conj{\mathcal{C}}\subset \R\times(TQ\oplus TQ)$ and with
boundary values in $\R\times E$. We have the grades
$\grade_{\conj{\mathcal{C}}}:=p_1|_{\conj{\mathcal{C}}}$ and
$\grade_{\R\times E}:=p_1$ and, as we saw in
Lemmas~\ref{le:critical_points_with_h_0}
and~\ref{le:nondegeneray_of_critical_points}, together with
Lemma~\ref{le:crit_problems_bar_vs_hat_are_equivalent}
$\gamma_0:\grade_{\R\times E}^{-1}(0)\rightarrow \{0\}\times
\Delta_{TQ}\subset \grade_{\conj{\mathcal{C}}}^{-1}(0)$ defined by
$\gamma_0(0,-z_q,z_q):=(0,z_q,z_q)$ assigns to each boundary value a
nondegenerate critical point of both critical problems. Observe that
$\gamma_0$ is a diffeomorphism between closed submanifolds. Also by
Lemma~\ref{le:crit_problems_bar_vs_hat_are_equivalent}, both functions
$\conj{\gamma}^j$ are the extensions of $\gamma_0$ provided by
Theorem~\ref{th:thm_2_in_cp07} for each of the critical problems. The
next result essentially verifies that the hypotheses of
Proposition~\ref{prop:invariance_of_res_gamma} hold in this ``bar
context'' and, then, derives properties of the $\conj{\gamma}^j$
functions.

\begin{prop}\label{prop:order_of_gamma_bar_and_variance_of_residual}
  With the notation as above,
  \begin{enumerate}
  \item \label{it:order_of_gamma_bar_and_variance_of_residual-order}
    $\conj{\gamma}_2 = \conj{\gamma}_1 + \mathcal{O}(\grade_{\R\times
      E}^r)$ and
  \item \label{it:order_of_gamma_bar_and_variance_of_residual-residual}
    $\cpres^r(\conj{\gamma}_2,\conj{\gamma}_1)(0,-z_q,z_q)\in
    T_{\conj{\gamma}_2(0,-z_q,z_q)} (\grade_{\conj{\mathcal{C}}}^{-1}(0))
    \subset T_{\conj{\gamma}_2(0,-z_q,z_q)} \conj{\mathcal{C}}$ is
    invariant by the $\Z_2$-action $l^{T\conj{\mathcal{C}}}$ for all
    $z_q\in TQ$.
  \end{enumerate}
\end{prop}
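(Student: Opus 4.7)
The plan is to deduce both statements by invoking Theorem~\ref{th:thm_3_in_cp07} and Proposition~\ref{prop:invariance_of_res_gamma} in the ``bar setting'', with $M:=\conj{\mathcal{C}}$ carrying $\grade_{\conj{\mathcal{C}}}=p_1|_{\conj{\mathcal{C}}}$, $N:=\R\times E$ carrying $\grade_{\R\times E}=p_1$, $M_0:=\{0\}\times\Delta_{TQ}$, $N_0:=\grade_{\R\times E}^{-1}(0)$, and $\gamma_0(0,-z_q,z_q):=(0,z_q,z_q)$. By Lemma~\ref{le:crit_problems_bar_vs_hat_are_equivalent}, the maps $\conj{\gamma^j}$ are precisely the extensions of $\gamma_0$ furnished by Theorem~\ref{th:thm_2_in_cp07} applied to the critical problems $(\conj{\dAFtp_j},\conj{\varphi^j},\ker(T\conj{\varphi^j}))$. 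The group $G:=\Z_2$ acts on $M$ by restricting $l^{\R\times TQ\times TQ}$ from~\eqref{eq:Z_2-action_RxTQxTQ-def}, which preserves the fibers of $p_1$ and hence the grade.

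For part~\eqref{it:order_of_gamma_bar_and_variance_of_residual-order} I will simply apply Theorem~\ref{th:thm_3_in_cp07}: the three order-$r$ contact hypotheses on the data of the bar problems are already established as $\conj{\dAFtp_2}=\conj{\dAFtp_1}+\mathcal{O}(\grade_{\conj{\mathcal{C}}}^r)$ in Lemma~\ref{le:order_of_sigma_2_bar}, $\conj{\varphi^2}=\conj{\varphi^1}+\mathcal{O}(\grade_{\conj{\mathcal{C}}}^r)$ in Lemma~\ref{le:order_of_phi_bar}, and $\conj{\mathcal{D}}^2=\conj{\mathcal{D}}^1+\mathcal{O}(\grade_{\conj{\mathcal{C}}}^r)$ in Lemma~\ref{le:contact_order_of_ker_T_bar_phi}.

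For part~\eqref{it:order_of_gamma_bar_and_variance_of_residual-residual} I will verify the four hypotheses of Proposition~\ref{prop:invariance_of_res_gamma}. Hypothesis~(a) holds because $p_1\circ \conj{\varphi^j}=p_1\circ\widehat{\varphi^j}\circ(\Theta^j)^{-1}=p_1|_{\conj{\mathcal{C}}}$. For~(b), the containment $\conj{\mathcal{D}}^j|_{\grade^{-1}(0)}\subset T\grade_{\conj{\mathcal{C}}}^{-1}(0)$ together with the $\Z_2$-invariance of the restricted distributions will follow from Lemma~\ref{le:skew_problem_with_D_from_tig} and Remark~\ref{rem:skew_problem_with_D_from_tig-open}, using the factorization $\conj{\varphi^j}(h,w,\ti w)=(h,\widehat{\chi^j}(h,w,\ti w))$ from~\eqref{eq:order_of_phi_bar-conj_phi_and_chi_hat} and the $\Z_2$-invariance of $\conj{\varphi^j}|_{\grade^{-1}(0)}$ expressed by~\eqref{eq:phi_bar_when_h=0}; the two identities in~\eqref{eq:invariance_of_res_gamma-invariance} are, respectively, equation~\eqref{eq:Z2_variance_of_sigma_2_bar-bar} and the $\Z_2$-invariance of $\conj{\varphi^j}|_{\grade^{-1}(0)}$ from Lemma~\ref{le:order_of_phi_bar}. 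Hypothesis~(c), the fiberwise equality $((l^M_\tau)^*\conj{\dAFtp_j})(m_0)=\conj{\dAFtp_j}(m_0)$ in $T^*_{m_0}M$ for $m_0\in M_0$, is exactly the pointwise content of~\eqref{eq:Z2_variance_of_sigma_2_bar-bar}, recalling the convention that $|_Z$ denotes restriction of a $1$-form as a section of $T^*M$ over $Z$, rather than pullback to $Z$. Finally,~(d) collects three already-proven equivariance statements: for $\cpres^r(\conj{\dAFtp_2},\conj{\dAFtp_1})$ in Lemma~\ref{le:Z2_variance_of_sigma_2_bar}, for $\cpres^r(i_{\conj{\mathcal{D}^2}},i_{\conj{\mathcal{D}^1}})$ in Proposition~\ref{prop:residual_i_D_bar_is_Z2_equivariant}, and for $\cpres^r(\conj{\varphi^2},\conj{\varphi^1})$ in Lemma~\ref{le:order_of_phi_bar} (where the action on $\R\times E$ can be taken trivial, since Proposition~\ref{prop:invariance_of_res_gamma} does not require an action on $N$).

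The main delicate point will be the bookkeeping in hypothesis~(c): distinguishing the restriction of $\conj{\dAFtp_j}$ as a covector-valued section from its pullback to $\grade^{-1}(0)$, and tracking that every $(0,z_q,z_q)\in M_0$ is a fixed point of the $\Z_2$-action, so that the fiberwise identity on $\grade^{-1}(0)$ gives the required invariance at $m_0$. With the four hypotheses in place, Proposition~\ref{prop:invariance_of_res_gamma} yields the containment of $\cpres^r(\conj{\gamma^2},\conj{\gamma^1})(N_0)$ in $T(\grade_{\conj{\mathcal{C}}}^{-1}(0))$ together with the $\Z_2$-invariance of each tangent vector $\cpres^r(\conj{\gamma^2},\conj{\gamma^1})(0,-z_q,z_q)$, which constitutes both assertions of the proposition.
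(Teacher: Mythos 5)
Your proposal is correct and follows essentially the same route as the paper: part (1) by feeding Lemmas~\ref{le:order_of_sigma_2_bar}, \ref{le:order_of_phi_bar} and \ref{le:contact_order_of_ker_T_bar_phi} into Theorem~\ref{th:thm_3_in_cp07}, and part (2) by checking the four hypotheses of Proposition~\ref{prop:invariance_of_res_gamma} via Lemma~\ref{le:skew_problem_with_D_from_tig} (with Remark~\ref{rem:skew_problem_with_D_from_tig-open}), Lemma~\ref{le:Z2_variance_of_sigma_2_bar}, Proposition~\ref{prop:residual_i_D_bar_is_Z2_equivariant} and Lemma~\ref{le:order_of_phi_bar}, exactly as the paper does (including the use of the trivial $\Z_2$-action on $E$).
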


\begin{proof}
  By Lemma~\ref{le:order_of_sigma_2_bar} we have
  $\conj{\dAFtp_2} = \conj{\dAFtp_1} + \mathcal{O}(\grade_{\R\times
    TQ\times TQ}^r)$, by Lemma~\ref{le:order_of_phi_bar} we have
  $\conj{\varphi^2} =\conj{\varphi^1} +
  \mathcal{O}(\grade_{\conj{\mathcal{C}}}^r)$ and, by
  Lemma~\ref{le:contact_order_of_ker_T_bar_phi}, we have that
  $\conj{\mathcal{D}}^2 = \conj{\mathcal{D}}^1 +
  \mathcal{O}(\grade_{\conj{\mathcal{C}}}^r)$. Then, the first statement
  follows from Theorem~\ref{th:thm_3_in_cp07}.

  The second statement is an application of
  Proposition~\ref{prop:invariance_of_res_gamma}, whose hypotheses are
  satisfied in this case, as we see below.

  Direct evaluation shows that
  $\grade_{\conj{\mathcal{C}}} \circ l_\tau =
  \grade_{\conj{\mathcal{C}}}$ and
  $ \grade_{\R\times E}\circ\conj{\varphi}^j =
  \grade_{\conj{\mathcal{C}}}$, proving that
  hypothesis~\ref{it:invariance_of_res_gamma-g_j_and_grades} in
  Proposition~\ref{prop:invariance_of_res_gamma} is satisfied.

  If we apply Lemma~\ref{le:skew_problem_with_D_from_tig} taking
  Remark~\ref{rem:skew_problem_with_D_from_tig-open} into account with
  $\ti{M}:=TQ\oplus TQ$,
  $M:=\conj{\mathcal{C}}\subset\R\times (TQ\oplus TQ)$,
  $\ti{M}_0 := \Delta_{TQ}$, $\ti{N}:=E$ and
  $\ti{g}:=p_2\circ \conj{\varphi^j}$, we see that
  $\conj{\mathcal{D}}_j:= \ker(T\conj{\varphi}^j)$ satisfies
  $\conj{\mathcal{D}}_j|_{\grade_M^{-1}}\subset T
  \grade_{M}^{-1}(0)$. Also, as
  $M_0:=\{0\}\times \ti{M}_0 = \{0\}\times \Delta_{TQ} \subset
  \conj{\mathcal{C}} = M$, if we consider the $\Z_2$-action
  $l^{\ti{M}}_{[1]}(w,\ti{w}):= (\ti{w},w)$ the subsets $M_0$ and
  $\conj{\mathcal{D}}_j|_{\grade_M^{-1}(0)}$ are
  $\Z_2$-invariant. Still in the same setting, for the trivial
  $\Z_2$-action on $E$, we have that
  $\ti{g}(0,w,\ti{w}) = p_2(\conj{\varphi^j}(0,w,\ti{w})) =
  (-\frac{1}{2}(w+\ti{w}), \frac{1}{2}(w+\ti{w}))$, so that
  $\ti{g}(0,\cdot)$ is $\Z_2$-equivariant (that means, in fact, that
  it is $\Z_2$-invariant) and it is immediate that the same thing
  applies to $g(0,w,\ti{w}):=(0,\ti{g}(0,w,\ti{w}))$; hence, again by
  the same result, $\conj{\mathcal{D}}_j|_{\grade_M^{-1}(0)}$ and
  $\conj{\mathcal{D}}_j|_{M_0}$ are $\Z_2$-invariant. We also observe
  that in the same context, by
  Lemma~\ref{le:Z2_variance_of_sigma_2_bar}, we know that
  $\conj{\dAFtp_j}|_{T\grade_{M}^{-1}(0)}$ is $\Z_2$-invariant. All
  together,
  hypothesis~\ref{it:invariance_of_res_gamma-D_j_invariance} in
  Proposition~\ref{prop:invariance_of_res_gamma} is satisfied.

  Also by Lemma~\ref{le:Z2_variance_of_sigma_2_bar} and still in the
  context of the previous paragraph, we have that
  $\conj{\dAFtp_j}|_{\grade_{\R\times TQ\times TQ}^{-1}(0)}$ is
  $\Z_2$-invariant which, in turn, implies that
  $\conj{\dAFtp_j}|_{\{0\}\times \Delta_{TQ}}$ is $\Z_2$-invariant so
  that
  hypothesis~\ref{it:invariance_of_res_gamma-alphas_fixed_at_critical_pts}
  in Proposition~\ref{prop:invariance_of_res_gamma} is satisfied.

  Last, still in the same context as above,
  $\cpres^r(\conj{\dAFtp_2}, \conj{\dAFtp_1})$ is $\Z_2$-equivariant
  by Lemma~\ref{le:Z2_variance_of_sigma_2_bar},
  $\cpres^r(i_{\conj{\mathcal{D}^2}},i_{\conj{\mathcal{D}^1}})$ is
  $\Z_2$-equivariant by
  Proposition~\ref{prop:residual_i_D_bar_is_Z2_equivariant} and
  $\cpres^{r}(\conj{\varphi^2}, \conj{\varphi^1})$ are
  $\Z_2$-invariant by Lemma~\ref{le:order_of_phi_bar}, thus satisfying
  hypothesis~\ref{it:invariance_of_res_gamma-residuals_variance} in
  Proposition~\ref{prop:invariance_of_res_gamma}.

  All together, we apply
  Proposition~\ref{prop:invariance_of_res_gamma} to see that
  $\cpres^r(\conj{\gamma}_2,\conj{\gamma}_1)(0,-z_q,z_q)$ is a
  $\Z_2$-invariant vector of $T(\grade_M^{-1}(0))$ for all
  $z_q\in TQ$.
\end{proof}

\begin{proof}[Proof of Theorem~\ref{th:contact_order_discrete_FMS}]
  As we saw in the proof of
  Theorem~\ref{th:flow_of_discretizations_of_FMS}, the flow $F^j$ of
  the \FDMSCP obtained from the discretization $\discCP^j$ is
  constructed using the maps $\widehat{\gamma}^j$ that assign critical
  points of a critical problem ---hence trajectories of the \FDMSCP---
  to boundary values. Proposition 6
  in~\cite{ar:cuell_patrick-skew_critical_problems} can be used to
  prove that if
  $\widehat{\gamma_2} = \widehat{\gamma_1}
  +\mathcal{O}(\grade_{\R\times E}^r)$, then
  $F^2 = F^1 + \mathcal{O}(\grade_{\R\times TQ\times TQ}^r)$ and that,
  furthermore, if
  $\cpres^r(\widehat{\gamma_2},\widehat{\gamma_1})(0,-z_q,z_q)$ is
  $\Z_2$-invariant by the $\Z_2$-action $l^{T(\R\times TQ\times TQ)}$
  for all $z_q\in TQ$ ---what is called ``being symmetric'' in
  Proposition 6--- then
  $F^2 = F^1 + \mathcal{O}(\grade_{\R\times TQ\times TQ}^{r+1})$, that
  is, the contact order of the flows is $r$.

  By~\eqref{eq:phi_bar_and_gamma_hat-def}, we know that
  \begin{equation}
    \label{eq:gamma_hat_in_terms_of_gamma_bar}
    \widehat{\gamma^j} = (\Theta^j)^{-1} \circ \conj{\gamma^j}.
  \end{equation}
  As,
  $\Theta^2 = \Theta^1 + \mathcal{O}(\grade_{\R\times TQ\times
    TQ}^{r+1})$ and $\Theta^j$ is a diffeomorphism preserving the
  grades, by Proposition 4
  in~\cite{ar:cuell_patrick-skew_critical_problems} we have that
  $(\Theta^2)^{-1} = (\Theta^1)^{-1} + \mathcal{O}(\grade_{\R\times
    TQ\times TQ}^{r+1})$. By
  point~\ref{it:order_of_gamma_bar_and_variance_of_residual-order} in
  Proposition~\ref{prop:order_of_gamma_bar_and_variance_of_residual}
  we know that
  $\conj{\gamma}_2 = \conj{\gamma}_1 + \mathcal{O}(\grade_{\R\times
    E}^r)$. Then, applying Proposition 3
  in~\cite{ar:cuell_patrick-skew_critical_problems}
  to~\eqref{eq:gamma_hat_in_terms_of_gamma_bar} (recalling that both
  $(\Theta^j)^{-1}$ and $\conj{\gamma^j}$ preserve the grades), we
  have that
  $\widehat{\gamma}_2 = \widehat{\gamma}_1 +
  \mathcal{O}(\grade_{\R\times E}^r)$. As we mentioned above, this
  last fact proves that
  $F^2=F^1+\mathcal{O}(\grade_{\R\times TQ\times TQ}^r)$, that is, we
  proved that the flows have contact order one less than the claim in
  the statement; next we use the $\Z_2$-equivariance properties to add
  an extra power to the contact order.

  Still using Proposition 3
  of~\cite{ar:cuell_patrick-skew_critical_problems} in the same
  setting as before we see that, as
  $(\Theta^2)^{-1} = (\Theta^1)^{-1} + \mathcal{O}(\grade_{\R\times
    TQ\times TQ}^{r+1})$,
  \begin{equation}\label{eq:contact_order_discrete_FMS-comparison_1}
    \begin{split}
      \cpres^r(\widehat{\gamma^2},\widehat{\gamma^1})(0,-z_q,z_q) =&
      \underbrace{\cpres^r((\Theta^2)^{-1},
        (\Theta^2)^{-1})(0,z_q,z_q)}_{=0} \\&+ T_{(0,z_q,z_q)}
      (\Theta^1)^{-1}(\cpres^r(\conj{\gamma^2},\conj{\gamma^1})(0,-z_q,z_q)).
    \end{split}
  \end{equation}
  In addition, as
  $\cpres^r(\conj{\gamma}_2,\conj{\gamma}_1)(0,-z_q,z_q)\in
  T_{\conj{\gamma}_2(0,-z_q,z_q)} (\grade_{\conj{\mathcal{C}}}^{-1}(0))$ by
  point~\eqref{it:order_of_gamma_bar_and_variance_of_residual-residual}
  in
  Proposition~\ref{prop:order_of_gamma_bar_and_variance_of_residual},
  and
  $\Theta^j|_{\{0\}\times TQ\times TQ} = id_{\{0\}\times TQ\times
    TQ}$,
  \begin{equation*}
    \begin{split}
      T_{(0,z_q,z_q)}
      (\Theta^1)^{-1}(\cpres^r(\conj{\gamma^2},\conj{\gamma^1})&(0,-z_q,z_q))
      \\=& T_{(0,z_q,z_q)} ((\Theta^j)^{-1}|_{\{0\}\times TQ\times TQ})
      (\cpres^r(\conj{\gamma^2},\conj{\gamma^1})(0,-z_q,z_q)) \\=&
      \cpres^r(\conj{\gamma^2},\conj{\gamma^1})(0,-z_q,z_q).
    \end{split}
  \end{equation*}
  This last computation together
  with~\eqref{eq:contact_order_discrete_FMS-comparison_1}, proves that
  \begin{equation*}
    \cpres^r(\widehat{\gamma^2},\widehat{\gamma^1})(0,-z_q,z_q) =
    \cpres^r(\conj{\gamma^2},\conj{\gamma^1})(0,-z_q,z_q)
  \end{equation*}
  for all $z_q\in TQ$. Finally, by
  point~\eqref{it:order_of_gamma_bar_and_variance_of_residual-residual}
  in
  Proposition~\ref{prop:order_of_gamma_bar_and_variance_of_residual}
  we know that the right hand side of the last identity is a
  $\Z_2$-invariant vector for $l^{T(\R\times TQ\times TQ)}$, we
  conclude that the same thing happens to
  $\cpres^r(\widehat{\gamma^2},\widehat{\gamma^1})(0,-z_q,z_q)$ and,
  as we stated above,
  $F^2=F^1+\mathcal{O}(\grade_{\R\times TQ\times TQ}^{r+1})$ by Proposition
  6 in~\cite{ar:cuell_patrick-skew_critical_problems}.
\end{proof}


\section{Forced discrete mechanical systems on $Q\times Q$}
\label{sec:forced_discrete_mechanical_systems_on_QxQ}

So far in this paper we have studied a type of discrete dynamical
system, the \FDMSCP introduced in
Definition~\ref{def:forced_discrete_mechanicalCP_system}, that can be
used to approximate the behavior of a \FMS. There is a more well known
type of discrete dynamical system that we call \jdef{forced discrete
  mechanical system on $Q\times Q$} and that is also used to model a
\FMS. In this section we review such systems and derive some error
analysis results for them from our results of
Sections~\ref{sec:discretizations_a_la_cuell_and_patrick}
and~\ref{sec:error_analysis}. In particular, we introduce and study
the properties of the exact \discretizationMW in
Example~\ref{ex:exact_discretization_MW-simple_bd} and
Theorem~\ref{th:flow_of_exact_discrete_systems-FDMS} and, in
Theorem~\ref{th:order_of_flow_MW_vs_order_discMW}, we relate the
contact order of the flow of a \discretizationMW to that of the
underlying continuous system.

\begin{definition}
  A \jdef{forced discrete mechanical system on $Q\times Q$} (\FDMSMW)
  is a $4$-tuple $(Q,W,L_d,f_d)$ where $Q$ is a differentiable
  manifold, the \jdef{configuration space}, $W\subset Q\times Q$ is an
  open neighborhood of $\Delta_Q$, $L_d:W\rightarrow \R$ is a smooth
  function, the \jdef{discrete Lagrangian}, and
  $f_d\in \mathcal{A}^1(W)$, the \jdef{discrete force}.
\end{definition}

As $T^*(Q\times Q) \simeq p_1^*T^*Q \oplus p_2^*T^*Q$ it is standard
practice to write $f_d(q_0,q_1) = (f_d^-(q_0,q_1),f_d^+(q_0,q_1))$,
where $f_d^+$ is a section of $p_2^*T^*Q$ and $f_d^-$ is a section of
$p_1^*T^*Q$.

\FDMSMWs become dynamical systems by the \jdef{discrete
  Lagrange--D'Alembert principle} as follows. A \jdef{discrete path}
is an element $q_\cdot = (q_0,\ldots,q_N)\in Q^{N+1}$. An
\jdef{infinitesimal variation} $\delta q_\cdot$ over $q_\cdot$
satisfies $\delta q_k \in T_{q_k}Q$ for $k=0,\ldots,N$ and has fixed
enpoints when $\delta q_0=0$ and $\delta q_N=0$. A discrete path
$q_\cdot$ is a trajectory of $(Q,L_d,f_d)$ if $q_\cdot$ is a critical
point of
$\mwAFnp(q_\cdot):=\sum_{k=1}^N (dL_d(q_{k-1},q_k) +
f_d(q_{k-1},q_k))$ for all infinitesimal fixed endpoint variations
$\delta q_\cdot$ over $q_\cdot$.  Standard computations lead to the
corresponding equations of motion
\begin{equation*}
  T^2L_d(q_{k-1},q_k) + T^1L_d(q_k,q_{k+1}) + f_d^+(q_{k-1},q_k) +
  f_d^-(q_k,q_{k+1}) = 0,
\end{equation*}
for $k=1,\ldots,N-1$, known as the \jdef{discrete Lagrange--D'Alembert
  equations}.

\begin{remark}\label{rem:trajectoriies_FDMS_as_critical_points}
  According to the previous description, the trajectories of a \FDMSMW
  $(Q,L_d,f_d)$ can be defined as critical points of a critical
  problem. Indeed, let $\mathcal{C}_d:=Q^{N+1}$ and define
  $g_d:\mathcal{C}_d\rightarrow Q\times Q$ by
  $g_d(q_0,\ldots,q_N):=(q_0,q_N)$. Also, let
  $\mwAFnp:=\sum_{j=1}^N p_{j-1,j}^*\mwAFop \in
  \mathcal{A}^1(\mathcal{C}_d)$, for
  $\mwAFop:=dL_d+f_d\in \mathcal{A}^1(Q\times Q)$. Then
  $q_\cdot \in \mathcal{C}_d$ is a trajectory of $(Q,L_d,f_d)$ if and
  only if it is a critical point of $(\mwAFnp,g_d,\ker(Tg_d))$. If
  $L_d$ and $f_d$ are only defined in the open subset
  $W\subset Q\times Q$, then we replace $\mathcal{C}_d$ with
  \begin{equation}\label{eq:D_d^W-def}
    \mathcal{C}_d^W := p_{01}^{-1}(W)\cap \cdots\cap p_{(N-1),
      N}^{-1}(W) \subset Q^{N+1}.
  \end{equation}
\end{remark}

\FDMSMWs have been studied for many years and a good reference is Part
III
of~\cite{ar:marsden_west-discrete_mechanics_and_variational_integrators}.


Just as in the case of \FDMSCP, a notion of discretization of a \FMS
will be essential to the error analysis of \FDMSMW. A crucial difference
between the two cases is that while for a discretization the value
$h=0$ is not special, \FDMSMWs are usually not well behaved at $h=0$
since this represents ``instantaneous displacement'' between the fixed
points $q_0$ and $q_1$. For that reason, the notion of discretization
in the context of \FDMSMW will be slightly more subtle than before.

First we recall that when $(Q,L,f)$ is a regular \FMS, we have its
exact discretizations
$\discCPEexp := (Q,(\psi^E,\dBTp,\dBTm),L_\CPDS^E,f_\CPDS^E)$
introduced in Example~\ref{ex:exact_discretization_forced-system},
where $\dBTp$ and $\dBTm$ are arbitrary, but satisfying the conditions
given in Definition~\ref{def:discretization_of_TQ}. Applying
Lemma~\ref{le:discretizations_yield_discrete_tangent_bundles-U_and_W}
to $\discCPEexp$ (with $\ti{\del}^{E\mp}$), we have two sets
$\mathcal{U}^E\subset U^{X_{L,f}} \subset \R\times TQ$ and
$\mathcal{W}^E\subset \R\times Q\times Q$ such that
$\{0\}\times TQ\subset \mathcal{U}^E$,
$\{0\}\times \Delta_Q\subset \mathcal{W}^E$ and the map
$\ti{\del}^{E\mp}(h,v):=(h,\del^{E\mp}_h(v))$ is a diffeomorphism
between the open subsets
${\mathcal{U}^E}^* := \mathcal{U}^E\SM (\{0\}\times TQ)$ and
${\mathcal{W}^E}^* := \mathcal{W}^E\SM (\{0\}\times Q\times Q)$.

\begin{definition}\label{def:discretization_MW}
  Let $(Q,L,f)$ be a regular \FMS and $\mathcal{W}^E$ be as in the
  previous paragraph. A \jdef{\discretizationMW} of $(Q,L,f)$ is a
  $4$-tuple $(Q,W,L_{d,\cdot},f_{d,\cdot})$ where
  $\mathcal{W}^E\subset W\subset \R\times Q\times Q$ and
  $W^*:=W\SM (\{0\}\times Q\times Q)$ is open; a function
  $L_{d,\cdot}:W^*\rightarrow\R$ and a section $f_{d,\cdot}$ over
  $W^*$ of $p_{23}^*T^*(Q\times Q)$ such that
  $L_{d,\cdot}\circ \ti{\del}^{E\mp}|_{{\mathcal{U}^E}^*}$ and
  $(\ti{\del}^{E\mp}|_{{\mathcal{U}^E}^*})^{*_2}(f_{d,\cdot})$
  (defined over ${\mathcal{U}^E}^*$) extend to $\mathcal{U}^E$ as a
  smooth function $L_{ext,\cdot}$ and a smooth section $f_{ext,\cdot}$
  that satisfy
  \begin{equation}\label{eq:discretization_MW-order_conditions}
    \begin{split}
      L_{ext,\cdot} =& h L
      +\mathcal{O}(\grade_{\R\times TQ}^2), \\
      f_{ext,\cdot} =& h f + \mathcal{O}(\grade_{\R\times TQ}^2).
    \end{split}
  \end{equation}
\end{definition}

In order to emphasize the distinction between discretizations in the
sense of Definition~\ref{def:discretization_CP} and
\discretizationsMW, in this section we will call
\jdef{\discretizationCPExp} to a discretization in the sense of
Definition~\ref{def:discretization_CP}.

Using the following result it is possible to transform \discretizationsCPExp of a regular \FMS into \discretizationsMW of the same system,
and conversely.

\begin{prop}\label{prop:correspondence_discretizations_CP_and_MW}
  Let $(Q,L,f)$ be a regular \FMS and $(\psi^E,\alpha^+,\alpha^-)$ be
  the exact discretization of $TQ$ constructed in
  Example~\ref{ex:exact_discretization_forced}. For any \discretizationCPExp
  $\discCPexp=(Q,\psi^E,L_\CPDS,f_\CPDS)$ of $(Q,L,f)$ so that
  $L_\CPDS$ and $f_\CPDS$ are defined over $\mathcal{U}^E$
  we define
  \begin{equation}
    \label{eq:correspondence_discretizations_CP_and_MW-MW_from_CP}
    L_{d,\cdot} := L_\CPDS \circ (\ti{\del}^{E\mp}|_{{\mathcal{U}^E}^*})^{-1}
    \stext{ and }
    f_{d,\cdot} := ((\ti{\del}^{E\mp}|_{{\mathcal{U}^E}^*})^{-1})^{*_2}(f_\CPDS).
  \end{equation}
  Then $\discMW=(Q,\mathcal{W}^E,L_{d,\cdot},f_{d,\cdot})$ is a
  \discretizationMW of $(Q,L,f)$. Furthermore, this assignment is a
  bijection between the two types of discretization of $(Q,L,f)$.
\end{prop}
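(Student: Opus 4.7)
The plan is to verify the four requirements of Definition~\ref{def:discretization_MW} for the proposed $4$-tuple $(Q,\mathcal{W}^E,L_{d,\cdot},f_{d,\cdot})$, and then to exhibit an explicit inverse to the assignment.

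For the first part, set $W:=\mathcal{W}^E$, so $W^*=\mathcal{W}^{E*}$ is open by Lemma~\ref{le:discretizations_yield_discrete_tangent_bundles-U_and_W}. Writing $F:=\ti{\del}^{E\mp}|_{\mathcal{U}^{E*}}$ for the diffeomorphism onto $\mathcal{W}^{E*}$, which is of product form $(h,v)\mapsto(h,\del^{E\mp}_h(v))$, the composition $L_{d,\cdot}=L_\CPDS\circ F^{-1}$ is smooth on $W^*$ and $f_{d,\cdot}=(F^{-1})^{*_2}(f_\CPDS)$ is a smooth section of $p_{23}^*T^*(Q\times Q)$ over $W^*$. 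By construction one has $L_{d,\cdot}\circ F=L_\CPDS|_{\mathcal{U}^{E*}}$ and $F^{*_2}(f_{d,\cdot})=f_\CPDS|_{\mathcal{U}^{E*}}$; both extend to all of $\mathcal{U}^E$ simply as $L_\CPDS$ and $f_\CPDS$ themselves, which by hypothesis are already smoothly defined there. Taking $L_{ext,\cdot}:=L_\CPDS$ and $f_{ext,\cdot}:=f_\CPDS$ on $\mathcal{U}^E$, the order conditions~\eqref{eq:discretization_MW-order_conditions} coincide exactly with the order conditions demanded by Definition~\ref{def:discretization_CP}, which hold by assumption.

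The inverse assignment sends a \discretizationMW $(Q,W,L_{d,\cdot},f_{d,\cdot})$ to the \discretizationCPExp $(Q,\psi^E,L_{ext,\cdot},f_{ext,\cdot})$ obtained from the smooth extensions produced by Definition~\ref{def:discretization_MW}; these satisfy the CP order conditions thanks to~\eqref{eq:discretization_MW-order_conditions}. The two assignments are mutually inverse: going $\text{CP}\to\text{MW}\to\text{CP}$ returns $(L_\CPDS,f_\CPDS)$ because a smooth function (resp.\ section) on $\mathcal{U}^E$ is uniquely determined by its restriction to the dense subset $\mathcal{U}^{E*}=\mathcal{U}^E\SM(\{0\}\times TQ)$; going $\text{MW}\to\text{CP}\to\text{MW}$ returns $(L_{d,\cdot},f_{d,\cdot})$ on $\mathcal{W}^{E*}$ directly by unwinding the defining formulas~\eqref{eq:correspondence_discretizations_CP_and_MW-MW_from_CP}.

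The one delicate point I expect to require careful treatment is the interplay between the $*_2$ construction and the product-type diffeomorphism $F$: specifically, that $F^{*_2}$ is inverse to $(F^{-1})^{*_2}$ on sections of $(\ker Tp_1)^\circ$, so that the identity $F^{*_2}(f_{d,\cdot})=f_\CPDS|_{\mathcal{U}^{E*}}$ really holds. This follows because $F$ preserves the first coordinate, so $TF$ preserves the decomposition $T(\R\times X)=\ker(Tp_1)\oplus\ker(Tp_2)$; consequently $F^*$ sends sections of $(\ker Tp_1)^\circ$ to themselves and agrees there with $F^{*_2}$, and the inverse relation follows from $(F^{-1}\circ F)^* =\mathrm{id}$. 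Once this is set up, the remainder of the argument is a straightforward unwinding of definitions.
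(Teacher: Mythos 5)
Your proposal is correct and follows essentially the same route as the paper: take $W=\mathcal{W}^E$, observe that $L_{ext,\cdot}=L_\CPDS$ and $f_{ext,\cdot}=f_\CPDS$ so the order conditions transfer verbatim, and invert by sending a \discretizationMW to its extensions; your identification of the delicate point (that the product-type diffeomorphism $\ti{\del}^{E\mp}$ preserves the splitting of the tangent bundle, so the $*_2$-pullbacks compose to the identity on the relevant forms) is exactly the content of the paper's explicit computation for $f_{d,\cdot}$. The only quibble is notational: in the paper's conventions the forms annihilating the $\R$-direction are sections of $(\ker(Tp_2))^\circ$, not $(\ker(Tp_1))^\circ$.
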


\begin{proof}
  Given $\discCPexp$ as in the statement, defining $L_{d,\cdot}$ and
  $f_{d,\cdot}$
  using~\eqref{eq:correspondence_discretizations_CP_and_MW-MW_from_CP}
  ensures that $L_{d,\cdot}:{\mathcal{W}^E}^*\rightarrow\R$ is a
  smooth function and $f_{d,\cdot}$ is a smooth section of
  $p_{23}^*T^*(Q\times Q)$ over ${\mathcal{W}^E}^*$. Then,
  \begin{equation*}
    L_{d,\cdot}\circ (\ti{\del}^{E\mp}|_{{\mathcal{U}^E}^*}) =
    L_\CPDS \circ (\ti{\del}^{E\mp}|_{{\mathcal{U}^E}^*})^{-1}
    \circ (\ti{\del}^{E\mp}|_{{\mathcal{U}^E}^*}) = L_\CPDS|_{{\mathcal{U}^E}^*}
  \end{equation*}
  that extends smoothly over $\mathcal{U}^E$ to $L_\CPDS$, so
  that $L_{ext,\cdot} = L_\CPDS$ and, then,
  \begin{equation*}
    L_{ext,\cdot} = L_\CPDS = h L +\mathcal{O}(\grade_{\R\times TQ}^{2}),
  \end{equation*}
  where the last identity comes from $\discCPexp$ being a
  \discretizationCPExp. Hence the first condition
  in~\eqref{eq:discretization_MW-order_conditions} is
  valid. Similarly, for $(h,v) \in {\mathcal{U}^E}^*$ and
  $(0, \delta v) \in T_{(h,v)}(\R\times TQ)$,
  \begin{equation*}
    \begin{split}
      (\ti{\del}^{E^\mp}|_{{\mathcal{U}^E}^*})^{*_2}(f_{d,\cdot})&(h,v)(0,
      \delta v) =
      (\ti{\del}^{E^\mp}|_{{\mathcal{U}^E}^*})^*(f_{d,\cdot})(h,v)(0,
      \delta v) \\=&
      f_{d,\cdot}(\ti{\del}^{E^\mp}|_{{\mathcal{U}^E}^*}(h,v))(T_{(h,v)}
      \ti{\del}^{E^\mp}|_{{\mathcal{U}^E}^*} (0,\delta v)) \\=&
      (((\ti{\del}^{E\mp}|_{{\mathcal{U}^E}^*})^{-1})^{*_2}(f_\CPDS))
      (\ti{\del}^{E^\mp}|_{{\mathcal{U}^E}^*}(h,v))(\underbrace{T_{(h,v)}
        \ti{\del}^{E^\mp}|_{{\mathcal{U}^E}^*} (0,\delta
        v)}_{=:(0,\delta q,\delta q')}) \\=&
      (((\ti{\del}^{E\mp}|_{{\mathcal{U}^E}^*})^{-1})^*(f_\CPDS))
      (\ti{\del}^{E^\mp}|_{{\mathcal{U}^E}^*}(h,v))(0,\delta q,\delta
      q') \\=& f_\CPDS(h,v)
      (T_{\ti{\del}^{E^\mp}|_{{\mathcal{U}^E}^*}(h,v)}
      (\ti{\del}^{E\mp}|_{{\mathcal{U}^E}^*})^{-1}(T_{(h,v)}
      \ti{\del}^{E^\mp}|_{{\mathcal{U}^E}^*} (0,\delta v))) \\=&
      f_\CPDS(h,v)(0,\delta v).
    \end{split}
  \end{equation*}
  Then,
  \begin{equation*}
    (\ti{\del}^{E^\mp}|_{{\mathcal{U}^E}^*})^{*_2}(f_{d,\cdot}) =
    f_\CPDS|_{{\mathcal{U}^E}^*}
  \end{equation*}
  and, as $f_\CPDS$ is smooth over $\mathcal{U}^E$, we see that
  $f_{ext,\cdot} = f_\CPDS$ and, then,
  \begin{equation*}
    f_{ext,\cdot} = f_\CPDS = h f +\mathcal{O}(\grade_{\R\times TQ}^2)
  \end{equation*}
  because, once again, $\discCPexp$ is a \discretizationCPExp of
  $(Q,L,f)$. Hence, we see that the second condition
  in~\eqref{eq:discretization_MW-order_conditions} is also valid. All
  together, $(Q,\mathcal{W}^E,L_{d,\cdot},f_{d,\cdot})$ is a
  \discretizationMW of $(Q,L,f)$.

  Conversely, let $\discMW=(Q,\mathcal{W}^E,L_{d,\cdot},f_{d,\cdot})$
  be as in the statement and define $L_\CPDS := L_{ext,\cdot}$
  and $f_\CPDS := f_{ext,\cdot}$, so that both are smooth over
  $\mathcal{U}^E$ and
  \begin{equation*}
    L_\CPDS = L_{ext,\cdot} = h L + \mathcal{O}(\grade_{\R\times TQ}^2),
    \stext{ and }
    f_\CPDS = f_{ext,\cdot} = h f +
    \mathcal{O}(\grade_{\R\times TQ}^2).
  \end{equation*}
  Thus, $(Q,\psi^E,L_\CPDS, f_\CPDS)$ is a \discretizationCPExp of
  $(Q,L,f)$.

  It is immediate that the two procedures described above are mutually
  inverse.
\end{proof}

\begin{example}\label{ex:exact_discretization_MW-simple_bd}
  Let $\discCPEexp=(Q,\psi^E,L_\CPDS^E,f_\CPDS^E)$ be the
  exact \discretizationCPExp associated to the regular \FMS $(Q,L,f)$
  constructed in Example~\ref{ex:exact_discretization_forced-system}
  (corresponding to arbitrary $\dBTp$ and $\dBTm$). As both
  $L_\CPDS^E$ and $f_\CPDS^E$ are defined over
  $\mathcal{U}^E$, by
  Proposition~\ref{prop:correspondence_discretizations_CP_and_MW},
  $\discMWE := (Q,\mathcal{W}^E,L_{d,\cdot}^E,f_{d,\cdot}^E)$ defined
  by
  \begin{equation}
    \label{eq:exact_discretization_MW-def}
    L_{d,\cdot}^E := L_\CPDS^E \circ (\ti{\del}^{E\mp}|_{{\mathcal{U}^E}^*})^{-1}
    \stext{ and }
    f_{d,\cdot}^E :=
    ((\ti{\del}^{E\mp}|_{{\mathcal{U}^E}^*})^{-1})^{*_2}(f_\CPDS^E)
  \end{equation}
  is a \discretizationMW of $(Q,L,f)$ that we call the \jdef{exact
    \discretizationMW} of $(Q,L,f)$.

  We can specialize the construction of $\discMWE$ to the case where
  \begin{equation}
    \label{eq:simple_dBDp_dBDm-def}
    \dBTp(h):=h \stext{ and } \dBTm(h)=0 \stext{ for all } h\in\R.
  \end{equation}
  We have $\del^{E\mp}_t(v) = (\tau_Q(v),\tau_Q(F^{X_{L,f}}_t(v)))$
  or, if $q(t) := \tau_Q(F^{X_{L,f}}_t(v))$,
  $\del^{E\mp}_t(v) = (q(0),q(t))$. Thus, for
  $(h,q_0,q_1)\in {\mathcal{W}^E}^*$ there is a unique
  $(h,v_{01})\in \mathcal{U}^{E^*}$ so that
  $(h,q_0,q_1) = \ti{\del}^{E\mp}_h(v_{01}) =
  (h,q_{01}(0),q_{01}(h))$, where $q_{01}(t)$ is the trajectory of
  $(Q,L,f)$ starting at $t=0$ with velocity
  $v_{01}$. Then
  \begin{equation}\label{eq:exact_L_d-MW}
    \begin{split}
      L_{d,h}^E(q_0,q_1) =&
      L_\CPDS^E((\ti{\del}^{E\mp}|_{{\mathcal{U}^E}^*})^{-1}(h,q_0,q_1))
      = L_\CPHS^E(v_{01}) \\=& \int_0^h L(F_t^{X_{L,f}}(v_{01})) dt =
      \int_0^h L(q_{01}'(t)) dt.
    \end{split}
  \end{equation}
  Similarly, for $(h,q_0,q_1)\in {\mathcal{W}^E}^*$ and
  $(0,\delta q_0,\delta q_1)\in p_{23}^*T(Q\times Q)\subset
  T_{(h,q_0,q_1)}{\mathcal{W}^E}^*$, we have
  \begin{equation*}
    \begin{split}
      f_{d,h}^E&(q_0,q_1)(0, \delta q_0,\delta q_1) =
      ((\ti{\del}^{E\mp}|_{{\mathcal{U}^E}^*})^{-1})^{*_2}(f_\CPDS^E)(h,q_0,q_1)
      (0, \delta q_0,\delta q_1) \\=& f_\CPDS^E(\underbrace{
        (\ti{\del}^{E\mp}|_{{\mathcal{U}^E}^*})^{-1}(h,q_0,q_1)}_{=(h,v_{01})})
      (T_{(h,q_0,q_1)}((\ti{\del}^{E\mp}|_{{\mathcal{U}^E}^*})^{-1})(0,
      \delta q_0,\delta q_1)) \\=& f_\CPHS^E(v_{01})
      (T^{23}_{(h,q_0,q_1)}((\del^{E\mp}_h|_{{\mathcal{U}^E}^*\cap
        (\{h\}\times TQ)})^{-1})( \delta q_0,\delta q_1)) \\=&
      \int_0^h \check{f}(q_{01}'(t))(T_{v_{01}}(\tau_Q\circ
      F^{X_{L,f}}_t)(T^{23}_{(h,q_0,q_1)}((\del^{E\mp}_h|_{{\mathcal{U}^E}^*\cap
        (\{h\}\times TQ)}))^{-1})(\delta q_0,\delta q_1)) dt \\=&
      \int_0^h \check{f}(q_{01}'(t))(T^{23}_{(h,q_0,q_1)}(\tau_Q\circ
      F^{X_{L,f}}_t \circ (\del^{E\mp}_h|_{{\mathcal{U}^E}^*\cap
        (\{h\}\times TQ)})^{-1})(\delta q_0,\delta q_1)) dt.
    \end{split}
  \end{equation*}
  Notice that
  $\tau_Q\circ F^{X_{L,f}}_t \circ
  (\del^{E\mp}_h|_{{\mathcal{U}^E}^*\cap (\{h\}\times TQ)})^{-1}$ maps
  $(h,q_0,q_1)$ to $q_{01}(t)$, the trajectory of $(Q,L,f)$ satisfying
  $q_{01}(0)=q_0$ and $q_{01}(h)=q_1$ (with
  $(h,q_{01}'(0)) \in {\mathcal{U}^E}^*\cap (\{h\}\times TQ)$) . Then
  \begin{equation*}
    T^{23}_{(h,q_0,q_1)}(\tau_Q\circ
    F^{X_{L,f}}_t \circ
    (\del^{E\mp}_h|_{{\mathcal{U}^E}^*\cap
      (\{h\}\times TQ)})^{-1})(\delta q_0,\delta q_1) =
    \pd{q_{01}(t)}{q_0} \delta q_0 + \pd{q_{01}(t)}{q_1} \delta q_1
  \end{equation*}
  and, so,
  \begin{equation}\label{eq:exact_f_d-MW}
    f_{d,h}^E(q_0,q_1)(0,\delta q_0,\delta q_1) = \int_0^h
    \check{f}(q_{01}'(t))(\pd{q_{01}(t)}{q_0} \delta q_0 +
    \pd{q_{01}(t)}{q_1} \delta q_1)dt.
  \end{equation}
\end{example}

The expressions~\eqref{eq:exact_L_d-MW} and~\eqref{eq:exact_f_d-MW}
had been proposed in Section 3.2.4
of~\cite{ar:marsden_west-discrete_mechanics_and_variational_integrators}
as definitions for the exact discrete Lagrangian and force associated
to the regular \FMS $(Q,L,f)$. There, it is claimed that, for $h$
sufficiently small, $L_{d,h}^E$ and $f_{d,h}^E$ are well defined,
locally, near $(q_0,q_1)$. This claim is not verified in the paper:
even the $f=0$ case is not trivial and it has only been properly
solved
in~\cite{ar:marrero_martindediego_martinez-on_the_exact_discrete_lagrangian_function_for_variational_integrators_theory_and_applications}. Example~\ref{ex:exact_discretization_MW-simple_bd}
proves that, indeed, $L_{d,h}^E$ and $f_{d,h}^E$ are well
defined. Below, in
Theorem~\ref{th:flow_of_exact_discrete_systems-FDMS}, we study an
additional property of the exact discretization $\discMWE$ that
explains the ``exact'' part of its name.

\begin{definition}\label{def:contact_order_discMW}
  Let $(Q,L,f)$ be a regular \FMS and
  $\discCPEexp = (Q, \psi^E, L_\CPDS^E, f_\CPDS^E)$ its exact
  \discretizationCPExp introduced in
  Example~\ref{ex:exact_discretization_forced-system}. A
  \discretizationMW $\discMW = (Q,W,L_{d,\cdot},f_{d,\cdot})$ has
  \jdef{order $r$ contact} if
  \begin{equation*}
    L_{ext,\cdot} = L_\CPDS^E + \mathcal{O}(\grade_{\R\times TQ}^{r+1})
    \stext{ and }
    f_{ext,\cdot} = f_\CPDS^E + \mathcal{O}(\grade_{\R\times TQ}^{r+1}),
  \end{equation*}
  where $L_{ext,\cdot}$ and $f_{ext,\cdot}$ are the extensions of
  $L_{d,\cdot}\circ \ti{\del}^{E\mp}|_{{\mathcal{U}^E}^*}$ and
  $(\ti{\del}^{E\mp}|_{{\mathcal{U}^E}^*})^{*_2}(f_{d,\cdot})$ to
  $\mathcal{U}^E$.
\end{definition}

\begin{remark}
  Definition~\ref{def:contact_order_discMW} extends to the (smooth)
  forced case the contact order notion for $L_d$ introduced in Section
  2.3.1
  of~\cite{ar:marsden_west-discrete_mechanics_and_variational_integrators},
  where the expression
  \begin{equation}
    \label{eq:L_d_contact_order_MW-def}
    \abs{L_d(q(0),q(h),h)-L_d^E(q(0),q(h),h)}\leq C h^{r+1}
  \end{equation}
  is used, and where $q(t)$ is a trajectory of the continuous system
  with initial condition $(q,\dot{q})$ in a certain open subset of
  $TQ$. Indeed,
  $L_d(q(0),q(h),h) = (L_{d,\cdot}\circ
  \ti{\del}^{E\mp}|_{{\mathcal{U}^E}^*})(h,q,\dot{q})$ and
  $L_d^E(q(0),q(h),h) = L^E_\CPS(h,q,\dot{q})$, so that
  condition~\eqref{eq:L_d_contact_order_MW-def} (for smooth functions)
  is equivalent to our
  $L_{ext,\cdot} = L_\CPDS^E + \mathcal{O}(\grade_{\R\times
    TQ}^{r+1})$. Notice that the introduction of $L_{ext}$ is needed
  because $L_d$ is usually not well defined when $h=0$.
\end{remark}

\begin{corollary}\label{cor:correspondence_CP_vs_MW_preserves_order}
  The correspondence between \discretizationsCPExp and \discretizationsMW
  of a regular \FMS defined in
  Proposition~\ref{prop:correspondence_discretizations_CP_and_MW}
  preserves the contact order.
\end{corollary}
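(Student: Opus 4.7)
The plan is to unravel the two contact-order conditions and observe that, via the identities established in the proof of Proposition~\ref{prop:correspondence_discretizations_CP_and_MW}, they become the same statement. First, note that all \discretizationsMW in Definition~\ref{def:discretization_MW} are attached to the exact discretization $\psi^E$ of $TQ$, and the correspondence of Proposition~\ref{prop:correspondence_discretizations_CP_and_MW} also keeps $\psi^E$ fixed on the \discretizationCPExp side. Consequently the $\psi$-component of the contact-order condition in Definition~\ref{def:contact_order_of_discretizations} between any such \discretizationCPExp and the exact one $\discCPEexp$ is automatic, and only the conditions on $L_\CPDS$ and $f_\CPDS$ have content.

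Let $\discCPexp = (Q, \psi^E, L_\CPDS, f_\CPDS)$ be a \discretizationCPExp and $\discMW = (Q,\mathcal{W}^E, L_{d,\cdot}, f_{d,\cdot})$ its image under the correspondence. The proof of Proposition~\ref{prop:correspondence_discretizations_CP_and_MW} shows that the smooth extensions of $L_{d,\cdot}\circ \ti{\del}^{E\mp}|_{{\mathcal{U}^E}^*}$ and $(\ti{\del}^{E\mp}|_{{\mathcal{U}^E}^*})^{*_2}(f_{d,\cdot})$ to all of $\mathcal{U}^E$ satisfy
\begin{equation*}
  L_{ext,\cdot} = L_\CPDS \quad\text{and}\quad f_{ext,\cdot} = f_\CPDS
  \stext{ on } \mathcal{U}^E.
\end{equation*}
In particular, applied to the exact \discretizationCPExp $\discCPEexp$ and its image $\discMWE$ in Example~\ref{ex:exact_discretization_MW-simple_bd}, one obtains $L^E_{ext,\cdot} = L_\CPDS^E$ and $f^E_{ext,\cdot} = f_\CPDS^E$ on $\mathcal{U}^E$.

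With these identifications, the MW contact-order condition from Definition~\ref{def:contact_order_discMW},
\begin{equation*}
  L_{ext,\cdot} = L_\CPDS^E + \mathcal{O}(\grade_{\R\times TQ}^{r+1})
  \stext{ and }
  f_{ext,\cdot} = f_\CPDS^E + \mathcal{O}(\grade_{\R\times TQ}^{r+1}),
\end{equation*}
is literally the same pair of equations as the content of the CP contact-order condition from Definition~\ref{def:contact_order_of_discretizations} between $\discCPexp$ and $\discCPEexp$, namely
\begin{equation*}
  L_\CPDS = L_\CPDS^E + \mathcal{O}(\grade_{\R\times TQ}^{r+1})
  \stext{ and }
  f_\CPDS = f_\CPDS^E + \mathcal{O}(\grade_{\R\times TQ}^{r+1}).
\end{equation*}
Since the inverse direction of the correspondence is defined precisely by $L_\CPDS := L_{ext,\cdot}$ and $f_\CPDS := f_{ext,\cdot}$, the equivalence of the two contact-order conditions holds in both directions.

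There is no genuine obstacle here; the result is a definitional unwinding. The only care required is to work with the extensions $L_{ext,\cdot}$ and $f_{ext,\cdot}$ (defined on $\mathcal{U}^E$, which contains $\{0\}\times TQ$) rather than with $L_{d,\cdot}$ and $f_{d,\cdot}$ (defined only on ${\mathcal{W}^E}^*$, which excludes $h=0$), so that the residuals at $h=0$ encoded by $\mathcal{O}(\grade_{\R\times TQ}^{r+1})$ are meaningful.
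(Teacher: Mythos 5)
Your proposal is correct and follows essentially the same route as the paper: both reduce the claim to the identities $L_{ext,\cdot} = L_\CPDS$ and $f_{ext,\cdot} = f_\CPDS$ established in the proof of Proposition~\ref{prop:correspondence_discretizations_CP_and_MW}, after which the two contact-order definitions coincide verbatim (the $\psi$-condition being automatic since both sides use $\psi^E$).
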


\begin{proof}
  Let $\discCPexp=(Q,\psi^E,L_\CPDS,f_\CPDS)$ be a
  \discretizationCPExp and
  $\discMW=(Q,\mathcal{W}^E,L_{d,\cdot},f_{d,\cdot})$ be
  \discretizationMW of the regular \FMS $(Q,L,f)$ related under the
  bijection defined in
  Proposition~\ref{prop:correspondence_discretizations_CP_and_MW}.
  According to the proof of
  Proposition~\ref{prop:correspondence_discretizations_CP_and_MW} the
  extensions $L_{ext,\cdot}$ and $f_{ext,\cdot}$ of $\discMW$ are
  $L_\CPDS$ and $f_\CPDS$ respectively.  Then $\discCPexp$
  has order $r$ contact if
  $L_\CPDS = L_\CPDS^E + \mathcal{O}(\grade_{\R\times
    TQ}^{r+1})$ and
  $f_\CPDS = f_\CPDS^E + \mathcal{O}(\grade_{\R\times
    TQ}^{r+1})$. These conditions are then, the same as
  $L_{ext,\cdot} = L_\CPDS^E + \mathcal{O}(\grade_{\R\times
    TQ}^{r+1})$ and
  $f_{ext,\cdot} = f_\CPDS^E + \mathcal{O}(\grade_{\R\times
    TQ}^{r+1})$, that are the conditions for $\discMW$ to have order
  $r$ contact.
\end{proof}

Let $\discMW = (Q,\mathcal{W}^E,L_{d,\cdot},f_{d,\cdot})$ be a
\discretizationMW of the regular \FMS $(Q,L,f)$. Consider the space
of (length $2$) paths
\begin{equation*}
  \mathcal{C}_{\MWS}^* := p_{123}^{-1}({\mathcal{W}^E}^*) \cap
  p_{134}^{-1}({\mathcal{W}^E}^*),
\end{equation*}
that is an open subset of $(\R\SM\{0\}) \times Q\times Q\times Q$, the
function $g_{\MWS}:\mathcal{C}_{\MWS}^*\rightarrow \R\times Q\times Q$
defined by $g_{\MWS}(h,q_0,q_1,q_2):=(h,q_0,q_2)$ and the section of
$p_{234}^*(T^*(Q\times Q\times Q))$ over
$\mathcal{C}_{\MWS}^*$\footnote{The sub-index $\MWS$ used in some
  places, as in $\mathcal{C}_{\MWS}^*$, comes from Marsden and West,
  and is used for objects that relate to the description of Discrete
  Mechanics as reviewed in their
  paper~\cite{ar:marsden_west-discrete_mechanics_and_variational_integrators}.}
defined by
\begin{equation*}
  \mwAFDtp := p_{123}^*(\mwAFDop) + p_{134}^*(\mwAFDop) \stext{ where }
  \mwAFDop := d_{23}L_{d,\cdot} + f_{d,\cdot}.
\end{equation*}
Let $\discCPexp = (Q,\psi^E,L_\CPDS,f_\CPDS)$ be the
\discretizationCPExp of $(Q,L,f)$ associated to $\discMW$ by
Proposition~\ref{prop:correspondence_discretizations_CP_and_MW}.

\begin{prop}\label{prop:crit_points_CP_vs_MW}
  With the notation as above,
  \begin{enumerate}
  \item \label{it:crit_points_CP_vs_MW-problem}
    $(\mwAFDtp,g_{\MWS},\ker(Tg_{\MWS}))$ is a critical problem over
    $\mathcal{C}_{\MWS}^*$.
  \item \label{it:crit_points_CP_vs_MW-bijection} Let
    $F:\mathcal{C}^*\rightarrow \mathcal{C}_{\MWS}^*$ be defined by
    $F(h,v,\ti{v}):=(h, \del^{E-}_h(v), \del^{E+}_h(v),
    \del^{E+}_h(\ti{v}))$. Then $F$ establishes a bijection between
    the critical points of the critical problem
    $(\dAFtpst,g|_{\mathcal{C}^*},\ker(Tg|_{\mathcal{C}^*}))$ over
    $\mathcal{C}^*$ (associated to $\discCPexp$,
    see~\eqref{eq:submanifold_of_matching_arrows_after_blow_up},~\eqref{eq:g_cp-def}
    and~\eqref{eq:alpha_cp-def}) and those of the problem
    $(\mwAFDtp,g_{\MWS},\ker(Tg_{\MWS}))$.
  \end{enumerate}
\end{prop}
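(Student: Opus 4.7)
Part~\eqref{it:crit_points_CP_vs_MW-problem} is essentially a formal verification. The map $g_{\MWS}$ is just a projection, hence a submersion onto $\R\times Q\times Q$, so $\ker(Tg_{\MWS})$ is a smooth regular distribution of rank $\dim(Q)$. Pointwise $\mwAFDop = d_{23}L_{d,\cdot}+f_{d,\cdot}$ annihilates $\partial/\partial h$ because both summands do (by the definition of $d_{23}$ and because $f_{d,\cdot}$ is a section of $p_{23}^*T^*(Q\times Q)$); pulling back via the smooth projections $p_{123}, p_{134}$ yields a smooth section $\mwAFDtp$ of $p_{234}^*T^*(Q\times Q\times Q)$, so the triple is a critical problem on $\mathcal{C}_{\MWS}^*$.

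For part~\eqref{it:crit_points_CP_vs_MW-bijection}, the plan is to apply Lemma~\ref{le:critical_problems_and_diffeomorphisms} with the identity as the bijection on the common boundary space $\R\times Q\times Q$. First I would restrict attention to the open subset $\mathcal{C}^{**}:=\mathcal{C}^*\cap p_{12}^{-1}({\mathcal{U}^E}^*)\cap p_{13}^{-1}({\mathcal{U}^E}^*)$ (this is where the analysis of Section~\ref{sec:flows_of_discretizations_of_FMS} places the critical points of interest anyhow, since the flow $F$ of $\discCPEexp$ takes values there). On $\mathcal{C}^{**}$ the map $F$ is nothing but $\ti{\del}^{E\mp}$ applied to each of $(h,v)$ and $(h,\ti{v})$ and glued along the common point $\del^{E+}_h(v)=\del^{E-}_h(\ti{v})$. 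Its smooth inverse is obtained by applying $(\ti{\del}^{E\mp}|_{{\mathcal{U}^E}^*})^{-1}$ to the two adjacent pairs $(h,q_0,q_1)$ and $(h,q_1,q_2)$ of any $(h,q_0,q_1,q_2)\in\mathcal{C}_{\MWS}^*$; the matching condition at $q_1$ is precisely what defines $\mathcal{C}^{**}$, so $F$ is a diffeomorphism from $\mathcal{C}^{**}$ onto $\mathcal{C}_{\MWS}^*$.

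The identity $g_{\MWS}\circ F = g|_{\mathcal{C}^{**}}$ is immediate from the definitions, so taking $f:=\mathrm{id}_{\R\times Q\times Q}$ and using the last clause of Lemma~\ref{le:critical_problems_and_diffeomorphisms} we get $TF(\ker(Tg|_{\mathcal{C}^{**}}))=\ker(Tg_{\MWS})$. The key remaining step is to verify
\begin{equation*}
  F^*(\mwAFDtp)(z)(\delta z) = \dAFtpst(z)(\delta z)
  \quad\text{for all } z\in\mathcal{C}^{**},\ \delta z\in\ker(T_z g|_{\mathcal{C}^{**}}).
\end{equation*}
Using~\eqref{eq:correspondence_discretizations_CP_and_MW-MW_from_CP} one has $(\ti{\del}^{E\mp}|_{{\mathcal{U}^E}^*})^{*_2}(\mwAFDop)=\dAFop_{\cdot}|_{{\mathcal{U}^E}^*}$ as sections of $p_2^*T^*TQ$; the relations $p_{123}\circ F = \ti{\del}^{E\mp}\circ p_{12}$ and $p_{134}\circ F = \ti{\del}^{E\mp}\circ p_{13}$ then give, after composing with $F^*$, the identity $F^*(\mwAFDtp)=p_{12}^*(\ti{\del}^{E\mp,*_2}\mwAFDop)+p_{13}^*(\ti{\del}^{E\mp,*_2}\mwAFDop)=\dAFtpst$ whenever the tangent vector involved lies in $\ker(Tp_1)$. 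Since $\delta z\in\ker(T_z g|_{\mathcal{C}^{**}})$ automatically has $\delta h=0$, the required agreement on the constraint distribution follows, and Lemma~\ref{le:critical_problems_and_diffeomorphisms} delivers the bijection on critical points.

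The main obstacle is the accounting with the $*_2$ pullback operation: one must be careful that $F^*(\mwAFDtp)$ and $\dAFtpst$ need not coincide on vectors with a nonzero $\partial/\partial h$ component, but only on the subspace $\ker(Tp_1)$, which is precisely where the critical condition is evaluated. Keeping track of the horizontality conventions introduced in Section~\ref{sec:remarks_on_cartesian_products} is what makes this step work.
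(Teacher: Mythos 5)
Your proposal is correct and follows essentially the same route as the paper's proof: both establish that $F$ is a diffeomorphism built from $\ti{\del}^{E\mp}|_{{\mathcal{U}^E}^*}$, check $g_{\MWS}\circ F=g|_{\mathcal{C}^*}$, use~\eqref{eq:correspondence_discretizations_CP_and_MW-MW_from_CP} together with $p_{123}\circ F=\ti{\del}^{E\mp}\circ p_{12}$ and $p_{134}\circ F=\ti{\del}^{E\mp}\circ p_{13}$ to match $F^*(\mwAFDtp)$ with $\dAFtpst$ on vectors with $\delta h=0$, and then invoke Lemma~\ref{le:critical_problems_and_diffeomorphisms}. Your restriction to $\mathcal{C}^{**}$ is harmless since, by the construction recalled in Remark~\ref{rem:discrete_flow_existence-for_local_data}, $\mathcal{C}^*$ is already contained in $p_{12}^{-1}({\mathcal{U}^E}^*)\cap p_{13}^{-1}({\mathcal{U}^E}^*)$, so $\mathcal{C}^{**}=\mathcal{C}^*$.
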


\begin{proof}
  Being
  $\mathcal{C}_{\MWS}^*\subset (\R\SM\{0\}) \times Q\times Q\times Q$ an
  open subset and $g_{\MWS}$ a projection, it is a submersion. Thus,
  point~\eqref{it:crit_points_CP_vs_MW-problem} follows from the definitions.

  As
  $\ti{\del}^{E\mp}: {\mathcal{U}^E}^*\rightarrow {\mathcal{W}^E}^*$
  are diffeomorphisms, so is
  $\ti{\del}^{E\mp}\times
  \ti{\del}^{E\mp}:{\mathcal{U}^E}^*\times{\mathcal{U}^E}^*
  \rightarrow {\mathcal{W}^E}^*\times {\mathcal{W}^E}^*$, and it is
  straightforward to derive from this result that $F$ is a
  diffeomorphism. Observe that
  \begin{equation*}
    g_{\MWS}(F(h,v,\ti{v})) = (h,\del^{E-}_h(v),\del^{E+}_h(\ti{v})) =
    g|_{\mathcal{C}^*}(h,v,\ti{v}).
  \end{equation*}
  By
  construction,~\eqref{eq:correspondence_discretizations_CP_and_MW-MW_from_CP}
  is valid, and we deduce from it that
  $((\ti{\del}^{E\mp}|_{{\mathcal{U}^E}^*})^{-1})^{*_2}(\dAFop) =
  \mwAFDop$ and, eventually, that for $(h,v)\in {\mathcal{U}^E}^*$ and
  $(0,\delta v) \in T_{(h,v)}(\R\times TQ)$,
  $\dAFop(h,v)(0,\delta v) =
  (\ti{\del}^{E\mp}|_{{\mathcal{U}^E}^*})^*(\mwAFDop)(h,v)(0,\delta
  v)$. Then, for $(h,v,\ti{v})\in \mathcal{C}^*$ and
  $(0,\delta v, \delta \ti{v}) \in
  \ker(Tg|_{\mathcal{C}^*})_{(h,v,\ti{v})}$,
  \begin{equation*}
    \begin{split}
      F^*(\mwAFDtp)&(h,v,\ti{v})(0,\delta v, \delta \ti{v}) =
      F^*(p_{123}^*(\mwAFDop)+p_{134}^*(\mwAFDop))(h,v,\ti{v})(0,\delta
      v, \delta \ti{v}) \\=& ((\underbrace{p_{123}\circ
        F}_{=\ti{\del}^{E\mp}|_{{\mathcal{U}^E}^*}\circ
        p_{12}})^*(\mwAFDop)+ (\underbrace{p_{134}\circ
        F}_{=\ti{\del}^{E\mp}|_{{\mathcal{U}^E}^*}\circ
        p_{13}})^*(\mwAFDop))(h,v,\ti{v})(0,\delta v, \delta \ti{v})
      \\=&(p_{12}^*((\ti{\del}^{E\mp}|_{{\mathcal{U}^E}^*})^*(\mwAFDop))
      + p_{13}^*((\ti{\del}^{E\mp}|_{{\mathcal{U}^E}^*})^*(\mwAFDop)))
      (h,v,\ti{v})(0,\delta v, \delta \ti{v}) \\=&
      ((\ti{\del}^{E\mp}|_{{\mathcal{U}^E}^*})^*(\mwAFDop)(h,v)(0,\delta
      v) +
      ((\ti{\del}^{E\mp}|_{{\mathcal{U}^E}^*})^*(\mwAFDop)(h,\ti{v})(0,\delta
      \ti{v}) \\=& \dAFop(h,v)(0,\delta v) + \dAFop(h,\ti{v})(0,\delta
      \ti{v}) = \dAFtpst(h,v,\ti{v})(0,\delta v,\delta \ti{v}).
    \end{split}
  \end{equation*}
  Then, point~\eqref{it:crit_points_CP_vs_MW-bijection} in the
  statement follows from
  Lemma~\ref{le:critical_problems_and_diffeomorphisms} with
  $f:=id_{Q\times Q}$.
\end{proof}

\begin{prop}\label{prop:FDMS_from_discretization_MW}
  Let $\discMW := (\mathcal{W}^E,f_{d,\cdot},L_{d,\cdot})$ be a
  \discretizationMW of the regular \FMS $(Q,L,f)$ and $Q_0\subset Q$ a
  relatively compact open subset. Then,
  \begin{enumerate}
  \item \label{it:FDMS_from_discretization_MW-system} there is $a>0$
    such that whenever $0\neq \abs{h}<a$, the tuple defined by
    ${\aFDMS}_h:=(Q_0,\mathcal{W}^E_h, L_{d,h}|_{\mathcal{W}^E_h},
    f_{d,h}|_{\mathcal{W}^E_h})$ with
    \begin{equation*}
      \mathcal{W}^E_h :=
      p_{23}(p_1^{-1}(\{h\})\cap \mathcal{W}^E\cap p_{23}^{-1}(Q_0\times Q_0))
      \subset Q_0\times Q_0
    \end{equation*}
    is a \FDMSMW.
  \item \label{it:FDMS_from_discretization_MW-equivalence} Let
    $(h,q_0,q_1,q_2)\in \{h\}\times
    \mathcal{C}_d^{\mathcal{W}^E_h}\subset \mathcal{C}_{\MWS}^*$ where
    $0<\abs{h}<a$. The following assertions are equivalent.
    \begin{enumerate}
    \item \label{it:FDMS_from_discretization_MW-equivalence-cp_MW}
      $(h,q_0,q_1,q_2)$ is a critical point of the problem
      $(\mwAFDtp,g_{\MWS},\ker(Tg_{\MWS}))$ associated to $\discMW$.
    \item
      \label{it:FDMS_from_discretization_MW-equivalence-cp_h_fixed}
      $(q_0,q_1,q_2)$ is a critical point of the problem
      $(\mwAFtp,g_d,\ker(T g_d))$ associated to ${\aFDMS}_h$.
    \item \label{it:FDMS_from_discretization_MW-equivalence-trajectory}
      $(q_0,q_1,q_2)$ is a trajectory of ${\aFDMS}_h$.
    \end{enumerate}
  \end{enumerate}
\end{prop}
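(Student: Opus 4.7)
For part~\eqref{it:FDMS_from_discretization_MW-system} the plan is to apply Lemma~\ref{le:discretizations_yield_discrete_tangent_bundles-W_and_diagonal} to the exact discretization $\psi^E$ and the relatively compact open submanifold $Q_0$, producing $a>0$ with $(-a,a)\times \Delta_{Q_0}\subset \mathcal{W}^E$. For any $h$ with $0\neq \abs{h}<a$ the set
\begin{equation*}
  p_1^{-1}(\{h\})\cap \mathcal{W}^E\cap p_{23}^{-1}(Q_0\times Q_0)
\end{equation*}
is relatively open inside the slice $\{h\}\times Q\times Q$, and under the diffeomorphism $p_{23}:\{h\}\times Q\times Q\to Q\times Q$ it is carried onto the open subset $\mathcal{W}^E_h\subset Q_0\times Q_0$, which contains $\Delta_{Q_0}$ by the inclusion above. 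Because $h\neq 0$, $\{h\}\times \mathcal{W}^E_h\subset {\mathcal{W}^E}^*$, so $L_{d,h}|_{\mathcal{W}^E_h}$ and $f_{d,h}|_{\mathcal{W}^E_h}$ are smooth by the definition of \discretizationMW. Hence $\aFDMS_h$ is a \FDMSMW.

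For part~\eqref{it:FDMS_from_discretization_MW-equivalence}, the equivalence \eqref{it:FDMS_from_discretization_MW-equivalence-cp_h_fixed}$\iff$\eqref{it:FDMS_from_discretization_MW-equivalence-trajectory} is the content of Remark~\ref{rem:trajectoriies_FDMS_as_critical_points} applied to $\aFDMS_h$ with $N=2$, since then $\mathcal{C}_d^{\mathcal{W}^E_h}$ is exactly the set of length-$2$ discrete paths of $\aFDMS_h$ and the corresponding form and submersion are $\mwAFtp$ and $g_d$. So the only thing to establish is \eqref{it:FDMS_from_discretization_MW-equivalence-cp_MW}$\iff$\eqref{it:FDMS_from_discretization_MW-equivalence-cp_h_fixed}, and the natural tool is the slice inclusion
\begin{equation*}
  i_h:\mathcal{C}_d^{\mathcal{W}^E_h}\hookrightarrow \mathcal{C}_{\MWS}^*,
  \qquad i_h(q_0,q_1,q_2):=(h,q_0,q_1,q_2),
\end{equation*}
which is a diffeomorphism onto the $h$-slice $\{h\}\times \mathcal{C}_d^{\mathcal{W}^E_h}$.

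The key observations I would verify are two. First, $g_{\MWS}\circ i_h=(h,g_d)$, and $g_{\MWS}$ is a submersion at every point of the slice (it is globally a submersion, since $\mathcal{C}_{\MWS}^*$ is open in $(\R\setminus\{0\})\times Q^3$ and $g_{\MWS}$ is a projection), so a vector $(\delta h,\delta q_0,\delta q_1,\delta q_2)\in T_{i_h(q_0,q_1,q_2)}\mathcal{C}_{\MWS}^*$ lies in $\ker(Tg_{\MWS})$ precisely when $\delta h=0$ and $(\delta q_0,\delta q_1,\delta q_2)\in \ker(Tg_d)$. Second, $\mwAFDop=d_{23}L_{d,\cdot}+f_{d,\cdot}$ is a section of $p_{234}^*(T^*(Q\times Q\times Q))$, hence annihilates any vector tangent to the $h$-direction; together with $p_{123}\circ i_h=(h,p_{01})$ and $p_{134}\circ i_h=(h,p_{12})$ on the relevant factors, this yields
\begin{equation*}
  i_h^*(\mwAFDtp)=\mwAFtp,
\end{equation*}
and $i_h$ sends $\ker(Tg_d)$ onto $\ker(Tg_{\MWS})|_{\{h\}\times \mathcal{C}_d^{\mathcal{W}^E_h}}$.

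With these two identifications in hand, an application of Lemma~\ref{le:critical_problems_and_diffeomorphisms} (taking $F=i_h$ onto its image and $f$ the corresponding identification of boundary values) gives the desired bijection between critical points of the two problems. I expect the only mildly tricky step to be the clean verification that the distribution $\ker(Tg_{\MWS})$ is the pushforward of $\ker(Tg_d)$ under $Ti_h$; this amounts to a direct computation using the Cartesian product decomposition of the tangent bundles, exactly in the style of Section~\ref{sec:remarks_on_cartesian_products}. Everything else reduces to unraveling the definitions.
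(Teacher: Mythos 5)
Your proposal is correct and follows essentially the same route as the paper: part (1) via the discrete-tangent-bundle machinery for the exact discretization (the paper invokes Theorem~\ref{th:discretizations_yield_discrete_tangent_bundles} directly, you invoke Lemma~\ref{le:discretizations_yield_discrete_tangent_bundles-W_and_diagonal} plus the openness of $\mathcal{W}^E\SM(\{0\}\times Q\times Q)$, which amounts to the same thing), and part (2) by restricting the critical problem to the slice $\{h\}\times\mathcal{C}_d^{\mathcal{W}^E_h}$, noting that $\ker(Tg_{\MWS})$ forces $\delta h=0$, and identifying the restricted problem with that of $\aFDMS_h$ before closing with Remark~\ref{rem:trajectoriies_FDMS_as_critical_points}. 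The only quibble is notational: it is $\mwAFDtp$ (not $\mwAFDop$) that is a section of $p_{234}^*(T^*(Q\times Q\times Q))$, while $\mwAFDop$ is a section of $p_{23}^*T^*(Q\times Q)$ over $W^*$; the substantive point that both annihilate the $h$-direction is right.
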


\begin{proof}
  From Theorem~\ref{th:discretizations_yield_discrete_tangent_bundles}
  applied to the exact \discretizationCPExp $\discCPEexp$ associated with
  $(Q,L,f)$ with the given $Q_0$, there is $a>0$ so that, for
  $0\neq \abs{h}<a$,
  $\del^{E\mp}_h(\mathcal{V}^E_h)\subset Q_0\times Q_0$ is an open
  neighborhood of $\Delta_{Q_0}$. Furthermore, as seen in the proof of
  Theorem~\ref{th:discretizations_yield_discrete_tangent_bundles},
  $\mathcal{W}^E_h:=\del^{E\mp}_h(\mathcal{V}^E_h)$ is also given by
  the formula of the statement. Last, as $L_{d,\cdot}$ and
  $f_{d,\cdot}$ are smooth over $\mathcal{W}^E$, it follows that
  $L_{d,h}|_{\mathcal{W}^E_h}$ and $f_{d,h}|_{\mathcal{W}^E_h}$ are
  smooth over $\mathcal{W}^E_h$, concluding the proof of
  point~\eqref{it:FDMS_from_discretization_MW-system}.

  Let $\mathcal{C}_h:=\{h\}\times \mathcal{C}_d^{\mathcal{W}^E_h}$,
  that is an embedded submanifold of $\mathcal{C}_{\MWS}^*$. Define
  $i_h:\mathcal{C}_h\rightarrow \mathcal{C}_{\MWS}^*$ as the inclusion
  and $I_h:\mathcal{C}_d^{\mathcal{W}^E_h}\rightarrow \mathcal{C}_h$
  by $I_h(q_0,q_1,q_2):=(h,q_0,q_1,q_2)$; clearly, $I_h$ is a
  diffeomorphism. Then, $(h,q_0,q_1,q_2) \in \mathcal{C}_h$ is a
  critical point of the problem $(\mwAFDtp,g_{\MWS},\ker(T g_{\MWS}))$
  over $\mathcal{C}_{\MWS}^*$ if and only if it is a critical point of
  the problem
  $(i_h^*(\mwAFDtp),g_{\MWS}\circ i_h, \ker(T (g_{\MWS}\circ i_h))$ over
  $\mathcal{C}_h$. In turn, this last condition is equivalent under
  $I_h$ to $(q_0,q_1,q_2)$ being a critical point of the problem
  $(\mwAFtp,g_d,\ker(T g_d))$ over
  $\mathcal{C}_d^{\mathcal{W}^E_h}$. This proves the equivalence of
  points~\eqref{it:FDMS_from_discretization_MW-equivalence-cp_MW}
  and~\eqref{it:FDMS_from_discretization_MW-equivalence-cp_h_fixed}. The
  equivalence of
  points~\eqref{it:FDMS_from_discretization_MW-equivalence-cp_h_fixed}
  and~\eqref{it:FDMS_from_discretization_MW-equivalence-trajectory}
  follows immediately from
  Remark~\ref{rem:trajectoriies_FDMS_as_critical_points}.
\end{proof}

\begin{example}\label{ex:exact_FDMS}
  Given a regular \FMS $(Q,L,f)$ and $Q_0\subset Q$ a relatively
  compact open subset, by
  Proposition~\ref{prop:FDMS_from_discretization_MW} applied to $Q_0$
  and the exact \discretizationMW $\discMWE$ of $(Q,L,f)$, there is
  $a>0$ so that for any $h$ with $0\neq \abs{h}<a$,
  $\aFDMSE:=(\mathcal{W}^E_h, L_{d,h}^E|_{\mathcal{W}^E_h},
  f_{d,h}^E|_{\mathcal{W}^E_h})$ is a \FDMSMW that we call the
  \jdef{exact forced discrete mechanical system}.
\end{example}

The following result summarizes the relationship between the discrete
systems obtained from discretizations as well as their critical points
and trajectories.

\begin{theorem}
  \label{th:correspondence_between_systems_derived_from_discretizations}
  Let $(Q,L,f)$ be a regular \FMS, $Q_0\subset Q$ be a relatively
  compact open subset, and $\discCPexp$ and $\discMW$ be a \discretizationCPExp
  and a \discretizationMW of $(Q,L,f)$ related via the equivalence
  given in
  Proposition~\ref{prop:correspondence_discretizations_CP_and_MW}. Let
  $a>0$ be given by
  Theorem~\ref{th:discretizations_yield_discrete_tangent_bundles}
  applied to the exact \discretizationCPExp $\discCPEexp$ associated to
  $(Q,L,f)$ and the given $Q_0$. Then, for each $0\neq \abs{h}<a$,
  \begin{enumerate}
  \item \label{it:correspondence_between_systems_derived_from_discs-discs_and_sys}
    $\del^{E\mp}_h|_{\mathcal{V}^E_h}$ defines a correspondence
    between the \FDMSCPs
    $\aFDLSh = (Q_0,\mathcal{V}^E_h, L_\CPHS|_{\mathcal{V}^E_h},
    L_\CPHS|_{\mathcal{V}^E_h})$ constructed by
    Proposition~\ref{prop:FDLS_from_discretization-general_disc_TQ}
    (applied to the \discretizationCPExp
    $(Q,\psi^E,L_\CPDS,f_\CPDS)$) and the \FDMSMWs
    $\aFDMSh := (Q_0,\mathcal{W}^E_h, L_{d,h}|_{\mathcal{W}^E_h},
    L_{d,h}|_{\mathcal{W}^E_h})$ constructed by
    Proposition~\ref{prop:FDMS_from_discretization_MW}. Furthermore,
    the following diagram is commutative.
    \begin{equation*}
      \xymatrix{
        {\text{\discretizationsCPExp}} \ar@{<->}[r]^-{\ti{\del}^{E\mp}} \ar[d] &
        {\text{\discretizationsMW}} \ar[d] \\
        {\text{\FDMSCPsExp on $Q_0$}} \ar@{<->}[r]_-{\del^{E\mp}_h|_{\mathcal{V}^E_h}} &
        {\text{\FDMSMWs on $Q_0$}}
      }
    \end{equation*}
  \item For fixed \discretizationCPExp $\discCPexp$, \discretizationMW
    $\discMW$, \FDMSCPExp $\aFDLSh$ and \FDMSMW $\aFDMSh$ satisfying
    the relations stated above, we have
    \begin{enumerate}

    \item \label{it:correspondence_between_systems_derived_from_discs-crit_pt-disc_sys-CP}
      The map $p_{23}|_{\mathcal{C}^*}$ defines a bijection between
      the critical points in
      $\{h\}\times \mathcal{C}_d^{\mathcal{V}^E_h} \subset
      \mathcal{C}^*$ of the critical problem associated to $\discCPexp$
      and those of the problem associated to $\aFDLSh$.
    \item \label{it:correspondence_between_systems_derived_from_discs-crit_pt-disc_sys-MW}
      The map $p_{234}|_{\mathcal{C}_{\MWS}^*}$ defines a bijection
      between the critical points in
      $\{h\}\times \mathcal{C}_d^{\mathcal{W}^E_h} \subset
      \mathcal{C}_{\MWS}^*$ of the critical problem associated to
      $\discMW$ and those of the problem associated to $\aFDMSh$.
    \item \label{it:correspondence_between_systems_derived_from_discs-crit_pt-disc_disc}
      The map
      $F(h,v,\ti{v}):=(h,\del^{E-}_h(v),\del^{E+}_h(v),\del^{E+}_h(\ti{v}))$
      is a bijection between the critical points of $\discCPexp$ and
      those of $\discMW$. Furthermore, those critical points of
      $\discCPexp$ in
      $p_{12}^{-1}(\mathcal{V}^E_h)\cap p_{13}^{-1}(\mathcal{V}^E_h)
      \subset \{h\}\times TQ\times TQ$ correspond to those points of
      $\discMW$ in $\{h\}\times \mathcal{C}_d^{\mathcal{W}^E_h}$.
    \item \label{it:correspondence_between_systems_derived_from_discs-crit_pt-sys}
      The map
      $F(v,\ti{v}):=(\del^{E-}_h(v),\del^{E+}_h(v),\del^{E+}_h(\ti{v}))$
      is a bijection between the critical points of the problem
      $(\dAFnp,g_{\mathcal{C}_N}, \ker(Tg_{\mathcal{C}_N}))$
      associated to $\aFDLSh$ and the critical points of the problem
      $(\mwAFtp,g_d,\ker(Tg_d))$ associated to $\aFDMSh$.
    \item \label{it:correspondence_between_systems_derived_from_discs-crit_pt-sys_CP_MW}
      The map $F$ introduced in
      point~\eqref{it:correspondence_between_systems_derived_from_discs-crit_pt-sys}
      is a bijection between the trajectories of $\aFDLSh$ and those of
      $\aFDMSh$.
      \end{enumerate}
  \end{enumerate}
\end{theorem}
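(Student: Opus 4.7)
The plan is to assemble the statement from the bijections and constructions already established, rather than proving anything fundamentally new. The logical core is already in place: Proposition~\ref{prop:correspondence_discretizations_CP_and_MW} relates \discretizationsCPExp to \discretizationsMW via $\ti{\del}^{E\mp}$; Proposition~\ref{prop:FDLS_from_discretization-general_disc_TQ} produces $\aFDLSh$ from $\discCPexp$; and Proposition~\ref{prop:FDMS_from_discretization_MW} produces $\aFDMSh$ from $\discMW$. All the correspondences in the Theorem will be read off from these together with Lemma~\ref{le:trajectories_and_critical_points-h_not_0}, Proposition~\ref{prop:crit_points_CP_vs_MW}, and Remark~\ref{rem:trajectoriies_FDMS_as_critical_points}.

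For point~\eqref{it:correspondence_between_systems_derived_from_discs-discs_and_sys}, I would first recall from Theorem~\ref{th:discretizations_yield_discrete_tangent_bundles} that $\del^{E\mp}_h$ is a diffeomorphism from $\mathcal{V}^E_h$ onto $\mathcal{W}^E_h=\del^{E\mp}_h(\mathcal{V}^E_h)$. Using the defining relations~\eqref{eq:correspondence_discretizations_CP_and_MW-MW_from_CP}, namely $L_{d,\cdot}=L_\CPDS\circ(\ti{\del}^{E\mp})^{-1}$ and $f_{d,\cdot}=((\ti{\del}^{E\mp})^{-1})^{*_2}(f_\CPDS)$, together with the fact that the pullback $\ti{\del}^{E\mp}$ sends $\{h\}\times \mathcal{V}^E_h$ diffeomorphically to $\{h\}\times \mathcal{W}^E_h$, I would verify directly that $L_\CPHS|_{\mathcal{V}^E_h} = L_{d,h}\circ \del^{E\mp}_h|_{\mathcal{V}^E_h}$ and analogously for the force (restricted to the appropriate kernel). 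The commutativity of the diagram is then the statement that one obtains the same \FDMSMW by first passing to $\discMW$ and then restricting to $h$, or by first restricting to $h$ as a \FDMSCPExp and then transferring via $\del^{E\mp}_h|_{\mathcal{V}^E_h}$.

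For points~(a) and~(b) of part~(2), both are restrictions of a critical problem defined on a space $\R\times \cdots$ (or its $\mathcal{C}^*$/$\mathcal{C}_{\MWS}^*$ versions) to the slice $\{h\}\times \cdots$; the map $p_{23}$ (resp. $p_{234}$) is a diffeomorphism onto its image, and freezing $h$ kills precisely the $\delta h=0$ part of the kernel. The argument is the same as in Lemma~\ref{le:trajectories_and_critical_points-h_not_0} (CP case) and part~\eqref{it:FDMS_from_discretization_MW-equivalence} of Proposition~\ref{prop:FDMS_from_discretization_MW} (MW case), so I would just cite these. Point~(c) is exactly Proposition~\ref{prop:crit_points_CP_vs_MW}\eqref{it:crit_points_CP_vs_MW-bijection}, stating that $F(h,v,\ti{v})=(h,\del^{E-}_h(v),\del^{E+}_h(v),\del^{E+}_h(\ti{v}))$ is a bijection between critical sets; I would only add the observation that $F$ sends $(h,v,\ti{v})$ with $v,\ti{v}\in \mathcal{V}^E_h$ into $\{h\}\times \mathcal{C}_d^{\mathcal{W}^E_h}$ and conversely (the matching condition $\del^{E+}_h(v)=\del^{E-}_h(\ti{v})$ in $\mathcal{C}^*$ is precisely what makes the middle triple coherent).

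Finally, points~(d) and~(e) combine the preceding items into a single statement at the level of the discrete systems themselves. Chasing the bijections given by~(a), (b), (c) around the diagram
\[
\xymatrix{
\{h\}\times\mathcal{C}_d^{\mathcal{V}^E_h} \ar[r]^-{F} \ar[d]_{p_{23}} & \{h\}\times\mathcal{C}_d^{\mathcal{W}^E_h} \ar[d]^{p_{234}} \\
\{\text{crit. pts. of } \aFDLSh\} \ar[r]_-{\bar F} & \{\text{crit. pts. of } \aFDMSh\}
}
\]
shows that the induced map $\bar F(v,\ti{v})=(\del^{E-}_h(v),\del^{E+}_h(v),\del^{E+}_h(\ti{v}))$ is a bijection between critical points of the two discrete systems, which is point~(d). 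Point~(e) then follows by invoking Remark~\ref{rem:trajectoriies_FDMS_as_critical_points} (and the analogous identification for $\aFDLSh$ from Remark~\ref{rem:forced_variational_pple_as_skew_critical}), which identify trajectories with critical points of the associated critical problems. The whole argument is essentially bookkeeping; the only step requiring care is verifying the commutativity of the square above, namely that $F$ restricted to $\{h\}\times \mathcal{C}_d^{\mathcal{V}^E_h}$ projects correctly under $p_{234}$ to $\bar F\circ p_{23}$, which is immediate from the explicit formulas.
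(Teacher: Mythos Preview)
Your proposal is correct and follows essentially the same approach as the paper's proof: both assemble the result by citing Propositions~\ref{prop:FDLS_from_discretization-general_disc_TQ} and~\ref{prop:FDMS_from_discretization_MW} for part~\eqref{it:correspondence_between_systems_derived_from_discs-discs_and_sys}, Lemma~\ref{le:trajectories_and_critical_points-h_not_0} and Proposition~\ref{prop:FDMS_from_discretization_MW} for (a)--(b), Proposition~\ref{prop:crit_points_CP_vs_MW} for (c), and then deduce (d)--(e) by composing these bijections together with Remarks~\ref{rem:forced_variational_pple_as_skew_critical} and~\ref{rem:trajectoriies_FDMS_as_critical_points}. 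Your version is slightly more explicit (the commutative square for (d) is a nice touch), but the logical content is the same.
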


\begin{proof}
  Part~\eqref{it:correspondence_between_systems_derived_from_discs-discs_and_sys}
  follows comparing the constructions described in
  Propositions~\ref{prop:FDLS_from_discretization-general_disc_TQ}
  and~\ref{prop:FDMS_from_discretization_MW} and recalling that
  $\ti{\del}^{E\mp}|_{{\mathcal{U}^E}^*}$ is a
  diffeomorphism. Point~\eqref{it:correspondence_between_systems_derived_from_discs-crit_pt-disc_sys-CP}
  follows from Lemma~\ref{le:trajectories_and_critical_points-h_not_0}
  and Remark~\ref{rem:forced_variational_pple_as_skew_critical}.
  Point~\eqref{it:correspondence_between_systems_derived_from_discs-crit_pt-disc_sys-MW}
  follows from point~\ref{it:FDMS_from_discretization_MW-equivalence}
  in
  Proposition~\ref{prop:FDMS_from_discretization_MW}. Point~\eqref{it:correspondence_between_systems_derived_from_discs-crit_pt-disc_disc}
  follows from Proposition~\ref{prop:crit_points_CP_vs_MW}.
  Point~\eqref{it:correspondence_between_systems_derived_from_discs-crit_pt-sys}
  is a consequence of
  points~\eqref{it:correspondence_between_systems_derived_from_discs-crit_pt-disc_sys-CP},~\eqref{it:correspondence_between_systems_derived_from_discs-crit_pt-disc_sys-MW}
  and~\eqref{it:correspondence_between_systems_derived_from_discs-crit_pt-disc_disc}. Last,
  point~\eqref{it:correspondence_between_systems_derived_from_discs-crit_pt-sys_CP_MW}
  follows from
  point~\eqref{it:correspondence_between_systems_derived_from_discs-crit_pt-sys}
  using Remarks~\ref{rem:forced_variational_pple_as_skew_critical}
  and~\ref{rem:trajectoriies_FDMS_as_critical_points}.
\end{proof}

The following result proves that the trajectories of the exact
discrete \FDMSMW $\aFDMSE$ associated to a regular \FMS $(Q,L,f)$ are,
precisely, the trajectories of $(Q,L,f)$ evaluated at discrete time
steps $t=0,h,2h,\ldots$.

\begin{theorem}\label{th:flow_of_exact_discrete_systems-FDMS}
  Let $(Q,L,f)$ be a regular \FMS and $Q_0\subset Q$ be a relatively
  compact open subset. Let $a>0$ be the constant provided by
  Proposition~\ref{prop:FDMS_from_discretization_MW} when applied to
  $Q_0$ together with the exact \discretizationMW $\discMWE$
  associated to $(Q,L,f)$
  (Example~\ref{ex:exact_discretization_MW-simple_bd}); consider the
  special case where $\dBTp(h)=h$ and $\dBTm(h)=0$ for all $h$. Then,
  for any $\conj{q}\in Q_0$ and $h$ such that $0\neq \abs{h}<a$, there
  is an open subset $\hat{U} \subset Q\times Q\times Q$ such that
  $\conj{q})\in p_1(\hat{U})$ and, for all $(q_0,q_1,q_2)\in \hat{U}$,
  the following statements are equivalent.
  \begin{enumerate}
  \item \label{it:flow_of_exact_discrete_systems-FDMS-traj}
    $(q_0,q_1,q_2)$ is a trajectory of the exact \FDMSMW
    $\aFDMSEh = (Q_0,\mathcal{W}^E_h,L_{d,h}^E,f_{d,h}^E)$ (see
    Example~\ref{ex:exact_FDMS}).
  \item \label{it:flow_of_exact_discrete_systems-FDMS-flow} There is a
    trajectory $q:[0,2h]\rightarrow Q$ of $(Q,L,f)$ with
    $q'(0),q'(h)\in \mathcal{V}^E_h$ ---the domain of the exact \FDMSCPExp
    for the same $Q_0$ and $h$ as above--- such that $q_0 = q(0)$,
    $q_1 = q(h)$ and $q_2 = q(2h)$.
  \end{enumerate}
\end{theorem}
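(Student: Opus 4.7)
My approach is to reduce the theorem to the corresponding statement on the $TQ$-side, namely Theorem~\ref{thm:flow_of_exact_discrete_systems}, via the trajectory bijection already established in Theorem~\ref{th:correspondence_between_systems_derived_from_discretizations}. Concretely, with the special boundary functions $\dBTp(h)=h$ and $\dBTm(h)=0$, the exact boundary maps become $\del^{E-}_h(v)=\tau_Q(v)$ and $\del^{E+}_h(v)=\tau_Q(F^{X_{L,f}}_h(v))$, so the bijection $F(v,\tilde{v}):=(\del^{E-}_h(v),\del^{E+}_h(v),\del^{E+}_h(\tilde{v}))$ of point~\eqref{it:correspondence_between_systems_derived_from_discs-crit_pt-sys_CP_MW} in that theorem sends a trajectory $(v,\tilde{v})$ of $\aFDLSEh$ to the triple $(\tau_Q(v),\tau_Q(F^{X_{L,f}}_h(v)),\tau_Q(F^{X_{L,f}}_h(\tilde{v})))$. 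If in addition $\tilde{v}=F^{X_{L,f}}_h(v)$, this triple is precisely $(q(0),q(h),q(2h))$ where $q(t):=\tau_Q(F^{X_{L,f}}_t(v))$ is the continuous trajectory with initial velocity $v$. This is what links the two characterizations.

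\textbf{Key steps.} First, I would invoke Theorem~\ref{thm:flow_of_exact_discrete_systems} to obtain an open neighborhood $\ti{U}\subset \R\times TQ\times TQ$ of $\{0\}\times\Delta_{TQ_0}$ such that, for $(h,v,\ti{v})\in\ti{U}$ with $0\neq\abs{h}<a$, $(v,\ti{v})$ is a trajectory of $\aFDLSEh$ if and only if $\ti{v}=F^{X_{L,f}}_h(v)$. Next, after possibly shrinking $a$ using the relative compactness of $\conj{Q_0}$ and the openness of $\ti{U}$ and of $\mathcal{W}^E_h$, I would construct $\hat{U}$ as follows: for the given $\conj{q}\in Q_0$, since $(\conj{q},\conj{q})\in\Delta_{Q_0}\subset\mathcal{W}^E_h$, set $v_0:=(\del^{E\mp}_h|_{\mathcal{V}^E_h})^{-1}(\conj{q},\conj{q})\in\mathcal{V}^E_h$ and $\ti{v}_0:=F^{X_{L,f}}_h(v_0)$, which by the shrinking also lies in $\mathcal{V}^E_h$ and satisfies $(h,v_0,\ti{v}_0)\in\ti{U}$ together with the matching condition $\del^{E+}_h(v_0)=\del^{E-}_h(\ti{v}_0)$. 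Then I take $\hat{U}$ to be the image under $F$ of an open neighborhood $\mathcal{N}$ of $(v_0,\ti{v}_0)$ in $\ti{U}\cap\mathcal{C}^{\mathcal{V}^E_h}$; since $F$ restricts to a diffeomorphism from $\mathcal{C}^{\mathcal{V}^E_h}$ onto an open subset of $\mathcal{C}_d^{\mathcal{W}^E_h}$, $\hat{U}$ is open in $Q\times Q\times Q$ and contains the triple $(\conj{q},\conj{q},\tau_Q(F^{X_{L,f}}_{2h}(v_0)))$, giving $\conj{q}\in p_1(\hat{U})$.

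\textbf{Verification of the equivalence.} For \eqref{it:flow_of_exact_discrete_systems-FDMS-traj}$\Rightarrow$\eqref{it:flow_of_exact_discrete_systems-FDMS-flow}: a trajectory $(q_0,q_1,q_2)\in\hat{U}$ of $\aFDMSEh$ pulls back under $F^{-1}$ to a pair $(v,\ti{v})\in\mathcal{V}^E_h\times\mathcal{V}^E_h$ with $(h,v,\ti{v})\in\ti{U}$, which by the bijection of Theorem~\ref{th:correspondence_between_systems_derived_from_discretizations} is a trajectory of $\aFDLSEh$; by Theorem~\ref{thm:flow_of_exact_discrete_systems}, $\ti{v}=F^{X_{L,f}}_h(v)$, so the curve $q(t):=\tau_Q(F^{X_{L,f}}_t(v))$ on $[0,2h]$ satisfies $q(0)=q_0$, $q(h)=q_1$, $q(2h)=q_2$, and $q'(0)=v$, $q'(h)=\ti{v}\in\mathcal{V}^E_h$. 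For the converse \eqref{it:flow_of_exact_discrete_systems-FDMS-flow}$\Rightarrow$\eqref{it:flow_of_exact_discrete_systems-FDMS-traj}: given such a continuous trajectory $q$ with samples in $\hat{U}$, set $v:=q'(0)$ and $\ti{v}:=q'(h)=F^{X_{L,f}}_h(v)$; by construction of $\hat{U}$ one has $(h,v,\ti{v})\in\ti{U}$, hence Theorem~\ref{thm:flow_of_exact_discrete_systems} makes $(v,\ti{v})$ a trajectory of $\aFDLSEh$, and the bijection $F$ maps it to the trajectory $(q_0,q_1,q_2)$ of $\aFDMSEh$.

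\textbf{Main obstacle.} The substantive work is not the equivalence of dynamics, which is essentially dictated by the two theorems cited, but rather the careful bookkeeping in constructing $\hat{U}$: one must arrange simultaneously that (i) $\conj{q}$ appears in $p_1(\hat{U})$, (ii) the matching condition holds so that one stays in the constraint manifold $\mathcal{C}^{\mathcal{V}^E_h}$, (iii) $(h,v,\ti{v})\in\ti{U}$ so that Theorem~\ref{thm:flow_of_exact_discrete_systems} applies, and (iv) \emph{both} velocities $v$ and $\ti{v}=F^{X_{L,f}}_h(v)$ lie in $\mathcal{V}^E_h$. Simultaneously enforcing (iii) and (iv) uniformly for $\conj{q}$ ranging in $Q_0$ and $\abs{h}<a$ is what forces the shrinking of $a$ and relies on the compactness of $\conj{Q_0}$ together with the containment $\{0\}\times\Delta_{TQ_0}\subset\ti{U}$.
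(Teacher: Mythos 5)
Your proposal is correct and follows essentially the same route as the paper's proof: reduce to Theorem~\ref{thm:flow_of_exact_discrete_systems} via the trajectory bijection of Theorem~\ref{th:correspondence_between_systems_derived_from_discretizations}, take $\hat{U}$ to be (an open piece of) the preimage of $\ti{U}$ under $(q_0,q_1,q_2)\mapsto\big(h,(\del^{E\mp}_h)^{-1}(q_0,q_1),(\del^{E\mp}_h)^{-1}(q_1,q_2)\big)$, and use continuity together with the containment $\{0\}\times\Delta_{TQ_0}\subset\ti{U}$ to anchor $\conj{q}$ in $p_1(\hat{U})$. The only cosmetic difference is the anchor point: the paper uses $v_{01}:=0_{\conj{q}}$, giving the triple $(\conj{q},\tau_Q(F^{X_{L,f}}_h(0_{\conj{q}})),\tau_Q(F^{X_{L,f}}_{2h}(0_{\conj{q}})))$, rather than your $(\del^{E\mp}_h)^{-1}(\conj{q},\conj{q})$, which makes the chain of continuity estimates slightly more direct but changes nothing substantive.
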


\begin{proof}
  Let $\mathcal{U}^E\subset \R\times TQ$ be the open neighborhood of
  $\{0\}\times TQ$ obtained when
  Lemma~\ref{le:discretizations_yield_discrete_tangent_bundles-U_and_W}
  is applied to the exact \discretizationCPExp
  $\discCPEexp=(Q,\psi^E,L_\CPDS^E,f_\CPDS^E)$ associated to
  the \FMS $(Q,L,f)$. Also, let $U^{X_{L,f}}\subset \R\times TQ$ be
  the open subset introduced in
  Example~\ref{ex:exact_discretization_forced}, where the flow of the
  Lagrangian vector field $X_{L,f}$ is defined; recall from
  Example~\ref{ex:exact_discretization_forced} that, for the exact
  systems, the domains of $\del^{E+}$ and $\del^{E-}$ are
  $U^{X_{L,f}}$ and that, as observed in
  Lemma~\ref{le:discretizations_yield_discrete_tangent_bundles-U_and_W},
  $\mathcal{U}^E\subset U^{X_{L,f}}$. Last, let $\ti{U}$ be the open
  neighborhood of $\{0\}\times \Delta_{TQ_0}$ that appears in
  Theorem~\ref{thm:flow_of_exact_discrete_systems}.

  Let $\gamma_1:\R\times Q\rightarrow \R\times TQ$ be
  $\gamma_1(t,q):=(t,0_q)$. As $\gamma_1$ is continuous and
  $\mathcal{U}^E\subset\R\times TQ$ is open with
  $\gamma_1(0,\conj{q})=(0,0_{\conj{q}})\in \mathcal{U}^E$, there are
  $a_1>0$ and $V^1_{\conj{q}}\subset Q_0\subset Q$ an open
  neighborhood of $\conj{q}$ such that if $q\in V^1_{\conj{q}}$ and
  $\abs{t}<a_1$, then
  $\gamma_1(t,q) \subset \mathcal{U}^E \subset U^{X_{L,f}}$. In
  particular, $(t,0_q)\in U^{X_{L,f}}$.

  The same argument as in the previous paragraph with the functions
  $\gamma_2:(-a_1,a_1)\times V^1_{\conj{q}}\rightarrow \R\times TQ$
  and
  $\gamma_3:(-a_1,a_1)\times V^1_{\conj{q}}\rightarrow \R\times
  TQ\times TQ$ defined by $\gamma_2(t,q):=(t,F^{X_{L,f}}_t(0_q))$ and
  $\gamma_3(t,q):=(t,0_q,F^{X_{L,f}}_t(0_q))$ using the open subsets
  $\mathcal{U}^E$ and $\ti{U}$ respectively, produces a constant
  $a_{23}\in (0,a_1]$ and an open neighborhood
  $V^{23}_{\conj{q}}\subset V^1_{\conj{q}}$ of $\conj{q}$ such that if
  $\abs{t}<a_{23}$ and $q\in V^{23}_{\conj{q}}$ we have
  $(t,F^{X_{L,f}}_t(0_q))\in \mathcal{U}^E$ and
  $(t,0_q,F^{X_{L,f}}_t(0_q))\in \ti{U}$.
  
  Observe that if $\abs{t}<a_{23}$ and $q\in V^{23}_{\conj{q}}$, then
  $(t,F^{X_{L,f}}_t(0_q))\in \mathcal{U}^E\subset U^{X_{L,f}}$, so
  that $F^{X_{L,f}}_{2t}(0_q) = F^{X_{L,f}}_{t}(F^{X_{L,f}}_{t}(0_q))$
  is defined. Now let
  $\gamma_4:(-a_{23},a_{23})\times V^{23}_{\conj{q}} \rightarrow
  Q\times Q$ be
  $\gamma_4(t,q):=(\tau_Q(F^{X_{L,f}}_t(0_q)),
  \tau_Q(F^{X_{L,f}}_{2t}(0_q))$. As $\gamma_4$ is continuous and
  $\gamma_4(0,\conj{q}) = (\conj{q},\conj{q})\in Q_0\times Q_0$ that
  is an open subset of $Q\times Q$, there are $a\in (0,a_{23}]$ and an
  open neighborhood $V_{\conj{q}}\subset V^{23}_{\conj{q}}$ of
  $\conj{q}$ in $Q_0$ such that if $\abs{t}<a$ and
  $q\in V_{\conj{q}}$, then
  $(\tau_Q(F^{X_{L,f}}_t(0_q)), \tau_Q(F^{X_{L,f}}_{2t}(0_q)))\in
  Q_0\times Q_0$.

  Collecting the previous results, we see that, if $q\in V_{\conj{q}}$
  and $\abs{t}<a$, then
  \begin{equation}
    \label{eq:flow_of_exact_discrete_systems-FDMS-v_01_and_v12-def}
    v_{01}:=0_q \stext{ and } v_{12}:=F^{X_{L,f}}_t(0_q)
  \end{equation}
  have the following properties.
  $(t,v_{01}),(t,v_{12})\in \mathcal{U}^E$ so, in particular, in the
  domain of $\del^{E+}$ and $\del^{E-}$. Then,
  \begin{equation*}
    \begin{split}
      \del^{E-}(t,v_{01}) =& \del^{E-}_t(v_{01}) = \tau_Q(0_q)
      = q\in Q_0, \\ \del^{E+}(t,v_{01}) =&
      \del^{E+}_t(v_{01}) =
      \tau_Q(F^{X_{L,f}}_t(v_{01})) = \tau_Q(F^{X_{L,f}}_t(0_q)) \in Q_0,\\
      \del^{E-}(t,v_{12}) =& \del^{E-}_t(v_{12}) =
      \tau_Q(v_{12}) =
      \tau_Q(F^{X_{L,f}}_t(0_q)) \in Q_0,\\
      \del^{E+}(t,v_{12}) =& \del^{E+}_t(v_{12}) =
      \tau_Q(F^{X_{L,f}}_t(F^{X_{L,f}}_t(0_q))) =
      \tau_Q(F^{X_{L,f}}_{2t}(0_q)) \in Q_0.
    \end{split}
  \end{equation*}
  The exact \FDMSCPExp
  $\aFDLSE_t = (Q_0,\mathcal{V}^E_t,L_\CPTS^E,f_\CPTS^E)$ constructed in
  Proposition~\ref{prop:FDLS_from_discretization-general_disc_TQ}
  (applied to the exact \discretizationCPExp $\discCPEexp$) has
  \begin{equation*}
    \mathcal{V}^E_t:=\{v\in TQ:(t,v)\in \mathcal{U}^E\}\cap
    (\del^{E\mp}_t)^{-1}(Q_0\times Q_0),
  \end{equation*}
  and, then, the previous computations show that
  $v_{01},v_{12} \in \mathcal{V}^E_t$ and that
  $\del^{E+}_t(v_{01}) = \del^{E-}_t(v_{12})$. Finally, observe that
  $(t,v_{01},v_{12}) \in \ti{U}$.

  With $h$ such that $0\neq \abs{h}<a$, by
  point~\eqref{it:correspondence_between_systems_derived_from_discs-discs_and_sys}
  in
  Theorem~\ref{th:correspondence_between_systems_derived_from_discretizations}
  applied to $\aFDLSE_h$, we have the exact \FDMSMW
  $\aFDMSEh = (Q_0,\mathcal{W}^E_h,L_{d,h}^E,f_{d,h}^E)$ where
  $\mathcal{W}^E_h := \del^{E\mp}_h(\mathcal{V}^E_h)$. Define
  $\hat{U}_1:=p_{12}^{-1}(\mathcal{W}^E_h)\cap
  p_{23}^{-1}(\mathcal{W}^E_h)\subset Q\times Q\times Q$ and
  $R_h:\hat{U}_1\rightarrow \R\times TQ\times TQ$ by
  \begin{equation*}
    R_h(q_0,q_1,q_2):=(h,\del^{E\mp}_h|_{\mathcal{V}^E_h}^{-1}(q_0,q_1),
    \del^{E\mp}_h|_{\mathcal{V}^E_h}^{-1}(q_1,q_2)).
  \end{equation*}
  Finally, define $\hat{U}:=R_h^{-1}(\ti{U})$. Observe that, by our
  previous computations (with $t=h$), for any $q\in V_{\conj{q}}$, if
  we define $v_{01},v_{12}\in \mathcal{V}^E_h$
  by~\eqref{eq:flow_of_exact_discrete_systems-FDMS-v_01_and_v12-def}
  we have that $\del^{E+}_h(v_{01})=\del^{E-}_h(v_{12})$, so that
  $\del^{E\mp}_h(v_{01}) = (q_0,q_1) \in \mathcal{W}^E_h$ and
  $\del^{E\mp}_h(v_{12}) = (q_1,q_2) \in \mathcal{W}^E_h$. In
  addition, $R_h(q_0,q_1,q_2) = (h, v_{01},v_{12}) \in \ti{U}$, so
  that $(q_0,q_1,q_2)\in \hat{U}$; observe that taking
  $q_0=q=\conj{q}$, $\conj{q}\in p_1(\hat{U})$.

  In what follows, let $(q_0,q_1,q_2)\in \hat{U}$.

  \eqref{it:flow_of_exact_discrete_systems-FDMS-traj} \imp
  \eqref{it:flow_of_exact_discrete_systems-FDMS-flow}. If
  $(q_0,q_1,q_2)$ is a trajectory of the \FDMSMW $\aFDMSEh$, let
  $v_{01}:=\del^{E\mp}_h|_{\mathcal{V}^E_h}^{-1}(q_0,q_1)$ and
  $v_{12}:=\del^{E\mp}_h|_{\mathcal{V}^E_h}^{-1}(q_1,q_2)$, so that
  $v_{01},v_{12}\in \mathcal{V}^E_h$. By
  point~\eqref{it:correspondence_between_systems_derived_from_discs-crit_pt-sys_CP_MW}
  in
  Theorem~\ref{th:correspondence_between_systems_derived_from_discretizations}
  $(v_{01},v_{12})$ is a trajectory of the exact \FDMSCPExp $\aFDLSE_h$.
  Also, $(h,v_{01},v_{12}) = R_h(q_0,q_1,q_2) \in \ti{U}$ and, by
  Theorem~\ref{thm:flow_of_exact_discrete_systems}, we have that
  $v_{12}=F^{X_{L,f}}_h(v_{01})$.

  Let $q(t):=\tau_Q(F^{X_{L,f}}_t(v_{01}))$ so that
  $q'(t) = F^{X_{L,f}}_t(v_{01})$ and $q(t)$ is a trajectory of
  $(Q,L,f)$. Then,
  \begin{equation*}
    \begin{split}
      q(0) =& \tau_Q(F^{X_{L,f}}_0(v_{01})) =\tau_Q(v_{01}) =
      \del^{E-}_h(v_{01}) = q_0,\\
      q(h) =& \tau_Q(F^{X_{L,f}}_h(v_{01})) = \del^{E+}_h(v_{01}) = q_1,\\
      q(2h) =& \tau_Q(F^{X_{L,f}}_{2h}(v_{01})) =
      \tau_Q(F^{X_{L,f}}_h(F^{X_{L,f}}_h(v_{01}))) =
      \del^{E+}_h(v_{12}) = q_2.
    \end{split}
  \end{equation*}
  As we also have
  \begin{equation*}
    q'(0) = F^{X_{L,f}}_0(v_{01}) = v_{01} \in \mathcal{V}^E_h \stext{ and }
    q'(h) = F^{X_{L,f}}_h(v_{01}) = v_{12} \in \mathcal{V}^E_h,
  \end{equation*}
  we conclude that
  statement~\eqref{it:flow_of_exact_discrete_systems-FDMS-flow} is
  valid.
  
  \eqref{it:flow_of_exact_discrete_systems-FDMS-flow} \imp
  \eqref{it:flow_of_exact_discrete_systems-FDMS-traj}. Let $q(t)$ be a
  trajectory of the \FMS $(Q,L,f)$ such that $q(0)=q_0$, $q(h)=q_1$,
  $q(2h)=q_2$, and $q'(0),q'(h)\in \mathcal{V}^E_h$. Let
  $v_{01}:=q'(0)$ and $v_{12}:=q'(h)$. As $q(t)$ is a trajectory of
  $(Q,L,f)$ we have that $q'(t) = F^{X_{L,f}}_t(v)$ for some $v\in TQ$
  but, as $v_{01} = q'(0) = F^{X_{L,f}}_0(v) = v$, we conclude that
  $q'(t) = F^{X_{L,f}}_t(v_{01})$. Notice that
  $v_{12} = q'(h) = F^{X_{L,f}}_h(v_{01})$. Since
  \begin{equation*}
    \begin{split}
      \del^{E\mp}_h(v_{01}) =& (\del^{E-}_h(v_{01}),
      \del^{E+}_h(v_{01})) = (\tau_Q(v_{01}),
      \tau_Q(F^{X_{L,f}}_h(v_{01}))) \\=& (\tau_Q(q'(0)),
      \tau_Q(q'(h)))
      = (q(0),q(h)) = (q_0,q_1), \stext{ and }\\
      \del^{E\mp}_h(v_{12}) =& (\del^{E-}_h(v_{12}),
      \del^{E+}_h(v_{12})) = (\tau_Q(v_{12}),
      \tau_Q(F^{X_{L,f}}_h(v_{12}))) \\=& (\tau_Q(q'(h)),
      \tau_Q(F^{X_{L,f}}_{2h}(v_{01})) =(\tau_Q(q'(h)),
      \tau_Q(q'(2h))) \\=& (q(h),q(2h)) = (q_1,q_2),
    \end{split}
  \end{equation*}
  we have that $(h,v_{01},v_{12}) = R_h(q_0,q_1,q_2) \in
  \ti{U}$. Hence, by Theorem~\ref{thm:flow_of_exact_discrete_systems},
  we know that $(v_{01},v_{12})$ is a trajectory of $\aFDLSE_h$. Then,
  by
  point~\eqref{it:correspondence_between_systems_derived_from_discs-crit_pt-sys_CP_MW}
  in
  Theorem~\ref{th:correspondence_between_systems_derived_from_discretizations}
  we have that, for $F$ defined in
  point~\eqref{it:correspondence_between_systems_derived_from_discs-crit_pt-sys}
  of the same Theorem, $F(v_{01},v_{12}) = (q_0,q_1,q_2)$ is a
  trajectory of $\aFDMSEh$.
\end{proof}

\begin{definition}\label{def:flow_discretization_MW}
  Let $\discMW = (Q,W,L_{d,\cdot},f_{d,\cdot})$ be a \discretizationMW
  of the regular \FMS $(Q,L,f)$. A \jdef{flow} of $\discMW$ is a
  smooth function $F:{W'}^*\rightarrow Q\times Q$ where
  $W'\subset \mathcal{W}^E$ and
  ${W'}^*:=W'\SM (\{0\}\times Q\times Q)$ such that, for any
  $(h,q_0,q_1)\in {W'}^*$, $F(h,q_0,q_1) = (q_1,q_2)$ for some
  $q_2\in Q$ such that $(h,q_0,q_1,q_2) \in \mathcal{C}_h$ is a
  critical point of the problem $(\mwAFDtp,g_{\MWS},\ker(Tg_{\MWS}))$ over
  $\mathcal{C}_{\MWS}^*$. An \jdef{extended flow} of $\discMW$ is a
  function $\ti{F}:{W'}^*\rightarrow \R\times Q\times Q$ such that
  $\ti{F}(h,q_0,q_1) = (h, F(h,q_0,q_1))$ for a flow $F$ of $\discMW$.
\end{definition}

\begin{lemma}\label{le:flows_discretizations_CP_and_MW-critical}
  Let $\discCPexp=(Q,\psi^E,L_\CPDS,f_\CPDS)$ and
  $\discMW=(Q,\mathcal{W}^E,L_{d,\cdot}, f_{d,\cdot})$ be a
  \discretizationCPExp and a \discretizationMW of the regular \FMS
  $(Q,L,f)$ that are related by the bijection given in
  Proposition~\ref{prop:correspondence_discretizations_CP_and_MW}. Let
  $U'\subset \R\times TQ$ be an open neighborhood of $\{0\}\times TQ$
  and $\ti{F}_\CPS:U'\rightarrow \R\times TQ$ an extended flow of
  $\discCPexp$. Define
  $U'':=U'\cap \mathcal{U}^E \cap \ti{F}_\CPS^{-1}(\mathcal{U}^E)$,
  ${U''}^*:=U''\SM (\{0\}\times TQ)$, $W'':=\ti{\del}^{E\mp}(U'')$ and
  ${W''}^*:=\ti{\del}^{E\mp}({U''}^*)$. Then
  $\ti{F}_{\MWS}:{W''}^*\rightarrow \R\times Q\times Q$ defined by
  \begin{equation}
    \label{eq:flow_MW_from_flow_CP-def}
    \ti{F}_{\MWS} :=  \ti{\del}^{E\mp}|_{{\mathcal{U}^E}^*} \circ \ti{F}_\CPS
    \circ (\ti{\del}^{E\mp}|_{{U''}^*})^{-1}
  \end{equation}
  is an extended flow of $\discMW$, that is, for any
  $(h,(q_0,q_1))\in {W''}^*$, if
  $(h,q_1,q_2) := \ti{F}_{\MWS}(h,q_0,q_1)$, then
  $(h,q_0,q_1,q_2) \in \mathcal{C}_h$ is a critical point of the
  critical problem $(\mwAFDtp,g_{\MWS},\ker(Tg_{\MWS}))$ over $\mathcal{C}_{\MWS}^*$.
\end{lemma}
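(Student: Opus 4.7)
The plan is to unwind~\eqref{eq:flow_MW_from_flow_CP-def} at a generic point $(h,q_0,q_1) \in {W''}^*$ and verify two things: first, that $\ti{F}_{\MWS}(h,q_0,q_1)$ has the form $(h,q_1,q_2)$ required by Definition~\ref{def:flow_discretization_MW}, and second, that the resulting $4$-tuple $(h,q_0,q_1,q_2)$ is a critical point of $(\mwAFDtp, g_{\MWS}, \ker(Tg_{\MWS}))$ lying in $\mathcal{C}_{\MWS}^*$. The main input will be the bijection between critical points of the CP and the MW critical problems provided by part~\eqref{it:crit_points_CP_vs_MW-bijection} of Proposition~\ref{prop:crit_points_CP_vs_MW}.

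First I will set $(h,v) := (\ti{\del}^{E\mp}|_{{U''}^*})^{-1}(h,q_0,q_1)$, which by construction of $\ti{\del}^{E\mp}$ satisfies $q_0 = \del^{E-}_h(v)$ and $q_1 = \del^{E+}_h(v)$, and let $(h,\ti{v}) := \ti{F}_\CPS(h,v)$. The very definition $U'' = U' \cap \mathcal{U}^E \cap \ti{F}_\CPS^{-1}(\mathcal{U}^E)$ guarantees that $\ti{F}_\CPS$ is defined at $(h,v)$ and that $(h,\ti{v}) \in \R \times \mathcal{U}^E$; since $h \neq 0$, in fact $(h,\ti{v}) \in {\mathcal{U}^E}^*$. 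Hence
\begin{equation*}
  \ti{F}_{\MWS}(h,q_0,q_1) = \ti{\del}^{E\mp}(h,\ti{v}) = (h, \del^{E-}_h(\ti{v}), \del^{E+}_h(\ti{v})).
\end{equation*}
Because $\ti{F}_\CPS$ is an extended flow of $\discCPexp$ (Definition~\ref{def:flow_discretization_CP}), the triple $(h,v,\ti{v})$ lies in $\mathcal{C}$ and is a critical point of $(\dAFtpst, g|_{\mathcal{C}^*}, \ker(Tg|_{\mathcal{C}^*}))$. The defining matching-arrow relation for $\mathcal{C}$ in~\eqref{eq:submanifold_of_matching_arrows_after_blow_up} then gives $\del^{E-}_h(\ti{v}) = \del^{E+}_h(v) = q_1$, so $\ti{F}_{\MWS}(h,q_0,q_1) = (h,q_1,q_2)$ with $q_2 := \del^{E+}_h(\ti{v})$, which confirms the shape required by Definition~\ref{def:flow_discretization_MW}.

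The remaining step will invoke part~\eqref{it:crit_points_CP_vs_MW-bijection} of Proposition~\ref{prop:crit_points_CP_vs_MW}, according to which the map $F(h,v,\ti{v}) = (h, \del^{E-}_h(v), \del^{E+}_h(v), \del^{E+}_h(\ti{v})) = (h,q_0,q_1,q_2)$ carries critical points of the CP problem to critical points of the MW problem. Applying this to $(h,v,\ti{v})$ immediately yields that $(h,q_0,q_1,q_2)$ is a critical point of $(\mwAFDtp, g_{\MWS}, \ker(Tg_{\MWS}))$; membership in $\mathcal{C}_{\MWS}^*$ follows routinely, since $(h,q_0,q_1) \in {W''}^* \subseteq {\mathcal{W}^E}^*$ by the definition of $W''$, while $(h,q_1,q_2) = \ti{\del}^{E\mp}(h,\ti{v}) \in \ti{\del}^{E\mp}({\mathcal{U}^E}^*) = {\mathcal{W}^E}^*$.

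I expect no substantive obstacle: the whole argument is a composition of definitions together with the CP--MW equivalence already established. The only delicate point to keep in mind is that the matching-arrow condition $q_1 = \del^{E+}_h(v) = \del^{E-}_h(\ti{v})$ is not a separate hypothesis but is forced automatically by $(h,v,\ti{v}) \in \mathcal{C}$, which is in turn built into Definition~\ref{def:flow_discretization_CP} of an extended flow.
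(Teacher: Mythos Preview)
Your proposal is correct and follows essentially the same route as the paper's proof: both unwind the definition of $\ti{F}_{\MWS}$ at a point $(h,q_0,q_1)$, use that $\ti{F}_\CPS$ is an extended flow to get a critical point $(h,v,\ti{v})\in\mathcal{C}^*$ of the CP problem, read off $q_1'=q_1$ from the matching condition in $\mathcal{C}$, and then invoke part~\eqref{it:crit_points_CP_vs_MW-bijection} of Proposition~\ref{prop:crit_points_CP_vs_MW} to transport the critical point to the MW side. The only item the paper states explicitly that you leave implicit is the preliminary observation that $U''$ is an open neighborhood of $\{0\}\times TQ$ and hence ${W''}^*$ is open and $\ti{F}_{\MWS}$ is a well-defined smooth map; this is routine from the composition structure but worth one sentence.
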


\begin{proof}
  As $U'$, $\mathcal{U}^E$ and $\ti{F}_\CPS^{-1}(\mathcal{U}^E)$ are
  open neighborhoods of $\{0\}\times TQ$ in $\R\times TQ$, so is
  $U''$. Then, ${U''}^*\subset {\mathcal{U}^E}^*$ is open and, as
  $\ti{\del}^{E\mp}|_{{\mathcal{U}^E}^*}:{\mathcal{U}^E}^*\rightarrow
  {\mathcal{W}^E}^*$ is a diffeomorphism,
  ${W''}^* = \ti{\del}^{E\mp}({U''}^*)$ is open in
  ${\mathcal{W}^E}^*$. Then, $\ti{F}_{\MWS}$ is a well defined smooth
  map.

  For $(h,q_0,q_1)\in {W''}^*$ we have that
  $(h,q_0,q_1) = \ti{\del}^{E\mp}(h,v_{01})$ for some
  $(h,v_{01})\in {U''}^*$. Then, as $\ti{F}_\CPS$ is an extended flow
  of $\discCPexp$, $(h,v_{12}):=\ti{F}_\CPS(h,v_{01})$ must satisfy that
  $(h,v_{01},v_{12})\in \mathcal{C}^*$ is a critical point of
  $\discCPexp$; let $(h,q_1',q_2) = \ti{\del}^{E\mp}(h,v_{12})$ and
  notice that, as $(h,v_{01},v_{12})\in \mathcal{C}^*$ it must be
  $q_1'=q_1$. Then, by point~\eqref{it:crit_points_CP_vs_MW-bijection}
  of Proposition~\ref {prop:crit_points_CP_vs_MW},
  $(h,\del^{E-}_h(v_{01}),\del^{E+}_h(v_{01}),\del^{E+}_h(v_{12}))
  = (h,q_0,q_1,q_2)$, is a critical point of $\discMW$. Hence
  $\ti{F}_{\MWS}$ is an extended flow of $\discMW$, as stated.
\end{proof}

\begin{remark}\label{rem:extended_flow_MW_does_not_extend_but}
  Notice that the extended flow function $\ti{F}_{\MWS}$ constructed for
  $\discMW$ in Lemma~\ref{le:flows_discretizations_CP_and_MW-critical}
  is not defined when $h=0$. Still, we have that, because
  of~\eqref{eq:flow_MW_from_flow_CP-def},
  \begin{equation*}
    (\ti{\del}^{E\mp}|_{{\mathcal{U}^E}^*})^{-1} \circ \ti{F}_{\MWS} \circ
    \ti{\del}^{E\mp}|_{{U''}^*} = \ti{F}_\CPS|_{{U''}^*},
  \end{equation*}
  and $\ti{F}_\CPS|_{{U''}^*}$ extends smoothly to $U''$ that is an
  open neighborhood of $\{0\}\times TQ$ in $\R\times TQ$. Even more,
  $\ti{F}_\CPS|_{U''}(0,v) = (0,v)$ for all $v\in TQ$.
\end{remark}

As we noticed elsewhere, \discretizationsMW are not well behaved at
$h=0$. Motivated by
Remark~\ref{rem:extended_flow_MW_does_not_extend_but} we introduce a
notion of contact order for the flow of such discretizations.

\begin{definition}\label{def:contact_order_of_flows_MW}
  Let $\discMW = (\mathcal{W}^E,L_{d,\cdot},f_{d,\cdot})$ be a
  \discretizationMW of the regular \FMS $(Q,L,f)$ and
  $\ti{F}:W^*\rightarrow \R\times Q\times Q$ be an extended flow of
  $\discMW$. It is said that $\ti{F}$ has \jdef{contact order $r$} if
  there exists an open neighborhood $U$ of $\{0\}\times TQ$ in
  $\R\times TQ$ and a smooth extension $\ti{F}_{ext}$ of the function
  $(\ti{\del}^{E\mp}|_{{\mathcal{U}^E}^*})^{-1} \circ \ti{F}
  \circ \ti{\del}^{E\mp}|_{U^*}$ with $U^*:=U\SM(\{0\}\times TQ)$ such
  that
  \begin{equation*}
    \ti{F}_{ext} = \ti{F}_\CPS^E + \mathcal{O}(\grade_{\R\times TQ}^{r+1}).
  \end{equation*}
  where $\ti{F}_\CPS^E$ is an extended flow of the exact
  \discretizationCPExp $\discCPEexp$ associated to $(Q,L,f)$.
\end{definition}

\begin{theorem}\label{th:order_of_flow_MW_vs_order_discMW}
  Let $\discMW = (\mathcal{W}^E,L_{d,\cdot},f_{d,\cdot})$ be a
  \discretizationMW of the regular \FMS $(Q,L,f)$ that has order $r$
  contact. Then there exists an extended flow
  $\ti{F}:W^*\rightarrow \R\times Q\times Q$ that has contact order
  $r$.
\end{theorem}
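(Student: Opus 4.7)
The plan is to reduce the contact order statement for the \discretizationMW to the corresponding statement for its associated \discretizationCPExp, where Theorem~\ref{th:contact_order_discrete_FMS} is directly applicable. This is possible because Proposition~\ref{prop:correspondence_discretizations_CP_and_MW} gives us a bijection between \discretizationsCPExp and \discretizationsMW of $(Q,L,f)$, and Corollary~\ref{cor:correspondence_CP_vs_MW_preserves_order} tells us that this bijection preserves contact order. Moreover, Lemma~\ref{le:flows_discretizations_CP_and_MW-critical} provides a concrete mechanism to go from a flow of $\discCPexp$ to a flow of $\discMW$ via conjugation by $\ti{\del}^{E\mp}$.

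First I would let $\discCPexp = (Q,\psi^E,L_\CPDS,f_\CPDS)$ be the \discretizationCPExp associated to $\discMW$ under Proposition~\ref{prop:correspondence_discretizations_CP_and_MW}. By Corollary~\ref{cor:correspondence_CP_vs_MW_preserves_order}, $\discCPexp$ has order $r$ contact with the exact \discretizationCPExp $\discCPEexp$. Next, I would invoke Theorem~\ref{th:flow_of_discretizations_of_FMS} to produce extended flows $\ti{F}_\CPS$ and $\ti{F}_\CPS^E$ of $\discCPexp$ and $\discCPEexp$ respectively, both defined on open neighborhoods of $\{0\}\times TQ$. Applying Theorem~\ref{th:contact_order_discrete_FMS} to the pair $(\discCPexp, \discCPEexp)$ then yields
\begin{equation*}
  \ti{F}_\CPS = \ti{F}_\CPS^E + \mathcal{O}(\grade_{\R\times TQ}^{r+1}),
\end{equation*}
on the common domain (which is an open neighborhood of $\{0\}\times TQ$).

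I would then apply Lemma~\ref{le:flows_discretizations_CP_and_MW-critical} to $\ti{F}_\CPS$ to obtain an extended flow $\ti{F} := \ti{F}_{\MWS}$ of $\discMW$ defined on ${W''}^* = \ti{\del}^{E\mp}({U''}^*)$, where $U'' \subset \R\times TQ$ is an open neighborhood of $\{0\}\times TQ$ and by~\eqref{eq:flow_MW_from_flow_CP-def},
\begin{equation*}
  \ti{F} =  \ti{\del}^{E\mp}|_{{\mathcal{U}^E}^*} \circ \ti{F}_\CPS
    \circ (\ti{\del}^{E\mp}|_{{U''}^*})^{-1}.
\end{equation*}
Taking $U:=U''$ in Definition~\ref{def:contact_order_of_flows_MW}, the function $(\ti{\del}^{E\mp}|_{{\mathcal{U}^E}^*})^{-1} \circ \ti{F} \circ \ti{\del}^{E\mp}|_{U^*}$ equals $\ti{F}_\CPS|_{{U''}^*}$, which (as noted in Remark~\ref{rem:extended_flow_MW_does_not_extend_but}) extends smoothly to all of $U''$ as $\ti{F}_\CPS|_{U''}$. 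Taking this smooth extension as $\ti{F}_{ext}$, the previous order estimate yields
\begin{equation*}
  \ti{F}_{ext} = \ti{F}_\CPS|_{U''} = \ti{F}_\CPS^E|_{U''} + \mathcal{O}(\grade_{\R\times TQ}^{r+1}),
\end{equation*}
which is exactly the condition that $\ti{F}$ has contact order $r$ in the sense of Definition~\ref{def:contact_order_of_flows_MW}, proving the theorem.

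The only place requiring care is verifying that $\ti{F}_\CPS^E|_{U''}$ is a valid choice of extended flow of the exact \discretizationCPExp for comparison (which is immediate from Theorem~\ref{th:flow_of_discretizations_of_FMS}), and that the domains can be arranged to be compatible; neither presents a real obstacle, since all the domains involved are open neighborhoods of $\{0\}\times TQ$ and the only reason for the ``$*$'' restriction on the MW side is the exclusion of $h=0$ inherent to Definition~\ref{def:discretization_MW}, which is precisely what the conjugation construction above circumvents.
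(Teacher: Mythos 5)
Your proposal is correct and follows essentially the same route as the paper's own proof: pass to the associated \discretizationCPExp via Proposition~\ref{prop:correspondence_discretizations_CP_and_MW}, use Corollary~\ref{cor:correspondence_CP_vs_MW_preserves_order} and Theorem~\ref{th:contact_order_discrete_FMS} to get the order-$r$ estimate for $\ti{F}_\CPS$, and transfer it to $\ti{F}_{\MWS}$ via Lemma~\ref{le:flows_discretizations_CP_and_MW-critical} and Remark~\ref{rem:extended_flow_MW_does_not_extend_but}. Your extra attention to the compatibility of the domains is welcome but, as you note, presents no real obstacle.
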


\begin{proof}
  Let $\discCPexp = (Q,\psi^E,L_\CPDS,f_\CPDS)$ be the
  \discretizationCPExp associated to $\discMW$ by
  Proposition~\ref{prop:correspondence_discretizations_CP_and_MW}. By
  Theorem~\ref{th:flow_of_discretizations_of_FMS}, there is an open
  neighborhood $U'$ of $\{0\}\times TQ$ in $\R\times TQ$ and an extended flow
  function $\ti{F}_\CPS:U'\rightarrow \R\times TQ$ for
  $\discCPexp$. Then, by
  Lemma~\ref{le:flows_discretizations_CP_and_MW-critical} and
  Remark~\ref{rem:extended_flow_MW_does_not_extend_but}, the function
  $\ti{F}_{\MWS}$ defined by~\eqref{eq:flow_MW_from_flow_CP-def} is an
  extended flow of $\discMW$ with the required properties. Indeed, the
  correspondence $\discMW \leftrightarrow \discCPexp$ preserves the
  contact order by
  Corollary~\ref{cor:correspondence_CP_vs_MW_preserves_order}, so that
  $\discCPexp$ has contact order $r$. Then, by
  Theorem~\ref{th:contact_order_discrete_FMS}, $\ti{F}_\CPS$ also has
  contact order $r$, meaning that
  \begin{equation*}
    \ti{F}_\CPS = \ti{F}_\CPS^E + \mathcal{O}(\grade_{\R\times TQ}^{r+1})
  \end{equation*}
  and, as $\ti{F}_\CPS=\ti{F}_{ext}$ (see
  Remark~\ref{rem:extended_flow_MW_does_not_extend_but}), we conclude
  that the statement is true.
\end{proof}

Theorem~\ref{th:order_of_flow_MW_vs_order_discMW} proves that \FDMSMWs
coming from \discretizationsMW that have contact order $r$ (in the
sense of Definition~\ref{def:contact_order_discMW}) have flows that
have, also, contact order $r$ with the exact flow (in the sense of
Definition~\ref{def:contact_order_of_flows_MW}), a result that is
mentioned, but not proved, in Section 3.2.4
of~\cite{ar:marsden_west-discrete_mechanics_and_variational_integrators}.


\appendix


\section{Additional contact order properties}
\label{sec:additional_contact_order_properties}

In this appendix we prove some useful properties having to do with how
the contact order of maps relates to other operations.

\begin{lemma}\label{le:order_r_contact_for_borders_from_order_of_psis}
  If two discretizations of $TQ$, $\psi_1$ and $\psi_2$ have order $r$
  contact, then the corresponding border maps $\del^{+,1}$ and
  $\del^{+,2}$ as well as $\del^{-,1}$ and $\del^{-,2}$ have order $r$
  contact with respect to $h$, that is,
  $\del^{+,2} = \del^{+,1} + \mathcal{O}(\grade_{\R\times TQ}^{r+1})$
  and
  $\del^{-,2} = \del^{-,1} + \mathcal{O}(\grade_{\R\times TQ}^{r+1})$
  (see~\eqref{eq:gades_RxTQ_and_R^2xTQ}).
\end{lemma}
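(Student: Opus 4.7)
The approach is to apply the local description of the contact order condition from Definition~\ref{def:contact_order} to $\psi^2 = \psi^1 + \mathcal{O}(\grade_{\R^2\times TQ}^{r+1})$, and transfer it to the boundary maps via the substitution $t = \alpha^{\pm}(h)$. The only obstacle is to control the extra factor coming from the substitution, which becomes routine once one observes that $\alpha^{\pm}(0)=0$.

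First I would record that $\alpha^+(0) = \alpha^-(0) = 0$: by Remark~\ref{rem:bounds_for_boundary_times} we have $\alpha^+(h)\leq h$ for $h\geq 0$ and $\alpha^+(h)\geq h$ for $h\leq 0$, forcing $\alpha^+(0)=0$ (similarly for $\alpha^-$). Since $\alpha^{\pm}$ are smooth by Definition~\ref{def:discretization_of_TQ}, there exist smooth $\beta^{\pm}:(-a,a)\to\R$ with $\alpha^{\pm}(h) = h\,\beta^{\pm}(h)$ on the domain.

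Fix $(0,v_0)$ in the domain of $\partial^{+,j}$, so that $(0,0,v_0) \in \grade_{\R^2\times TQ}^{-1}(0)$ is a point where both $\psi^j$ are defined and agree (indeed $\psi^j(0,0,v_0) = \tau_Q(v_0)$). By Definition~\ref{def:contact_order} applied to $\psi^2 = \psi^1 + \mathcal{O}(\grade_{\R^2\times TQ}^{r+1})$, there is an open set $U\subset\R^2\times TQ$ containing $(0,0,v_0)$, a chart $(W,\eta)$ of $Q$ with $\psi^j(U)\subset W$, and a continuous map $(\delta\psi)_\eta:U\to\R^{\dim Q}$ such that
\begin{equation*}
  \eta(\psi^2(h,t,v)) - \eta(\psi^1(h,t,v)) = t^{r+1}(\delta\psi)_\eta(h,t,v) \stext{ for all } (h,t,v)\in U.
\end{equation*}
Since $(h,v)\mapsto (h,\alpha^+(h),v)$ is continuous and sends $(0,v_0)$ to $(0,0,v_0)\in U$, there is an open neighborhood $U'\subset \R\times TQ$ of $(0,v_0)$ whose image under this map lies in $U$.

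Now for $(h,v)\in U'$, substituting $t = \alpha^+(h)$ and using $\alpha^+(h) = h\beta^+(h)$,
\begin{equation*}
  \eta(\partial^{+,2}(h,v)) - \eta(\partial^{+,1}(h,v)) = \alpha^+(h)^{r+1}(\delta\psi)_\eta(h,\alpha^+(h),v) = h^{r+1}\,\beta^+(h)^{r+1}(\delta\psi)_\eta(h,\alpha^+(h),v).
\end{equation*}
Setting $(\delta\partial^+)_\eta(h,v) := \beta^+(h)^{r+1}(\delta\psi)_\eta(h,\alpha^+(h),v)$, we obtain a continuous map on $U'$ satisfying the condition of Definition~\ref{def:contact_order} for the grading $\grade_{\R\times TQ}(h,v)=h$, with exponent $r+1$. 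This proves $\partial^{+,2} = \partial^{+,1} + \mathcal{O}(\grade_{\R\times TQ}^{r+1})$; the identical argument with $\alpha^-$ in place of $\alpha^+$ yields $\partial^{-,2} = \partial^{-,1} + \mathcal{O}(\grade_{\R\times TQ}^{r+1})$.
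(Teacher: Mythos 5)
Your proposal is correct and follows essentially the same route as the paper: localize the contact condition $\psi^2=\psi^1+\mathcal{O}(t^{r+1})$, substitute $t=\alpha^{\pm}(h)$, and extract a factor of $h^{r+1}$ from $\alpha^{\pm}(h)^{r+1}$ using $\alpha^{\pm}(0)=0$. The only (immaterial) difference is that you factor $\alpha^{\pm}(h)=h\,\beta^{\pm}(h)$ with $\beta^{\pm}$ smooth via Hadamard's lemma, whereas the paper writes $\alpha^{+}(h)=\bigl((\alpha^{+})'(0)+\epsilon_1(h)\bigr)h$ with $\epsilon_1$ merely continuous at $0$; both yield the required continuous remainder.
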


\begin{proof}
  We only deal with $\del^{+,j}$ (with the $-$ case being completely
  similar). We assume that $\psi_1$ and $\psi_2$ have a common domain
  $U\supset \{0\}\times\{0\}\times TQ$ (otherwise, take the
  intersection of both domains). As
  $\psi_2(h,t,v) = \psi_1(h,t,v) + \mathcal{O}(\grade_{\R^2\times
    TQ}^{r+1})$, for all $(h_0,0,v_0)\in U$, there is an open set
  $U'\subset U$ containing $(h_0,0,v_0)$ and a chart $(V,\phi)$ of $Q$
  containing $\psi_1(h_0,0,v_0) = \tau_Q(v_0) = \psi_2(h_0,0,v_0)$
  with $\psi_j(U')\subset V$ (for $j=1,2$) and a continuous function
  $(\delta \psi)_\phi:U'\rightarrow \R^{\dim(Q)}$ such that
  \begin{equation}\label{eq:order_r_condition_for_psi_j}
    \phi(\psi_2(h,t,v))-\phi(\psi_1(h,t,v)) = 
    t^{r+1} (\delta \psi)_\phi(h,t,v)
  \end{equation}
  for all $(h,t,v)\in U'$. 

  Let $A^+:(-a,a)\times TQ\rightarrow \R^2\times TQ$ be as in
  Definition~\ref{def:smooth_discretization} and define
  $U'':=(A^+)^{-1}(U')$ that is open because $A^+$ is continuous; also
  $(h_0,v_0)\in U''$. In addition,
  $\del^{+,j}(h,v) = \psi_j(A^+(h,v))$, so that
  $\del^{+,j}(U'')\subset V$. Taking $t=\dBTp(h)$
  in~\eqref{eq:order_r_condition_for_psi_j} for $(h,v)\in U''$ we
  obtain
  \begin{equation}\label{eq:order_r_condition_for_partial_j}
    \phi(\del^{+,2}(h,v))-\phi(\del^{+,1}(h,v)) = 
    \dBTp(h)^{r+1} (\delta \psi)_\phi(h,\dBTp(h),v).
  \end{equation}
  Recalling that $\dBTp$ is $C^1$ (at least), and that
  $\dBTp(0)=0$, we can write
  \begin{equation*}
    \dBTp(h) = (\dBTp)'(0) h + \epsilon_1(h) h
  \end{equation*}
  for $\epsilon_1(h):=\frac{\dBTp(h) - (\dBTp)'(0) h}{h}$. By
  differentiability of $\dBTp$ at $h=0$, we have that
  $\lim_{h\rightarrow 0}\epsilon_1(h)=0$; hence, defining
  $\epsilon_1(0)=0$, we see that $\epsilon_1$ is continuous at $h=0$
  (and $C^1$ for $h\neq 0$ because so is $\dBTp$). Going back
  to~\eqref{eq:order_r_condition_for_partial_j}, we have
  \begin{equation*}
    \begin{split}
      \phi(\del^{+,2}(h,v))-\phi(\del^{+,1}(h,v)) =&
      ((\dBTp)'(0) h + \epsilon_1(h) h)^{r+1} (\delta
      \psi)_\phi(h,\dBTp(h),v) \\=& h^{r+1}
      \underbrace{((\dBTp)'(0) + \epsilon_1(h))^{r+1}(\delta
        \psi)_\phi(h,\dBTp(h),v)}_{=:(\delta \del^+)_\phi(h,v)},
    \end{split}
  \end{equation*}
  where, by construction, $(\delta \del^+)_\phi(h,v)$ is continuous
  over $U''$. Thus,
  $\del^{+,2} = \del^{+,1} + \mathcal{O}(\grade_{\R\times TQ}^{r+1})$.
\end{proof}

\begin{lemma}\label{le:order_and_residuals_of_1_forms}
  Let $f:(X,\grade_X)\rightarrow (Y,\grade_Y)$ be a diffeomorphism and
  $\gF^j\in \mathcal{A}^1(Y)$ such that
  $\gF^2 = \gF^1 +\mathcal{O}(\grade_Y^r)$. Then
  $f^*(\gF^j)\in \mathcal{A}^1(X)$ satisfy
  $f^*(\gF^2) = f^*(\gF^1) +\mathcal{O}(\grade_X^r)$. Furthermore, if
  $x_0\in \grade_X^{-1}(0)$,
  \begin{equation}
    \label{eq:order_and_residuals_of_1_forms-residuals-1}
    \cpres^r(f^*\gF^2, f^*\gF^1)(x_0) =
    T_{\gF^1(f(x_0))}(T^*f)(\cpres^r(\gF^2,\gF^1)(f(x_0))).
  \end{equation}
\end{lemma}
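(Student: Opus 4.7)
The plan is to write the pullback of a $1$-form as a composition of three smooth maps and then apply Proposition 3 of~\cite{ar:cuell_patrick-skew_critical_problems} (the chain rule for contact order and residuals) twice. Concretely, viewing $\gF^j$ as a smooth map $Y\to T^*Y$, the pullback is
\begin{equation*}
  f^*\gF^j \;=\; T^*\!f \,\circ\, \gF^j \,\circ\, f,
\end{equation*}
where $T^*\!f:T^*Y\to T^*X$ is the smooth (in fact diffeomorphic) fiberwise dual of $Tf$ associated to the diffeomorphism $f$, covering $f^{-1}$.

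First, I would handle the pre-composition with $f$. Since $f:(X,\grade_X)\to(Y,\grade_Y)$ is a map of manifolds with a grade, $\grade_Y\circ f=\grade_X$ and hence $\dot f(x)=1$ for every $x$. Taking $f_1=f_2=f$ (so that $\cpres^r(f,f)=0$) and $g_j=\gF^j$ in the chain-rule formula of Proposition~3 of~\cite{ar:cuell_patrick-skew_critical_problems}, the hypothesis $\gF^2=\gF^1+\mathcal{O}(\grade_Y^r)$ yields $\gF^2\circ f=\gF^1\circ f+\mathcal{O}(\grade_X^r)$ with
\begin{equation*}
  \cpres^r(\gF^2\circ f,\,\gF^1\circ f)(x_0)
  \;=\;\dot f(x_0)^r\,\cpres^r(\gF^2,\gF^1)(f(x_0))
  \;=\;\cpres^r(\gF^2,\gF^1)(f(x_0)).
\end{equation*}

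Next, I would post-compose with the smooth map $T^*\!f$. Applying Proposition~3 of~\cite{ar:cuell_patrick-skew_critical_problems} again in the form where the outer map is the \emph{same} smooth map $T^*\!f$ for both arguments (so the outer residual vanishes), one gets $T^*\!f\circ(\gF^2\circ f)=T^*\!f\circ(\gF^1\circ f)+\mathcal{O}(\grade_X^r)$, i.e.\ $f^*\gF^2=f^*\gF^1+\mathcal{O}(\grade_X^r)$, with residual
\begin{equation*}
  \cpres^r(f^*\gF^2,f^*\gF^1)(x_0)
  \;=\;T_{(\gF^1\circ f)(x_0)}(T^*\!f)\bigl(\cpres^r(\gF^2\circ f,\gF^1\circ f)(x_0)\bigr).
\end{equation*}
Substituting the first step gives exactly~\eqref{eq:order_and_residuals_of_1_forms-residuals-1}. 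The tangent-space bookkeeping is consistent because $f^*\gF^1(x_0)=f^*\gF^2(x_0)$ at $x_0\in\grade_X^{-1}(0)$, so $T_{f^*\gF^2(x_0)}T^*X=T_{f^*\gF^1(x_0)}T^*X$ contains both sides of the claimed identity.

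There is no real obstacle here beyond unwinding the composition; the only thing to verify with some care is that the hypotheses of the quoted chain rule are satisfied at each of the two stages (in particular, that both $f$ and $T^*\!f$ are smooth maps and that $f$ strictly preserves the grade, so that the factor $\dot f(x_0)^r$ becomes $1$ and no extra terms appear). Both checks are immediate from the assumption that $f$ is a diffeomorphism of manifolds with a grade.
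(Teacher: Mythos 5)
Your proposal is correct and follows essentially the same route as the paper: both decompose $f^*\gF^j = T^*f\circ\gF^j\circ f$ and apply Proposition 3 of~\cite{ar:cuell_patrick-skew_critical_problems} twice, using that $\dot f\equiv 1$ (so the factor $\dot f(x_0)^r$ disappears) and that the residual of a map with itself vanishes. No gaps.
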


\begin{proof}
  Recall that $f^*(\gF^j) = T^*f\circ \gF^j \circ f$ and that $f$
  has contact order $r$ with itself for any $r\in\N$ as well as
  $\cpres^r(f,f)=0$. Also, as $\gF^2 = \gF^1 +\mathcal{O}(\grade_Y^r)$,
  applying Proposition 3
  in~\cite{ar:cuell_patrick-skew_critical_problems} twice, we have
  that $f^*(\gF^2) = f^*(\gF^1) +\mathcal{O}(\grade_X^r)$ and, for
  $x_0\in \grade_X^{-1}(0)$,
  \begin{equation*}
    \begin{split}
      \cpres^r(f^*\gF^2, f^*\gF^1)(x_0) =& \cpres^r(T^*f \circ
      \gF^2\circ f, T^*f \circ \gF^1\circ f)(x_0) \\=&
      T_{\gF^1(f(x_0))}(T^*f)(\cpres^r(\gF^2\circ f, \gF^1\circ
      f)(x_0)) \\=&
      T_{\gF^1(f(x_0))}(T^*f)(\underbrace{\dot{f}^r(x_0)}_{=1}
      \cpres^r(\gF^2, \gF^1)(f(x_0)))
    \end{split}
  \end{equation*}
  which proves~\eqref{eq:order_and_residuals_of_1_forms-residuals-1}.
\end{proof}

\begin{remark}\label{rem:vertical_vectors_and_base_vectors_in_tangent_bundle}
  In the context of Lemma~\ref{le:order_and_residuals_of_1_forms}, we
  have that, if $\tau_{T^*Y,Y}T^*Y\rightarrow Y$ is the cotangent
  bundle, and $y_0\in \grade_Y^{-1}(0)$, then
  $\cpres^r(\gF^2,\gF^1)(y_0)\in T_{\gF^2(y_0)}T^*Y$ and
  \begin{equation*}
    \begin{split}
      T_{\gF^2(y_0)}\tau_{T^*Y,Y}(\cpres^r(\gF^2,\gF^1)(y_0))
      =& \cpres^r(\tau_{T^*Y,Y}\circ \gF^2, \tau_{T^*Y,Y}\circ
      \gF^1)(y_0) \\=& \cpres^r(id_Y,id_Y)(y_0) = 0,
    \end{split}
  \end{equation*}
  so that $\cpres^r(\gF^2,\gF^1)(y_0)$ is a vertical vector in
  $T_{\gF^2(y_0)} T^*Y$. It is well known that mapping $T^*_{y_0}Y$
  into $T_{\gF^2(y_0)} T^*Y$ by
  $\beta_{y_0}\mapsto \frac{d}{dt}\big|_{t=0}(\gF^2(y_0) + t
  \beta_{y_0})$ is an isomorphism onto the subspace of vertical
  vectors.
\end{remark}

In the next result we use the following notation for maps
$f:\R^m\rightarrow\R^n$ written as $f(x)=(f^1(x),\ldots,f^n(x))$. The
\jdef{usual derivative} is the map
$Df:\R^m\rightarrow \hom(\R^m,\R^n)$ given by
$Df(x)(\sum a_j e_j) = \sum_{jk} \pd{f^k}{x_j}(x) a_j e_k$ and the
\jdef{tangent map} is the map $Tf:T\R^m\rightarrow T\R^n$ such that
$Tf(x)(\delta x) := (f(x),Df(x)(\delta x))$.

\begin{lemma}\label{le:order_of_tangent_map}
  Let $f^j:X\rightarrow Y$ ($j=1,2$) be two smooth maps such that
  $f^2 = f^1+\mathcal{O}(\grade_X^r)$. Then $Tf^j:TX\rightarrow TY$
  satisfy $Tf^2 = Tf^1 +\mathcal{O}(\grade_{TX}^{r-1})$ for
  $\grade_{TX}:= \grade_X\circ \tau_{TX,X}$.
\end{lemma}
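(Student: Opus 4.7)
The plan is to reduce the statement to a local coordinate computation, following the same strategy used in the proof of Lemma~\ref{le:0_residual_implies_order+1}. First, I would verify the basic setup: since $\tau_{TX,X}:TX\to X$ is a submersion and $0\in\R$ is a regular value of $\grade_X$, it is also a regular value of $\grade_{TX}:=\grade_X\circ\tau_{TX,X}$, so $(TX,\grade_{TX})$ is indeed a manifold with a grade. Then, by Lemma~\ref{le:order_of_maps_in_terms_of_local_expression}, the condition $f^2=f^1+\mathcal{O}(\grade_X^r)$ is equivalent to a local statement at each point of $\grade_X^{-1}(0)$, and I would show that the conclusion $Tf^2=Tf^1+\mathcal{O}(\grade_{TX}^{r-1})$ can be checked analogously at each point of $\grade_{TX}^{-1}(0)=\tau_{TX,X}^{-1}(\grade_X^{-1}(0))$.

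Fix $v_{x_0}\in \grade_{TX}^{-1}(0)$, i.e.\ $x_0\in\grade_X^{-1}(0)$. As $\grade_X$ is a submersion near $x_0$, I can, after shrinking, choose coordinates $(V,\phi)$ on $X$ near $x_0$ so that $\check{\grade}_X(y,h)=h$, and coordinates $(W,\psi)$ on $Y$ containing $f^j(V)$, exactly as in the proof of Lemma~\ref{le:0_residual_implies_order+1}. The local expressions satisfy
\begin{equation*}
\check{f}^2(y,h)-\check{f}^1(y,h) = h^r\,\delta\check{f}(y,h)
\end{equation*}
with $\delta\check{f}$ continuous, and by part~\ref{it:0_residual_implies_order+1-smooth} of Lemma~\ref{le:0_residual_implies_order+1}, since $f^j$ are smooth, $\delta\check{f}$ is also smooth. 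The induced chart $(TV,T\phi)$ on $TX$ yields coordinates $(y,h,\delta y,\delta h)$, and in this chart $\grade_{TX}$ equals $h$, as required.

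Next, I would compute the induced tangent maps in coordinates. Differentiating the identity above with respect to $y$ and $h$ gives
\begin{equation*}
D_1\check{f}^2-D_1\check{f}^1 = h^r D_1\delta\check{f},\qquad
D_2\check{f}^2-D_2\check{f}^1 = r h^{r-1}\delta\check{f}+h^r D_2\delta\check{f},
\end{equation*}
so that the local expression of $Tf^2-Tf^1$, evaluated on a tangent vector $(\delta y,\delta h)$, equals
\begin{equation*}
h^{r-1}\bigl(h\,\delta\check{f}(y,h),\ h D_1\delta\check{f}(y,h)\,\delta y+ r\,\delta\check{f}(y,h)\,\delta h + h D_2\delta\check{f}(y,h)\,\delta h\bigr).
\end{equation*}
The factor in parentheses is a continuous (in fact smooth) function of $(y,h,\delta y,\delta h)$, so the local criterion of Lemma~\ref{le:order_of_maps_in_terms_of_local_expression} yields $Tf^2=Tf^1+\mathcal{O}(\grade_{TX}^{r-1})$ in a neighborhood of $v_{x_0}$. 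Since $v_{x_0}$ was arbitrary, the claim holds.

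The only mildly delicate point is the smoothness of $\delta\check{f}$: without it, differentiating the identity $\check{f}^2-\check{f}^1=h^r\delta\check{f}$ in $h$ would be unjustified. This is exactly what part~\ref{it:0_residual_implies_order+1-smooth} of Lemma~\ref{le:0_residual_implies_order+1} provides (with $k$ taken large, or $k=\infty$), so the argument proceeds without further obstruction.
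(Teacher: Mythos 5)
Your proposal is correct and follows essentially the same route as the paper: localize via Lemma~\ref{le:order_of_maps_in_terms_of_local_expression}, differentiate the identity $\check{f}^2-\check{f}^1=\check{\grade}^r\,\delta\check{f}$, and observe that the product rule produces exactly one factor $h^{r-1}$ times a continuous function. Your explicit appeal to part~\ref{it:0_residual_implies_order+1-smooth} of Lemma~\ref{le:0_residual_implies_order+1} to justify differentiating $\delta\check{f}$ is a welcome refinement of a point the paper's proof leaves implicit.
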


\begin{proof}
  For $x_0\in \grade_X^{-1}(0)$, using
  Lemma~\ref{le:order_of_maps_in_terms_of_local_expression}, there are
  local charts $(V,\phi)$ and $(W,\psi)$ of $X$ and $Y$ such that
  $x_0\in V$, $f_j(V)\subset W$ and an open subset $U'\subset \phi(V)$
  containing $\phi(x_0)$ as well as a continuous function
  $\delta\check{f}:U'\rightarrow \R^{\dim(Y)}$ such that
  \begin{equation}\label{eq:order_of_tangent_map-order_f_local}
    \check{f}_2(u')-\check{f}_1(u') = 
    \grade_{\R^{\dim(X)}}(u')^r \delta\check{f}(u') \stext{for all} u'\in U',
  \end{equation}
  where $\grade_{\R^{\dim(X)}} := \grade_X\circ \phi^{-1}$ and
  $\check{f}_j := \psi\circ f_j\circ \phi^{-1}$.

  Differentiating~\eqref{eq:order_of_tangent_map-order_f_local} we obtain
  \begin{equation*}
    T\check{f}_2(u')(\delta u')-T\check{f}_1(u')(\delta u') = 
    T(\grade_{\R^{\dim(X)}}^r \delta\check{f})(u')(\delta u')
    \stext{ for all } u'\in U',\ \delta u'\in T_{u'}U'
  \end{equation*}
  so that
  \begin{equation*}
    (\check{f}_2(u'),D\check{f}_2(u')(\delta u')) - 
    (\check{f}_1(u'),D\check{f}_1(u')(\delta u')) = 
    (\grade_{\R^{\dim(X)}}(u')^r \delta\check{f}(u'), 
    D(\grade_{\R^{\dim(X)}}^r \delta\check{f})(u')(\delta u')).
  \end{equation*}
  Then, as
  \begin{equation*}
    \begin{split}
      D(\grade_{\R^{\dim(X)}}^r \delta\check{f})(u')(\delta u')) =&
      \sum_{jk} \pd{(\grade_{\R^{\dim(X)}}^r
        (\delta\check{f})^k)}{u'_j}(u')(\delta u')_j e_k \\=& \sum_{j}
      r \grade_{\R^{\dim(X)}}(u')^{r-1}
      \pd{\grade_{\R^{\dim(X)}}}{u'_j}(u') (\delta u')_j
      \delta\check{f}(u') \\ &+ \sum_{jk} \grade_{\R^{\dim(X)}}(u')^r
      \pd{(\delta\check{f})^k}{u'_j}(\delta u')_j e_k \\=&
      \grade_{\R^{\dim(X)}}(u')^{r-1} \delta D\check{f}(u',\delta u')
    \end{split}
  \end{equation*}
  for
  $\delta D\check{f}(u',\delta u') := \sum_k (\delta
  D\check{f}(u',\delta u'))_k e_k$ and
  \begin{equation}\label{eq:order_of_tangent_map-delta_D_f}
    (\delta
    D\check{f}(u',\delta u'))_k := \sum_j \left( r \pd{\grade_{\R^{\dim(X)}}}{u'_j}(u')
      (\delta u')_j (\delta\check{f}(u'))_k + 
      \grade_{\R^{\dim(X)}}(u') \pd{(\delta\check{f})^k}{u'_j}(\delta
      u')_j\right),
  \end{equation}
  we see that
  \begin{equation*}
    \begin{split}
      \widecheck{Tf_2}(u',\delta u') - \widecheck{Tf_1}(u',\delta u')
      =& (\check{f}_2(u'),D\check{f}_2(u')(\delta u')) -
      (\check{f}_1(u'),D\check{f}_1(u')(\delta u')) \\=&
      (\grade_{\R^{\dim(X)}}(u')^r \delta\check{f}(u'),
      \grade_{\R^{\dim(X)}}^{r-1}(u') \delta D\check{f}(u',\delta
      u'))\\=&
      \underbrace{\grade_{\R^{\dim(X)}}^{r-1}(u')}_{=\grade_{T\R^{\dim(X)}}^{r-1}(u',\delta
        u')} \underbrace{(\grade_{\R^{\dim(X)}}(u')
        \delta\check{f}(u'), \delta D\check{f}(u',\delta
        u'))}_{=:\delta T\check{f}(u',\delta u')}
    \end{split}
  \end{equation*}
  with $\delta T\check{f}$ continuous on $TU'$. Hence, by
  Lemma~\ref{le:order_of_maps_in_terms_of_local_expression}, we
  conclude that $Tf^2 = Tf^1 +\mathcal{O}(\grade_{TX}^{r-1})$, as wanted.
\end{proof}

\begin{remark}\label{rem:order_of_tangent_map-on_subbundle}
  According to Lemma~\ref{le:order_of_tangent_map} the contact order
  of tangent maps may drop by one with respect to the contact order of
  the original maps. Notice that if $\delta u'$ is in
  $\ker(T_{u'}\grade_{\R^{\dim(X)}})$
  in~\eqref{eq:order_of_tangent_map-delta_D_f}, then one additional
  factor $\grade_{\R^{\dim(X)}}(u')$ is obtained and the contact order
  doesn't drop. In global terms, if one considers the restriction of
  $Tf^j$ to a subbundle $\mathcal{V}\subset TX$ such that
  $\mathcal{V}\subset \ker(T\grade_X)$, then
  $Tf^2|_\mathcal{V} = Tf^1|_\mathcal{V}
  +\mathcal{O}(\grade_\mathcal{V}^r)$ for
  $\grade_\mathcal{V}:=\grade_{TX}|_\mathcal{V}$.
\end{remark}

Let $M$ and $N$ be manifolds; then $(\R\times M,\grade_{\R\times M})$
with $\grade_{\R\times M}(h,m):=h$ is a manifold with a grade. Notice
that $\ker(T\grade_{\R\times M}) \simeq p_2^*TM$ and recall that, for
a map $f:\R\times M\rightarrow N$ we have the map
$T_2f:=T_{p_2^*TM}f : p_2^*TM \rightarrow TN$ introduced in
Section~\ref{sec:remarks_on_cartesian_products} as the fiberwise
restriction of $Tf$ to $p_2^*TM$.

\begin{corollary}\label{cor:order_of_tangent_map_2}
  Let $f^j:\R\times M\rightarrow N$ ($j=1,2$) be two smooth maps such
  that $f^2 = f^1+\mathcal{O}(\grade_M^r)$ for
  $\grade_M(h,m):=h$. Then,
  $T_2f^2 = T_2f^1 +\mathcal{O}(\grade_{p_2^*TM}^r)$ for
  $\grade_{p_2^*TM}(h,\delta m):= h$.
\end{corollary}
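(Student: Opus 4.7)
The plan is to derive this result directly from Lemma~\ref{le:order_of_tangent_map} together with the sharper statement sketched in Remark~\ref{rem:order_of_tangent_map-on_subbundle}. The first step is to identify $T_2f^j$ as the restriction of the full tangent map $Tf^j$ to an appropriate subbundle. Since $\grade_M(h,m)=h$ is the projection, we have $T_{(h,m)}\grade_M(\delta h,\delta m)=\delta h$, so $\ker(T\grade_M)=\{(h,m,0,\delta m)\}\subset T(\R\times M)$, which under the canonical splitting $T(\R\times M)\simeq p_1^*T\R\oplus p_2^*TM$ is exactly $p_2^*TM$. By the definition of $T_2$ recalled in Section~\ref{sec:remarks_on_cartesian_products}, $T_2f^j=(Tf^j)|_{p_2^*TM}=(Tf^j)|_{\ker(T\grade_M)}$.

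Next I would invoke the local description used in the proof of Lemma~\ref{le:order_of_tangent_map}. Starting from the local order identity $\check{f}_2(u')-\check{f}_1(u')=\grade_{\R^{\dim(X)}}(u')^r\delta\check{f}(u')$ and differentiating, one arrives at the expression for $\delta D\check{f}(u',\delta u')$ in~\eqref{eq:order_of_tangent_map-delta_D_f}. The loss of one power of $\grade$ in Lemma~\ref{le:order_of_tangent_map} comes precisely from the first summand of~\eqref{eq:order_of_tangent_map-delta_D_f}, namely $r\,\partial\grade_{\R^{\dim(X)}}/\partial u'_j\,(\delta u')_j\,(\delta\check{f})_k$. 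However, when $\delta u'\in\ker(T_{u'}\grade_{\R^{\dim(X)}})$, this term vanishes, so an extra factor $\grade_{\R^{\dim(X)}}(u')$ can be extracted, yielding contact order $r$ instead of $r-1$ on the subbundle. This is the content of Remark~\ref{rem:order_of_tangent_map-on_subbundle}, applied with $\mathcal{V}=p_2^*TM\subset\ker(T\grade_M)$.

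Finally, I would check that the grading named in the conclusion matches. By definition $\grade_{T(\R\times M)}=\grade_M\circ\tau_{T(\R\times M),\R\times M}$, so its restriction to $p_2^*TM$ sends $(h,m,\delta m)$ to $h$, which is exactly $\grade_{p_2^*TM}$ as stated. Combining these observations,
\[
T_2f^2=(Tf^2)|_{p_2^*TM}=(Tf^1)|_{p_2^*TM}+\mathcal{O}(\grade_{p_2^*TM}^r)=T_2f^1+\mathcal{O}(\grade_{p_2^*TM}^r),
\]
as required. There is no real obstacle here: the only technical point is the cancellation of the ``bad'' term in~\eqref{eq:order_of_tangent_map-delta_D_f} on $\ker(T\grade_X)$, which is precisely what Remark~\ref{rem:order_of_tangent_map-on_subbundle} records. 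The rest is bookkeeping to align the Cartesian-product splitting with the grade.
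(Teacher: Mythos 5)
Your proposal is correct and follows exactly the paper's route: the paper's proof of this corollary is simply to apply Lemma~\ref{le:order_of_tangent_map} together with Remark~\ref{rem:order_of_tangent_map-on_subbundle} for $\mathcal{V}:=p_2^*TM$, which is what you do, with the added (accurate) detail of why the problematic first summand of~\eqref{eq:order_of_tangent_map-delta_D_f} vanishes on $\ker(T\grade_M)$.
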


\begin{proof}
  It follows from Lemma~\ref{le:order_of_tangent_map} using
  Remark~\ref{rem:order_of_tangent_map-on_subbundle} for
  $\mathcal{V}:=p_2^*TM$.
\end{proof}

When $f:X\rightarrow Y$ is a diffeomorphism, one can define the
\jdef{cotangent map} $T^*f:T^*Y\rightarrow T^*X$ by
$T^*f(\gF_y) := \gF_y \circ Tf(f^{-1}(y))$. If $(U,\phi)$
and $(V,\psi)$ are coordinates in $X$ and $Y$ such that
$f(U)\subset V$ we have the commutative diagrams
\begin{equation*}
  \xymatrix{ {U}\ar[r]^{f|_U} \ar[d]_{\phi} & {V} \ar[d]^{\psi} \\
    {\R^{\dim(X)}} \ar[r]_{\check{f}} & {\R^{\dim(Y)}}
  }
  \stext{ and }
  \xymatrix{
    {T^*U} \ar[d]_{T^*\psi} & {T^*V} \ar[l]_{T^*f} \ar[d]^{T^*\phi} \\
    {T^*\R^{\dim(X)}} \ar[d]_{\Delta_{\dim(X)}} & {T^*\R^{\dim(Y)}} 
    \ar[d]^{\Delta_{\dim(Y)}} \ar[l]^{T^*\check{f}}\\
    {\R^{\dim(X)}\times \R^{\dim(X)}} & {\R^{\dim(Y)}\times \R^{\dim(Y)}} 
    \ar[l]^{\widecheck{T^*f}}
  }
\end{equation*}
where $\widecheck{T^*f}$, the coordinate expression of $T^*f$, is
$\widecheck{T^*f}(y,a) :=
(\check{f}^{-1}(y),\transpose{[D\check{f}(\check{f}^{-1}(y))]} a)$
(here we are identifying $(\R^n)^*$ with $\R^n$ using the canonical
inner product via $\Delta_n$).

\begin{lemma}\label{le:order_of_cotangent_map}
  Let $(X,\grade_X)$ and $(Y,\grade_Y)$ be manifolds with a grade and
  $f_j:X\rightarrow Y$ ($j=1,2$) be two diffeomorphisms such that
  $f_j(\grade_X^{-1}(0))\subset \grade_Y^{-1}(0)$ and
  $f_2 = f_1+\mathcal{O}(\grade_X^r)$. Then
  $T^*f_j:T^*Y\rightarrow T^*X$ satisfy
  $T^*f_2 = T^*f_1 +\mathcal{O}(\grade_{T^*Y}^{r-1})$ for
  $\grade_{T^*Y}:= \grade_Y\circ \tau_{T^*Y,Y}$.
\end{lemma}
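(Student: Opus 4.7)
The plan is to reduce to a local coordinate computation and then propagate the contact order through the two components of the coordinate expression of $T^*f_j$ displayed in the diagram immediately before the statement of the lemma. Concretely, for a covector $\alpha_{y_0}\in T^*_{y_0}Y$ with $y_0\in\grade_Y^{-1}(0)$, the assumption $r\geq 1$ together with $f_2=f_1+\mathcal{O}(\grade_X^r)$ and $f_j(\grade_X^{-1}(0))\subset \grade_Y^{-1}(0)$ forces $f_1=f_2$ on $\grade_X^{-1}(0)$, so $T^*f_2$ and $T^*f_1$ already agree on $\grade_{T^*Y}^{-1}(0)$, and it only remains to control the local difference. First I would choose charts $(U,\phi)$ of $X$ at $x_0:=f_1^{-1}(y_0)=f_2^{-1}(y_0)$ and $(V,\psi)$ of $Y$ at $y_0$ in which $\check{\grade}_X$ and $\check{\grade}_Y$ are the last coordinate (as in the proof of Lemma~\ref{le:0_residual_implies_order+1}), and in which $f_j(U)\subset V$.

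In these coordinates the cotangent maps read
\begin{equation*}
  \widecheck{T^*f_j}(y,a)=\bigl(\check{f}_j^{-1}(y),\,
  \transpose{[D\check{f}_j(\check{f}_j^{-1}(y))]}\,a\bigr).
\end{equation*}
The first component is handled by Proposition 4 of~\cite{ar:cuell_patrick-skew_critical_problems}: since $\check{f}_j$ are diffeomorphisms agreeing on the grade-zero slice, $\check{f}_2^{-1}=\check{f}_1^{-1}+\mathcal{O}(\check{\grade}_Y^{\,r})$, which is more than we need. For the second component, Lemma~\ref{le:order_of_tangent_map} gives $D\check{f}_2=D\check{f}_1+\mathcal{O}(\check{\grade}_X^{\,r-1})$ (one order is lost by differentiation), and then I would split
\begin{equation*}
  D\check{f}_2(\check{f}_2^{-1}(y))-D\check{f}_1(\check{f}_1^{-1}(y))
  =\bigl[D\check{f}_2-D\check{f}_1\bigr](\check{f}_2^{-1}(y))
  +\bigl[D\check{f}_1(\check{f}_2^{-1}(y))-D\check{f}_1(\check{f}_1^{-1}(y))\bigr].
\end{equation*}
The first bracket is $\mathcal{O}(\check{\grade}_X^{\,r-1}(\check{f}_2^{-1}(y)))=\mathcal{O}(\check{\grade}_Y^{\,r-1})$ by Proposition 3 of~\cite{ar:cuell_patrick-skew_critical_problems}, while the second bracket is controlled by the smoothness of $D\check{f}_1$ and the bound $\check{f}_2^{-1}-\check{f}_1^{-1}=\mathcal{O}(\check{\grade}_Y^{\,r})$, giving $\mathcal{O}(\check{\grade}_Y^{\,r})\subset \mathcal{O}(\check{\grade}_Y^{\,r-1})$. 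Transposing and contracting with $a$ preserves these bounds locally, so assembling the two components via Lemma~\ref{le:order_of_maps_in_terms_of_local_expression} yields $T^*f_2=T^*f_1+\mathcal{O}(\grade_{T^*Y}^{\,r-1})$.

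The main obstacle I expect is matching the grade structures cleanly in the chosen coordinates, so that the inversion step of Cuell--Patrick's Proposition~4 applies with the grades as specified and so that the composition with $\check{f}_j^{-1}$ does not artificially degrade the order: standardizing the grade to the last coordinate on both sides, and using that $f_j$ restricted to $\grade_X^{-1}(0)$ lands in $\grade_Y^{-1}(0)$, are exactly what is needed to justify evaluating the residuals at points of $\grade_Y^{-1}(0)$ in the composition formulas. Everything else is a routine bookkeeping of orders.
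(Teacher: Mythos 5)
Your proposal is correct and follows essentially the same route as the paper's proof: the same local coordinate expression $\widecheck{T^*f_j}(y,a)=(\check{f}_j^{-1}(y),\transpose{[D\check{f}_j(\check{f}_j^{-1}(y))]}a)$, Proposition 4 of Cuell--Patrick for the order-$r$ contact of the inverses, the tangent-map lemma for the order-$(r-1)$ contact of the derivatives, and a telescoping of the second component into a ``same function at two nearby points'' term (order $r$, by smoothness) plus a ``difference of derivatives at one point'' term (order $r-1$); the paper merely chooses the opposite middle term in the telescope and writes out the mean-value step as an explicit integral. The grade-matching issue you flag is handled in the paper exactly as you anticipate, via the $\dot f$ factors in Cuell--Patrick's composition formula.
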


\begin{proof}
  Fix $\gF_{y_0}\in \grade_{T^*Y}^{-1}(0)$ where
  $\grade_{T^*Y}:=\grade_Y\circ \tau_{T^*Y,Y}$ and
  $\tau_{T^*Y,Y}(\gF_{y_0}) = y_0$. Let
  $x_0:=f_j^{-1}(y_0) \in \grade_X^{-1}(0)$. Using
  Lemma~\ref{le:order_of_maps_in_terms_of_local_expression}, there are
  local charts $(V,\phi)$ and $(W,\psi)$ of $X$ and $Y$ such that
  $x_0\in V$, $f_j(V)\subset W$ and an open subset $U'\subset \phi(V)$
  containing $\phi(x_0)$ as well as a continuous function
  $\delta\check{f}:U'\rightarrow \R^{\dim(Y)}$ such that
  \begin{equation}\label{eq:order_of_cotangent_map-order_f_local}
    \check{f}_2(u')-\check{f}_1(u') = 
    \grade_{\R^{\dim(X)}}(u')^r \delta\check{f}(u') \stext{for all} u'\in U',
  \end{equation}
  where $\grade_{\R^{\dim(X)}} := \grade_X\circ \phi^{-1}$ and
  $\check{f}_j := \psi\circ f_j\circ \phi^{-1}$. It was shown in
  Lemma~\ref{le:order_of_tangent_map} that
  \begin{equation*}
    \widecheck{Tf_2}(u',\delta u') - \widecheck{Tf_1}(u',\delta u') = 
    \grade_{\R^{\dim(X)}}^{r-1}(u')\delta T\check{f}(u',\delta u'),
  \end{equation*}
  for all $u'\in U'$ and $\delta u'\in T_{u'}U'$, and where
  $\delta T\check{f}$ is continuous in $TU'$. In particular, looking
  at the second factor,
  \begin{equation*}
    D\check{f}_2(u',\delta u') - D\check{f}_1(u',\delta u') = 
    \grade_{\R^{\dim(X)}}^{r-1}(u')\delta D\check{f}(u',\delta u') 
    \stext{ in } \R^{\dim(Y)},
  \end{equation*}
  for all $u'\in U'$ and $\delta u'\in T_{u'}U'$ or, equivalently,
  \begin{equation}\label{eq:order_of_cotangent_map-order_Df_local}
    D\check{f}_2(u') - D\check{f}_1(u') = 
    \grade_{\R^{\dim(X)}}^{r-1}(u')\delta D\check{f}(u') 
    \stext{ in } \hom(\R^{\dim(X)},\R^{\dim(Y)}),
  \end{equation}
  for all $u'\in U'$.

  The local expressions $\widecheck{T^*f_j}$ of $T^*f_j$ with respect
  to the charts $(T^*W, \Delta_{\R^{\dim(Y)}}\circ T^*\psi^{-1})$
  ---which contains $\gF_{y_0}$--- and
  $(T^*V, \Delta_{\R^{\dim(X)}}\circ T^*\phi^{-1})$ are, for
  $u'\in U'$ and $a\in\R^{\dim(Y)}$,
  \begin{equation*}
    \widecheck{T^*f_j}(u',a) = 
    (\check{f}_j^{-1}(u'), \transpose{[D\check{f}_j(\check{f}_j^{-1}(u'))]} a).
  \end{equation*}

  Then,
  \begin{equation}\label{eq:order_of_cotangent_map-local_cotangent-1}
    \begin{split}
      \widecheck{T^*f_2}(u',a) - \widecheck{T^*f_1}(u',a) =&
      (\check{f}_2^{-1}(u'),
      \transpose{[D\check{f}_2(\check{f}_2^{-1}(u'))]} a) -
      (\check{f}_1^{-1}(u'),
      \transpose{[D\check{f}_1(\check{f}_1^{-1}(u'))]} a) \\=&
      (\underbrace{\check{f}_2^{-1}(u')-\check{f}_1^{-1}(u')}_{=:I_1},
      \underbrace{(\transpose{[D\check{f}_2(\check{f}_2^{-1}(u'))]} -
        \transpose{[D\check{f}_1(\check{f}_1^{-1}(u'))]}) a}_{=:I_2}).
    \end{split}
  \end{equation}
  
  As $f_2=f_1+\mathcal{O}(\grade_X^r)$, by Proposition 4
  in~\cite{ar:cuell_patrick-skew_critical_problems}, and shrinking
  $U'$ if needed, we have that
  \begin{equation}\label{eq:order_of_cotangent_map-inverse_bounds}
    (\check{f_2})^{-1}(u')-(\check{f_1})^{-1}(u') = \grade_{\R^{\dim(Y)}}(u')^r \delta
    (\check{f})^{-1}(u') \stext{ for all } u'\in U',
  \end{equation}
  showing that
  \begin{equation}\label{eq:order_of_cotangent_map-inverse_bounds-2}
    I_1 = \grade_{\R^{\dim(Y)}}(u')^r \delta
    (\check{f})^{-1}(u').
  \end{equation}

  Also, we have
  \begin{equation}\label{eq:order_of_cotangent_map-order_Df_local-partial}
    D\check{f}_2(\underbrace{\check{f}_2^{-1}(u')}_{=:u'_2}) - 
    D\check{f}_1(\underbrace{\check{f}_1^{-1}(u')}_{=:u'_1}) = 
    D\check{f}_2(u'_2) - D\check{f}_2(u'_1) + D\check{f}_2(u'_1) - 
    D\check{f}_1(u'_1).
  \end{equation}
  Using that, from~\eqref{eq:order_of_cotangent_map-inverse_bounds},
  $u'_2-u'_1 = \grade_{\R^{\dim(Y)}}(u')^r \delta (\check{f})^{-1}(u')$, we
  have that
  \begin{equation*}
    \begin{split}
      D\check{f}_2(u'_2) - D\check{f}_2(u'_1) =& \int_0^1 \frac{d}{dt}
      D\check{f}_2(u'_1+t(u'_2-u'_1)) dt \\=& \int_0^1
      D(D\check{f}_2)(u'_1+t(u'_2-u'_1)) \grade_{\R^{\dim(Y)}}(u')^r \delta
      (\check{f})^{-1}(u') dt \\=& \grade_{\R^{\dim(Y)}}(u')^r
      \underbrace{\int_0^1 D(D\check{f}_2)(u'_1+t(u'_2-u'_1))\delta
        (\check{f})^{-1}(u')dt}_{=:\delta I_3(u')}.
    \end{split}
  \end{equation*}
  On the other hand,
  by~\eqref{eq:order_of_cotangent_map-order_Df_local},
  \begin{equation*}
    D\check{f}_2(u'_1) - D\check{f}_1(u'_1) = 
    \grade_{\R^{\dim(X)}}^{r-1}(\underbrace{u'_1}_{=\check{f}_1^{-1}(u')}) 
    \delta D\check{f}(u'_1) = 
    \grade_{\R^{\dim(Y)}}^{r-1}(u') \delta D\check{f}(\check{f}_1^{-1}(u')).
  \end{equation*}
  Back to~\eqref{eq:order_of_cotangent_map-order_Df_local-partial}
  applying the results of the last two computations we have that
  \begin{equation*}
    \begin{split}
      D\check{f}_2(\check{f}_2^{-1}(u')) -
      D\check{f}_1(\check{f}_1^{-1}(u')) = \grade_{\R^{\dim(Y)}}(u')^{r-1}
      \underbrace{(\grade_{\R^{\dim(X)}}(u') \delta I_3(u') + \delta
        D\check{f}(\check{f}_1^{-1}(u')))}_{\delta I(u')}.
    \end{split}
  \end{equation*}
  This last expression, together
  with~\eqref{eq:order_of_cotangent_map-inverse_bounds-2},
  turn~\eqref{eq:order_of_cotangent_map-local_cotangent-1} into
  \begin{equation}\label{eq:order_of_cotangent_map-local_error_estimate}
    \begin{split}
      \widecheck{T^*f_2}(u',a) - \widecheck{T^*f_1}(u',a) =&
      (\grade_{\R^{\dim(Y)}}(u')^r \delta (\check{f})^{-1}(u'),
      \grade_{\R^{\dim(Y)}}(u')^{r-1} \transpose{[\delta I(u')]}a) \\=&
      \grade_{\R^{\dim(Y)}}(u')^{r-1} \underbrace{(\grade_{\R^{\dim(Y)}}(u') \delta
      (\check{f})^{-1}(u'), \transpose{[\delta I(u')]}a)}_{=:\delta T^*f(u',a)},
    \end{split}
  \end{equation}
  with $\delta T^*f(u',a)$ continuous in $u'$ and $a$.  By
  Lemma~\ref{le:order_of_maps_in_terms_of_local_expression}, we
  conclude that $T^*f_2 = T^*f_1+\mathcal{O}(\grade_{T^*Y}^{r-1})$.
\end{proof}

\begin{remark}\label{rem:order_of_cotangent_map-on_subbundle}
  According to Lemma~\ref{le:order_of_cotangent_map} the contact order
  of cotangent maps may drop by one with respect to the contact order
  of the original maps. This is a direct consequence of
  Lemma~\ref{le:order_of_tangent_map}, as can be seen in the
  expression~\eqref{eq:order_of_cotangent_map-order_Df_local} that is,
  eventually, used to find the local
  expression~\eqref{eq:order_of_cotangent_map-local_error_estimate}. Notice
  that if one were to restrict $Tf_j$ to any subbundle
  $\mathcal{V}\subset\ker(T\grade_M)$, as observed in
  Remark~\ref{rem:order_of_tangent_map-on_subbundle}, the contact
  order of the restricted $Tf_j$ would be the same as the one among
  the $f_j$ and, in the local
  description~\eqref{eq:order_of_cotangent_map-order_Df_local}, the
  power $r-1$ would change to $r$ (when evaluated in the local
  coordinates of the elements of $\mathcal{V}$). This change,
  eventually, leads to the corresponding change of the contact order
  of the (restricted) cotangent maps.
\end{remark}

\begin{remark}\label{rem:restrictions_of_tangent_and_cotangent_maps}
  Let $\mathcal{V}\subset TX$ be a subbundle such that
  $\mathcal{V}\subset\ker(T \grade_X)$; denote the inclusion of
  $\mathcal{V}$ into $TX$ by $i_\mathcal{V}$. For any
  $f:X\rightarrow Y$ we define $T_\mathcal{V}f:=Tf|_{\mathcal{V}}$. If
  $f_1,f_2:X\rightarrow Y$ are such that $f_2=f_1+\mathcal{O}(\grade_X^r)$,
  from Remark~\ref{le:order_of_tangent_map} we have that
  $T_\mathcal{V} f_2 = T_\mathcal{V}
  f_1+\mathcal{O}(\grade_X^r)$. Similarly, when $f:X\rightarrow Y$ is a
  diffeomorphism and $\mathcal{V}$ is as above, we define
  $T_\mathcal{V}^*f:=i_\mathcal{V}^*\circ T^*f:T^*Y\rightarrow
  \mathcal{V}^*$. If $f_1,f_2:X\rightarrow Y$ are diffeomorphisms such
  that $f_2=f_1+\mathcal{O}(\grade_X^r)$, from
  Remark~\ref{rem:order_of_cotangent_map-on_subbundle} we have that
  $T_\mathcal{V}^* f_2 = T_\mathcal{V}^* f_1+\mathcal{O}(\grade_X^r)$.
\end{remark}

Let $M$ be a manifold; then $(\R\times M,\grade_{\R\times M})$ with
$\grade_{\R\times M}(h,m):=h$ is a manifold with a grade. Let
$(N,\grade_N)$ be another manifold with a grade and
$f:\R\times M\rightarrow N$ be a diffeomorphism. Consider the
subbundle $\ker(T\grade_{\R\times M}) \simeq p_2^*TM$ of
$T(\R\times M)$ and the map
$T_2^*f:=T_{p_2^*TM}^*f : T^*N \rightarrow p_2^*T^*M$ introduced in
Section~\ref{sec:remarks_on_cartesian_products}.

\begin{corollary}\label{cor:order_of_cotangent_map_2}
  Let $f^j:\R\times M\rightarrow N$ ($j=1,2$) be two diffeomorphisms
  such that $f^j(\grade_{\R\times M}^{-1}(0))\subset \grade_N^{-1}(0)$
  for $\grade_{\R\times M}=p_1$. Assume that
  $f^2 = f^1+\mathcal{O}(\grade_{\R\times M}^r)$. Then
  $T^*_2f^j:T^*N\rightarrow p_2^*T^*M$ satisfy
  $T^*_2f^2 = T^*_2f^1 +\mathcal{O}(\grade_{T^*N}^r)$ for
  $\grade_{T^*N}:=\grade_N\circ \tau_{T^*N,N}$.
\end{corollary}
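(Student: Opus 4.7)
My plan is to derive this corollary from Lemma~\ref{le:order_of_cotangent_map} exactly as Corollary~\ref{cor:order_of_tangent_map_2} is derived from Lemma~\ref{le:order_of_tangent_map}, namely by restricting to a subbundle contained in $\ker(T\grade)$, which recovers the order that Lemma~\ref{le:order_of_cotangent_map} loses in its general form. First I would set $X:=\R\times M$, $Y:=N$, and $\mathcal{V}:=p_2^*TM\subset T(\R\times M)$. Because $\grade_{\R\times M}=p_1$, its differential reads off the $\R$-component, so $\ker(T\grade_{\R\times M})=p_2^*TM=\mathcal{V}$; in particular $\mathcal{V}\subset\ker(T\grade_X)$, and the map $T_\mathcal{V}^*f^j:=i_\mathcal{V}^*\circ T^*f^j$ of Remark~\ref{rem:restrictions_of_tangent_and_cotangent_maps} is exactly $T_2^*f^j$. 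The remaining hypotheses (diffeomorphism, $f^j(\grade_X^{-1}(0))\subset\grade_N^{-1}(0)$, and $f^2=f^1+\mathcal{O}(\grade_X^r)$) are precisely those of the current statement, so Remark~\ref{rem:restrictions_of_tangent_and_cotangent_maps} yields the desired $T_2^*f^2 = T_2^*f^1 + \mathcal{O}(\grade_{T^*N}^r)$.

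To make that Remark explicit, the argument is to revisit the local calculation of Lemma~\ref{le:order_of_cotangent_map} and track the extra $\grade$-factor that arises when the cotangent output is restricted to $\mathcal{V}^*$. Working in coordinates $(h,m^1,\ldots,m^n)$ on $X=\R\times M$ with $\grade_X=h$, the subbundle $\mathcal{V}$ consists of the $m$-directions. The loss of one order in Lemma~\ref{le:order_of_cotangent_map} comes from the term $\delta D\check{f}(\check{f}_1^{-1}(u'))$ appearing in the residual $\delta I(u')$. Using the formula from the proof of Lemma~\ref{le:order_of_tangent_map},
\begin{equation*}
  (\delta D\check{f}(u',\delta u'))_k =
  \sum_j\Bigl(r\,\pd{\grade_{\R^{\dim(X)}}}{u'_j}(u')(\delta u')_j(\delta\check{f}(u'))_k
  + \grade_{\R^{\dim(X)}}(u')\,\pd{(\delta\check{f})^k}{u'_j}(\delta u')_j\Bigr),
\end{equation*}
only the $j=h$ column contains a non-vanishing $\pd{\grade_{\R^{\dim(X)}}}{u'_j}$ contribution, so the columns corresponding to $m$-directions all carry an extra factor of $\grade_{\R^{\dim(X)}}(u')$. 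Since $T^*f$ is locally $\transpose{[D\check{f}]}\,a$ and $i_\mathcal{V}^*$ selects the $m$-rows of this covector, which are precisely the ones carrying the extra $\grade$-factor, the cotangent-side estimate~\eqref{eq:order_of_cotangent_map-local_error_estimate} is upgraded from $\grade_{T^*N}^{r-1}$ to $\grade_{T^*N}^{r}$. The remaining pieces of the same estimate (the base displacement $\check{f}_2^{-1}-\check{f}_1^{-1}$ and the $\delta I_3$ contribution) are already of order $r$ by Proposition~4 of~\cite{ar:cuell_patrick-skew_critical_problems}, so no further adjustment is needed.

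The only mildly delicate point, handled in the same manner as in Lemma~\ref{le:order_of_cotangent_map}, is to translate the $X$-side grade $\grade_{\R^{\dim(X)}}(\check{f}_1^{-1}(u'))$ appearing in the local computation into the $Y$-side grade $\grade_{T^*N}(u',a)=\grade_{\R^{\dim(Y)}}(u')$ used in the statement. Since $\check{f}_1^{-1}$ carries (a neighborhood of) $\grade_{\R^{\dim(Y)}}^{-1}(0)$ into $\grade_{\R^{\dim(X)}}^{-1}(0)$, the smooth function $\grade_{\R^{\dim(X)}}\circ \check{f}_1^{-1}$ vanishes on $\grade_{\R^{\dim(Y)}}^{-1}(0)$ and hence locally factors as $\grade_{\R^{\dim(Y)}}$ times a smooth function, allowing the extra factor of $\grade_{\R^{\dim(X)}}$ found in the $\mathcal{V}$-columns to be absorbed as an extra factor of $\grade_{T^*N}$ in the final local expression.
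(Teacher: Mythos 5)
Your proposal is correct and follows exactly the paper's route: the paper's proof is precisely the one-line reduction to Lemma~\ref{le:order_of_cotangent_map} via Remark~\ref{rem:restrictions_of_tangent_and_cotangent_maps} with $\mathcal{V}:=p_2^*TM=\ker(Tp_1)$. Your additional unpacking of why the restriction to $\mathcal{V}$ recovers the lost order (the vanishing of the $\sum_j\pd{\grade}{u'_j}(\delta u')_j$ term on $\ker(T\grade_X)$ and the conversion of the $X$-side grade factor to the $Y$-side one) correctly fills in the content of Remarks~\ref{rem:order_of_tangent_map-on_subbundle} and~\ref{rem:order_of_cotangent_map-on_subbundle}, which the paper leaves implicit.
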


\begin{proof}
  It follows from Lemma~\ref{le:order_of_cotangent_map} using
  Remark~\ref{rem:restrictions_of_tangent_and_cotangent_maps} for
  $\mathcal{V}:=p_2^*TM$.
\end{proof}

\begin{lemma}\label{le:order_of_pullback_map_2}
  Let $f^j:\R\times M\rightarrow \R\times N$ ($j=1,2$) be two
  diffeomorphisms such that
  $\grade_{\R\times N} \circ f^j = \grade_{\R\times M}$ for
  $\grade_{\R\times M}=p_1$ and $\grade_{\R\times N}=p_1$ and let
  $\gF^k\in \Gamma(\R\times N,p_2^* T^*N)$. Assume that
  $f^2 = f^1+\mathcal{O}(\grade_{\R\times M}^r)$ and that
  $\gF_2 = \gF_1 + \mathcal{O}(\grade_{\R\times N}^r)$. Then
  $(f^2)^{*_2}(\gF_2) = (f^1)^{*_2}(\gF_1) +
  \mathcal{O}(\grade_{\R\times M}^r)$, for $*_2$ defined
  in~\eqref{eq:*_2-def}. Furthermore, if we consider the map
  $\grade_{p_2^*T^*N} := \grade_{\R\times N}\circ \tau_{T^*(\R\times
    N)}$, we have
  \begin{equation}\label{eq:order_of_pullback_map_2-residual}
    \begin{split}
      \cpres^r((f^2)^{*_2}(\gF_2),(f^1)^{*_2}(&\gF_1))(0,m) =
      \cpres^r(T_2^*f^2,T_2^*f^1)(\gF_2(f^2(0,m))) \\& + T(T_2^*
      f^1)(\gF_1(f^2(0,m)))(\cpres^r(\gF_2,\gF_1)(f^2(0,m))) \\& +
      T(T_2^*f^1 \circ \gF_1)(f^2(0,m))(\cpres^r(f^2,f^1)(0,m))
    \end{split}
  \end{equation}
\end{lemma}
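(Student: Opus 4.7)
The plan is to express $(f^j)^{*_2}(\gF_j)$ as a triple composition and then to apply the chain-rule formula for residuals (Proposition~3 of \cite{ar:cuell_patrick-skew_critical_problems}) twice, combined with the contact-order estimate for cotangent maps provided by Corollary~\ref{cor:order_of_cotangent_map_2}.

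First, I would establish the composition identity
\begin{equation*}
  (f^j)^{*_2}(\gF_j) = T_2^* f^j \circ \gF_j \circ f^j
\end{equation*}
as sections of $p_2^*T^*M\to \R\times M$. Since $\grade_{\R\times N}\circ f^j=\grade_{\R\times M}$, the diffeomorphism $f^j$ has the form $f^j(h,m)=(h,\tilde f^j(h,m))$, so that $T_2^*f^j$ is a well-defined vector-bundle morphism from $p_2^*T^*N$ to $p_2^*T^*M$ covering $(f^j)^{-1}$. Unpacking~\eqref{eq:*_2-def} and using that $\gF_j$ takes values in $p_2^*T^*N\simeq(\ker(Tp_2))^\circ$ then yields the claimed identity by a direct fiberwise verification. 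Moreover each of the three factors $T_2^*f^j$, $\gF_j$, and $f^j$ preserves the relevant grade, so that the scalar factors $\dot h(m)^r$ produced by the chain rule for residuals will all be equal to $1$.

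Next, a first application of Proposition~3 of \cite{ar:cuell_patrick-skew_critical_problems} to the composition $\gF_j\circ f^j$ gives, at any $(0,m)\in\grade_{\R\times M}^{-1}(0)$,
\begin{equation*}
\cpres^r(\gF_2\circ f^2,\gF_1\circ f^1)(0,m) = \cpres^r(\gF_2,\gF_1)(f^2(0,m)) + T_{f^1(0,m)}\gF_1\bigl(\cpres^r(f^2,f^1)(0,m)\bigr),
\end{equation*}
where I have used $f^1(0,m)=f^2(0,m)$. Then, applying the same proposition to $T_2^*f^j\circ(\gF_j\circ f^j)$, combined with Corollary~\ref{cor:order_of_cotangent_map_2} (which gives $T_2^*f^2=T_2^*f^1+\mathcal{O}(\grade_{T^*N}^r)$), yields both the asserted estimate $(f^2)^{*_2}(\gF_2)=(f^1)^{*_2}(\gF_1)+\mathcal{O}(\grade_{\R\times M}^r)$ and
\begin{equation*}
\begin{split}
\cpres^r((f^2)^{*_2}(\gF_2),(f^1)^{*_2}(\gF_1))(0,m) =&\ \cpres^r(T_2^*f^2,T_2^*f^1)\bigl((\gF_2\circ f^2)(0,m)\bigr) \\& + T(T_2^*f^1)\bigl((\gF_1\circ f^1)(0,m)\bigr)\bigl(\cpres^r(\gF_2\circ f^2,\gF_1\circ f^1)(0,m)\bigr).
\end{split}
\end{equation*}
Substituting the first residual into this expression and recognizing via the chain rule that $T(T_2^*f^1)\circ T\gF_1=T(T_2^*f^1\circ\gF_1)$ produces the three-term formula~\eqref{eq:order_of_pullback_map_2-residual}.

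The only real subtlety lies in bookkeeping: checking that each of the three factors preserves the relevant grade (so that every $\dot h^r$ factor collapses to $1$), and justifying the base-point substitutions $f^1(0,m)\leftrightarrow f^2(0,m)$ and $\gF_1\leftrightarrow\gF_2$ inside the residual arguments. Both substitutions are legitimate because $f^1$ and $f^2$ agree on $\grade_{\R\times M}^{-1}(0)$ and $\gF_1$ and $\gF_2$ agree on $\grade_{\R\times N}^{-1}(0)$, so the residuals live in the fiber over a well-defined common point. Once the composition identity is in hand, the rest is two mechanical applications of Proposition~3.
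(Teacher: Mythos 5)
Your proposal is correct and follows essentially the same route as the paper: write $(f^j)^{*_2}(\gF_j)=T_2^*f^j\circ\gF_j\circ f^j$, check that each factor preserves the grade so all $\dot{(\cdot)}^r$ factors equal $1$, invoke Corollary~\ref{cor:order_of_cotangent_map_2} for the cotangent maps, and apply Proposition~3 of Cuell--Patrick twice. The only (immaterial) difference is the bracketing — you split off the inner composition $\gF_j\circ f^j$ first and then collapse $T(T_2^*f^1)\circ T\gF_1$ by the chain rule, whereas the paper splits off the innermost factor $f^j$ first — and both yield formula~\eqref{eq:order_of_pullback_map_2-residual}.
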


\begin{proof}
  From~\eqref{eq:*_2-def},
  $(f^j)^{*_2}(\gF_k) = T_2^*f^j \circ \gF_k \circ f^j$. We have that
  $\grade_{\R\times N} \circ f^j = \grade_{\R\times M}$ so that
  $f^j(\grade_{\R\times M}^{-1}(0)) \subset \grade_{\R\times
    N}^{-1}(0)$ and $\dot{f^j}(0,m)=1$. Also, as
  $\gF_j\in \Gamma(\R\times N, p_2^*T^*N)$, we have that
  $\grade_{p_2^*T^*N}(\gF_k(h,n)) =\grade_{\R\times
    N}(\tau_{T^*(\R\times N)}(\gF_k(h,n))) = \grade_{\R\times
    N}(h,n)$, so that
  $\grade_{p_2^*T^*N} \circ \gF_k = \grade_{\R\times N}$ and,
  consequently,
  $\gF_k(\grade_{\R\times N}^{-1}(0)) \subset
  \grade_{p_2^*T^*N}^{-1}(0)$ and $\dot{\gF_k}(0,n)=1$. As, by
  hypothesis, $f^2 = f^1+\mathcal{O}(\grade_{\R\times M}^r)$, by
  Corollary~\ref{cor:order_of_cotangent_map_2},
  $T_2^*f^2 = T_2^*f^1+\mathcal{O}(\grade_{p_2^*T^*N}^r)$ and, as
  $\gF_2 = \gF_1 + \mathcal{O}(\grade_{\R\times N}^r)$, applying
  Proposition 3 from~\cite{ar:cuell_patrick-skew_critical_problems}
  twice, we have that
  \begin{equation*}
    (f^2)^{*_2}(\gF_2) = T_2^*f^2 \circ \gF_2 \circ f^2 = T_2^*f^1
    \circ \gF_1 \circ f^1 + \mathcal{O}(\grade_{\R\times M}^r) =
    (f^1)^{*_2}(\gF_1) + \mathcal{O}(\grade_{\R\times M}^r).
  \end{equation*}
  As for the corresponding residuals, using the same result, we have
  \begin{equation*}
    \begin{split}
      \cpres^r((f^2)^{*_2}(\gF_2),(f^1)^{*_2}(\gF_1))(0,m) =&
      \cpres^r(T_2^*f^2 \circ \gF_2 \circ f^2, T_2^*f^1 \circ \gF_1
      \circ f^1)(0,m) \\=& \underbrace{\dot{f^2}(0,m)^r}_{=1}
      \cpres^r(T_2^*f^2 \circ \gF_2,T_2^*f^1 \circ \gF_1)(f^2(0,m))
      \\& + T(T_2^*f^1 \circ \gF_1)(f^2(0,m))(\cpres^r(f^2,f^1)(0,m))
    \end{split}
  \end{equation*}
  Notice that, since $\grade_{\R\times N} \circ f^2 = \grade_{\R\times M}$, we
  have $\dot{f^2}(0,m)=1$. Similarly
  \begin{equation*}
    \begin{split}
      \cpres^r(T_2^*f^2 \circ \gF_2,T_2^*f^1 \circ \gF_1)(f^2(0,m)) =&
      \underbrace{\dot{\gF_2}(f^2(0,m))^r}_{=1}
      \cpres^r(T_2^*f^2,T_2^*f^1)(\gF_2(f^2(0,m))) \\&+ T(T_2^*
      f^1)(\gF_1(f^2(0,m)))(\cpres^r(\gF_2,\gF_1)(f^2(0,m))).
    \end{split}
  \end{equation*}
  All together, we see that~\eqref{eq:order_of_pullback_map_2-residual}
  is valid.
\end{proof}


\section{Things about Grassmannians}
\label{sec:things_about_grassmannians}

In this appendix we review a few basic facts about the real
Grassmannian manifolds.


\subsection{Definition and coordinates}
\label{sec:grassmannian-definition_and_coordinates}

Let $V$ be a finite dimensional $\R$-vector space and $k\in\NZ$. The
Grassmannian $\Gr(V,k)$ is the set of $k$-dimensional subspaces of
$V$. Fixing $D\in \Gr(V,k)$, a coordinate chart of $\Gr(V,k)$
containing $D$ can be constructed as follows. Choose $F\subset V$ such
that $V=D\oplus F$ and let $p_D:V\rightarrow D$ and
$p_F:V\rightarrow F$ be the associated projections. Then,
$U_{D,F}:=\{D'\in \Gr(V,k) : p_D|_{D'}:D'\rightarrow D \text{ is an
  isomorphism }\} \subset \Gr(V,k) $ is an open subset and the map
$\phi_{D,F} : U_{D,F} \rightarrow \hom(D,F) \simeq
\R^{k\times(\dim(V)-k)}$ defined by
$\phi_{D,F}(D') := p_F\circ (p_D|_{D'})^{-1}$ is a coordinate
function\footnote{In fact, $\phi_{D,F}(D')$ is not a coordinate
  function because its codomain is $\hom(D,F)$ rather than
  $\R^{\dim(D)\dim(F)}$. This last step is achieved by fixing bases in
  $D$ and $F$.}. It is easy to check that,
$\phi_{D,F}^{-1}:\hom(D,F)\rightarrow U_{D,F}$ is
$\phi_{D,F}^{-1}(\Delta') = \im(i_{D,V} + i_{F,V}\circ \Delta')$,
where $i_{D,V} \in \hom(D,V)$ and $i_{F,V} \in \hom(F,V)$ are the
natural inclusions.


\subsection{Tangent spaces}
\label{sec:grassmannian-tangent_spaces}

Fix $D\in \Gr(V,k)$. In what follows we establish a (canonical)
isomorphism between $T_D\Gr(V,k)$ and $\hom(D,V/D)$ ---choosing a
complement $F$ so that $V=D\oplus F$, $V/D\simeq F$, the description
matches the one given in the previous paragraph.

Given $\zeta \in T_D\Gr(V,k)$, in what follows we define
$\conj{\zeta}\in \hom(D,V/D)$. First, let $F(t)$ be a smooth curve in
$\Gr(V,k)$ such that $F(0)=D$ and $F'(0)=\zeta$. Now, for any
$v\in D$, choose a smooth curve $v(t) \in F(t)\subset V$ for all $t$
and such that $v(0)=v$. Finally, define
\begin{equation*}
  \conj{\zeta}(v) := \pi^{V,V/D}(v'(0)).
\end{equation*}
It can be checked that $\zeta \mapsto \conj{\zeta}$ is well defined
and, indeed, it is an isomorphism.

Now, let $A\in\hom(V,\ti{V})$, where $\ti{V}$ is another $\R$-vector
space. Assume that $A$ is an isomorphism, so that it defines a map
$A^{\Gr}:\Gr(V,k)\rightarrow \Gr(\ti{V},k)$ by
$A^{\Gr}(\langle v_1,\ldots,v_k\rangle) := \langle
A(v_1),\ldots,A(v_k)\rangle$. Then, for $D\in \Gr(V,k)$, we can
compute the representative of
$T_DA^{\Gr}\in\hom(T_D\Gr(V,k), T_{\ti{D}}\Gr(\ti{V},k))$, where
$\ti{D}:=A^{\Gr}(D)$, under the isomorphism considered in the previous
paragraph. Explicitly, we have
\begin{equation*}
  \xymatrix{
    {T_D\Gr(V,k)} \ar[d]_{\sim} \ar[r]^{T_D A^{\Gr}} &
    {T_{\ti{D}}\Gr(\ti{V},k)} \ar[d]^{\sim}\\
    {\hom(D,V/D)} \ar[r]_{\conj{T_D A^{\Gr}}} & {\hom(\ti{D},\ti{V}/\ti{D})}
  }
\end{equation*}
Let $\zeta \in T_D\Gr(V,k)$ and choose $F(t)$ a smooth curve in
$\Gr(V,k)$ such that $F(0)=D$ and $F'(0)=\zeta$. Then
$\ti{F}(t) := A^{\Gr}(F(t))$ is a smooth curve in $\Gr(\ti{V},k)$
satisfying $\ti{F}(0) = \ti{D}$ and
$\ti{F}'(0) = T_D A^{\Gr}(\zeta) =: \ti{\zeta} \in
T_{\ti{D}}\Gr(\ti{V},k)$. Given $\ti{v}\in \ti{D}$, choose a smooth
curve $\ti{v}(t) \in \ti{F}(t) \in \Gr(\ti{V},k)$ so that
$\conj{\ti{\zeta}}(\ti{v}) =
\pi^{\ti{V},\ti{V}/\ti{D}}(\ti{v}'(0))$. Then we can define
$v(t) := A^{-1}(\ti{v}(t)) \in F(t)$ and $v:=v(0)\in D$ and we have
that $\conj{\zeta}(v) = \pi^{V,V/D}(v'(0))$. Now we combine the
constructions
\begin{equation*}
  \begin{split}
    \conj{\ti{\zeta}}(A(v)) =& \conj{\ti{\zeta}}(\ti{v}) =
    \pi^{\ti{V},\ti{V}/\ti{D}}(\ti{v}'(0)) =
    \pi^{\ti{V},\ti{V}/\ti{D}}((A\circ v)'(0)) \\=&
    \pi^{\ti{V},\ti{V}/\ti{D}}(A(v'(0))) =
    \conj{A}(\pi^{V,V/D}(v'(0))) = \conj{A}(\conj{\zeta}(v))
  \end{split}
\end{equation*}
where $\conj{A}\in \hom(V/D,\ti{V}/\ti{D})$ is the map induced by $A$
on the quotient spaces, that is
$\conj{A}\circ \pi^{V,V/D} = \pi^{\ti{V},\ti{V}/\ti{D}}\circ A$. As the
previous identity holds for all $v\in V$ (and $A$ is an isomorphism),
we have that
$\conj{\ti{\zeta}} \circ A|_D = \conj{A} \circ \conj{\zeta}$, hence
\begin{equation}\label{eq:grassmannian-tangent-tangent_application}
  \conj{T_D A^{\Gr}(\zeta)} \circ A|_D = \conj{A} \circ \conj{\zeta}.
\end{equation}


\section{Local computations in the Grassmann bundle}
\label{sec:local_computations_in_the_grassmann_bundle}

In this appendix we review the basic constructions of Grassmann
bundles and use them to perform a very detailed analysis leading to
the proof of some properties of the residual maps
$\cpres^r(\mathcal{D}_1,\mathcal{D}_2)$, where
$\mathcal{D}_j:=\ker(T\varphi_j)$ is a distribution associated to some
submersion.

Let $\discCP^j=(Q,\psi^j,L_\CPHS^j,f_\CPHS^j)$ be two discretizations
of the \FMS $(Q,L,f)$ that have order $r$ contact. If
$\conj{\varphi^j}:\conj{\mathcal{C}}\rightarrow \R\times TQ$ are
defined by~\eqref{eq:phi_bar_and_gamma_hat-def} (notice that
$E\simeq TQ$ and, in what follows, we prefer the more explicit $TQ$),
we have defined the distribution $\conj{\mathcal{D}^j}$ on
$\conj{\mathcal{C}}$ by
$\conj{\mathcal{D}^j}:= \ker(T\conj{\varphi^j})$. Explicitly,
using~\eqref{eq:skew_problem_with_D_from_tig-D_def}, if $n:=\dim(Q)$,
\begin{equation*}
  \conj{\mathcal{D}^j}(h,w,\ti{w}) = \ker(T_{(h,w,\ti{w})}\conj{\varphi^j}) 
  \in \Gr(T_{(h,w,\ti{w})}\conj{\mathcal{C}},n).
\end{equation*}
We can also define
$i_{\conj{\mathcal{D}^j}}:\conj{\mathcal{C}}\rightarrow \GrB$, where
$\GrB:=\GrB(T\conj{\mathcal{C}},n)$, by
\begin{equation*}
  i_{\conj{\mathcal{D}^j}}(h,w,\ti{w}) := \conj{\mathcal{D}^j}(h,w,\ti{w}).
\end{equation*}

Recall that any $G$-action $l^{\conj{\mathcal{C}}}$ defined in
$\conj{\mathcal{C}}$, induces $G$-actions on $\GrB$ and
$T\GrB$ by
\begin{equation*}
  l^{\GrB}_\tau(m,\mathcal{D}) := (l^{\conj{\mathcal{C}}}_\tau(m),
  T_ml^{\conj{\mathcal{C}}}_\tau(\mathcal{D})) \stext{ and }
  l^{T\GrB}_\tau((m,\mathcal{D}),\xi) := (l^{\GrB}_\tau(m,\mathcal{D}),
  T_{(m,\mathcal{D})}l^{\GrB}_\tau(\xi)).
\end{equation*}


\subsection{Coordinate charts}
\label{sec:coordinate_charts}

In what follows, we want to prove that
$i_{\conj{\mathcal{D}^2}} = i_{\conj{\mathcal{D}^1}} +
\mathcal{O}(\grade_{\conj{\mathcal{C}}}^r)$ and that the function
$\cpres^r(i_{\conj{\mathcal{D}^2}},i_{\conj{\mathcal{D}^1}}) :
\grade_{\conj{\mathcal{C}}}^{-1}(0) \rightarrow T\GrB$ is
$\Z_2$-equivariant. An important step will be the description of the
function $\delta i_{\conj{\mathcal{D}}}$ on $\conj{\mathcal{C}}$
---that would appear in~\eqref{eq:contact_order_condition-def} for
$f_j=i_{\conj{\mathcal{D}^j}}$ ---, at least for $h$ close to $0$. In
order to do this, we pick
$(0,w_0,\ti{w}_0)\in \grade_{\conj{\mathcal{C}}}^{-1}(0)$ and construct
coordinate charts of $\conj{\mathcal{C}}$ and $\GrB$ containing
$(0,w_0,\ti{w}_0)$ and $\conj{\mathcal{D}^j}(0,w_0,\ti{w}_0)$
respectively. Let $q_0:=\tau_Q(w_0)=\tau_Q(\ti{w}_0)$ and choose a
chart $(U,\psi)$ of $Q$ containing $q_0$. Then, let
$\hat{U}:= (\R\times (TU\oplus TU))\cap \conj{\mathcal{C}} \subset
\conj{\mathcal{C}}$; notice that, if $w_0$ and $\ti{w_0}$ are
sufficiently small, $\conj{\mathcal{C}}$ can be shrunk so that it is
$\Z_2$-invariant and, consequently, so is $\hat{U}$ (for the action
$l^{\R\times TQ\oplus TQ}_{[1]}(h,w,\ti{w}):=(h,\ti{w},w)$). Define
$\hat{\psi}:\hat{U}\rightarrow \R^{m}$ for $m:=1+3n$ by
$\hat{\psi}(h,w,\ti{w}):=(h,\psi(q),T_q\psi(w), T_q\psi(\ti{w}))$ for
$q:=\tau_Q(w)$. If we define $l^{\R^m}_{[1]}(h,x,a,b):=(h,x,b,a)$, we
have a $\Z_2$-action on $\R^m$ and $\hat{\psi}$ is $\Z_2$-equivariant.

On the other hand, we write
$\R^{m} = \R^1\oplus \R^n\oplus \R^n\oplus\R^n$ and label $\{e^h\}$,
$\{e^x_j:j=1,\ldots,n\}$, $\{e^a_j:j=1,\ldots,n\}$,
$\{e^b_j:j=1,\ldots,n\}$ the canonical bases of each of the direct
summands. Let 
\begin{equation}
  \label{eq:subspaces_for_definition_of_grassmanian_coords-def}
  \DD:=\vspan{\{e^b_j: j=1,\ldots, n\}} \stext{ and }
  \F:=\vspan{\{e^h,e^x_j,e^a_j:j=1,\ldots,n\}},
\end{equation}
so that $\DD\in \Gr(\R^m,n)$ and $\R^m = \DD\oplus\F$; let
$p_\DD\in \hom(\R^m,\DD)$ and $p_\F\in\hom(\R^m,\F)$ be the
projections associated to that direct sum decomposition. Define
\begin{equation}\label{eq:U_DF-def}
  U_{\DD,\F} := 
  \{D\in \Gr(\R^m,n):p_\DD|_D\in\hom(D,\DD) \text{ is an isomorphism}\}
\end{equation}
and
\begin{equation}\label{eq:hat_U_DF-def}
  \widehat{U_{\DD,\F}}:=\{((h,w,\ti{w}),\mathcal{D})\in \GrB: 
  (h,w,\ti{w})\in \hat{U} \text{ and } 
  T_{(h,w,\ti{w})}\hat{\psi}(\mathcal{D})\in U_{\DD,\F}\} 
\end{equation}
and let
$\Delta_{\DD,\F}:U_{\DD,\F}\rightarrow \R^{(1+2n)\times n}$ be
defined by
\begin{equation}\label{eq:Delta_DF-def}
  \Delta_{\DD,\F}(D):=p_\F \circ (p_\DD|_D)^{-1}.
\end{equation}
It is easy to check that $\Delta_{\DD,\F}$ is, indeed, a bijection
with inverse
\begin{equation*}
  \Delta_{\DD,\F}^{-1}(K):=\{(K v^b,v^b)\in \R^m: v^b\in\R^n\}.
\end{equation*}
Last, we define
$\Psi_{\DD,\F}:\widehat{U_{\DD,\F}}\rightarrow \R^m\times
\R^{(1+2n)\times n}$ by
\begin{equation}\label{eq:Psi_coordinates_on_grassmann_bundle}
  \Psi_{\DD,\F}((h,w,\ti{w}),\mathcal{D}) := (\hat{\psi}(h,w,\ti{w}), 
  \Delta_{\DD,\F}(T_{(h,w,\ti{w})}\hat{\psi}(\mathcal{D})));
\end{equation}
then, $(\widehat{U_{\DD,\F}},\Psi_{\DD,\F})$ is a coordinate chart
in $\GrB$.

In what follows, we also choose a coordinate chart $\R\times TU$ of
$\R\times TQ$ with coordinate function
$\eta(h,w):=(h,\psi(q),T_q\psi(w))$, for $q:=\tau_Q(w)$.


\subsection{Local description of $\conj{\varphi^j}$}
\label{sec:local_description_of_phi_bar}

With the notation as above, notice that, as
$(\{0\}\times (TU\oplus TU)) \subset \hat{U}\cap
\conj{\varphi^j}^{-1}(\R\times TU)$ (here we are using that
$\conj{\varphi^j}(0,w,\ti{w}) = (0,\frac{-1}{2}(w+\ti{w}))$), we can
replace $\hat{U}$ with
$\hat{U}\cap \conj{\varphi^j}^{-1}(\R\times TU)$, so that $\hat{U}$ is
open, $(\{0\}\times (TU\oplus TU)) \subset \hat{U}$ and
$\conj{\varphi^j}(\hat{U}) \subset \R\times TU$. With these choices,
we can describe $\conj{\varphi^j}$ locally in the chosen coordinates
of $\conj{\mathcal{C}}$ and $\R\times TQ$:
$\check{\conj{\varphi^j}}:\check{U}\rightarrow \R\times \psi(U) \times
\R^n$ where $\check{U}:= \hat{\psi}(\hat{U})$, that is, a smooth map
between open sets of $\R^m$ and $\R^{1+2n}$, whose domain contains
$(0,x_0,a_0,b_0):=\hat{\psi}(0,w_0,\ti{w}_0)$. We have
\begin{equation*}
  \check{\conj{\varphi^j}}(h,x,a,b) := (h, \chi^j(h,x,a,b), 
  \varsigma^j(h,x,a,b))
\end{equation*}
with $\chi^j$ and $\varsigma^j$ smooth and, because
of~\eqref{eq:phi_bar_when_h=0},
\begin{equation*}
  \chi^j(0,x,a,b)=x \stext{ and } \varsigma^j(0,x,a,b) = -\frac{1}{2}(a+b).
\end{equation*}
As a consequence,
\begin{equation}\label{eq:D234_phi_bar_check}
  \begin{split}
    D_{234}\check{\conj{\varphi^j}}(h,x,a,b) =& \left(
      \begin{array}{ccc}
         0_{1\times n} & 0_{1\times n} & 0_{1\times n}\\
        D_2\chi^j(h,x,a,b) & D_3\chi^j(h,x,a,b) & D_4\chi^j(h,x,a,b)\\
        D_2\varsigma^j(h,x,a,b) & D_3\varsigma^j(h,x,a,b) & D_4\varsigma^j(h,x,a,b)\\
      \end{array}
    \right),
  \end{split}
\end{equation}
and, in particular,
\begin{equation}\label{eq:D234_phi_bar_check-h=0}
  \begin{split}
    D_{234}\check{\conj{\varphi^j}}(0,x,a,b) =& \left(
      \begin{array}{cccc}
        0_{1\times n} & 0_{1\times n} & 0_{1\times n}\\
        1_{n\times n} & 0_{n\times n} & 0_{n\times n}\\
        0_{n\times n} & \frac{-1}{2} 1_{n\times n} & \frac{-1}{2} 1_{n\times n}\\
      \end{array}
    \right).
  \end{split}
\end{equation}


\subsection{Distribution associated to $\check{\conj{\varphi^j}}$}
\label{sec:distribution_locally}

Recalling that 
\begin{equation*}
  T\check{\conj{\varphi^j}}(h,x,a,b) = (\check{\conj{\varphi^j}}(h,x,a,b), 
  D\check{\conj{\varphi^j}}(h,x,a,b)), 
\end{equation*}
we have that
\begin{equation}\label{eq:ker_T234-def}
  \begin{split}
    \ker(T\check{\conj{\varphi^j}}(h,x,a,b)) =& \{ (\delta y_h, \delta
    y_x, \delta y_a, \delta y_b) \in \R^{1+3n} :\\&\phantom{\{ (\delta y_h,}
    D\check{\conj{\varphi^j}}(h,x,a,b)\transpose{(\delta y_h, \delta
      y_x, \delta y_a, \delta y_b)} = \transpose{(0,0,0,0)}\}
    \\\subset& T\R^4
  \end{split}
\end{equation}
and the kernel condition can be rewritten as
\begin{equation*}
  \left(
    \begin{array}{c}
      0\\0\\0
    \end{array}
  \right) = \Xi
  \left(
    \begin{array}{c}
      \delta y_h \\\delta y_x\\\delta y_a\\\delta y_b
    \end{array}
  \right)
\end{equation*}
for
\begin{equation*}
  \Xi =
  \left(
    \begin{array}{cccc}
      1& 0_{1\times n} & 0_{1\times n} & 0_{1\times n}\\
      D_1\chi^j(h,x,a,b) & D_2\chi^j(h,x,a,b) & D_3\chi^j(h,x,a,b) & D_4\chi^j(h,x,a,b)\\
      D_1\varsigma^j(h,x,a,b) & D_2\varsigma^j(h,x,a,b) & D_3\varsigma^j(h,x,a,b) & D_4\varsigma^j(h,x,a,b)\\
    \end{array}
  \right),
\end{equation*}
or $\delta y_h = 0$ and
\begin{equation*}
    \left(
    \begin{array}{c}
      0\\0
    \end{array}
  \right) = B^j(h,x,a,b) \left(
    \begin{array}{c}
      \delta y_x\\\delta y_a
    \end{array}
  \right) + A^j(h,x,a,b) \left(\delta y_b\right)
\end{equation*}
for
\begin{equation}\label{eq:A^j_and_B^j-def}
  \begin{split}
    B^j(h,x,a,b) :=&   \left(
      \begin{array}{cc}
        D_2\chi^j(h,x,a,b) & D_3\chi^j(h,x,a,b)\\
        D_2\varsigma^j(h,x,a,b) & D_3\varsigma^j(h,x,a,b)\\
      \end{array}
    \right),\\
    A^j(h,x,a,b) :=&   \left(
      \begin{array}{ccc}
        D_4\chi^j(h,x,a,b)\\
        D_4\varsigma^j(h,x,a,b)\\
      \end{array}
    \right).
  \end{split}
\end{equation}
Notice that, because of~\eqref{eq:D234_phi_bar_check-h=0}, we have
\begin{equation}\label{eq:A_and_B-h=0}
  B^j(0,x,a,b) = \left(
    \begin{array}{cc}
      1_{n\times n} & 0_{n\times n} \\
      0_{n\times n} & \frac{-1}{2} 1_{n\times n} 
    \end{array}
  \right) \stext{ and }
  A^j(0,x,a,b) = \left(
    \begin{array}{c}
      0_{n\times n} \\
      \frac{-1}{2} 1_{n\times n} 
    \end{array}
  \right).
\end{equation}
By~\eqref{eq:A_and_B-h=0} we see that $B^j(0,x,a,b)$ is invertible
and, so, $B^j(h,x,a,b)$ is invertible for sufficiently small $h$ (in a
neighborhood of $(0,x_0,a_0,b_0)$). Hence, we can
rewrite~\eqref{eq:ker_T234-def} as
\begin{equation*}
  \begin{split}
    \ker(T\check{\conj{\varphi^j}}(h,x,a,b)) = \{ (0,\delta &y_x,
    \delta y_a, \delta y_b) \in \R^{1+3n}: \\&\svec{\delta y_x}{\delta y_a}
    = -B^j(h,x,a,b)^{-1} A^j(h,x,a,b) \delta y_b\}.
  \end{split}
\end{equation*}
If we rewrite this last expression as
\begin{equation*}
  \begin{split}
    \ker(T\check{\conj{\varphi^j}}(h,x,a,b)) =& \bigg\{ (\delta
    y_h,\delta y_x, \delta y_a, \delta y_b) \in \R^{1+3n}: \\&\left(
      \begin{array}{c}
        \delta y_h\\\delta y_x\\\delta y_a
      \end{array}
    \right) = \left(
      \begin{array}{c}
        0_{1\times n}\\-B^j(h,x,a,b)^{-1} A^j(h,x,a,b)
      \end{array}
    \right) \left(\delta y_b\right)\bigg\},
  \end{split}
\end{equation*}
looking at~\eqref{eq:Delta_DF-def}, we have
\begin{equation}\label{eq:Delta_DF-ker_check}
  \Delta_{\DD,\F}(\ker(T\check{\conj{\varphi^j}}(h,x,a,b))) = 
          \left(
            \begin{array}{c}
              0_{1\times n}\\-B^j(h,x,a,b)^{-1} A^j(h,x,a,b)
            \end{array}
          \right)
\end{equation}
and, in particular,
\begin{equation}\label{eq:Delta_DF-ker_check-h=0}
  \Delta_{\DD,\F}(\ker(T\check{\conj{\varphi^j}}(0,x,a,b))) = 
  \left(
    \begin{array}{c}
      0_{1\times n}\\-B^j(0,x,a,b)^{-1} A^j(0,x,a,b)
    \end{array}
  \right) = \left(
    \begin{array}{c}
      0_{1\times n}\\0_{n\times n}\\-1_{n\times n}
    \end{array}
  \right).
\end{equation}


\subsection{Local description of $i_{\conj{\mathcal{D}^j}}$}
\label{sec:local_description_of_i_D_bar}

Recall that 
\begin{equation*}
  \begin{split}
    T_{(h,w,\ti{w})}\hat{\psi}(\conj{\mathcal{D}^j}(h,w,\ti{w})) =&
    T_{(h,w,\ti{w})}\hat{\psi}(\ker(T\conj{\varphi^j}(h,w,\ti{w}))) =
    \ker(T\check{\conj{\varphi^j}}(\hat{\psi}(h,w,\ti{w}))
  \end{split}
\end{equation*}
so that, from~\eqref{eq:Delta_DF-ker_check}, we have
\begin{equation*}
  \Delta_{\DD,\F}(T_{(h,w,\ti{w})}\hat{\psi}(\conj{\mathcal{D}^j}(h,w,\ti{w}))) =
  \left(
    \begin{array}{c}
      0_{1\times n}\\-B^j(\hat{\psi}(h,w,\ti{w}))^{-1} A^j(\hat{\psi}(h,w,\ti{w}))
    \end{array}
  \right)
\end{equation*}
Using the coordinate function $\Psi_{\DD,\F}$ on $\GrB$
defined in~\eqref{eq:Psi_coordinates_on_grassmann_bundle}, we have
\begin{equation*}
  \begin{split}
    \Psi_{\DD,\F}(i_{\conj{\mathcal{D}^j}}(h,w,\ti{w})) =&
    (\hat{\psi}(h,w,\ti{w}),
    \Delta_{\DD,\F}(T_{(h,w,\ti{w})}\hat{\psi}(\conj{\mathcal{D}^j}(h,w,\ti{w}))))
    \\=& \bigg(\hat{\psi}(h,w,\ti{w}),  \left(
      \begin{array}{c}
        0_{1\times n}\\-B^j(\hat{\psi}(h,w,\ti{w}))^{-1} A^j(\hat{\psi}(h,w,\ti{w}))
      \end{array}
    \right)\bigg),
  \end{split}
\end{equation*}
with $A^j$ and $B^j$ defined in~\eqref{eq:A^j_and_B^j-def}. In particular, 
\begin{equation}
  \label{eq:Psi_i_D_bar_explicit-h=0}
  \Psi_{\DD,\F}(i_{\conj{\mathcal{D}^j}}(0,w,\ti{w})) = 
  \bigg(\hat{\psi}(0,w,\ti{w}),  \left(
    \begin{array}{c}
      0_{1\times n}\\0_{n\times n}\\-1_{n\times n}
    \end{array}
  \right)\bigg).
\end{equation}

If we now take $(h,w,\ti{w})$ with $h$ small enough that
$B^j(\hat{\psi}(h,w,\ti{w}))$ are invertible for $j=1,2$, we can
compute
\begin{equation}\label{eq:diff_Psi_i_D_bar}
  \begin{split}
    \Psi_{\DD,\F}(&i_{\conj{\mathcal{D}^2}}(h,w,\ti{w})) -
    \Psi_{\DD,\F}(i_{\conj{\mathcal{D}^1}}(h,w,\ti{w})) \\=&
    \bigg(\hat{\psi}(h,w,\ti{w}), \left(
      \begin{array}{c}
        0_{1\times n}\\-B^2(\hat{\psi}(h,w,\ti{w}))^{-1} A^2(\hat{\psi}(h,w,\ti{w}))
      \end{array}
    \right)\bigg)
    \\&-
    \bigg(\hat{\psi}(h,w,\ti{w}), \left(
      \begin{array}{c}
        0_{1\times n}\\-B^1(\hat{\psi}(h,w,\ti{w}))^{-1} A^1(\hat{\psi}(h,w,\ti{w}))
      \end{array}
    \right)\bigg) \\=&
    \bigg(0, \left(
      \begin{array}{c}
        0_{1\times n} \\ B^1(\hat{\psi}(h,w,\ti{w}))^{-1} A^1(\hat{\psi}(h,w,\ti{w})) - 
        B^2(\hat{\psi}(h,w,\ti{w}))^{-1} A^2(\hat{\psi}(h,w,\ti{w}))
      \end{array}
    \right)\bigg).
  \end{split}
\end{equation}
Omitting the evaluation vector, we see that
\begin{equation}\label{eq:Bs_and_As-expanded}
  \begin{split}
    (B^1)^{-1}A^1 - (B^2)^{-1} A^2 =& (B^1)^{-1}A^1 -(B^1)^{-1} A^2 +
    (B^1)^{-1} A^2 - (B^2)^{-1} A^2 \\=& (B^1)^{-1}(A^1-A^2) +
    ((B^1)^{-1}-(B^2)^{-1}) A^2
  \end{split}
\end{equation}


\subsection{Computation of
  $\cpres^r(i_{\conj{\mathcal{D}^2}},i_{\conj{\mathcal{D}^1}})$}
\label{sec:computation_of_residual_of_i_D_bar}

Under the current hypotheses, by Lemma~\ref{le:order_of_phi_bar},
$\conj{\varphi^2} = \conj{\varphi^1} +
\mathcal{O}(\grade_{\conj{\mathcal{C}}}^r)$, that easily leads to
$\check{\conj{\varphi^2}} = \check{\conj{\varphi^1}} +
\mathcal{O}(\grade_{\R^m}^r)$ (see
Lemma~\ref{le:order_of_maps_in_terms_of_local_expression}). Explicitly,
this last expression means that
\begin{equation}\label{eq:delta_check_phi_bar-explicit}
  \check{\conj{\varphi^2}}(h,x,a,b) - \check{\conj{\varphi^1}}(h,x,a,b) =
  h^r \delta \check{\conj{\varphi}}(h,x,a,b),
\end{equation}
for a continuous (hence smooth) function
\begin{equation*}
  \delta \check{\conj{\varphi}}(h,x,a,b) = 
  (0,\delta \chi(h,x,a,b), \delta \varsigma(h,x,a,b))\in\R^{1+2n}.
\end{equation*}
Notice that, in particular,
\begin{equation}\label{eq:residue_local-delta_check_phi_bar-explicit}
  \cpres^r(\check{\conj{\varphi^2}}, \check{\conj{\varphi^1}})(0,x,a,b) = 
  \delta \check{\conj{\varphi}}(0,x,a,b) = 
  (0,\delta \chi(0,x,a,b), \delta \varsigma(0,x,a,b))
\end{equation}
and, by~\eqref{eq:residual_vs_delta_f},
\begin{equation}\label{eq:res_phi_bar_vs_res_local_phi_bar}
  \cpres^r(\check{\conj{\varphi^2}}, \check{\conj{\varphi^1}})(0,x,a,b) = 
  T_{\conj{\varphi^2}(\hat{\psi}^{-1}(0,x,a,b))} \eta 
  (\cpres^r(\conj{\varphi^2}, \conj{\varphi^1})(\hat{\psi}^{-1}(0,x,a,b)))
\end{equation}
Applying now $D_{234}$ to both sides
of~\eqref{eq:delta_check_phi_bar-explicit}, we obtain
  \begin{equation*}
  \begin{split}
    D_{234}\check{\conj{\varphi^2}}(h,x,a,b) -&
    D_{234}\check{\conj{\varphi^1}}(h,x,a,b) = h^r D_{234}\delta
    \check{\conj{\varphi}}(h,x,a,b)\\=& h^r \left(
      \begin{array}{ccc}
        0_{1\times n} & 0_{1\times n} & 0_{1\times n}\\
        D_2\delta \chi(h,x,a,b) & D_3\delta \chi(h,x,a,b) & D_4\delta \chi(h,x,a,b) \\
        D_2\delta \varsigma(h,x,a,b) & D_3\delta \varsigma(h,x,a,b) & D_4\delta \varsigma_(h,x,a,b) 
      \end{array}
    \right).
  \end{split}
\end{equation*}
Taking into account~\eqref{eq:D234_phi_bar_check}
and~\eqref{eq:A^j_and_B^j-def}, we have that
\begin{equation*}
  B^2(h,x,a,b) - B^1(h,x,a,b) = \underbrace{h^r \left(
      \begin{array}{cc}
        D_2\delta \chi(h,x,a,b) & D_3\delta \chi(h,x,a,b) \\
        D_2\delta \varsigma(h,x,a,b) & D_3\delta \varsigma(h,x,a,b)  
      \end{array}
    \right)}_{=:\delta B}
\end{equation*}
and
\begin{equation*}
  A^2(h,x,a,b) - A^1(h,x,a,b) = \underbrace{h^r \left(
    \begin{array}{cc}
      D_4\delta \chi(h,x,a,b) \\
      D_4\delta \varsigma(h,x,a,b) 
    \end{array}
  \right)}_{=:\delta A}.
\end{equation*}

Notice that, if $B^2-B^1=\delta B$ and both $B^j$ are invertible, then
$(B^1)^{-1} - (B^2)^{-1} = (B^2)^{-1}\delta B (B^1)^{-1}$, so that
$((B^1)^{-1} - (B^2)^{-1})A^2 = (B^2)^{-1}\delta B
(B^1)^{-1}A^2$. Considering the previous computations we have
\begin{equation*}
  \begin{split}
    (B^1&(h,x,a,b)^{-1} - B^2(h,x,a,b)^{-1})A^2(h,x,a,b)\\
    &=B^2(h,x,a,b)^{-1}\delta B(h,x,a,b) B^1(h,x,a,b)^{-1}A^2(h,x,a,b)
    \\ &=h^r B^2(h,x,a,b)^{-1} \left(
      \begin{array}{cc}
        D_2\delta \chi(h,x,a,b) & D_3\delta \chi(h,x,a,b) \\
        D_2\delta \varsigma(h,x,a,b) & D_3\delta \varsigma(h,x,a,b)  
      \end{array}
    \right)B^1(h,x,a,b)^{-1}A^2(h,x,a,b)
  \end{split}
\end{equation*}
and
\begin{equation*}
  \begin{split}
    B^1(h,x,a,b)^{-1}(A^1(h,x,a,b)-A^2(h,x,a,b)) =& -h^r B^1(h,x,a,b)\left(
      \begin{array}{cc}
        D_4\delta \chi(h,x,a,b) \\
        D_4\delta \varsigma(h,x,a,b) 
      \end{array}
    \right).
  \end{split}
\end{equation*}

Back in~\eqref{eq:diff_Psi_i_D_bar},
using~\eqref{eq:Bs_and_As-expanded} and the last two computations, we
have
\begin{equation}\label{eq:diff_Psi_i_D_bar_and_delta_i}
  \begin{split}
    \Psi_{\DD,\F}(i_{\conj{\mathcal{D}^2}}(h,w,\ti{w})) -
    \Psi_{\DD,\F}(i_{\conj{\mathcal{D}^1}}(h,w,\ti{w})) =& h^r \delta
    i_{\conj{\mathcal{D}}}(h,w,\ti{w}) \\=& h^r \bigg(0, \left(
      \begin{array}{c}
        0_{1\times n} \\ C(\hat{\psi}(h,w,\ti{w}))
      \end{array}
    \right)\bigg)
  \end{split}
\end{equation}
where, omitting the corresponding evaluation points $(h,x,a,b)$,
\begin{equation}\label{eq:C_def}
  \begin{split}
    C:=& -(B^1)^{-1}\left(
      \begin{array}{cc}
        D_4\delta \chi \\
        D_4\delta \varsigma 
      \end{array}
    \right) + (B^2)^{-1} \left(
      \begin{array}{cc}
        D_2\delta \chi & D_3\delta \chi \\
        D_2\delta \varsigma & D_3\delta \varsigma  
      \end{array}
    \right)(B^1)^{-1}A^2.
  \end{split}
\end{equation}

Now we remark that, because of~\eqref{eq:A_and_B-h=0}, for $h$
sufficiently small, $B^j(h,x,a,b)$ are invertible, so that the
previous computations are justified.

\begin{proof}[Proof of Lemma~\ref{le:contact_order_of_D_from_tig}]
  As $C$ defined in~\eqref{eq:C_def} is continuous in a neighborhood
  of $(0,x_0,a_0,b_0)$,
  expression~\eqref{eq:diff_Psi_i_D_bar_and_delta_i} says that
  $i_{\conj{\mathcal{D}^2}} = i_{\conj{\mathcal{D}^1}} +
  \mathcal{O}(\grade_{\conj{\mathcal{C}}}^r)$. In other words,
  $\conj{\mathcal{D}^2} =\conj{\mathcal{D}^1} +
  \mathcal{O}(\grade_{\conj{\mathcal{C}}}^r)$.
\end{proof}

Also, omitting the corresponding evaluation points $(0,x,a,b)$, we
have
\begin{equation*}
  \begin{split}
    C =& \left(
      \begin{array}{cc}
        -D_4\delta \chi \\
        2D_4\delta \varsigma 
      \end{array}
    \right) + \left(
      \begin{array}{cc}
        D_2\delta \chi & D_3\delta \chi \\
        -2D_2\delta \varsigma & -2D_3\delta \varsigma  
      \end{array}
    \right)\left(
      \begin{array}{c}
        0_{n\times n}\\1_{n\times n}
      \end{array}
    \right) =
    \left(
      \begin{array}{cc}
        -D_4\delta \chi + D_3\delta \chi\\
        2(D_4\delta \varsigma - D_3\delta \varsigma)
      \end{array}
    \right).
  \end{split}
\end{equation*}
Comparing~\eqref{eq:contact_order_condition-def}
with~\eqref{eq:diff_Psi_i_D_bar_and_delta_i} and taking into account
the last computation, from~\eqref{eq:residual_vs_delta_f} we conclude
that
\begin{equation}
  \label{eq:res^r(i_D_bar)}
  \begin{split}
    \cpres^r(i_{\conj{\mathcal{D}^2}},i_{\conj{\mathcal{D}^1}})&(0,w,\ti{w})
    =
    T_{\underbrace{\Psi_{\DD,\F}(\conj{\mathcal{D}^2}(0,w,\ti{w}))}_{=((0,x,a,b),\transpose{(0,0,-1_{n\times
            n})})}}\Psi_{\DD,\F}^{-1} \left(0_m, \left(
      \begin{array}{cc}
        0_{1\times n}\\ C(0,x,a,b)
      \end{array}
    \right)\right) \\=&
  T_{\Psi_{\DD,\F}(\conj{\mathcal{D}^2}(0,w,\ti{w}))}\Psi_{\DD,\F}^{-1}
  \left(0_m, \left(
      \begin{array}{cc}
        0_{1\times n}\\
        -D_4\delta \chi(0,x,a,b) + D_3\delta \chi(0,x,a,b)\\
        2(D_4\delta \varsigma(0,x,a,b) - D_3\delta \varsigma(0,x,a,b))
      \end{array}
    \right)\right)
  \end{split}
\end{equation}
for $(0,x,a,b):=\hat{\psi}(0,w,\ti{w})$.


\subsection{$\cpres^r(i_{\conj{\mathcal{D}^2}},i_{\conj{\mathcal{D}^1}})$
  at different points}
\label{sec:res^r_at_different_points}

Recall that, by Lemma~\ref{le:order_of_phi_bar},
$\conj{\varphi^j}|_{\grade_{\conj{\mathcal{C}}}^{-1}(0)}$ and
$\cpres^{r}(\conj{\varphi^2},
\conj{\varphi^1}):\grade_{\conj{\mathcal{C}}}^{-1}(0)\rightarrow T(\R\times
TQ)$ are $\Z_2$-invariant. Then, as $\hat{\psi}$ is
$\Z_2$-equivariant,~\eqref{eq:res_phi_bar_vs_res_local_phi_bar}
implies that
$\cpres^{r}(\check{\conj{\varphi^2}}, \check{\conj{\varphi^1}})$ is
$\Z_2$-invariant. Then,
by~\eqref{eq:residue_local-delta_check_phi_bar-explicit},
\begin{equation*}
  \delta \chi(0,x,a,b) = \delta \chi(0,x,b,a) \stext{ and }
  \delta \varsigma(0,x,a,b) = \delta \varsigma(0,x,b,a).
\end{equation*}
Consequently,
\begin{equation*}
  D_3\delta \chi(0,x,a,b) = D_4\delta \chi(0,x,b,a) \stext{ and }
  D_3\delta \varsigma(0,x,a,b) = D_4\delta \varsigma(0,x,b,a).
\end{equation*}
Then, from~\eqref{eq:res^r(i_D_bar)} and the last identities, we have
\begin{equation}\label{eq:res^r(i_D_bar)-switched}
  \begin{split}
    \cpres^r(i_{\conj{\mathcal{D}^2}},&i_{\conj{\mathcal{D}^1}})(0,\ti{w},w)
    \\=&
    T_{\Psi_{\DD,\F}(\conj{\mathcal{D}^2}(0,\ti{w},w))}\Psi_{\DD,\F}^{-1}
    \left(0_m,\left(
      \begin{array}{cc}
        0_{1\times n}\\
        -D_4\delta \chi(0,x,b,a) + D_3\delta \chi(0,x,b,a)\\
        2(D_4\delta \varsigma(0,x,b,a) - D_3\delta \varsigma(0,x,b,a))
      \end{array}
    \right)\right) \\=&
    T_{\Psi_{\DD,\F}(\conj{\mathcal{D}^2}(0,\ti{w},w))}\Psi_{\DD,\F}^{-1} \left(0_m,\left(
      \begin{array}{cc}
        0_{1\times n}\\
        -D_3\delta \chi(0,x,a,b) + D_4\delta \chi(0,x,a,b)\\
        2(D_3\delta \varsigma(0,x,a,b) - D_4\delta \varsigma(0,x,a,b))
      \end{array}
    \right)\right),
  \end{split}
\end{equation}
for $(0,x,a,b):=\hat{\psi}(0,w,\ti{w})$.


\subsection{Local expression of the $\Z_2$-action on $\GrB$}
\label{sec:local_expression_of_Z2_action_on_G}

As we know $\Z_2$ acts on $\GrB$ by
\begin{equation*}
  l^{\GrB}_{[1]}((h,w,\ti{w}),\mathcal{D}):= 
  ((h,\ti{w},w),T_{(h,w,\ti{w})} l^{\conj{\mathcal{C}}}_{[1]}(\mathcal{D})).
\end{equation*}
Below we want to give a description of this action in local
terms. Recall that $\DD$ and $\F$ were defined
by~\eqref{eq:subspaces_for_definition_of_grassmanian_coords-def}; define
\begin{equation*}
  \ti{\DD}:=\vspan{\{e^a_j:=1,\ldots, n\}} \stext{ and } 
  \ti{\F}:=\vspan{\{e^h,e^x_j,e^b_j:j=1,\ldots,n\}},
\end{equation*}
also, define $U_{\ti{\DD},\ti{\F}}$ and $\widehat{U_{\ti{\DD},\ti{\F}}}$
according to~\eqref{eq:U_DF-def} and~\eqref{eq:hat_U_DF-def}
respectively. Finally, let
\begin{equation}
  \label{eq:U_hat_sigma-def}
  \hat{U}_\sigma:= \widehat{U_{\DD,\F}}\cap
  \widehat{U_{\ti{\DD},\ti{\F}}};
\end{equation}
by definition, $\hat{U}_\sigma\subset \GrB$ is open. We
characterize the elements of $\hat{U}_\sigma$ as follows. If
$((h,w,\ti{w}),\mathcal{D})\in\widehat{U_{\DD,\F}}$, then
$(h,w,\ti{w})\in \hat{U}$ and
$T_{(h,w,\ti{w})}\hat{\psi}(\mathcal{D}) \in U_{\DD,\F}$, so that
\begin{equation*}
  T_{(h,w,\ti{w})}\hat{\psi}(\mathcal{D}) = \left\{\left(
    \begin{array}{c}
      K^h\\K^x\\K^a\\1_{n\times n}
    \end{array}
  \right) (\delta y^b) : \delta y^b\in\DD \right\}, 
  \stext{ where }
  \left(
    \begin{array}{c}
      K^h\\K^x\\K^a
    \end{array}
  \right) =
  \Delta_{\DD,\F}(T_{(h,w,\ti{w})}\hat{\psi}(\mathcal{D})).
\end{equation*}
Analogously, if $((h,w,\ti{w}),\mathcal{D})$ is in
$\widehat{U_{\ti{\DD},\ti{\F}}}$, then $(h,w,\ti{w})\in \hat{U}$ and
\begin{equation*}
  T_{(h,w,\ti{w})}\hat{\psi}(\mathcal{D}) = \left\{\left(
    \begin{array}{c}
      \ti{K}^h\\\ti{K}^x\\1_{n\times n}\\\ti{K}^b
    \end{array}
  \right) (\delta y^a) : \delta y^a\in\ti{\DD} \right\}, 
  \stext{ where }
  \left(
    \begin{array}{c}
      \ti{K}^h\\\ti{K}^x\\\ti{K}^b
    \end{array}
  \right) =
  \Delta_{\ti{\DD},\ti{\F}}(T_{(h,w,\ti{w})}\hat{\psi}(\mathcal{D})).
\end{equation*}
Comparing the two descriptions of
$T_{(h,w,\ti{w})}\hat{\psi}(\mathcal{D})$ we see that
$((h,w,\ti{w}),\mathcal{D})\in\hat{U}_\sigma$ if and only if $K^a$ and
$\ti{K}^b$ are invertible matrices.

Next we verify that $\hat{U}_\sigma$ is
$l^{\GrB}_{[1]}$-invariant. Let
$((h,w,\ti{w}),\mathcal{D})\in\hat{U}_\sigma\subset\widehat{U_{\DD,\F}}$. Then, 
\begin{equation*}
  l^{\GrB}_{[1]}((h,w,\ti{w}),\mathcal{D}) = 
  ((h,\ti{w},w), T_{(h,w,\ti{w})} l^{\conj{\mathcal{C}}}_{[1]} 
  (\mathcal{D})).
\end{equation*}
As
\begin{equation*}
  T_{(h,\ti{w},w)}\hat{\psi}(T_{(h,w,\ti{w})} l^{\conj{\mathcal{C}}}_{[1]}(\mathcal{D}))
  = T_{} l^{\R^m}_{[1]}(T_{(h,w,\ti{w})}\hat{\psi}(\mathcal{D})) = 
  l^{\R^m}_{[1]}(T_{(h,w,\ti{w})}\hat{\psi}(\mathcal{D})) =: \ti{D},
\end{equation*}
using the characterizations given above, we have
\begin{equation}\label{eq:local_description_Z2_action_on_grassmann_bundle}
  \begin{split}
    \ti{D} =& l^{\R^m}_{[1]}\left(\left\{\left(
    \begin{array}{c}
      K^h\\K^x\\K^a\\1_{n\times n}
    \end{array}
  \right) (\delta y^b) : \delta y^b\in\DD \right\}\right) = \left\{\left(
    \begin{array}{c}
      K^h\\K^x\\1_{n\times n}\\K^a
    \end{array}
  \right) (\delta y^b) : \delta y^b\in\DD \right\} \\=& \left\{\left(
    \begin{array}{c}
      K^h(K^a)^{-1}\\K^x(K^a)^{-1}\\(K^a)^{-1}\\1_{n\times n}
    \end{array}
  \right) (\underbrace{K^a\delta y^b}_{=:\delta z^b}) : \delta
  y^b\in\DD \right\} = \left\{\left(
    \begin{array}{c}
      K^h(K^a)^{-1}\\K^x(K^a)^{-1}\\(K^a)^{-1}\\1_{n\times n}
    \end{array}
  \right) (\delta z^b) : \delta
  z^b\in\DD \right\},
  \end{split}
\end{equation}
and we see that $\ti{D}\in U_{\DD,\F}$, so that, as $\hat{U}$ is
$\Z_2$-invariant,
$l^{\GrB}_{[1]}((h,w,\ti{w}),\mathcal{D}) \in
\widehat{U_{\DD,\F}}$. A completely similar computation shows that
$l^{\GrB}_{[1]}((h,w,\ti{w}),\mathcal{D}) \in
\widehat{U_{\ti{\DD},\ti{\F}}}$ and, all together, we have that
$l^{\GrB}_{[1]}((h,w,\ti{w}),\mathcal{D}) \in\hat{U}_\sigma$, proving
that $\hat{U}_\sigma$ is $\Z_2$-invariant.

Last, we want to give a local description of $l^{\GrB}$ restricted to
$\hat{U}_\sigma$, using the coordinate function
$\Psi_{\DD,\F}|_{\hat{U}_\sigma}$. We have the diagram
\begin{equation*}
  \xymatrix{
    {\hat{U}_\sigma} \ar[r]^{l^{\GrB}_{[1]}} \ar[d]_{\Psi_{\DD,\F}} 
    & {\hat{U}_\sigma} \ar[d]^{\Psi_{\DD,\F}}\\
    {\R^m\times \R^{(1+2n)\times n}} \ar[r]_{l^{\GrB(\R^m)}_{[1]}} 
    & {\R^m\times \R^{(1+2n)\times
        n}},
  }
\end{equation*}
where $l^{\GrB(\R^m)}_{[1]}$ is defined so that the diagram is
commutative (the other arrows being diffeomorphisms). Then, for $K=\left(
  \begin{array}{c}
    K^h\\K^x\\K^a
  \end{array}
\right)$, with $K^a$ invertible, we have
\begin{equation}\label{eq:Z_2_action_on_grassmann_bundle-local}
  \begin{split}
    l^{\GrB(\R^m)}_{[1]}((h,x,a,b),K)=&
    \Psi_{\DD,\F}(l^{\GrB}_{[1]}(\Psi_{\DD,\F}^{-1}(((h,x,a,b),K))))
    \\=&
    \Psi_{\DD,\F}(l^{\GrB}_{[1]}(\hat{\psi}^{-1}(h,x,a,b),T_{(h,x,a,b)}\hat{\psi}^{-1}(\im(\svec{K}{1_{n\times
        n}})))) \\=& \Psi_{\DD,\F}(\hat{\psi}^{-1}(h,x,b,a),
    T_{\hat{\psi}^{-1}(h,x,a,b)}
    l^{\conj{\mathcal{C}}}_{[1]}(T_{(h,x,a,b)}\hat{\psi}^{-1}(\im(\svec{K}{1_{n\times
        n}})))) \\=& ((h,x,b,a),\\&
    \Delta_{\DD,\F}(\underbrace{T_{\hat{\psi}^{-1}(h,x,b,a)}\hat{\psi}(T_{\hat{\psi}^{-1}(h,x,a,b)}
      l^{\conj{\mathcal{C}}}_{[1]}(T_{(h,x,a,b)}\hat{\psi}^{-1}(\im(\svec{K}{1_{n\times
          n}}))))}_{=l^{\R^m}_{[1]}(\im(\svec{K}{1_{n\times n}}))}))
    \\=& \left((h,x,b,a), \left(
    \begin{array}{c}
      K^h(K^a)^{-1}\\K^x(K^a)^{-1}\\(K^a)^{-1}
    \end{array}
  \right)\right)
  \end{split}
\end{equation}
where we
used~\eqref{eq:local_description_Z2_action_on_grassmann_bundle} in the
last identity.


\subsection{Local description of the lifted $\Z_2$-action on
  $T\GrB$}
\label{sec:local_description_of_lifted_Z2_action_on_tangent_grasmann_bundle}

Here we describe the $\Z_2$-action on $T\GrB$ obtained by
lifting $l^{\GrB}$. In particular, we want to give an explicit
description of this action on $T\hat{U}_\sigma$, where
$\hat{U}_\sigma$ is the $\Z_2$-invariant open subset defined
in~\eqref{eq:U_hat_sigma-def}. 

We start by finding an expression for
$T_{((h,x,a,b),K)}l^{\GrB(\R^m)}_{[1]}((\delta h, \delta x, \delta a,
\delta b), (\delta K))$, for $K = \left(
  \begin{array}{c}
    K^h\\K^x\\K^a
  \end{array}
\right)\in\R^{(1+2n)\times n}$ with $K^a$ invertible (hence
$((h,x,a,b),K)\in \Psi_{\DD,\F}(\hat{U}_\sigma)$).
\begin{equation}
  \label{eq:local_description_Z2_action_on_grasmann_bundle-partial}
  \begin{split}
    T_{((h,x,a,b),K)}&l^{\GrB(\R^m)}_{[1]}((\delta h, \delta x, \delta
    a, \delta b), (\delta K)) =\\=& \frac{d}{dt}\bigg|_{t=0}
    l^{\GrB(\R^m)}_{[1]}((h+t\delta h, x+t\delta x, a+t\delta a,
    b+t\delta b), (K+t\delta K))\\=& \frac{d}{dt}\bigg|_{t=0}
    \bigg((h+t\delta h, x+t\delta x, b+t\delta b, a+t \delta a), K(t)
    \bigg)
  \end{split}
\end{equation}
where $K(t)$ is the curve in $\R^{(1+2n)\times n}$ defined by
\begin{equation*}
  K(t) = \left(
    \begin{array}{c}
      (K^h+t(\delta K)^h)(K^a+t(\delta K)^a)^{-1}\\
      (K^x+t(\delta K)^x)(K^a+t(\delta K)^a)^{-1}\\
      (K^a+t(\delta K)^a)^{-1}
    \end{array}
  \right).
\end{equation*}
As $K^a$ is invertible, for $t$ small, $K^a+t(\delta K)^a$ is also
invertible and we have
\begin{equation*}
  1_{n\times n} = (K^a+t(\delta K)^a)^{-1}(K^a+t(\delta K)^a) ,
\end{equation*}
so that applying $\frac{d}{dt}\big|_{t=0}$ on both sides, we obtain
\begin{equation*}
  0_{n\times n}=\frac{d}{dt}\bigg|_{t=0}((K^a+t(\delta K)^a)^{-1}) K^a + (K^a)^{-1} (\delta K)^a,
\end{equation*}
and, then,
\begin{equation*}
  \frac{d}{dt}\bigg|_{t=0}((K^a+t(\delta K)^a)^{-1}) = 
  -(K^a)^{-1} (\delta K)^a (K^a)^{-1}.
\end{equation*}
Thus,
\begin{equation*}
  \begin{split}
    \frac{d}{dt}\bigg|_{t=0} K(t) =& \frac{d}{dt}\bigg|_{t=0} \left(
      \begin{array}{c}
        (K^h+t(\delta K)^h)(K^a+t(\delta K)^a)^{-1}\\
        (K^x+t(\delta K)^x)(K^a+t(\delta K)^a)^{-1}\\
        (K^a+t(\delta K)^a)^{-1}
      \end{array}
    \right) \\=&
    \left(
      \begin{array}{c}
        (\delta K)^h(K^a)^{-1} - K^h (K^a)^{-1} (\delta K)^a (K^a)^{-1}\\
        (\delta K)^x(K^a)^{-1} - K^x (K^a)^{-1} (\delta K)^a (K^a)^{-1}\\
        -(K^a)^{-1} (\delta K)^a (K^a)^{-1}
      \end{array}
    \right)
  \end{split}
\end{equation*}

Back
in~\eqref{eq:local_description_Z2_action_on_grasmann_bundle-partial},
we have
\begin{equation*}
  \begin{split}
    T_{((h,x,a,b),K)}&l^{\GrB(\R^m)}_{[1]}((\delta h, \delta x,
    \delta a, \delta b), (\delta K)) = \bigg( \left((h,x,b,a),\left(
    \begin{array}{c}
      K^h(K^a)^{-1}\\K^x(K^a)^{-1}\\(K^a)^{-1}
    \end{array}
    \right)\right), \\&\left((\delta h,\delta x, \delta b, \delta a),
    \left(
    \begin{array}{c}
      (\delta K)^h(K^a)^{-1} - K^h (K^a)^{-1} (\delta K)^a (K^a)^{-1}\\
      (\delta K)^x(K^a)^{-1} - K^x (K^a)^{-1} (\delta K)^a (K^a)^{-1}\\
      -(K^a)^{-1} (\delta K)^a (K^a)^{-1}
    \end{array}
  \right)\right) \bigg).
\end{split}
\end{equation*}

Finally, the action $l^{T\GrB}$ on $T\hat{U}_\sigma$ satisfies
\begin{equation}
  \label{eq:local_description_Z2_action_on_grasmann_bundle-final}
  \begin{split}
    l^{T\GrB}_{[1]}((&T_{((h,x,a,b),K)}\Psi_{\DD,\F}^{-1})((\delta h,
    \delta x, \delta a, \delta b), \delta K)) =\\=&
    T_{\Psi_{\DD,\F}^{-1}((h,x,a,b),K)}l^{\GrB}_{[1]}(T_{((h,x,a,b),K)}\Psi_{\DD,\F}^{-1}((\delta
    h, \delta x, \delta a, \delta b), \delta K)) \\=&
    T_{((h,x,a,b),K)}(l^{\GrB}_{[1]}\circ \Psi_{\DD,\F}^{-1})((\delta
    h, \delta x, \delta a, \delta b), \delta K)) \\=&
    T_{((h,x,a,b),K)}(\Psi_{\DD,\F}^{-1}\circ
    l^{\GrB(\R^m)}_{[1]})((\delta h, \delta x, \delta a, \delta b),
    \delta K)) \\=&
    T_{l^{\GrB(\R^m)}_{[1]}((h,x,a,b),K)}\Psi_{\DD,\F}^{-1}
    (T_{((h,x,a,b),K)}l^{\GrB(\R^m)}_{[1]}((\delta h, \delta x, \delta
    a, \delta b), \delta K)) \\=&
    T_{l^{\GrB(\R^m)}_{[1]}((h,x,a,b),K)}\Psi_{\DD,\F}^{-1}\bigg(
    (\delta h,\delta x, \delta b, \delta a), \\&
    \phantom{T_{l^{\R^m}_{[1]}((h,x,a,b),K)}\Psi_{\DD,\F}^{-1}\bigg(}
    \left(
      \begin{array}{c}
        (\delta K)^h(K^a)^{-1} - K^h (K^a)^{-1} (\delta K)^a (K^a)^{-1}\\
        (\delta K)^x(K^a)^{-1} - K^x (K^a)^{-1} (\delta K)^a (K^a)^{-1}\\
        -(K^a)^{-1} (\delta K)^a (K^a)^{-1}
      \end{array}
    \right)\bigg)
  \end{split}
\end{equation}


\subsection{Equivariance of
  $\cpres^r(i_{\conj{\mathcal{D}^2}},i_{\conj{\mathcal{D}^1}})$}
\label{sec:equivariance_of_residual_kernel_distributions}

In this section we conclude the local analysis of the maps
$i_{\conj{\mathcal{D}^j}}$ and prove
Proposition~\ref{prop:residual_i_D_bar_is_Z2_equivariant}.

\begin{proof}[Proof of
  Proposition~\ref{prop:residual_i_D_bar_is_Z2_equivariant}]
  With the previous choices of coordinates, for
  $(0,w,\ti{w})\in \hat{U}$, we
  recall~\eqref{eq:Psi_i_D_bar_explicit-h=0}:
  \begin{equation}\label{eq:Psi_i_D_bar_explicit-h=0-K_defined}
    \Psi_{\DD,\F}(i_{\conj{\mathcal{D}^j}}(0,w,\ti{w})) = 
    \bigg(\hat{\psi}(0,w,\ti{w}),  \underbrace{\left(
      \begin{array}{c}
        0\\0\\-1_{n\times n}
      \end{array}
    \right)}_{=:K}\bigg).
\end{equation}
  We see that the local representation of
  $i_{\conj{\mathcal{D}^j}}(0,w,\ti{w})$ has $K^a=-1_{n\times n}$ that
  is invertible so that, as seen in
  Section~\ref{sec:local_description_of_lifted_Z2_action_on_tangent_grasmann_bundle},
  $i_{\conj{\mathcal{D}^j}}(0,w,\ti{w}) \in \hat{U}_\sigma$. Then,
  using~\eqref{eq:local_description_Z2_action_on_grasmann_bundle-final}
  with $h=0$ and recalling~\eqref{eq:Psi_i_D_bar_explicit-h=0}, we
  obtain
  \begin{equation*}
    \begin{split}
      l^{T\GrB}_{[1]}(T_{((0,x,a,b),K)}\Psi_{\DD,\F}^{-1}(0_m, \delta K)) =&
      T_{\Psi_{\DD,\F}^{-1}((0,x,a,b),K)}l^{\GrB}_{[1]}
      (T_{((0,x,a,b),K)}\Psi_{\DD,\F}^{-1}(0_m, \delta K)) \\=&
      T_{l^{\GrB(\R^m)}_{[1]}((0,x,a,b),K)}\Psi_{\DD,\F}^{-1}\bigg(
      0_m, \left(
      \begin{array}{c}
        -(\delta K)^h \\
        -(\delta K)^x \\
        -(\delta K)^a
      \end{array}
    \right)\bigg),
    \end{split}
  \end{equation*}
  where $(0,x,a,b):=\hat{\psi}(0,w,\ti{w})$. Notice that, for $K$ as
  in~\eqref{eq:Psi_i_D_bar_explicit-h=0-K_defined},
  $l^{\GrB(\R^m)}_{[1]}((0,x,a,b),K) = ((0,x,b,a),K)$, due
  to~\eqref{eq:Z_2_action_on_grassmann_bundle-local}. Taking
  $\cpres^r(i_{\conj{\mathcal{D}^2}},i_{\conj{\mathcal{D}^1}})(0,w,\ti{w})
  \in T_{\conj{\mathcal{D}^2}(0,w,\ti{w})}\GrB$ as the tangent vector
  in the last computation and using~\eqref{eq:res^r(i_D_bar)}, we
  obtain
  \begin{equation*}
    \begin{split}
      l^{T\GrB}_{[1]}(&\cpres^r(i_{\conj{\mathcal{D}^2}},i_{\conj{\mathcal{D}^1}})
      (0,w,\ti{w})) =
      \\=& l^{T\GrB}_{[1]}\bigg(T_{((0,x,a,b),K)}\Psi_{\DD,\F}^{-1} \bigg(0_m,\left(
        \begin{array}{cc}
          0\\
          -D_4\delta \chi(0,x,a,b) + D_3\delta \chi(0,x,a,b)\\
          2(D_4\delta \varsigma(0,x,a,b) - D_3\delta \varsigma(0,x,a,b))
        \end{array}
      \right)\bigg)\bigg) \\=& T_{((0,x,b,a),K)}\Psi_{\DD,\F}^{-1}\bigg(
      0_m, \left(
        \begin{array}{c}
          0\\
          D_4\delta \chi(0,x,a,b) - D_3\delta \chi(0,x,a,b))\\
          -2(D_4\delta \varsigma(0,x,a,b) - D_3\delta \varsigma(0,x,a,b))
        \end{array}
      \right)\bigg). 
    \end{split}
  \end{equation*}
  Notice that, using~\eqref{eq:Psi_i_D_bar_explicit-h=0-K_defined} and
  the $\Z_2$-equivariance of $\hat{\psi}$,
  \begin{equation*}
    \begin{split}
      \Psi_{\DD,\F}(\conj{\mathcal{D}^2}(0,\ti{w},w)) =&
      (\hat{\psi}(0,\ti{w},w), K) =
      (\hat{\psi}(l^{\conj{\mathcal{C}}}_{[1]}(0,w,\ti{w}), K) \\=&
      (l^{\R^m}_{[1]}(\hat{\psi}(0,w,\ti{w})),K) = ((0,x,b,a),K),
    \end{split}
  \end{equation*}
  and the previous expression becomes
  \begin{equation*}
    \begin{split}
      l^{T\GrB}_{[1]}(\cpres^r(i_{\conj{\mathcal{D}^2}},&i_{\conj{\mathcal{D}^1}})
      (0,w,\ti{w}))
      =\\=&T_{\Psi_{\DD,\F}(\conj{\mathcal{D}^2}(0,\ti{w},w))}\Psi_{\DD,\F}^{-1}\bigg(
      0_m, \left(
        \begin{array}{c}
          0\\
          D_4\delta \chi(0,x,a,b) - D_3\delta \chi(0,x,a,b))\\
          -2(D_4\delta \varsigma(0,x,a,b) - D_3\delta \varsigma(0,x,a,b))
        \end{array}
      \right)\bigg).
    \end{split}
  \end{equation*}

  Comparison of this last expression
  with~\eqref{eq:res^r(i_D_bar)-switched} shows that
  \begin{equation*}
    l^{T\GrB}_{[1]}
    (\cpres^r(i_{\conj{\mathcal{D}^2}},i_{\conj{\mathcal{D}^1}})(0,w,\ti{w})) = 
    \cpres^r(i_{\conj{\mathcal{D}^2}},i_{\conj{\mathcal{D}^1}})
    (l^{\conj{\mathcal{C}}}_{[1]}(0,w,\ti{w})),
  \end{equation*}
  concluding the proof.
\end{proof}



\begin{thebibliography}{MMdDM16}

\bibitem[AM78]{bo:AM-mechanics} R. Abraham and J.~E. Marsden,
  \emph{Foundations of mechanics}, Benjamin/Cummings Publishing
  Co. Inc. Advanced Book Program, Reading, Mass., 1978, Second
  edition, revised and enlarged, With the assistance of Tudor
  Ra{\c{t}}iu and Richard Cushman. \MR{MR515141 (81e:58025)}

\bibitem[Boo86]{bo:Boothby-differential_geometry} W.~M. Boothby,
  \emph{An introduction to differentiable manifolds and {R}iemannian
    geometry}, second ed., Pure and Applied Mathematics, vol. 120,
  Academic Press Inc., Orlando, FL, 1986. \MR{MR861409 (87k:58001)}

\bibitem[CdS01]{bo:cannasDaSilva-lectures_on_symplectic_geometry}
  A. Cannas~da Silva, \emph{Lectures on symplectic geometry}, Lecture
  Notes in Mathematics, vol. 1764, Springer-Verlag, Berlin,
  2001. \MR{1853077}

\bibitem[CFG08]{ar:cendra_ferraro_grillo-lagrangian_reduction_of_generalized_NHS}
  H. Cendra, S. Ferraro, and S. Grillo, \emph{Lagrangian reduction of
    generalized nonholonomic systems}, J. Geom. Phys. \textbf{58}
  (2008), no.~10, 1271--1290. \MR{MR2453664 (2009k:70014)}

\bibitem[CMR01]{bo:cendra_marsden_ratiu-lagrangian_reduction_by_stages}
  H. Cendra, J.~E. Marsden, and T.~S. Ratiu, \emph{Lagrangian
    reduction by stages}, Mem. Amer. Math. Soc. \textbf{152} (2001),
  no.~722, x+108. \MR{MR1840979 (2002c:37081)}

\bibitem[CP07]{ar:cuell_patrick-skew_critical_problems} C.~Cuell and
  G.~W. Patrick, \emph{Skew critical problems}, Regul. Chaotic Dyn.
  \textbf{12} (2007), no.~6, 589--601. \MR{2373159 (2010g:37100)}

\bibitem[CP09]{ar:cuell_patrick-geometric_discrete_analogues_of_tangent_bundles_and_constrained_lagrangian_systems}
  \bysame, \emph{Geometric discrete analogues of tangent bundles and
    constrained {L}agrangian systems}, J. Geom. Phys.  \textbf{59}
  (2009), no.~7, 976--997. \MR{2536857 (2010f:37118)}

\bibitem[dDdA18]{ar:deDiego_deAlmagro-variational_order_for_forced_lagrangian_systems}
  D.~Mart\'{\i}n de~Diego and R.~Sato~Mart\'{\i}n de~Almagro,
  \emph{Variational order for forced {L}agrangian systems},
  Nonlinearity \textbf{31} (2018), no.~8, 3814--3846. \MR{3826116}

\bibitem[MMdDM16]{ar:marrero_martindediego_martinez-on_the_exact_discrete_lagrangian_function_for_variational_integrators_theory_and_applications}
  J.~C. Marrero, D.~Mart{\'\i}n~de Diego, and E.~Mart{\'\i}nez,
  \emph{On the exact discrete lagrangian function for variational
    integrators: theory and applications},
  \href{http://arXiv.org/abs/1608.01586}{\tt arXiv:1608.01586}, 2016.

\bibitem[MR99]{bo:marsden_ratiu-introduction_to_mechanics_and_symmetry_2e}
  J.~E. Marsden and T.~S. Ratiu, \emph{Introduction to mechanics and
    symmetry}, second ed., Texts in Applied Mathematics, vol.~17,
  Springer-Verlag, New York, 1999, A basic exposition of classical
  mechanical systems. \MR{1723696 (2000i:70002)}

\bibitem[MW01]{ar:marsden_west-discrete_mechanics_and_variational_integrators}
  J.~E. Marsden and M.~West, \emph{Discrete mechanics and variational
    integrators}, Acta Numer. \textbf{10} (2001),
  357--514. \MR{MR2009697 (2004h:37130)}

\bibitem[PC09]{ar:patrick_cuell-error_analysis_of_variational_integrators_of_unconstrained_lagrangian_systems}
  G.~W. Patrick and C. Cuell, \emph{Error analysis of variational
    integrators of unconstrained {L}agrangian systems},
  Numer. Math. \textbf{113} (2009), no.~2, 243--264. \MR{2529508 (2010f:37146)}

\bibitem[TU99]{ar:tulczyjew_urbanski-a_slow_and_careful_legendre_transformation_for_singular_lagrangians}
  W.~M. Tulczyjew and P. Urba\'{n}ski, \emph{A slow and careful
    {L}egendre transformation for singular {L}agrangians}, Acta
  Phys. Polon. B \textbf{30} (1999), no.~10, 2909--2978, The Infeld
  Centennial Meeting (Warsaw, 1998). \MR{1737557}

\bibitem[Whi43]{ar:whitney-differentiability_of_the_remainder_term_in_taylors_formula}
  H. Whitney, \emph{Differentiability of the remainder term in
    {T}aylor's formula}, Duke Math. J. \textbf{10} (1943),
  153--158. \MR{0007782 (4,192e)}

\end{thebibliography}


\def\cprime{$'$} \def\polhk#1{\setbox0=\hbox{#1}{\ooalign{\hidewidth
  \lower1.5ex\hbox{`}\hidewidth\crcr\unhbox0}}} \def\cprime{$'$}
  \def\cprime{$'$}
\providecommand{\bysame}{\leavevmode\hbox to3em{\hrulefill}\thinspace}
\providecommand{\MR}{\relax\ifhmode\unskip\space\fi MR }
\providecommand{\MRhref}[2]{%
  \href{http://www.ams.org/mathscinet-getitem?mr=#1}{#2}
}
\providecommand{\href}[2]{#2}


\end{document}